\theoremstyle{plain}
\newtheorem{theorem}{Theorem}[section]
\newtheorem{corollary}[theorem]{Corollary}
\newtheorem{lemma}[theorem]{Lemma}
\newtheorem{proposition}[theorem]{Proposition}
\newtheorem{notation}[theorem]{Notation}
\theoremstyle{definition}
\newtheorem{definition}[theorem]{Definition}
\newtheorem{example}[theorem]{Example}
\theoremstyle{remark}
\newtheorem{remark}[theorem]{Remark}
\newcommand{\A}{\mathcal{A}}
\newcommand{\B}{\mathcal{B}}
\newcommand{\K}{\mathcal{K}}
\newcommand{\U}{\mathcal{U}}
\newcommand{\N}{\mathcal{N}}
\newcommand{\R}{\mathbb{R}}
\newcommand{\scS}{\mathcal{S}}
\newcommand{\Z}{\mathbb{Z}}
\newcommand{\h}{{\rm h}}
\newcommand{\vn}{\noindent}
\newcommand{\D}{{\mathcal{D}}}
\newcommand{\T}{{\mathcal{T}}}
 \newcommand{\lan}{\langle}
\newcommand{\ran}{\rangle}
\newcommand{\Der}{{\rm Der}} 
\newcommand{\codim}{{\rm codim}}
\newcommand{\M}{\mathcal{M}}
\newcommand{\rank}{\operatorname{rank}}
\newcommand{\DP}{{\mathcal{DP}}}
\newcolumntype{K}[1]{>{\centering\arraybackslash}p{#1}}
\begin{document}

\title[On $A_1^2$ restrictions of Weyl arrangements]{On $A_1^2$ restrictions of Weyl arrangements}

\date{\today}

\begin{abstract}
Let $\mathcal{A}$ be a Weyl arrangement in an $\ell$-dimensional Euclidean space. The freeness of restrictions of $\mathcal{A}$ was first settled by a case-by-case method by Orlik and the second author (1993), and later by a uniform argument by Douglass (1999). Prior to this, Orlik and Solomon (1983) had completely determined the exponents of these arrangements by exhaustion. A classical result due to Orlik, Solomon and the second author (1986), asserts that the exponents of any $A_1$ restriction, i.e., the restriction of $\mathcal{A}$ to a hyperplane, are given by $\{m_1,\ldots, m_{\ell-1}\}$, where $\exp(\mathcal{A})=\{m_1,\ldots, m_{\ell}\}$ with $m_1 \le \cdots\le m_{\ell}$. As a next step towards conceptual understanding of the restriction exponents we will investigate the $A_1^2$ restrictions, i.e., the restrictions of $\mathcal{A}$ to the subspaces of type $A_1^2$. In this paper, we give a combinatorial description of the exponents and describe bases for the modules of derivations of the $A_1^2$ restrictions in terms of the classical notion of related roots by Kostant (1955).
 \end{abstract}

\author{Takuro Abe}
\address{Takuro Abe, Institute of Mathematics for Industry, Kyushu University, Fukuoka 819-0395, Japan}
\email{abe@imi.kyushu-u.ac.jp}

\author{Hiroaki Terao}
\address{Hiroaki Terao, Department of Mathematics, Hokkaido University, Kita 10, Nishi 8, Kita-Ku, Sapporo 060-0810, Japan}
\email{hterao00@za3.so-net.ne.jp}

\author{Tan Nhat Tran}
\address{Tan Nhat Tran, Department of Mathematics, Hokkaido University, Kita 10, Nishi 8, Kita-Ku, Sapporo 060-0810, Japan}
\email{trannhattan@math.sci.hokudai.ac.jp}


\subjclass[2010]{32S22 (Primary), 17B22 (Secondary)}

\keywords{Root system, Weyl arrangement, restriction, freeness, exponent, basis}

\date{\today}
\maketitle

\tableofcontents


\section{Introduction}
Assume that $V=\R^\ell$ with the standard inner product $(\cdot,\cdot)$.
Denote by $\Phi$ an irreducible (crystallographic) root system in $V$ and by $\Phi^+$ a positive system of $\Phi$.
Let $\A$ be the Weyl arrangement of $\Phi^+$. 
Denote by $L(\A)$ the intersection poset of $\A$.
For each $X\in L(\A)$, we write ${\A}^{X}$ for the restriction of ${\A}$ to $X$. 
Set $L_p(\A):=\{X \in L(\A) \mid \codim(X)=p\}$ for  $0 \le p \le \ell$.
Let $W$ be the Weyl group of $\Phi$ and let $m_1, \ldots, m_\ell$ with $m_1 \le \cdots \le m_\ell$ be the exponents of $W$.

\begin{notation}\label{not:associated}
If $X \in L_p(\A)$, then $\Phi_X := \Phi \cap X^{\perp}$ is a root system of rank $p$. 
A positive system of $\Phi_X$ is taken to be $\Phi^+_X:=\Phi^+\cap\Phi_X$.
Let $\Delta_X$ be the base of  $\Phi_X$ associated to  $\Phi^+_X$. 
\end{notation}

\begin{definition}\label{def:s}
A subspace $X\in L(\A)$ is said to be \emph{of type $T$} (or \emph{$T$} for short) if $\Phi_X$ is a root system of type $T$. 
In this case, the restriction $\A^{X}$ is said to be \emph{of type $T$} (or \emph{$T$}).
\end{definition}

Weyl arrangements are important examples of \emph{free} arrangements. 
In other words, the module $D(\A)$ of $\A$-derivations is a free module. 
Furthermore, the exponents of $\A$ are the same as the exponents of $W$, i.e., $\exp(\mathcal{A})=\{m_1,\ldots, m_{\ell}\}$ (e.g., \cite{S93}).
It is shown by Orlik and Solomon  \cite{OS83}, using the classification of finite reflection groups, that the characteristic polynomial of the restriction ${\A}^{X}$ ($X \in L(\A)$) of an arbitrary Weyl arrangement $\A$ is fully factored. 
Orlik and the second author  \cite{OT93} proved a stronger statement that $D(\A^X)$ is free by a case-by-case study. 
Soon afterwards, Douglass \cite{D99} gave a uniform proof for the freeness using the representation theory of Lie groups. 

We are interested in  studying  the exponents of $\A^X$  and bases for $D(\A^X)$.
For a general $X$, it is not an easy task. 
The only known general result due to Orlik, Solomon and the second author \cite{OST86},
asserts that for each $X\in\A$, i.e., $X$ is of type $A_1$, $\exp(\A^X)=\{m_1,\ldots, m_{\ell-1}\}$ and a basis for $D(\A^X)$ consists of the restrictions to $X$ of the corresponding basic derivations. 
In this paper, we consider a ``next" general class of restrictions, that is when $X$ is of type $A_1^2$.
We prove that $\exp(\A^X)$ is obtained from $\exp(\A)$ by removing either the two largest exponents, or the largest and the middle exponents, depending upon a combinatorial condition on $X$.
Furthermore, similar to the result of  \cite{OST86}, our method produces an explicit basis for  $D(\A^X)$ in each case.
The main combinatorial ingredient in our description is the following concept defined by Kostant:\begin{definition}[\cite{K55}]\label{def:RO}
Two non-proportional roots $\beta_1$, $\beta_2$ are said to be \textit{related} if 
\begin{enumerate}[(a)]
\item $(\beta_1,\beta_2)=0$,
\item for any $\gamma\in\Phi \setminus \{\pm\beta_1,\pm\beta_2\}$, $(\gamma,\beta_i)=0$
implies $(\gamma,\beta_{3-i})=0$ for all $i \in\{1,2\}$.
\end{enumerate}
In this case, we call the set $\{\beta_1,\beta_2\}$ \textit{relatedly orthogonal} (RO), and the subspace $X=H_{\beta_1} \cap H_{\beta_2}$ is said to be RO.
\end{definition}

\begin{remark}\label{rem:terminology}
The relatedly orthogonal sets presumably first appeared in \cite{K55}, wherein Kostant required $\beta_1,\beta_2$ to have the same length, and allowed a root is related to itself and its negative. 
Green called the relatedly orthogonal sets \textit{strongly orthogonal} and defined the strong orthogonality in a more general setting \cite[Definition 4.4.1]{G13}. 
It should be noted that the notion of strongly orthogonal sets is probably more well-known with the definition that neither sum nor difference of the two roots is a root. 
For every $\beta \in \Phi$, set $\beta^\perp:=\{\alpha \in \Phi \mid (\alpha,\beta)=0\}$. Condition (b) in Definition \ref{def:RO} can be written symbolically as (b') $\beta_1^{\perp}\setminus \{\pm \beta_2\}=\beta_2^{\perp} \setminus \{\pm \beta_1\}$.
\end{remark}

Let $\h$ be the Coxeter number of $W$.
For $\phi \in D(\A)$, let $\phi^X$ be the restriction of $\phi$ to $X$.
We now formulate our main results.
 \begin{theorem}\label{thm:combine}
 Assume that $\ell \ge 3$.
If $X\in L(\A)$ is of type $A_1^2$, then $\A^X$  is free with 
\begin{equation*}\label{eq:exp-part}
\exp(\A^X)=
\begin{cases}
\exp(\A)\setminus \{ \h/2,m_\ell\} \mbox{ if $X$ is RO}, \\
\exp(\A)\setminus \{ m_{\ell-1},m_\ell\} \mbox{ otherwise}.
\end{cases}
\end{equation*}
 
\end{theorem}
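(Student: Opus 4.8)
The plan is to realize the exponents of $\A^X$ as the restriction of a suitable basis of $D(\A)$, in the spirit of \cite{OST86}, using the Ziegler restriction and an inductive "addition–deletion" analysis along a flag $X = X_2 \subset X_1 \subset V$ with $X_1 \in L_1(\A)$. First I would fix $X \in L_2(\A)$ of type $A_1^2$, write $\Phi_X = \{\pm\beta_1,\pm\beta_2\}$ with $(\beta_1,\beta_2)=0$, and choose the intermediate hyperplane $H := H_{\beta_1} \in \A$, so that $X \in L_1(\A^H)$. By \cite{OST86}, $\A^H$ is free with $\exp(\A^H) = \{m_1,\ldots,m_{\ell-1}\}$ and an explicit basis obtained by restricting basic derivations $\theta_1,\ldots,\theta_\ell$ of $D(\A)$ to $H$; moreover $X$ is a hyperplane of $\A^H$, so it remains to identify which exponent of $\A^H$ is deleted when we further restrict to $X$. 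Applying \cite{OST86} once more — now to the free arrangement $\A^H$ and its hyperplane $X$ — would give freeness of $\A^X = (\A^H)^X$ together with $\exp(\A^X) = \exp(\A^H) \setminus \{m_{j}\}$ for exactly one index $j$, and a basis by restricting the basic derivations of $\A^H$. So the entire theorem reduces to a single computation: \emph{which} exponent disappears in the second step.

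The heart of the matter is thus a local multiplicity/coefficient computation that detects the RO condition. The cleanest way I would attack it is via the characteristic polynomial and Terao's factorization: since $\A^X$ is free, $\chi(\A^X,t) = \prod_{i}(t - e_i)$ with $\exp(\A^X) = \{e_i\}$, and by the finite-field / Whitney method one has the deletion–restriction recursion feeding $\chi(\A^X,t)$ from $\chi(\A^H,t)$ and $\chi((\A^H)_X,t)$ (the localization), so it suffices to compute the number of hyperplanes of $\A^H$ through $X$, equivalently $|\Phi^+ \cap X^\perp$-span$|$ minus the obvious ones — and to show that this count takes one of exactly two values according to whether $\{\beta_1,\beta_2\}$ is RO. Concretely, the roots of $\Phi$ orthogonal to $X$ but contributing a hyperplane of $\A$ not equal to $H_{\beta_1},H_{\beta_2}$ are governed by $\beta_1^\perp \cap \beta_2^\perp$ versus $\beta_1^\perp \cup \beta_2^\perp$, and Remark \ref{rem:terminology}(b') says RO is precisely the statement that these two differ only in $\pm\beta_1,\pm\beta_2$; I would turn this into the statement that the top exponent $m_\ell$ is always removed (it corresponds to the Coxeter-number data and the highest root, which is never orthogonal to a generic $A_1^2$ after one restriction), while the second removed exponent is $\h/2$ in the RO case and $m_{\ell-1}$ otherwise, by comparing with the known exponents of $\A^{X_1}$ for $X_1$ of type $A_1$ and tracking one coefficient of $\chi$.

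To make the basis statement precise and not merely the exponents, I would instead of (or in addition to) the characteristic-polynomial argument run the explicit derivation module computation: starting from basic derivations $\theta_1 = \theta_E$ (Euler), $\theta_2,\ldots,\theta_\ell$ of $D(\A)$ with $\deg\theta_i = m_i$, restrict to $H_{\beta_1}$ to get a basis of $D(\A^H)$ by \cite{OST86}, then restrict those to $X$ and show that exactly one of the resulting $\ell-1$ derivations becomes a $\A^X$-multiple of (lies in the submodule generated by) the others, the redundant one having degree $m_\ell$; then among the remaining $\ell-2$ derivations, the RO condition forces a second collapse at degree $\h/2$ rather than $m_{\ell-1}$, and one exhibits the surviving $\ell-2$ restricted derivations, proving they form a basis of $D(\A^X)$ via Saito's criterion (the determinant of their coefficient matrix equals, up to a nonzero scalar, the defining polynomial of $\A^X$). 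The \textbf{main obstacle} will be this second collapse: showing that the RO/non-RO dichotomy is exactly what distinguishes $\h/2$ from $m_{\ell-1}$ as the second deleted exponent. I expect this requires either (i) a careful case-by-case verification across the irreducible types (where one knows $\h/2$ is an exponent exactly in the simply-laced and certain other cases, and one must match the RO pairs against the exponent that is "doubled" or sits at $\h/2$), or (ii) a uniform argument relating the localization $(\A^H)_X$ — which is itself a reflection arrangement of the pointwise stabilizer of $X$, a product whose factors are dictated by $\beta_1^\perp \cup \beta_2^\perp$ — to Kostant's cascade of strongly orthogonal roots, so that RO corresponds to the stabilizer factor being of type governed by the Coxeter number. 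I would try the uniform route first, falling back on type-by-type checks for the exceptional groups where a conceptual proof is hard to complete.
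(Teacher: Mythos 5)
Your overall architecture (restrict to $H=H_{\beta_1}$ first, then view $X$ as a hyperplane of $\A^H$ and identify the one exponent that disappears) matches the paper's, but there are two genuine gaps. First, the step ``applying \cite{OST86} once more --- now to the free arrangement $\A^H$ and its hyperplane $X$'' is not available: the theorem of Orlik--Solomon--Terao is a statement about Weyl arrangements and their basic derivations, and $\A^H$ is a restriction of a Weyl arrangement, not a Weyl arrangement itself. To conclude that $\A^X=(\A^H)^X$ is free with $\exp(\A^X)=\exp(\A^H)\setminus\{m_j\}$ via Addition--Deletion, you must first prove that the \emph{deletion} $\A^H\setminus\{X\}$ is free. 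This is exactly where the paper invokes Abe's Combinatorial Deletion Theorem (Theorem \ref{thm:Abe}), and verifying its hypothesis requires showing that $|\A^{H}_Y|-|\A^{X}_Y|$ is a root of $\chi(\A^H_Y,t)$ for every $Y\in L(\A^X)$ --- i.e.\ the exponent count has to be established locally at every such $Y$ (Proposition \ref{prop:weaker} and the product decomposition in the proof of Theorem \ref{thm:main}), not just globally. Your proposal silently assumes the conclusion of this step.

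Second, the ``single computation'' you defer --- which exponent disappears --- is the actual content of the theorem, and your sketch of it does not amount to an argument. The deleted exponent equals $|\A^{H_{\beta_1}}|-|\A^X|$, and the paper computes this by partitioning $\Phi^+$ according to the rank-$3$ subsystems containing $\{\beta_1,\beta_2\}$ and using ${\rm h}=2\sum_{\mu\in\Phi^+}(\widehat{\beta_i},\widehat{\mu})^2$ (Corollary \ref{cor:passing} through Proposition \ref{prop:X=A_1^2}), arriving at $|\A^{H_{\beta_i}}|-|\A^X|={\rm h}/2+\K_{\beta_{3-i}}(\beta_i)$. The RO case then reduces to $\K_{\beta_2}(\beta_1)=0$ plus a separate proof that ${\rm h}/2$ is actually an exponent of $W$ (Theorem \ref{thm:half1}, via the eigenvalue $2$ of the Cartan matrix) --- a point your proposal never addresses; and the non-RO case (Theorem \ref{thm:half2}) requires transporting the given set into the orbit of $\{\theta,\lambda\}$ with $\lambda$ in the unique large component $\Omega$ of $\theta^\perp$, comparing with the pair of roots of height $m_{\ell-1}$ (Theorem \ref{thm:crucial}), and the structural results on $\widetilde{\D}(\Phi)$ of \S\ref{subsec:elemetary}. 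Your fallback to ``case-by-case verification across the irreducible types'' would of course prove the statement (as the paper concedes, Theorem \ref{thm:combine} can be read off from \cite{OS83} given \cite{OT93}), but as written the proposal neither completes the uniform route nor carries out the case check, so the dichotomy ${\rm h}/2$ versus $m_{\ell-1}$ remains unproved.
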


\begin{theorem}\label{thm:basis-main}
 Assume that $\ell \ge 3$.
\begin{enumerate}[(i)]
\item Suppose that $X$ is  $A_1^2$  and not RO. 
Let $\{\varphi_{1}, \ldots, \varphi_\ell\}$ be a basis for $D(\A)$ with $\deg {\varphi}_j=m_j$ ($1 \le j \le \ell$). 
Then $\{ {\varphi}^X_1, \ldots, {\varphi}^X_{\ell-2}\}$ is a basis for $D(\A^X)$.
\item Suppose that $X$ is both $A_1^2$ and RO. 
Then $\Phi$ must be of type $D_\ell$ with $\ell \ge 3$. 
Furthermore, a basis for $D(\A^X)$ is given by $\{ \tau_1^X, \ldots, \tau_{\ell-2}^X\}$, where
$$
\tau_i := \sum_{k=1}^\ell x_k^{2i-1}\partial/\partial x_k\,\, (1 \le i \le \ell-2).
$$
\end{enumerate}
 \end{theorem}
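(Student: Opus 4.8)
The plan is to deduce both parts from Saito's criterion, with Theorem~\ref{thm:combine} supplying the freeness of $\A^X$ and its exponents. We use the criterion in the following form: if $\B$ is a free arrangement in $\C^n$ with $\exp(\B)=\{e_1,\dots,e_n\}$ and $\theta_1,\dots,\theta_n\in D(\B)$ are homogeneous, then $\det(\theta_i(z_j))_{i,j}$ is always divisible by the defining polynomial $Q(\B)$, so if moreover $\det(\theta_i(z_j))\neq 0$ and $\sum_i\deg\theta_i\le\sum_i e_i=|\B|$, then $\det(\theta_i(z_j))$ is a nonzero scalar times $Q(\B)$ and $\{\theta_1,\dots,\theta_n\}$ is a basis for $D(\B)$, with the $\deg\theta_i$ equal to the $e_i$. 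We also use a simple remark: if $\phi\in D(\A)$ and $X\in L(\A)$, then $\phi$ is tangent to $X$ (because $\phi$ sends each defining form $\alpha_H$ with $H\supseteq X$ into $(\alpha_H)$, so $\phi(\alpha_H)$ vanishes on $X$); hence $\phi^X(p)=\phi(p)$ under $T_pX=X$ for $p\in X$, the derivation $\phi^X$ is homogeneous of the same degree as $\phi$ or is zero, and $\phi_1^X,\dots,\phi_k^X$ are linearly independent over the rational function field of $X$ exactly when $\phi_1(p),\dots,\phi_k(p)$ span a $k$-dimensional subspace of $X$ for generic $p\in X$. So in both parts it suffices to prove linear independence of the relevant restricted derivations; their degrees and a basis then follow automatically.

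I would treat part (ii) first. One begins by observing that $X$ being simultaneously $A_1^2$ and RO forces $\Phi$ to be of type $D_\ell$: running through the irreducible root systems, an orthogonal pair $\{\beta_1,\beta_2\}$ with $\Phi_X$ of type $A_1^2$ satisfies condition (b') of Remark~\ref{rem:terminology} only in type $D_\ell$ (this is also the source of the exponent $\h/2=\ell-1$ in Theorem~\ref{thm:combine}). Realizing $\A=\A(D_\ell)$ as the arrangement $\{x_a\pm x_b=0\}$, we get $\tau_i(x_a\pm x_b)=x_a^{2i-1}\pm x_b^{2i-1}$, divisible by $x_a\pm x_b$ since $2i-1$ is odd; thus each $\tau_i\in D(\A)$ and each $\tau_i^X\in D(\A^X)$. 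A short analysis of the two orthogonal roots shows that one may choose a coordinate system $z_1,\dots,z_{\ell-2}$ on $X$ consisting of restrictions of coordinate functions of $V$, so that $\tau_i^X(z_j)=z_j^{2i-1}$; then $\det(\tau_i^X(z_j))_{1\le i,j\le\ell-2}=\big(\prod_j z_j\big)\prod_{j<k}(z_j^2-z_k^2)\neq 0$ is a generalized Vandermonde determinant. Since $\sum_{i=1}^{\ell-2}(2i-1)=(\ell-2)^2=|\A^X|$ (by Theorem~\ref{thm:combine}, $\exp(\A^X)=\{1,3,\dots,2\ell-5\}$), Saito's criterion applies and $\{\tau_1^X,\dots,\tau_{\ell-2}^X\}$ is a basis for $D(\A^X)$.

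For part (i), Theorem~\ref{thm:combine} gives that $\A^X$ is free with $\exp(\A^X)=\{m_1,\dots,m_{\ell-2}\}$, and each nonzero $\varphi_i^X$ is homogeneous of degree $m_i$ with $\sum_{i=1}^{\ell-2}m_i=|\A^X|$; so by the packaging above the statement reduces to showing that $\varphi_1^X,\dots,\varphi_{\ell-2}^X$ are linearly independent, equivalently that $\varphi_1(p),\dots,\varphi_{\ell-2}(p)$ span $X$ for generic $p\in X$. One half is easy: all of $\varphi_1(p),\dots,\varphi_\ell(p)$ lie in the $(\ell-2)$-dimensional space $X$ (each $\varphi_i$ is tangent to $X$), while $\det(\varphi_i(x_j))$ is a nonzero multiple of $Q(\A)$, which vanishes along $X$ to order exactly $2$ because the only members of $\A$ containing $X$ are $H_{\beta_1}$ and $H_{\beta_2}$ (as $\Phi_X$ has type $A_1^2$); were $\varphi_1(p),\dots,\varphi_\ell(p)$ to span only an $(\le\ell-3)$-dimensional space at a generic $p\in X$, then $\det(\varphi_i(x_j))$ would vanish there to order $\ge 3$, a contradiction. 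So \emph{some} $\ell-2$ of the $\varphi_i(p)$ span $X$ generically.

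The genuine obstacle --- and the step I expect to be the most delicate --- is upgrading this to the assertion that it is the $\ell-2$ derivations of smallest degree whose restrictions work. I would do this by iterating the $A_1$ restriction theorem of \cite{OST86} along the flag $X\subset H_{\beta_1}\subset V$, using transitivity $\A^X=(\A^{H_{\beta_1}})^X$. The first step, \cite{OST86} for the Weyl arrangement $\A$ and the hyperplane $H_{\beta_1}$, removes the top exponent and exhibits $\{\varphi_1^{H_{\beta_1}},\dots,\varphi_{\ell-1}^{H_{\beta_1}}\}$ as a basis for $D(\A^{H_{\beta_1}})$. The second step restricts the free (but no longer Weyl) arrangement $\A^{H_{\beta_1}}$, with exponents $\{m_1,\dots,m_{\ell-1}\}$, to its hyperplane $X$; by Theorem~\ref{thm:combine} exactly one more exponent disappears, and it must be the largest remaining one $m_{\ell-1}$ --- here one uses that $m_{\ell-1}>m_{\ell-2}$ strictly when $X$ is not RO, which holds because the only irreducible Weyl group with $m_{\ell-1}=m_{\ell-2}$ is $D_4$ and every $A_1^2$ subspace of $D_4$ turns out to be RO --- leaving $\exp(\A^X)=\{m_1,\dots,m_{\ell-2}\}$ and the basis $\{\varphi_1^X,\dots,\varphi_{\ell-2}^X\}$. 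Carrying out this second step rigorously requires an analogue of \cite{OST86} valid for $A_1$ restrictions of arbitrary free arrangements --- equivalently, the surjectivity of the restriction map $D(\A^{H_{\beta_1}})\to D(\A^X)$, which together with the degree gap forces the restricted basic derivations to remain a basis --- and establishing this is where I would expect to spend most of the effort.
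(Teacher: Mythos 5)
Your part (ii) is correct and is in fact a more direct verification than the paper's: the paper (Example \ref{ex:basis-RO}) exhibits the auxiliary derivations $\eta$ and $\varphi$, shows $\eta^X=0$, and invokes Theorem \ref{thm:basis-derived}, whereas you simply compute the generalized Vandermonde determinant $\det(\tau_i^X(z_j))=\bigl(\prod_j z_j\bigr)\prod_{j<k}(z_j^2-z_k^2)$, identify it with $Q(\A^X)$ for the $B_{\ell-2}$ arrangement of size $(\ell-2)^2$, and apply Saito's criterion. Both routes use the classification to force $\Phi=D_\ell$. Your observation that $m_{\ell-2}<m_{\ell-1}$ whenever a non-RO $A_1^2$ subspace exists (because $D_4$ is the only culprit and all its $A_1^2$ subspaces are RO) also matches the paper's Corollary \ref{cor:exactly2} and Corollary \ref{cor:existence-D4}.

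Part (i), however, has a genuine gap, and it sits exactly where you say you "expect to spend most of the effort." Your order-of-vanishing argument correctly shows that \emph{some} $\ell-2$ of the $\varphi_i^X$ are independent over the function field of $X$, but this alone cannot identify them as $\varphi_1^X,\dots,\varphi_{\ell-2}^X$: an independent $(\ell-2)$-tuple whose degree sum exceeds $|\A^X|$ is not a basis, so one must prove independence of the specific first $\ell-2$ restrictions, not merely of some subset. The "analogue of \cite{OST86} for $A_1$ restrictions of arbitrary free arrangements" that you defer to is not available in that generality: the sequence in Proposition \ref{prop:exact} is only left exact, and surjectivity of $q\colon D(\A^{H_{\beta_1}})\to D(\A^X)$ fails for general free arrangements. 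What makes it work here is that the \emph{deletion} $\A^{H_{\beta_1}}\setminus\{X\}$ is itself free; the paper establishes this via the Combinatorial Deletion Theorem \ref{thm:Abe}, which requires verifying that $|\A^{H_{\beta_1}}_Y|-|\A^X_Y|$ is a root of $\chi(\A^{H_{\beta_1}}_Y,t)$ for every $Y\in L(\A^X)$ — i.e., the exponent formula of Theorem \ref{thm:card} applied to all localizations (Proposition \ref{prop:weaker} and the proof of Theorem \ref{thm:main}). Once both $\A^{H_{\beta_1}}$ and its deletion are free, Theorem \ref{thm:basis-derived} (whose proof uses the surjectivity of $q$ in precisely this free-plus-free situation, together with the fact that $m_{\ell-1}$ occurs only once in $\exp(\A^{H_{\beta_1}})$) delivers the conclusion. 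So your outline points in the right direction, but the freeness of the deletion and the resulting surjectivity are the substantive missing steps, not routine ones.
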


Theorem \ref{thm:combine} gave a little extra information: when $X$ is RO, half of the Coxeter number is an exponent of $W$ (hence it lies in the ``middle" of the exponent sequence). 
We emphasize that given \cite{OT93}, Theorem \ref{thm:combine} can be verified by looking at the numerical results in \cite{OS83} (the case of type $D$ is rather non-trivial). 
It is interesting to search for a proof, free of case-by-case considerations.
In this paper, we provide a conceptual proof with a minimal use of classifications of root systems: Theorem \ref{thm:combine} holds true trivially for $\ell=3$, and we use up to the classification of rank-$4$ root systems (for the proofs of some supporting arguments). 
As far as we are aware, a formula of the exponents of a restricted Weyl arrangement given under the RO condition is new.
Our proof shows how the RO concept arrives at the exponent description, hoping that it will reveal a new direction for future research of the exponents through the combinatorial properties of the root system.

Theorem \ref{thm:basis-main} seems to be, however, less straightforward even if one relies on the classification. 
In comparison with the classical result of  \cite{OST86}, Theorem \ref{thm:basis-main}(i) gives a bit more flexible construction of basis for $D(\A^X)$. 
Namely, a wanted basis is obtained by taking the restriction to $X$ of \emph{any} basis for $D(\A)$, without the need of basic derivations. 
Nevertheless, the remaining part of the basis construction (Theorem \ref{thm:basis-main}(ii)) can not avoid the classification. 
It may happen that there are more than one derivations in a basis for $D(\A)$ having the same degree. 
Hence some additional computation is required to examine which derivation vanishes after taking the restriction to $X$ (see Remark \ref{rem:once} and Example \ref{ex:basis-RO}).

Beyond the $A_1^2$ restrictions, driving conceptual understanding on the restrictions in higher codimensions, or of irreducible types is much harder (see Remarks \ref{rem:A2,B2} and \ref{rem:A1k}). 
The numerical results  \cite{OS83} say that we have a similar formula for the exponents of any $A_1^k$ restriction ($k\ge 2$) in terms of the strongly orthogonal sets of  \cite{G13}. 
The details are left for future research.

The remainder of this paper is organized as follows. 
In \S\ref{sec:pre}, we first review preliminary results on free arrangements and their exponents, and a recent result (Combinatorial Deletion) relating the freeness to combinatorics of arrangements (Theorem \ref{thm:Abe}). 
We also prove an important result in the paper, a construction of a basis for $D(\A^H)$ ($H\in\A$) from a basis for $D(\A)$ when $\A$ and $\A\setminus\{H\}$ are both free (Theorem \ref{thm:basis-derived}).
We then review the background information on root systems, Weyl groups, and Weyl arrangements. 
Based on the Combinatorial Deletion Theorem, we provide a slightly different proof for the result of  \cite{OST86} (Remark \ref{rem:alternative}).
In \S\ref{sec:cardinality}, we give an evaluation for the cardinality of every $A_1^2$ restriction (Proposition \ref{prop:X=A_1^2}). 
This evaluation can be expressed in terms of the local and global second smallest exponents of the Weyl groups (Remark \ref{rem:K0}). 
In \S\ref{sec:main}, 
we first provide a proof for the freeness part of Theorem \ref{thm:combine}, i.e., the freeness of $A_1^2$ restrictions (Theorem \ref{thm:main}). 
The proof is different from (and more direct than) the proofs presented in \cite{OT93} and \cite{D99}. 
We then complete the proof of Theorem \ref{thm:combine} by providing a proof for the exponent part (Theorem  \ref{thm:card}). 
The proof contains two halves which we present the proof for each half in Theorems \ref{thm:half1} and \ref{thm:half2}. 
We close the section by giving two results about local-global inequalities on the second smallest exponents and the largest coefficients of the highest roots (Corollary \ref{cor:Local-global}), and the Coxeter number of any irreducible component of the subsystem orthogonal to the highest root in the simply-laced cases (Corollary \ref{cor:Omega-cx}). 
In \S\ref{sec:Weyl-freeness part}, we present the proof of Theorem \ref{thm:basis-main} (Theorem \ref{thm:basis-non-RO} and Example \ref{ex:basis-RO}). 
In \S\ref{sec:app}, we give an alternative and bijective proof of Theorem \ref{thm:crucial}, one of the key ingredients in the proof of the exponent part of Theorem \ref{thm:combine}.
\section{Preliminaries}\label{sec:pre}
\subsection{Free arrangements and their exponents}
For basic concepts and results of free arrangements, we refer the reader to \cite{OT92}.

Let $\Bbb K$ be a field and let $V := \Bbb K^\ell$.
A \textit{hyperplane} in $V$ is a subspace of codimension $1$ of $V$.
An \textit{arrangement} is a finite set of hyperplanes in $V$.
We choose a basis $\{x_1,\ldots, x_\ell\}$ for $V^*$ and let $S:= \Bbb K[x_1,\ldots, x_\ell]$.
Fix an arrangement $\A$ in $V$. 
The \textit{defining polynomial} $Q(\A)$ of $\A$ is defined by
$$Q(\A):=\prod_{H \in \A} \alpha_H \in S,$$
where $ \alpha_H=a_1x_1+\cdots+a_\ell x_\ell \in V^*\setminus\{0\}$,  $a_i \in \Bbb K$ and $H = \ker \alpha_H$.
The \textit{number} of hyperplanes in $\A$ is denoted by $|\A|$.
It is easy to see that $Q(\A)$ is a homogeneous polynomial in $S$ and $\deg Q(\A)=|\A|$.

The \emph{intersection poset} of $\A$, denoted by $L(\A)$, is defined to be
$$L(\A):=\{ \cap_{H \in \B} H \mid \B \subseteq \A \},$$
where the partial order is given by reverse inclusion. We agree that $V \in L(\A)$ is the unique minimal element.
For each $X \in L(\A)$, we define the \emph{localization} of $\A$ on $X$  by 
$${\A}_X := \{ K \in {\A} \mid X \subseteq K\},$$
and define the \emph{restriction} ${\A}^{X}$ of ${\A}$ to $X$ by 
$${\A}^{X}:= \{ K \cap X \mid K \in{\A }\setminus {\A}_X\}.$$
The \textit{M\"{o}bius function} $\mu : L(\A) \to \mathbb Z$ is formulated by
$$\mu(V) :=1, \quad \mu(X) := - \sum_{X \subsetneq Y \subseteq V}\mu(Y).$$
The \textit{characteristic polynomial} $\chi(\A, t)$ of $\A$ is defined by
$$ \chi (\A, t):= \sum_{X \in L(\A)} \mu(X) t^{\dim X}.$$

A \textit{derivation} of $S$ over $ \Bbb K$ is a linear map $\phi:S\to S$ such that for all $f,g \in S$,
$\phi(fg)=f\phi(g)+g\phi(f).$
Let ${\rm{Der}}(S)$ denote the \emph{set} of derivations of $S$ over $ \Bbb K$.
Then ${\rm{Der}}(S)$ is a free $S$-module with a basis $\{\partial_1,\ldots , \partial_\ell\}$, where $\partial_i:=\partial/\partial x_i$ for  $1 \le i \le \ell$.
Define an $S$-submodule of ${\rm{Der}}(S)$, called the \textit{module of $\A$-derivations}, by
$$D(\A):= \{ \phi \in \Der(S) \mid \phi(Q) \in QS\}.$$
A non-zero element $\phi= f_1\partial_1+\cdots+f_\ell\partial_\ell\in  \Der(S)$ is \textit{homogeneous of degree $b$} if each non-zero polynomial $f_i\in S$ for $1 \le i \le \ell$ is homogeneous of degree $b$. 
We then write $\deg\phi=b$.
The arrangement $\A$ is called \textit{free} if $D(\A)$ is a free  $S$-module.
If $\A$ is free, then $D(\A)$ admits a basis $\{\phi_1, \ldots , \phi_\ell\}$ consisting of homogeneous derivations \cite[Proposition 4.18]{OT92}. 
Such a basis is called a homogeneous basis. 
Although homogeneous basis need not be unique, the degrees of elements of a basis are unique (with multiplicity but neglecting the order) depending only on $\A$ \cite[Proposition A.24]{OT92}.
In this case, we call $\deg\phi_1,  \ldots , \deg\phi_\ell$ the \textit{exponents} of $\A$, store them in a \textit{multiset} denoted by $\exp (\A)$ and write
$$\exp (\A)= \{\deg\phi_1,  \ldots , \deg\phi_\ell\}.$$
Interestingly, when an arrangement is free, the exponents turn out to be the roots of the characteristic polynomial due to  the second author.
\begin{theorem}[Factorization]\label{thm:Factorization}
If $\A$ is free with $\exp(\A) = \{d_1, \ldots, d_\ell\}$, then 
$$\chi (\A, t)= \prod_{i=1}^\ell (t-d_i).$$
\end{theorem}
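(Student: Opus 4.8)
The plan is to deduce the factorization from the Solomon--Terao formula for the characteristic polynomial, which holds for an \emph{arbitrary} central arrangement, with no freeness hypothesis, and reads
\[
\chi(\A,t)=(-1)^\ell\lim_{x\to 1}\ \sum_{p=0}^{\ell}\operatorname{Poin}(D_p(\A),x)\,\bigl(-1-t(1-x)\bigr)^p ,
\]
where $D_p(\A)$ denotes the $S$-module of $\A$-derivations of order $p$ (so $D_0(\A)=S$ and $D_1(\A)=D(\A)$) and $\operatorname{Poin}(M,x)$ is the Hilbert series of a graded module $M$. The first step is simply to invoke this identity; everything is then reduced to understanding the modules $D_p(\A)$.

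The second step injects freeness. When $\A$ is free with $\exp(\A)=\{d_1,\dots,d_\ell\}$, each $D_p(\A)$ is itself free, isomorphic to $\bigwedge^p_S D(\A)$, and hence $\operatorname{Poin}(D_p(\A),x)=e_p(x^{d_1},\dots,x^{d_\ell})/(1-x)^\ell$, with $e_p$ the $p$-th elementary symmetric polynomial. Substituting this and using the generating-function identity $\sum_{p=0}^{\ell}e_p(z_1,\dots,z_\ell)\,y^p=\prod_{i=1}^{\ell}(1+z_iy)$ collapses the alternating sum to $(-1)^\ell(1-x)^{-\ell}\prod_{i=1}^{\ell}\bigl(1-x^{d_i}-t(1-x)x^{d_i}\bigr)$. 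Pulling $1-x^{d_i}=(1-x)(1+x+\cdots+x^{d_i-1})$ out of the $i$-th factor cancels the $(1-x)^{\ell}$ in the denominator and leaves the genuine \emph{polynomial} $(-1)^\ell\prod_{i=1}^{\ell}\bigl((1+x+\cdots+x^{d_i-1})-t\,x^{d_i}\bigr)$; being regular at $x=1$, its limit equals its value there, namely $(-1)^\ell\prod_{i=1}^{\ell}(d_i-t)=\prod_{i=1}^{\ell}(t-d_i)$, as claimed. (Comparing the $t^{\ell-1}$-coefficients also recovers the familiar $\sum_i d_i=\deg Q(\A)=|\A|$.)

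The real content sits in the two imported facts rather than in the bookkeeping above. The Solomon--Terao formula itself is nontrivial but standard. The step I expect to be the main obstacle is the identity $D_p(\A)\cong\bigwedge^p_S D(\A)$ for free $\A$ (equivalently, $\Omega^p(\A)=\bigwedge^p_S\Omega^1(\A)$ for logarithmic $p$-forms): proving it needs genuine input---the reflexivity of the derivation modules together with a Saito-type criterion applied in each exterior degree---and this is exactly where the hypothesis ``$\A$ free'' enters. One could hope to bypass all of this by inducting on $|\A|$ with the deletion--restriction recursion $\chi(\A,t)=\chi(\A\setminus\{H\},t)-\chi(\A^H,t)$ together with Terao's Addition--Deletion Theorem; but that argument reaches only \emph{inductively} free arrangements, and since free arrangements need not be inductively free, the homological route cannot be avoided in full generality.
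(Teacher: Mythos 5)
Your argument is correct, and it is essentially the proof the paper points to: the paper gives no proof of its own but cites Terao's original paper and \cite[Theorem 4.137]{OT92}, and the latter is exactly your route --- the Solomon--Terao formula combined with the identification $D_p(\A)\cong\bigwedge^p_S D(\A)$ (equivalently $\Omega^p(\A)\cong\bigwedge^p_S\Omega^1(\A)$) for free arrangements, followed by the same elementary-symmetric-function collapse and the limit $x\to 1$. Your closing remarks correctly locate the real content in that wedge-power isomorphism and correctly note that a deletion--restriction induction would only reach inductively free arrangements.
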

\begin{proof} 
See \cite{T81} or \cite[Theorem 4.137]{OT92}.
\end{proof}

For $\phi_1, \ldots , \phi_\ell \in D(\A)$, we define the $(\ell \times\ell)$-matrix $M(\phi_1, \ldots , \phi_\ell)$ as the matrix with $(i,j)$th entry $\phi_j(x_i)$. 
In general, it is difficult to determine whether a given arrangement is free or not. 
However, using the following criterion, we can verify that a candidate for a basis is actually a basis.
\begin{theorem}[Saito's criterion]\label{thm:criterion}
Let $\phi_1, \ldots , \phi_\ell \in D(\A)$. Then $\{\phi_1, \ldots , \phi_\ell\}$ forms a basis for $D(\A)$ if and only if
$$\det M(\phi_1, \ldots , \phi_\ell) = cQ(\A)\,\,(c\ne0).$$
In particular, if $\phi_1, \ldots , \phi_\ell$ are all homogeneous, then $\{\phi_1, \ldots , \phi_\ell\}$ forms a basis for $D(\A)$ if and only if the following two conditions are satisfied:
\begin{enumerate}[(i)]
\item $\phi_1, \ldots , \phi_\ell$ are independent over $S$, 
\item  $\sum_{i=1}^\ell \deg\phi_i = |\A|$.
\end{enumerate}
\end{theorem}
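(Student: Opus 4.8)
The plan is to deduce both implications from one divisibility lemma: for any $\theta_1,\ldots,\theta_\ell\in D(\A)$, the polynomial $Q(\A)$ divides $\det M(\theta_1,\ldots,\theta_\ell)$ in $S$. To prove it, fix $H\in\A$; after a linear change of coordinates, which multiplies $\det M$ only by a nonzero scalar, we may assume $\alpha_H=x_1$. Writing $Q(\A)=x_1R$ with $x_1\nmid R$ and expanding $\theta_j(Q)=x_1\theta_j(R)+R\,\theta_j(x_1)\in x_1S$ forces $x_1\mid\theta_j(x_1)$ for every $j$; since $\theta_j(x_1)$ is the $(1,j)$-entry of $M$, the whole first row of $M$ is divisible by $x_1$, hence so is $\det M$. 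As the $\alpha_H$ $(H\in\A)$ are pairwise coprime in the UFD $S$ and $Q(\A)=\prod_{H\in\A}\alpha_H$, the lemma follows. The same unique-factorization argument gives the reformulation $D(\A)=\bigcap_{H\in\A}\{\phi\in\Der(S):\alpha_H\mid\phi(\alpha_H)\}$, used below.

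Granting the lemma, the ``if'' direction is Cramer's rule. If $\det M(\theta_1,\ldots,\theta_\ell)=cQ(\A)$ with $c\neq0$, then $\det M\neq0$, so the $\theta_j$ are independent over $\operatorname{Frac}(S)$, hence over $S$. Given $\theta\in D(\A)$, write $\theta=\sum_{j}f_j\theta_j$ with $f_j\in\operatorname{Frac}(S)$; Cramer's rule gives $f_j=\det M(\theta_1,\ldots,\theta_{j-1},\theta,\theta_{j+1},\ldots,\theta_\ell)/\det M(\theta_1,\ldots,\theta_\ell)$, whose numerator is divisible by $Q(\A)$ by the lemma and whose denominator is $cQ(\A)$, so $f_j\in S$. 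Hence the $\theta_j$ generate $D(\A)$ and, being independent, form a basis.

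The ``only if'' direction is where the real work lies: assuming $\{\theta_1,\ldots,\theta_\ell\}$ is a basis of $D(\A)$, I must upgrade ``$Q(\A)\mid\det M$'' to ``$\det M$ is a nonzero scalar times $Q(\A)$'' — morally, the statement that $D(\A)$ is the \emph{largest} submodule of $\Der(S)$ on which all divisibility conditions hold. Write $\det M=Q(\A)\,g$ with $0\neq g\in S$; it suffices to see $g$ is a unit, which I would check by localizing at each height-one prime $\mathfrak{p}=(p)$ of $S$. Because $D(\A)$ is free with basis $\{\theta_j\}$, the matrix $M$ presents $\Der(S)/D(\A)$ as the cokernel of a square matrix, so by Smith normal form over the discrete valuation ring $S_{\mathfrak{p}}$ the order $v_{\mathfrak{p}}(\det M)$ equals the length of $(\Der(S)/D(\A))_{\mathfrak{p}}$. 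Since localization commutes with the finite intersection above, this length is $0$ if $p$ is proportional to no $\alpha_H$, and is $1$ if $p\doteq\alpha_{H_0}$ for the necessarily unique such $H_0\in\A$ — for then, in coordinates with $\alpha_{H_0}=x_1$, one has $D(\A)_{\mathfrak{p}}=x_1S_{\mathfrak{p}}\partial_1\oplus\bigoplus_{i\ge2}S_{\mathfrak{p}}\partial_i$; in both cases it equals $v_{\mathfrak{p}}(Q(\A))$. Thus $\det M$ and $Q(\A)$ have the same divisor, hence differ by a unit of $S$, i.e.\ by a nonzero scalar. (One could instead first establish $\sum_j\deg\theta_j=|\A|$ for a homogeneous basis and reduce the general case to it by a base change of unit determinant.)

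The homogeneous refinement is then formal: $S$-independence of $\theta_1,\ldots,\theta_\ell$ is equivalent to $\det M\neq0$, and when they are homogeneous $\det M$ is homogeneous of degree $\sum_j\deg\theta_j$; so $\det M=cQ(\A)$ with $c\neq0$ is equivalent to (i) together with (ii). The only non-formal ingredients are the divisibility lemma and the local colength computation; the latter — precisely the fact that $D(\A)$ cannot be enlarged — is the step I expect to need the most care.
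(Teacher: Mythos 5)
Your argument is correct. Note first that the paper itself does not prove this statement: it is quoted from the literature, with proof delegated to \cite[Theorems 4.19 and 4.23]{OT92}, so there is no in-paper proof to match; what you have written is a complete, self-contained proof of the cited result. Your divisibility lemma and the Cramer's-rule direction coincide with the standard treatment (Proposition 4.12 and the ``if'' half of Theorem 4.19 in Orlik--Terao). Where you genuinely deviate is the ``only if'' half: you identify $\Der(S)/D(\A)$ with the cokernel of the square matrix $M$ and compare $v_{\mathfrak p}(\det M)$ with $v_{\mathfrak p}(Q)$ at every height-one prime via Smith normal form over the DVR $S_{\mathfrak p}$, using that localization commutes with the finite intersection $D(\A)=\bigcap_{H}\{\phi:\alpha_H\mid\phi(\alpha_H)\}$ to see that the local colength is $0$ or $1$ according as $\mathfrak p$ avoids or equals some $(\alpha_{H_0})$. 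This divisor-theoretic argument is sound (the two small points you leave implicit --- that $S$-independence of a basis forces $\det M\neq 0$, and that a coordinate change only rescales $\det M$ --- are immediate), and it buys a conceptual explanation of \emph{why} $\det M/Q$ is a unit, namely that $D(\A)$ is locally as large as the divisibility conditions allow; the textbook proof reaches the same conclusion by more elementary matrix manipulations. The homogeneous refinement by degree count is the same as in the source. No gaps.
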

\begin{proof} 
See \cite[Theorems 4.19 and 4.23]{OT92}.
\end{proof}

\vn
In addition to the Saito's criterion, we have a way to check if a set of derivations is part of a homogeneous basis, and sometimes a full basis. 
First, the notation $\{d_1, \ldots, d_\ell\}_{\le}$􏳨 indicates $d_1\le \cdots\le d_\ell$.
\begin{theorem}\label{thm:part}
Let $\A$ be a free arrangement with  $\exp(\A) =\{d_1, \ldots, d_\ell\}_{\le}$. 
If $\phi_1, \ldots , \phi_k \in D(\A)$ satisfy for $1 \le i\le k$,
\begin{enumerate}[(i)]
\item $\deg\phi_i =d_i$, 
\item  $\phi_i \notin S\phi_1 +\cdots+S\phi_{i-1}$,
\end{enumerate}
then $\phi_1, \ldots , \phi_k$ may be extended to a basis for $D(\A)$.
\end{theorem}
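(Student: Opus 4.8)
The plan is to prove the statement by induction on $k$, at each stage swapping a single element of an already-available homogeneous basis for $\phi_k$ while keeping tight control on degrees. For the base case $k=0$ there is nothing to do: the empty family lies inside any homogeneous basis of $D(\A)$, which exists because $\A$ is free.

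For the inductive step I would assume that $\phi_1,\ldots,\phi_{k-1}$ already extends to a homogeneous basis of $D(\A)$. Since the exponents of $\A$ form the multiset $\{d_1,\ldots,d_\ell\}$ and are unique (\cite[Proposition A.24]{OT92}), and $\deg\phi_i=d_i$ for $i<k$, the degrees of the complementary basis elements must form the multiset $\{d_k,\ldots,d_\ell\}$; after reordering I may therefore write this basis as $\phi_1,\ldots,\phi_{k-1},\psi_k,\ldots,\psi_\ell$ with $\deg\psi_i=d_i$ for $k\le i\le\ell$. Next I would expand $\phi_k$ in this basis, $\phi_k=\sum_{i<k}f_i\phi_i+\sum_{i\ge k}g_i\psi_i$, and compare degrees: since both sides are homogeneous of degree $d_k$, each coefficient may be taken homogeneous of degree $d_k-d_i$, and because $d_i\ge d_k$ for $i\ge k$ this forces $g_i=0$ whenever $d_i>d_k$ and $g_i=c_i\in\Bbb K$ whenever $d_i=d_k$. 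Hypothesis (ii) rules out $\phi_k\in\sum_{i<k}S\phi_i$, so some $c_j\neq 0$ with $j\ge k$ and $d_j=d_k$; as the exponent sequence is nondecreasing, $d_k=\cdots=d_j$, so after a harmless further reordering of $\psi_k,\ldots,\psi_j$ I may assume $c_k\neq 0$.

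Solving for $\psi_k$ then exhibits it as an $S$-linear combination of $\phi_1,\ldots,\phi_{k-1},\phi_k,\psi_{k+1},\ldots,\psi_\ell$, so these $\ell$ homogeneous derivations generate $D(\A)$. Finally I would observe that the transition matrix from the old basis to the new family is, up to reindexing, unitriangular with a single diagonal entry equal to $c_k\in\Bbb K\setminus\{0\}$, hence invertible over $S$; therefore the new family is again a basis, completing the induction. The step I expect to require the most care is the degree bookkeeping — that the complementary basis elements can always be reindexed so that $\deg\psi_i=d_i$, and that each $g_i$ with $d_i=d_k$ is genuinely a constant, which is precisely what makes the swap an invertible $S$-linear change of basis rather than merely a surjection. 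Past that point the argument is pure graded linear algebra over $S$ with no analytic input; one could alternatively conclude via Saito's criterion (Theorem \ref{thm:criterion}) or via the fact that a surjective $S$-endomorphism of a finitely generated module is bijective, but the transition-matrix argument seems cleanest.
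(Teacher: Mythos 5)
Your proposal is correct, and it is essentially the standard argument for this statement (the paper itself only cites \cite[Theorem 4.42]{OT92} rather than giving a proof): induct on $k$, expand $\phi_k$ in a homogeneous basis containing $\phi_1,\ldots,\phi_{k-1}$, use the degree comparison to see that the coefficients on the complementary basis elements of degree $>d_k$ vanish and those of degree $d_k$ are scalars, invoke hypothesis (ii) to find a nonzero scalar, and swap. The degree bookkeeping you flag as delicate is handled exactly as you describe, so no gap remains.
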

\begin{proof} 
See \cite[Theorem 4.42]{OT92}.
\end{proof}

 \begin{definition}\label{def:X-bar}
 For $X\in L(\A)$, let $I=I(X):=\sum_{H\in\A_X}\alpha_HS$ and $\overline{S}:=S/I$.
For $\phi \in D(\A)$, define $\phi^X \in \Der(\overline{S})$ by $\phi^X(f+I)=\phi(f)+I$. 
We call $\phi^X$ the restriction of $\phi$ to $X$.
 \end{definition}
 \begin{proposition}\label{prop:general-X}
If $\phi \in D(\A)$, then $\phi^X\in D(\A^X)$. 
If  $\phi^X \ne 0$, then $\deg\phi^X=\deg\phi$.
  \end{proposition}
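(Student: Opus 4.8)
The plan is to check the three assertions in turn: that $\phi^X$ is a well-defined derivation of $\overline{S}$, that it lies in $D(\A^X)$, and that it is homogeneous of the same degree as $\phi$ whenever it is nonzero. First I would recall the standard membership criterion (\cite{OT92}): a derivation $\phi\in\Der(S)$ lies in $D(\A)$ if and only if $\phi(\alpha_H)\in\alpha_H S$ for every $H\in\A$; this rests only on $S$ being a UFD with the $\alpha_H$ pairwise non-proportional. Applying it to each $H\in\A_X$ gives $\phi(\alpha_H)\in\alpha_H S\subseteq I$, and then the Leibniz rule applied to a general element $\sum_{H\in\A_X}\alpha_H g_H$ of $I$ shows $\phi(I)\subseteq I$. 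Consequently $\phi$ descends to a well-defined map $\phi^X\colon\overline{S}\to\overline{S}$, $f+I\mapsto\phi(f)+I$, and the Leibniz rule passes to the quotient, so $\phi^X\in\Der(\overline{S})$.

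Next, to obtain $\phi^X\in D(\A^X)$, I would use that $I=\sum_{H\in\A_X}\alpha_H S$ is generated by the linear forms cutting out $X$, hence is the full vanishing ideal of the subspace $X$; thus $\overline{S}$ is a polynomial ring in $\dim X$ variables, in particular a UFD. For $H\in\A\setminus\A_X$ the image $\overline{\alpha_H}\in\overline{S}$ is a nonzero linear form defining the hyperplane $H\cap X$ of $\A^X$, and $\phi^X(\overline{\alpha_H})=\overline{\phi(\alpha_H)}\in\overline{\alpha_H}\,\overline{S}$ since $\phi(\alpha_H)\in\alpha_H S$. As distinct members of $\A^X$ have non-proportional defining forms in $\overline{S}$, the same UFD criterion, applied now to $\A^X$, yields $\phi^X\in D(\A^X)$.

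Finally, for the degree statement I would note that $I$ is generated by linear forms, so it is a homogeneous ideal and $\overline{S}$ carries the grading $\overline{S}_d=S_d/I_d$ with the projection degree-preserving. If $\phi$ is homogeneous of degree $b$, then $\phi(S_d)\subseteq S_{d+b-1}$ for all $d$, hence $\phi^X(\overline{S}_d)\subseteq\overline{S}_{d+b-1}$, so $\phi^X$ is a homogeneous derivation; thus if $\phi^X\ne0$ then $\deg\phi^X=b=\deg\phi$. Equivalently, in coordinates chosen so that $I=(x_{k+1},\dots,x_\ell)$ with $k=\dim X$, the relation $\phi(I)\subseteq I$ forces $\overline{\phi(x_j)}=0$ for $j>k$, whence $\phi^X=\sum_{i=1}^{k}\overline{\phi(x_i)}\,\partial/\partial x_i$ with each coefficient $\overline{\phi(x_i)}$ equal to $0$ or homogeneous of degree $b$.

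There is no serious obstacle here: the statement is foundational and the verification routine. The one point that warrants care is the bookkeeping by which the restriction discards the directions transverse to $X$ — one must invoke $\phi(I)\subseteq I$ (equivalently $\overline{\phi(x_j)}=0$ for $j>\dim X$ in adapted coordinates) to be sure that $\phi^X$ is genuinely a derivation of the $(\dim X)$-variable ring $\overline{S}$, rather than an object still carrying the ambient transverse coordinates.
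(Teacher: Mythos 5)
Your proof is correct and is essentially the standard argument: the paper itself does not spell out a proof but simply cites \cite[Lemma 2.12]{OST86}, and what you have written is the routine verification that reference supplies (descent of $\phi$ through the homogeneous ideal $I$, the coprime-linear-forms membership criterion applied in the quotient polynomial ring $\overline{S}$, and preservation of the grading). The one point worth keeping explicit, which you do address, is that distinct $H\in\A\setminus\A_X$ may restrict to the same element of $\A^X$, so the divisibility check should be organized over the hyperplanes of $\A^X$ rather than over $\A\setminus\A_X$.
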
 
\begin{proof}
See   \cite[Lemma 2.12]{OST86}.
\end{proof}

Fix $H \in \A$, denote $\A':=\A\setminus \{H\}$ and $\A'':=\A^H$. 
We call $(\A, \A', \A'')$ the triple with respect to the hyperplane $H \in\A$. 
 
\begin{proposition}\label{prop:exact}
Define $h : D(\A') \to D(\A)$ by $h(\phi) = \alpha_H\phi$ and $q : D(\A) \to
D(\A'')$ by $q(\phi) ={\phi}^H$. The sequence
$$0 \rightarrow D(\A') \stackrel{h}{\rightarrow}   D(\A)  \stackrel{q}{\rightarrow} D(\A'')$$ 
is exact.
 \end{proposition}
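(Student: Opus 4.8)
The plan is to verify the three claims in turn: that $h$ and $q$ are well-defined $S$-module maps into the stated targets, that $h$ is injective, and that $\Im h = \Ker q$. First I would check well-definedness. For $h$, if $\phi \in D(\A')$ then $\phi(Q(\A')) \in Q(\A')S$, and since $Q(\A) = \alpha_H Q(\A')$, the Leibniz rule gives $(\alpha_H\phi)(Q(\A)) = \alpha_H\phi(\alpha_H)Q(\A') + \alpha_H^2\phi(Q(\A'))$; the second term lies in $Q(\A)S$ directly, and the first term lies in $\alpha_H Q(\A')S = Q(\A)S$, so $h(\phi) = \alpha_H\phi \in D(\A)$. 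That $h$ is $S$-linear is immediate. For $q$, the statement $\phi^H \in D(\A'')$ is exactly Proposition \ref{prop:general-X} applied to $X = H$ (note $\A^H = \A''$), and $S$-linearity of $\phi \mapsto \phi^H$ follows from the construction in Definition \ref{def:X-bar}, modulo the standard check that it descends to $\overline{S} = S/\alpha_H S$.

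Next, injectivity of $h$: if $\alpha_H\phi = 0$ in $\Der(S)$, then $\alpha_H\phi(x_i) = 0$ in $S$ for each $i$, and since $S$ is a domain and $\alpha_H \ne 0$, each $\phi(x_i) = 0$, so $\phi = 0$. Then exactness at $D(\A)$. The inclusion $\Im h \subseteq \Ker q$ is clear: for $\phi \in D(\A')$ and any $f \in S$, $h(\phi)^H(f + \alpha_H S) = \alpha_H\phi(f) + \alpha_H S = 0$, so $q\circ h = 0$. For the reverse inclusion $\Ker q \subseteq \Im h$, suppose $\psi \in D(\A)$ with $\psi^H = 0$. This means $\psi(x_i) \in \alpha_H S$ for every $i$ (taking $f = x_i$ in Definition \ref{def:X-bar}), equivalently $\psi(f) \in \alpha_H S$ for all $f \in S$. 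Write $\psi(x_i) = \alpha_H g_i$ with $g_i \in S$, and set $\phi := \sum_i g_i \partial_i \in \Der(S)$, so that $\psi = \alpha_H \phi$. It remains to show $\phi \in D(\A')$, i.e.\ $\phi(Q(\A')) \in Q(\A')S$. From $\alpha_H\phi(Q(\A)) \in Q(\A)S = \alpha_H Q(\A') S$ and $Q(\A) = \alpha_H Q(\A')$ we get, after cancelling $\alpha_H$ (valid since $S$ is a domain), $\phi(\alpha_H Q(\A')) \in Q(\A') S$; expanding by Leibniz, $\phi(\alpha_H)Q(\A') + \alpha_H\phi(Q(\A')) \in Q(\A')S$, hence $\alpha_H\phi(Q(\A')) \in Q(\A')S$. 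Since $\A'$ does not contain $H$, the linear form $\alpha_H$ is coprime to $Q(\A')$ in the UFD $S$, so $\phi(Q(\A')) \in Q(\A')S$, giving $\phi \in D(\A')$ and $\psi = h(\phi)$.

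The one genuinely delicate point — the step I expect to be the main obstacle — is the coprimality argument just used: one must know that $\alpha_H$ does not divide $Q(\A')$, which holds precisely because $H \notin \A'$ and distinct hyperplanes correspond to non-proportional linear forms. This is where the hypothesis "$H \in \A$" (so that removing it genuinely changes the arrangement) is invisibly doing its work. Everything else is bookkeeping with the Leibniz rule and the fact that $S$ is a domain (indeed a UFD). I would state this coprimality explicitly as a small remark rather than leave it implicit.
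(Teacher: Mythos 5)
Your proof is correct and is essentially the standard argument: the paper itself does not prove this proposition but simply cites \cite[Proposition 4.45]{OT92}, and your verification (well-definedness of $h$ and $q$, injectivity via $S$ being a domain, and $\Ker q\subseteq\Im h$ via factoring out $\alpha_H$ and the coprimality of $\alpha_H$ with $Q(\A')$) is the same argument given there. The point you flag as delicate --- that $\alpha_H\nmid Q(\A')$ because $H\notin\A'$ --- is indeed the only place where anything beyond the Leibniz rule is used, and you have handled it correctly.
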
 
\begin{proof}
See  \cite[Proposition 4.45]{OT92}.
\end{proof}

Then the freeness of any two of the triple, under a certain condition on their exponents, implies the freeness of the third.
\begin{theorem}[Addition-Deletion]\label{thm:AD}
Let $\A$ be a non-empty arrangement and let $H \in \A$. Then two of the following imply the third:
\begin{enumerate}[(1)]
\item $\A$ is free with $\exp(\A) = \{d_1, \ldots, d_{\ell-1}, d_\ell\}$.
\item  $\A'$ is free with $\exp(\A')=\{d_1, \ldots, d_{\ell-1}, d_\ell-1\}$. 
\item $\A''$ is free with $\exp(\A'') = \{d_1, \ldots, d_{\ell-1}\}$.
\end{enumerate}
Moreover, all the three hold true if $\A$ and $\A'$ are both free.
\end{theorem}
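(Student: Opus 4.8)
The plan is to deduce everything from the exact sequence $0 \to D(\A') \xrightarrow{h} D(\A) \xrightarrow{q} D(\A'')$ of Proposition~\ref{prop:exact}, together with three bookkeeping devices: the equality $|\A|=|\A'|+1$; the deletion--restriction recursion $\chi(\A,t)=\chi(\A',t)-\chi(\A'',t)$ (see \cite{OT92}); and the Factorization Theorem~\ref{thm:Factorization}, which turns the exponents of a free arrangement into the roots of its characteristic polynomial. Saito's criterion (Theorem~\ref{thm:criterion}) will be used to certify candidate bases. As a preliminary I would record the elementary inclusions $\alpha_H D(\A')\subseteq D(\A)\subseteq D(\A')$, so that $\alpha_H$ annihilates the graded $\overline{S}$-module $\Im(q)\cong D(\A)/\alpha_H D(\A')$, and extend the sequence on the right by $C:=\Coker(q)$, again an $\overline{S}$-module.

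For the deletion direction (1) and (2) $\Rightarrow$ (3): when $\A$ and $\A'$ are free, $\Im(q)$ is the cokernel of an injection between free $S$-modules, hence has projective dimension $\le 1$ over $S$; being killed by $\alpha_H$ it is a \emph{free} $\overline{S}$-module, and a Hilbert-series computation using $\exp(\A)=\{d_1,\dots,d_\ell\}$ and $\exp(\A')=\{d_1,\dots,d_{\ell-1},d_\ell-1\}$ shows its generator degrees are exactly $d_1,\dots,d_{\ell-1}$. It then remains to upgrade the inclusion $\Im(q)\hookrightarrow D(\A'')$ to an equality: both modules have rank $\ell-1$, $D(\A'')$ is reflexive over $\overline{S}$ (\cite{OT92}), and the inclusion becomes an isomorphism after localizing at any height-$\le 1$ prime of $\overline{S}$, because the local arrangement there has rank $\le 2$, for which the theorem is trivial and $q$ is visibly onto; and a reflexive module containing a reflexive submodule of the same rank that agrees with it in codimension $1$ equals it. Hence $C=0$, $\A''$ is free, and $\exp(\A'')=\{d_1,\dots,d_{\ell-1}\}$. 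The formal direction (1) and (3) $\Rightarrow$ (2) runs symmetrically: once $q$ is surjective (same codimension argument, now using $\A''$ free for the reflexivity input), $\ker(q)=\alpha_H D(\A')$ is the kernel of a surjection from the free module $D(\A)$ onto a module of projective dimension $1$, hence is free, so $\A'$ is free, and its exponents are read off from $\operatorname{Hilb}(\ker q)=\operatorname{Hilb}(D(\A))-\operatorname{Hilb}(D(\A''))$.

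The genuinely hard case, and the step I expect to be the main obstacle, is the addition direction (2) and (3) $\Rightarrow$ (1), where $D(\A)$ is the unknown module and must be produced from scratch. The plan is to exhibit $\ell$ homogeneous elements of $D(\A)$ of degrees $d_1,\dots,d_\ell$: namely $\alpha_H\theta_\ell$, where $\theta_\ell$ is the degree-$(d_\ell-1)$ member of a homogeneous basis of $D(\A')$ (contributing degree $d_\ell$), together with lifts $\eta_1,\dots,\eta_{\ell-1}\in D(\A)$ along $q$ of a homogeneous basis of $D(\A'')$ of degrees $d_1,\dots,d_{\ell-1}$. Their degree sum is $\sum_j d_j=|\A|$ by the exponent hypotheses, and $S$-independence follows by restricting a putative $S$-linear relation to $H$ and descending; Saito's criterion then certifies a basis, and Factorization together with the $\chi$-recursion pins down $\exp(\A)=\{d_1,\dots,d_\ell\}$ (alternatively, Theorem~\ref{thm:part}(iii) shortcuts the exponent check once freeness is in hand). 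The delicate point is producing the lifts, i.e.\ the surjectivity of $q$ in the relevant degrees: one must show that $C=\Coker(q)$, which is supported in codimension $\ge 2$ in $H$ by the rank-$\le 2$ localization argument, in fact vanishes---and this is where the freeness of $\A''$ together with a depth/$\operatorname{Ext}$ computation over $\overline{S}$ enters and forms the technical heart of the proof.

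Finally, for the ``moreover'': if $\A$ and $\A'$ are both free, the deletion argument already shows that $\Im(q)$ is $\overline{S}$-free and equal to $D(\A'')$, so $\A''$ is free. To see that $\exp(\A)$ and $\exp(\A')$ are automatically compatible---so that we genuinely lie in case (1) and (2)---I would compare two graded free $S$-resolutions of $D(\A'')$: the resolution $0\to\bigoplus_i S(-e_i-1)\to\bigoplus_j S(-d_j)\to D(\A'')\to 0$ produced by the exact sequence (with $\exp(\A')=\{e_i\}$, $\exp(\A)=\{d_j\}$), and the minimal resolution $0\to\bigoplus_k S(-f_k-1)\xrightarrow{\operatorname{diag}(\alpha_H)}\bigoplus_k S(-f_k)\to D(\A'')\to 0$ with $\exp(\A'')=\{f_k\}$, which is minimal since $\alpha_H$ lies in the maximal ideal. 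By uniqueness of minimal free resolutions, the first resolution is the second one plus a trivial summand $0\to S(-a)\xrightarrow{\ \mathrm{id}\ }S(-a)\to 0$, forcing $\{d_j\}=\{f_k\}\sqcup\{a\}$ and $\{e_i\}=\{f_k\}\sqcup\{a-1\}$ for a single integer $a\ge 1$; that is, $\exp(\A)$ is obtained from $\exp(\A')$ by incrementing exactly one exponent, which is precisely the compatibility required, and all three statements follow.
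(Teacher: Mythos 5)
The paper does not actually prove this statement: it is Terao's classical Addition--Deletion Theorem, quoted with a pointer to \cite{T80} and \cite[Theorems 4.46 and 4.51]{OT92}, so there is no in-paper argument to compare against. Judged on its own terms, your proposal gets two of the four pieces right, by a genuinely more homological route than the classical one. The deletion direction $(1)+(2)\Rightarrow(3)$ is sound: $\Im(q)\cong D(\A)/\alpha_H D(\A')$ is the cokernel of an injection of free $S$-modules, hence has projective dimension $\le 1$, is killed by $\alpha_H$, and is therefore free over $\overline{S}$ by Auslander--Buchsbaum; the codimension-one comparison plus reflexivity of the \emph{free submodule} $\Im(q)$ then forces $\Im(q)=D(\A'')$, and the Hilbert series gives the exponents. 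Likewise, your minimal-free-resolution comparison for the ``moreover'' clause is a clean substitute for the determinant manipulation in \cite{OT92}.

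There are, however, two genuine gaps. First, in the direction $(1)+(3)\Rightarrow(2)$ you invoke ``the same codimension argument, now using $\A''$ free for the reflexivity input.'' That is reflexivity of the wrong module: to upgrade codimension-one agreement of $\Im(q)\subseteq D(\A'')$ to equality you need the \emph{sub}module $\Im(q)$ to equal the intersection of its localizations at height-one primes, i.e.\ to be reflexive; reflexivity of the ambient module proves nothing, as $(x,y)\subseteq \Bbb K[x,y]$ shows. Under $(1)+(3)$ you do not yet know that $D(\A')$ is free, so the projective-dimension argument that made $\Im(q)$ reflexive in the deletion direction is unavailable. Second, and more seriously, the addition direction $(2)+(3)\Rightarrow(1)$ --- the one this paper actually uses, e.g.\ in Theorem \ref{thm:main} --- is left unproven: everything hinges on lifting a basis of $D(\A'')$ through $q$, i.e.\ on the vanishing of $\Coker(q)$, which you defer to an unspecified ``depth/$\operatorname{Ext}$ computation.'' This is not a routine verification, and the same obstruction recurs: under $(2)+(3)$ alone, $\Im(q)$ is only visibly a submodule of the free $\overline{S}$-module $D(\A')/\alpha_H D(\A')$, hence torsion-free but not known to be reflexive, so the codimension-one argument cannot close. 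The classical proof does not establish surjectivity of $q$ a priori; it instead studies the evaluation map $D(\A')\to\overline{S}$, $\phi\mapsto \phi(\alpha_H)\bmod\alpha_H$, whose kernel is exactly $D(\A)$, and uses the exponent hypotheses together with Saito's criterion to manufacture a basis $\{\theta_1,\dots,\theta_{\ell-1},\alpha_H\theta_\ell\}$ of $D(\A)$ directly from a basis of $D(\A')$. Until the lifting step is supplied, the proposal does not prove the implication the paper relies on.
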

\begin{proof} 
See \cite{T80} or  \cite[Theorems 4.46 and 4.51]{OT92}.
\end{proof}

The Addition Theorem and Deletion Theorem above are rather ``algebraic" as they rely on the conditions concerning the exponents. 
Recently, the first author has found ``combinatorial" versions for these theorems \cite{Abe18}, \cite{Abe18AD}, \cite{Abe19}. 
In this paper, we focus on the Deletion theorem.
 \begin{theorem}[Combinatorial Deletion]\label{thm:Abe}
 Let $\A$ be a free arrangement and $H \in \A$. 
Then $\A'$ is free if and only if $|\A_X| - |\A^H_X|$ is a root of $\chi(\A_X, t)$ for all $X \in L(\A^H)$. 
\end{theorem}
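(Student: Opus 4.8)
The plan is to prove the two implications of Theorem~\ref{thm:Abe} separately. The forward implication (``only if'') is short and rests only on results recalled above; the reverse implication (``if'') is the substantial one and is the Deletion Theorem of the first author \cite{Abe19}.

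For necessity, suppose $\A'$ is free and fix $X\in L(\A^H)$. Since $X\subseteq H$ we have $H\in\A_X$, and one checks at once that $\A'_X=\A_X\setminus\{H\}$ and $(\A_X)^H=\A^H_X$. A localization of a free arrangement is again free, so $\A_X$ and $\A'_X=(\A_X)'$ are both free. Applying the Addition--Deletion Theorem~\ref{thm:AD} (its ``moreover'' clause) to the triple $(\A_X,\A'_X,\A^H_X)$, we get that $\A^H_X=(\A_X)^H$ is free and, comparing exponents, $\exp(\A_X)=\exp(\A^H_X)\sqcup\{d\}$ where necessarily $d=|\A_X|-|\A^H_X|$. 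Thus $d$ is an exponent of the free arrangement $\A_X$, hence a root of $\chi(\A_X,t)$ by Terao's Factorization Theorem~\ref{thm:Factorization}.

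For sufficiency, argue by induction on $\rank(\A)$, the cases $\rank(\A)\le 2$ being trivial. Assume the displayed local condition holds for every $X\in L(\A^H)$. The first step is to show that $\A'$ is locally free away from its center, meaning that $\A'_X$ is free for every $X\in L(\A')$ other than the center of $\A'$. If $H\notin\A_X$, then $\A'_X=\A_X$ is free, being a localization of the free arrangement $\A$. If $H\in\A_X$, then $X\subseteq H$, so $X\in L(\A^H)$ and $\rank(\A_X)=\codim(X)<\rank(\A)$; we may then apply the induction hypothesis to the triple $(\A_X,\A'_X,\A^H_X)$, whose required hypothesis ``$|\A_Y|-|\A^H_Y|$ is a root of $\chi(\A_Y,t)$ for all $Y\in L(\A^H_X)$'' is merely the restriction of our standing hypothesis to those flats $Y\supseteq X$ lying in $L(\A^H)$, and conclude that $\A'_X$ is free. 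It then remains to upgrade local freeness of $\A'$ to global freeness. This is carried out via a Yoshinaga-type criterion: for $\rank(\A)\ge 4$, local freeness of $\A'$ in codimension $\le\rank(\A)-1$ is equivalent to freeness of the Ziegler restriction of $\A'$ to a generic hyperplane, and then $\A'$ is free precisely when $\chi(\A',t)$ is compatible with the exponents of that Ziegler restriction; in rank $3$ the Ziegler restriction is automatically free, so only the condition on $\chi(\A',t)$ survives. One verifies this remaining condition by feeding the family of local hypotheses, the deletion--restriction identity $\chi(\A',t)=\chi(\A,t)+\chi(\A^H,t)$, and the factorization of $\chi(\A,t)$ coming from freeness of $\A$, into the comparison between the Ziegler restrictions of $\A$ and of $\A'$, which differ by a multiarrangement deletion.

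Equivalently, sufficiency can be recast homologically: with $q\colon D(\A)\to D(\A^H)$ as in Proposition~\ref{prop:exact} and $Q:=q(D(\A))\subseteq D(\A^H)$, the short exact sequence $0\to D(\A')\to D(\A)\to Q\to 0$ together with freeness of $D(\A)$ shows that $\A'$ is free if and only if $Q$ is free as a module over $\overline{S}=S/\alpha_H S$; the local hypotheses are exactly what makes the coherent sheaf attached to $Q$ locally free, after which one must still annihilate its intermediate cohomology. In either formulation, the main obstacle is this last passage, from the \emph{collection} of local ``root'' conditions to the \emph{single} global freeness statement. The delicate point is that a single local condition at one $X$ is strictly weaker than freeness of $\A'_X$ --- it does not by itself determine $\exp(\A^H_X)$ --- so the whole family of conditions must be used at once, and keeping track of the characteristic polynomial of the Ziegler restriction (equivalently, the cohomology of $\widetilde{Q}$) under deletion is precisely the technical core of \cite{Abe19}.
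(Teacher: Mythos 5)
The paper does not prove this theorem at all: its ``proof'' is the single line ``See \cite[Theorem 8.2]{Abe18}.'' So the relevant comparison is between your attempt and that external source. Your necessity argument is correct and self-contained, and it is worth noting that it uses only material already recalled in the paper: for $X\in L(\A^H)$ one has $H\in\A_X$, $\A'_X=\A_X\setminus\{H\}$ and $\A^H_X=(\A_X)^H$ (Lemma \ref{lem:start}(ii)); localizations of free arrangements are free, so the ``moreover'' clause of Theorem \ref{thm:AD} applies to the triple $(\A_X,\A'_X,\A^H_X)$; summing exponents identifies the removed exponent as $|\A_X|-|\A^H_X|$, and Theorem \ref{thm:Factorization} converts it into a root of $\chi(\A_X,t)$. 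This is exactly the easy half, and you have it right.

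The sufficiency direction, however, is not proved in your proposal; it is a narrative of the strategy of \cite{Abe18}, with the decisive step explicitly left to that reference. Concretely: the induction giving local freeness of $\A'$ at all non-central flats is fine, but the passage from this family of local conditions to global freeness of $\A'$ --- whether phrased via a Yoshinaga-type criterion on the Ziegler restriction or via vanishing of intermediate cohomology of the sheaf attached to $Q=D(\A)/\alpha_H D(\A')$ --- is asserted, not carried out, and you say so yourself (``the technical core of \cite{Abe19}''). That step is the entire content of the theorem; without it the argument is a correct proof of one implication plus an accurate table of contents for the other. Since the paper itself only cites the result, this does not put you behind the paper, but the proposal should not be read as a proof of the ``if'' direction. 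Two small inaccuracies in the sketch: the local freeness actually needed for the Yoshinaga-type criterion is freeness in codimension three along the chosen hyperplane (not ``codimension $\le\rank(\A)-1$''), and the single local condition at a flat $X$ is used in \cite{Abe18} through a division/characteristic-polynomial argument rather than by directly determining $\exp(\A^H_X)$, as you correctly caution at the end.
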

\begin{proof} 
See \cite[Theorem 8.2]{Abe18}.
\end{proof}

When $\A$ and $\A'$ are both free, one may construct a basis for $D(\A'')$ from a basis for $D(\A)$. 
Although the following theorem is probably well-known among experts, we give a detailed proof for the sake of completeness.
\begin{theorem}\label{thm:basis-derived}
Let $\A$ be a non-empty free arrangement and $\exp(\A)= \{d_1, \ldots, d_\ell\}_{\le}$. 
 Assume further that $\A'$ is also free.
 Let $\{\varphi_{1}, \ldots, \varphi_\ell\}$ be a basis for $D(\A)$ with $\deg {\varphi}_j=d_j$ for $1 \le j \le \ell$. 
Then there exists some $p$ with $1 \le p \le \ell$ such that $\{ {\varphi}^H_1, \ldots, {\varphi}^H_\ell\}  \setminus \{{\varphi}^H_{p}\}$ forms a basis for $D(\A'')$.
\end{theorem}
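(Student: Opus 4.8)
The plan is to use the exact sequence of Proposition \ref{prop:exact} together with the Deletion direction of the Addition-Deletion Theorem \ref{thm:AD} (which applies since $\A$ and $\A'$ are both free) to pin down $\exp(\A'')$, and then to show that the images $\varphi^H_1, \ldots, \varphi^H_\ell$ span $D(\A'')$ while exactly one of them must be redundant. First I would invoke Theorem \ref{thm:AD}: since $\A$ and $\A'$ are both free, all three statements hold, so $\A''$ is free and $\exp(\A'')=\{d_1,\ldots,d_\ell\}\setminus\{d_p\}$ for the unique index $p$ for which $\exp(\A')=\{d_1,\ldots,d_\ell\}\setminus\{d_p\}\cup\{d_p-1\}$ (more precisely, $\exp(\A)$ and $\exp(\A')$ differ by lowering exactly one exponent by $1$; call the position of that exponent $p$).

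Next I would analyze the exact sequence $0\to D(\A')\xrightarrow{h} D(\A)\xrightarrow{q} D(\A'')$, where $h(\phi)=\alpha_H\phi$ and $q(\phi)=\phi^H$. Because $\A$, $\A'$, $\A''$ are all free with the exponents just described, and the sequence is in fact exact on the right after comparing Hilbert series (the alternating sum of Hilbert series of the three free modules vanishes in each degree, forcing surjectivity of $q$ in every degree), $q$ is surjective. Hence $\{\varphi^H_1,\ldots,\varphi^H_\ell\}$ generates $D(\A'')$ over $S$ (equivalently over $\overline{S}$). Each nonzero $\varphi^H_j$ is homogeneous of degree $d_j$ by Proposition \ref{prop:general-X}. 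Since $D(\A'')$ is free of rank $\ell-1$ with exponents $\{d_j : j\neq p\}$, a generating set of $\ell$ homogeneous elements with these degrees, plus a $p$-th element of degree $d_p$, must have exactly one superfluous generator: I would argue that $\varphi^H_p$ is a combination of the others. Indeed, consider the submodule generated by $\{\varphi^H_j : j\neq p\}$; by Theorem \ref{thm:part} (applied after sorting, using that these degrees are exactly $\exp(\A'')$ and that independence can be checked successively), if these $\ell-1$ derivations were independent they would already form a basis, and then $\varphi^H_p$, having degree $d_p=d_p$, would necessarily lie in their span for degree/rank reasons (a free module of rank $\ell-1$ cannot contain $\ell$ independent elements). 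The only remaining possibility to rule out is that some $\varphi^H_j$ with $j\neq p$ is itself redundant while $\varphi^H_p$ is not; but then deleting that $\varphi^H_j$ would leave $\ell-1$ homogeneous generators whose multiset of degrees is $(\exp(\A'')\setminus\{d_j\})\cup\{d_p\}\neq \exp(\A'')$, contradicting the uniqueness of exponents (Theorem \ref{thm:Factorization} / \cite[Proposition A.24]{OT92}). This forces $p$ to be (an admissible choice of) the redundant index, and $\{\varphi^H_1,\ldots,\varphi^H_\ell\}\setminus\{\varphi^H_p\}$ to be a basis.

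The main obstacle I anticipate is the bookkeeping when there are repeated exponents: if $d_p$ equals $d_j$ for some $j\neq p$, the index of the omitted derivation is not canonically determined, and one must phrase the conclusion as ``there exists some $p$'' rather than ``for the $p$ coming from Addition-Deletion.'' I would handle this by not tracking $p$ through the exact sequence at all, but instead arguing purely at the level of the generated submodule: from surjectivity of $q$ I get that some subset of size $\ell-1$ of $\{\varphi^H_1,\ldots,\varphi^H_\ell\}$ is a basis (any minimal generating set of a free module of rank $\ell-1$ has size $\ell-1$, and among homogeneous generators a minimal one can be extracted by discarding elements lying in the span of the rest), and then matching degree multisets with $\exp(\A'')$ shows the discarded index works. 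A secondary point to be careful about is justifying that $q$ is surjective (not merely that the sequence is exact in the middle): I would get this from the freeness of all three modules via the Hilbert-series identity, or alternatively cite that exactness on the right is equivalent to freeness of $\A''$ given freeness of $\A$, which is part of the standard Addition-Deletion package.
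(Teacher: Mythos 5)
Your proposal is correct, but it is organized quite differently from the paper's proof. The paper starts from the candidate basis $\{\varphi_j^H : j\ne k\}$ (where $d_k$ is the exponent deleted by Addition--Deletion), tests it by the successive-independence criterion of Theorem \ref{thm:part}, and, when the test fails at some index $p$, uses the exact sequence of Proposition \ref{prop:exact} to lift the relation to $\tau_p=f_1\tau_1+\cdots+f_{p-1}\tau_{p-1}+\alpha_H\tau$, rebuilds bases of $D(\A')$ and $D(\A)$ via Saito's criterion, and only then invokes surjectivity of $q$ (citing \cite[Proposition 4.57]{OT92}) plus the rank count \cite[Proposition A.3]{OT92} to conclude that omitting $\varphi_p$ works; this forces $d_k=d_p$ and explains why the ambiguity only arises for repeated exponents. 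You instead go straight for surjectivity of $q$ (your Hilbert-series computation is valid, noting that $h$ shifts degrees by one, and it gives exactly the content of the cited Proposition 4.57), so that all $\ell$ restrictions generate $D(\A'')$, and then extract an $(\ell-1)$-element subset that is a minimal homogeneous generating set, hence a basis by the same rank-count fact. Your route is shorter and avoids the case analysis; the paper's route is more constructive and identifies explicitly how the ``wrong'' index gets exchanged. One caveat: the intermediate step where you try to rule out ``some $\varphi_j^H$ with $j\ne p$ is redundant while $\varphi_p^H$ is not'' by comparing degree multisets does not actually produce a contradiction when $d_j=d_p$, but this does not matter because your final reformulation (extract any minimal generating subset and observe its complement is a single index) sidesteps the issue entirely, and it also recovers the refinement used in Remark \ref{rem:once}: when the deleted exponent occurs only once in $\exp(\A)$, degree matching pins down the discarded index uniquely.
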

 \begin{proof}
By Theorem \ref{thm:AD},  $\A''$ is also free and we may write $\exp(\A'')=\exp(\A)\setminus\{d_k\}$ for some $k$ with $1 \le k \le \ell$.
Denote ${\tau}_i := {\varphi}_i$ ($1 \le i \le k-1$), and ${\tau}_j :=  {\varphi}_{j+1}$  ($k \le j \le \ell-1$).

If ${\tau}^H_i \notin  \overline{S}{\tau}^H_1 + \cdots +  \overline{S}{\tau}^H_{i-1}$ for all $1 \le i \le \ell-1$, then by Theorem \ref{thm:part}, $\{{\tau}^H_1, \ldots, {\tau}^H_{\ell-1}\}=\{ {\varphi}^H_1, \ldots, {\varphi}^H_\ell\}  \setminus \{{\varphi}^H_{k}\}$ forms a basis for $D(\A'')$.

If not, there exists some $p$, $1􏳨 \le p \le 􏳨\ell-1$, such that 
${\tau}^H_p \in  \overline{S}{\tau}^H_1 + \cdots +  \overline{S}{\tau}^H_{p-1}$. 
By Proposition \ref{prop:exact},
$${\tau}_p = f_1{\tau}_1+\cdots+f_{p-1}{\tau}_{p-1}+\alpha_H\tau,$$
where $f_i \in S$ ($1 \le i \le p-1$), $\tau \in D(\A')$, $\deg \tau=d_p-1$. 
By Theorem \ref{thm:criterion} (Saito's criterion), $\{{\tau}_{1},\ldots, {\tau}_{p-1}, \tau, {\tau}_{p+1}, \ldots, {\tau}_{\ell-1}, \varphi_k\}$ is a basis for $D(\A')$. 
By Theorem \ref{thm:AD}, $\exp(\A'') =\exp(\A)\setminus \{d_p\}$.
It means that $d_k=d_p$. 
Note that if $d_k$ appears only once in $\exp(\A)$, then we obtain a contradiction here and the proof is completed. 
Now suppose $d_k$ appears at least twice.
Again by Theorem \ref{thm:criterion}, $\{{\tau}_{1},\ldots, {\tau}_{p-1},\alpha_H \tau, {\tau}_{p+1}, \ldots, {\tau}_{\ell-1}, \varphi_k\}$ is a basis for $D(\A)$.
Since the map $q: D(\A) \to D(\A'')$ is surjective \cite[Proposition 4.57]{OT92}, $D(\A'')$ is generated by $\{ {\tau}^H_{1},\ldots,  {\tau}^H_{p-1},  {\tau}^H_{p+1}, \ldots,  {\tau}^H_{\ell-1}, {\varphi}^H_k\}$. 
This set is  the same as $\{ {\varphi}^H_1, \ldots, {\varphi}^H_\ell\}  \setminus \{{\varphi}^H_{p}\}$ which indeed forms a basis for $D(\A'')$  by \cite[Proposition A.3]{OT92}. 
It completes the proof.
\end{proof}

\begin{remark}\label{rem:once}
If $d_k$ appears only once in $\exp(\A)$, then Theorem \ref{thm:basis-derived} gives an explicit basis for $D(\A'')$. 
However, if $d_k$ appears at least twice, Theorem \ref{thm:basis-derived} may not be sufficient to derive an explicit basis for $D(\A'')$. 
It requires some additional computation to examine which derivation vanishes after taking the restriction to $H$. 
This observation will be useful to construct an explicit basis for $D(\A^X)$ when $X$ is $A_1^2$ and RO (Example \ref{ex:basis-RO}).
\end{remark}


\subsection{Root systems, Weyl groups and Weyl arrangements}
Our standard reference for root systems and their Weyl groups is \cite{B68}.

Let $V := \R^\ell$ with the standard inner product $(\cdot,\cdot)$.
Let $\Phi$ be an irreducible (crystallographic) root system spanning $V$. 
The \emph{rank} of $\Phi$, denoted by $\rm{rank}(\Phi)$, is defined to be dim$(V)$. 
We fix a positive system $\Phi^+$ of $\Phi$.
We write $\Delta:=\{\alpha_1, \ldots ,\alpha_\ell\}$ for the simple system (base) of $\Phi$ associated to  $\Phi^+$. 
For $\alpha = \sum_{i=1}^\ell d_i \alpha_i\in \Phi^+$, the \textit{height} of $\alpha$ is defined by $ {\rm ht}(\alpha) :=\sum_{i=1}^\ell  d_i$. 
There are four classical types: $A_\ell$ ($\ell \ge 1$), $B_\ell$ ($\ell \ge 2$), $C_\ell$ ($\ell \ge 3$), $D_\ell$ ($\ell \ge 4$) and five exceptional types: $E_6$, $E_7$, $E_8$, $F_4$, $G_2$.
We write $\Phi=T$ if the root system $\Phi$ is of type $T$, otherwise we write $\Phi\ne T$.

A \emph{reflection} in $V$ with respect to a vector $\alpha \in V\setminus\{0\}$ is a mapping $s_{\alpha}: V \to V$ defined by
$ s_\alpha (x) := x -2\frac{( x,\alpha )}{( \alpha , \alpha)} \alpha.$ 
The \emph{Weyl group} $W:=W(\Phi)$ of $\Phi$ is a group generated by the set $\{s_{\alpha}\mid \alpha \in \Phi\}$.
 An element of the form $c=s_{\alpha_1}\dots s_{\alpha_\ell}\in W$ is called a \emph{Coxeter element}. 
 Since all Coxeter elements are conjugate (e.g., \cite[Chapter V, $\S$6.1, Proposition 1]{B68}), they have the same order, characteristic polynomial and eigenvalues. 
The order ${\rm h}:={\rm h}(W)$ of Coxeter elements is called the \emph{Coxeter number} of $W$.
For a fixed Coxeter element $c\in W$, if its eigenvalues are of the form $\exp (2\pi\sqrt{-1}m_1/{\rm h}),\ldots, \exp (2\pi\sqrt{-1}m_\ell/{\rm h})$ with $0< m_1 \le \cdots\le m_\ell<{\rm h}$, then the integers  $m_1,\ldots, m_\ell$ are called the \emph{exponents} of $W$ (or of $\Phi$).
\begin{theorem}\label{exponents} 
For any irreducible root system $\Phi$ of rank $\ell$,
\begin{enumerate}[(i)]
\item    $m_j + m_{\ell+1-j}={\rm h}$ for $1 \le j \le \ell$,
\item  $m_1+m_2+\cdots+m_\ell =\ell{\rm h}/2$,
\item   $1=m_1 < m_2 \le \cdots\le m_{\ell-1} <m_\ell={\rm h}-1$,
\item  ${\rm h}= 2\left|\Phi^+ \right|/\ell,$
\item ${\rm h}={\rm ht}(\theta)+1$, where $\theta$ is the highest root of $\Phi$.
\item ${\rm h} = 2 \sum_{\mu \in \Phi^+}( \widehat{\gamma}, \widehat \mu )^2,$ for arbitrary $\gamma\in \Phi^+$. Here $\widehat{x} :=x/(x,x)$.
\end{enumerate}
\end{theorem}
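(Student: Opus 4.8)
The plan is to reduce the six assertions to the two most fundamental facts about Coxeter elements and exponents, namely that $c$ acts on $V$ with eigenvalues the primitive-or-not $\h$-th roots of unity $\exp(2\pi\sqrt{-1}\,m_j/\h)$, and that $c$ is a product of the simple reflections. From there, (i) is immediate: the eigenvalues of $c$ come in complex conjugate pairs because $c$ is a real orthogonal transformation, so the multiset $\{m_1,\dots,m_\ell\}$ is stable under $m \mapsto \h - m$; listing in increasing order forces $m_j + m_{\ell+1-j} = \h$. Then (ii) follows by summing (i) over $j$ and dividing by $2$. For (iii), the fact that $1$ is always an exponent (equivalently $m_\ell = \h-1$ by (i)) I would get from the standard observation that the Coxeter element has $e^{2\pi\sqrt{-1}/\h}$ as an eigenvalue — this is the Coxeter–Killing computation, conveniently packaged via the Coxeter plane or via Kostant's analysis; strict inequality $m_1 < m_2$ and $m_{\ell-1} < m_\ell$ comes from irreducibility, since the eigenvalue $1$ of the Weyl group acting on $V$ would contradict spanning, and a repeated extreme exponent would force $c$ to fix a line.

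For (iv) I would invoke (ii) together with the classical identity $\sum_{j=1}^\ell m_j = |\Phi^+|$, which is proved by the standard height-counting argument: the number of positive roots of each height $k$ is a non-increasing sequence whose "dual partition" is exactly $(m_1,\dots,m_\ell)$ (Kostant's theorem, or Macdonald's, whichever the paper wishes to cite from \cite{B68}). Combining $\sum m_j = |\Phi^+|$ with (ii) gives $\ell \h / 2 = |\Phi^+|$, hence $\h = 2|\Phi^+|/\ell$. Assertion (v), $\h = \mathrm{ht}(\theta) + 1$, I would derive from the same height partition: the largest height occurring among positive roots is $\mathrm{ht}(\theta)$, and the dual-partition description says $m_\ell$ equals the number of distinct heights minus the appropriate shift — more cleanly, $m_\ell = \mathrm{ht}(\theta)$ because the transpose of the height partition has largest part $m_\ell$, and then (iii) gives $\mathrm{ht}(\theta) = m_\ell = \h - 1$.

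Finally (vi): for a fixed $\gamma \in \Phi^+$ one computes $\sum_{\mu \in \Phi^+}(\widehat\gamma, \widehat\mu)^2$ by pairing each $\mu$ with the reflection $s_\gamma$. The key is that $\sum_{\mu \in \Phi}(\gamma^\vee, \mu^\vee)\,\mu^\vee$ relates to the action of $\gamma$ in the "trace form" sense; more concretely, the quantity $2\sum_{\mu \in \Phi^+}(\widehat\gamma,\widehat\mu)^2$ is exactly $\sum_{\mu\in\Phi}(\widehat\gamma,\widehat\mu)^2 = (\widehat\gamma, C\widehat\gamma)$ where $C = \sum_{\mu\in\Phi}\widehat\mu\otimes\widehat\mu$ is a $W$-invariant symmetric operator, hence a scalar $\lambda$ on the irreducible $V$; taking the trace gives $\lambda \ell = \sum_{\mu\in\Phi}(\widehat\mu,\widehat\mu) = \sum_{\mu\in\Phi} 1/(\mu,\mu)$, and a separate short computation (again via the height partition, or via the known value of the dual Coxeter number vs. the Coxeter number, being careful in the non-simply-laced case) identifies $\lambda$ with $\h$. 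I expect the genuine obstacle to be part (vi): getting the normalization right for non-simply-laced $\Phi$ and confirming that the $W$-invariant operator's scalar is precisely $\h$ (rather than a dual Coxeter number or a length-dependent variant) requires care, and it is the one place where I would double-check against a rank-$2$ example ($B_2$, $G_2$) before trusting the uniform argument.
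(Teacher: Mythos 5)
The paper does not actually prove this theorem: its ``proof'' is a pointer to Bourbaki (Ch.~V \S6.2 and Ch.~VI \S1.11), so any self-contained derivation is by definition a different route. Your arguments for (i)--(v) are the standard ones and are essentially correct: conjugate-pair symmetry of the eigenvalues of a real orthogonal Coxeter element gives (i), summing gives (ii), and the dual-partition theorem (which the paper records separately as Theorem~\ref{thm:dual}) together with $\sum_j m_j=|\Phi^+|$ gives (iv) and (v). The one soft spot before (vi) is the strictness $m_1<m_2$ in (iii): ``a repeated extreme exponent would force $c$ to fix a line'' is not a proof; the clean argument is that the multiplicity of the exponent $1$ equals the dimension of the space of $W$-invariant quadratic forms on $V$, which is $1$ by irreducibility (real Schur's lemma).

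The genuine gap is in (vi), exactly where you predicted trouble. Your Schur-lemma operator $C=\sum_{\mu\in\Phi}\widehat{\mu}\otimes\widehat{\mu}$ is the right idea, but with the normalization $\widehat{x}=x/(x,x)$ it yields $2\sum_{\mu\in\Phi^+}(\widehat{\gamma},\widehat{\mu})^2=(C\widehat{\gamma},\widehat{\gamma})=\lambda\,(\widehat{\gamma},\widehat{\gamma})=\lambda/(\gamma,\gamma)$, which depends on the length of $\gamma$ and therefore cannot equal the constant $\h$ in a non-simply-laced system. This is not a normalization you can fix afterwards: in $B_2$ with the standard inner product the left-hand side equals $3$ for a short root and $3/2$ for a long root, while $\h=4$, and since the two values differ no rescaling of the form rescues the literal statement. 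The identity that is actually true (and is what the paper uses downstream, e.g.\ in Proposition~\ref{prop:3sums}) is the scale-invariant one $\h=\sum_{\mu\in\Phi}(\gamma,\mu)^2/\bigl((\gamma,\gamma)(\mu,\mu)\bigr)$, i.e.\ one must read $\widehat{x}$ as the unit vector $x/\|x\|$ rather than $x/(x,x)$. With that reading your argument closes in one line and uniformly, with no case analysis and no dual-Coxeter-number subtlety: $C$ is $W$-invariant hence $\lambda I$; $\operatorname{tr}C=\sum_{\mu\in\Phi}(\widehat{\mu},\widehat{\mu})=|\Phi|$, so $\lambda=|\Phi|/\ell=\h$ by (iv); and $(C\widehat{\gamma},\widehat{\gamma})=\lambda$ because $\widehat{\gamma}$ is now a unit vector. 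So your proposal is complete for (i)--(v), and for (vi) it identifies the correct mechanism but does not resolve the normalization issue on which the statement, as literally printed, actually fails.
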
 
\begin{proof} 
See, e.g., \cite[Chapter V, $\S$6.2 and Chapter VI, $\S$1.11]{B68}.
\end{proof}

Let $\Theta^{(r)}\subseteq \Phi^+$ be the set consisting of positive roots of height $r$, i.e., $\Theta^{(r)}=\{\alpha \in \Phi^+ \mid {\rm ht}(\alpha)=r\}$. 
The \textit{height distribution} of $\Phi^+$ is defined as a multiset  of positive integers:
$$\{t_1, \ldots , t_r, \ldots , t_{{\rm h}-1}\},$$ where 
$t_r := \left|\Theta^{(r)}\right|$.
The  \textit{dual partition} $\DP(\Phi^+)$ of the height distribution of $\Phi^+$ is given by a multiset of non-negative integers:
$$\DP(\Phi^+) := \{(0)^{\ell-t_1},(1)^{t_1-t_2},\ldots ,({\rm h}-2)^{t_{{\rm h}-2}-t_{{\rm h}-3}},({\rm h}-1)^{t_{{\rm h}-1}}\},$$ 
where notation $(a)^b$ means the integer $a$ appears exactly $b$ times.
 \begin{theorem}\label{thm:dual}
The exponents of $W$ are given by $\DP(\Phi^+)$.
\end{theorem}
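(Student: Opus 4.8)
The plan is to establish a bijection between the exponents of $W$, arranged with multiplicity, and the entries of $\DP(\Phi^+)$, by exploiting the classical structure of the height function on $\Phi^+$ together with Theorem \ref{exponents}. The starting observation is that the dual partition is a combinatorial transpose: if $\{t_1 \ge t_2 \ge \cdots \ge t_{\h-1}\}$ is the height distribution (it is a classical fact of Kostant that the sequence $t_r$ is non-increasing, which I would recall or cite from \cite{B68} or \cite{K59}), then $\DP(\Phi^+)$ is precisely the conjugate partition of the partition $\lambda = (t_1, t_2, \ldots, t_{\h-1})$ of $|\Phi^+|$, padded with zeros up to $\ell$ parts. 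Equivalently, the number of entries of $\DP(\Phi^+)$ that are $\ge k$ equals $t_k$ for each $k \ge 1$. So the whole statement reduces to showing that for each $k$, exactly $t_k$ of the exponents $m_1, \ldots, m_\ell$ satisfy $m_j \ge k$.

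The heart of the argument is the Kostant--Macdonald--Shapiro--Steinberg identity, which asserts that the partition of $|\Phi^+|$ given by the multiset of exponents $\{m_1, \ldots, m_\ell\}$ and the partition given by the height distribution $\{t_1, \ldots, t_{\h-1}\}$ are conjugate to each other. I would present this as the key step. There are two standard routes: (1) the route via the principal $\mathfrak{sl}_2$-triple, decomposing the Lie algebra (or the exterior/coinvariant algebra) as a module and comparing the two gradings — one by the semisimple element $h$ of the triple, giving the heights, the other by the exponents coming from the Coxeter element; (2) a direct root-theoretic route, showing both partitions compute the same generating function. Since the excerpt only provides us with Theorem \ref{exponents}, the cleanest self-contained approach is to take the identity $\sum_{i=1}^\ell t^{m_i} = \prod$ or rather the equality of the two partitions as the cited input (it is genuinely a theorem requiring real work, typically attributed to Kostant 1959 and Macdonald), and then the remaining content of Theorem \ref{thm:dual} is the purely bookkeeping translation of "conjugate partition" into the explicit multiset notation $\{(0)^{\ell - t_1}, (1)^{t_1 - t_2}, \ldots\}$: an exponent equals $a$ iff it is $\ge a$ but not $\ge a+1$, and the count of such is $t_a - t_{a+1}$, which is exactly the multiplicity of $a$ in $\DP(\Phi^+)$.

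Thus the steps, in order, are: first recall that $t_1 \ge t_2 \ge \cdots \ge t_{\h-1} \ge 1$ and that $t_1 = \ell$ (the simple roots are the height-one roots), so $\DP(\Phi^+)$ has no negative padding and the top part is $\h - 1 = m_\ell$ by Theorem \ref{exponents}(iii),(v); second, invoke the conjugacy of the exponent partition and the height partition; third, unwind the definition of the conjugate partition to match it termwise with $\DP(\Phi^+)$; fourth, sanity-check the total: $\sum m_i = \ell \h/2 = |\Phi^+| = \sum_r t_r$ by Theorem \ref{exponents}(ii),(iv), and both partitions have this as their size, consistent with conjugation. The main obstacle is genuinely the conjugacy identity itself — it is the one nontrivial input and cannot be derived from Theorem \ref{exponents} alone; everything else is formal. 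I would therefore either cite it (Kostant, Macdonald, Steinberg; see also \cite[Chapter V]{B68}) or, if a self-contained proof is wanted, reproduce the principal-$\mathfrak{sl}_2$ argument, which is where all the real difficulty lies.
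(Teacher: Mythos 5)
Your proposal is correct and matches the paper's treatment: the paper gives no argument of its own for Theorem \ref{thm:dual} but simply cites the classical conjugacy-of-partitions theorem of Shapiro--Kostant--Macdonald (\cite{R59}, \cite{K59}, \cite{M72}), which is exactly the one nontrivial input you isolate, with the rest being the formal translation between the conjugate partition and the multiset $\DP(\Phi^+)$ that you carry out. Your bookkeeping (number of exponents $\ge k$ equals $t_k$, hence multiplicity of $a$ is $t_a-t_{a+1}$) is accurate, so nothing further is needed.
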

\begin{proof} 
See, e.g., \cite{R59},  \cite{K59}, \cite{M72}, \cite{ABCHT16}.
\end{proof}

Let $S^W$ denote the \emph{ring of $W$-invariant polynomials}. 
Let $\mathcal{F} = \{f_1, \ldots, f_\ell\}$ be a set of \emph{basic invariants} with $\deg f_1 \le \cdots\le \deg f_\ell$. 
Then $S^W=\R[f_1, \ldots, f_\ell]$ and $m_i=\deg f_i -1$ ($1 \le i \le \ell$).
Let $\D=\{\theta_{f_1}, \ldots, \theta_{f_{\ell}}\}$ be the set of \emph{basic derivations} associated to $\mathcal{F} $ (see, e.g., \cite[Definition 2.4]{OST86} and \cite[Definition 6.50]{OT92}).
The \emph{Weyl arrangement} of $\Phi^+$ is defined by 
$$\A=\A(\Phi^+):= \{(\Bbb R\alpha)^\perp \mid \alpha\in \Phi^+\}.$$
\begin{theorem} \label{thm:Saito}
$\A$ is free with $\exp(\A) = \{m_1, \ldots, m_\ell\}$ and $\{\theta_{f_1}, \ldots, \theta_{f_{\ell}}\}$ is a basis for $D(\A)$. 
\end{theorem}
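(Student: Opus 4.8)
The plan is to verify the candidate $\{\theta_{f_1},\dots,\theta_{f_\ell}\}$ directly by Saito's criterion (Theorem \ref{thm:criterion}); once it is shown to be a basis, the freeness of $\A$ and the identity $\exp(\A)=\{m_1,\dots,m_\ell\}$ follow at once from the degree bookkeeping $\deg\theta_{f_i}=\deg f_i-1=m_i$ (each $f_i$ is homogeneous, hence so is $\theta_{f_i}$).

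\textbf{Step 1: membership $\theta_{f_i}\in D(\A)$.} Recall that $\theta=\sum_j g_j\partial_j$ lies in $D(\A)$ exactly when $\alpha_H\mid\theta(\alpha_H)$ in $S$ for every $H\in\A$. Work in coordinates orthonormal for the $W$-invariant form, so that the identification $V\cong V^*$ is the obvious one and, by the construction of the basic derivations, $\theta_{f_i}=\sum_j(\partial_j f_i)\partial_j$. Fix $H=H_\alpha=\ker\alpha$ with $\alpha\in\Phi^+$, pick $x_0\in H$, and parametrize the normal line by $t\mapsto x_0+t\alpha$. Since $s_\alpha$ fixes $x_0$ and sends $x_0+t\alpha\mapsto x_0-t\alpha$, the $W$-invariance of $f_i$ forces $f_i(x_0+t\alpha)$ to be an even function of $t$; hence its derivative at $t=0$ vanishes, i.e. the directional derivative $\theta_{f_i}(\alpha)$ vanishes identically on $H$. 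As $\alpha$ is a prime of $S$, this gives $\alpha\mid\theta_{f_i}(\alpha)$, so $\theta_{f_i}\in D(\A)$.

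\textbf{Step 2: the Saito determinant.} The matrix $M(\theta_{f_1},\dots,\theta_{f_\ell})$ has $(i,j)$-entry $\theta_{f_j}(x_i)=\partial_i f_j$, so $\det M$ is the Jacobian $J:=\det(\partial_i f_j)$. Three observations pin it down. First, $\deg J=\sum_j(\deg f_j-1)=\sum_j m_j=\ell\h/2=|\Phi^+|=|\A|=\deg Q(\A)$, using Theorem \ref{exponents}(ii),(iv). Second, $J\neq 0$, because $f_1,\dots,f_\ell$ are algebraically independent. Third, $Q(\A)\mid J$: the computation in Step 1 shows that, after a linear change making $\alpha$ a coordinate, some row of $M$ is divisible by $\alpha$, whence $\alpha\mid J$; since the $\alpha_H$ are pairwise non-associate linear primes, $Q(\A)=\prod_H\alpha_H$ divides $J$. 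Combining the three facts gives $J=c\,Q(\A)$ with $c\neq 0$, so Saito's criterion applies: $\{\theta_{f_1},\dots,\theta_{f_\ell}\}$ is a basis for $D(\A)$, hence $\A$ is free with $\exp(\A)=\{m_1,\dots,m_\ell\}$.

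\textbf{Main obstacle.} The only genuinely substantive point is the divisibility $Q(\A)\mid J$ in Step 2 — equivalently, that the Jacobian of the basic invariants vanishes on every reflection hyperplane — which is where the reflection-group structure enters via the even-function argument; pleasantly, we do not need the refinement that this vanishing is to order exactly one, since the degree count supplies equality for free. The remaining care is purely notational: confirming that the normalization of the basic derivations in \cite{OST86} is the one for which $M(\theta_{f_1},\dots,\theta_{f_\ell})$ is the Jacobian matrix up to the harmless constant coming from the invariant form. As this is the classical theorem of Saito and the second author, in the final write-up I would simply cite \cite{S93}, \cite{T81}, and \cite[Theorem 6.59]{OT92} rather than reproduce the argument in full.
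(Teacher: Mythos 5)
Your proof is correct and is exactly the classical argument behind this result; the paper itself offers no proof, simply citing \cite{S93} and \cite[Theorem 6.60]{OT92}, which contain precisely the Jacobian/Saito-criterion argument you reproduce (invariance forces each $\theta_{f_k}(\alpha_H)$ to vanish on $H$, the degree count $\sum m_i=\ell\h/2=|\A|$ matches, and algebraic independence of the $f_i$ gives $J\ne 0$). The one point you rightly flag --- that $\theta_{f_k}$ coincides with the gradient of $f_k$ only after identifying $V$ with $V^*$ via the $W$-invariant form --- is harmless here because $\Phi$ is a real crystallographic root system, so your write-up matches the cited sources.
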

\begin{proof} 
See, e.g., \cite{S93} and \cite[Theorem 6.60]{OT92}.
\end{proof}

Recall from Proposition \ref{prop:exact} the map $q : D(\A) \to D(\A'')$ defined by $q(\phi) ={\phi}^H$.
\begin{theorem}\label{thm:OST}
If $H\in\A$, then $\A^H$ is free with $\exp(\A^H)=\{m_1,\ldots, m_{\ell-1}\}$. 
Furthermore, $\{ {\theta}^H_{f_1}, \ldots, {\theta}^H_{f_{\ell-1}}\}$ is a basis for $D(\A^H)$. 
\end{theorem}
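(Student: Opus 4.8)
The plan is to deduce Theorem~\ref{thm:OST} from the Combinatorial Deletion Theorem (Theorem~\ref{thm:Abe}) together with the basis-transfer results of~\S\ref{sec:pre}, with no appeal to the classification of root systems. Write $H=H_\beta$ for $\beta\in\Phi^+$ and put $\A':=\A\setminus\{H\}$, $\A'':=\A^H$. Since $\A$ is free (Theorem~\ref{thm:Saito}), Theorem~\ref{thm:Abe} reduces the freeness of $\A'$ to checking that $|\A_X|-|\A^H_X|$ is a root of $\chi(\A_X,t)$ for every $X\in L(\A^H)$. Here $L(\A^H)=\{X\in L(\A)\mid X\subseteq H\}$; for such $X$ we have $\beta\in\Phi_X$ (because $X\subseteq H_\beta$), the localization $\A_X$ equals the Weyl arrangement $\A(\Phi^+_X)$ of the subsystem $\Phi_X$, so $\A_X$ is free and, writing $p:=\codim X=\rank\Phi_X$, the Factorization Theorem gives $\chi(\A_X,t)=t^{\ell-p}\prod_i\prod_{e\in\exp(W(\Phi^{(i)}_X))}(t-e)$ over the irreducible components $\Phi^{(i)}_X$ of $\Phi_X$; moreover $\A^H_X=(\A_X)^H$, the restriction of $\A_X$ to its member $H$.

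The crux is the cardinality identity
\[
|\A(\Psi^+)|-|\A(\Psi^+)^{H_\beta}|=\h(\Psi^{(1)})-1 ,
\]
valid for any root system $\Psi$ and any $\beta\in\Psi$ lying in the irreducible component $\Psi^{(1)}$. I would prove it by grouping the hyperplanes of $\A(\Psi^+)^{H_\beta}$: two roots $\mu,\mu'\in\Psi^+\setminus\{\beta\}$ give the same subspace $H_\mu\cap H_\beta$ iff $\operatorname{span}(\beta,\mu)=\operatorname{span}(\beta,\mu')$, and writing $P$ for this common plane the corresponding class is $(\Psi\cap P)^+\setminus\{\beta\}$, of size $|(\Psi\cap P)^+|-1=\h(\Psi\cap P)-1$ (with the convention $\h(A_1^2)=|(A_1^2)^+|=2$). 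Hence $|\A(\Psi^+)^{H_\beta}|$ is the number of such planes $P$, and $|\A(\Psi^+)|-|\A(\Psi^+)^{H_\beta}|=1+\sum_P(\h(\Psi\cap P)-2)$. Applying the formula of Theorem~\ref{exponents}(vi), which expresses $\h$ through an arbitrary positive root, first inside $\Psi^{(1)}$ with the root $\beta$ and then inside each rank-$2$ subsystem $\Psi^{(1)}\cap P$ through $\beta$, collapses this sum to $\h(\Psi^{(1)})-2$: the planes $P$ not contained in $\operatorname{span}(\Psi^{(1)})$ — in particular those built from roots of the other components of $\Psi$ — give rank-$2$ subsystems of type $A_1^2$ and contribute $0$. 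Granting the identity, for every $X\in L(\A^H)$ we obtain $|\A_X|-|\A^H_X|=\h(\Phi^{(1)}_X)-1$, the largest exponent of the component $\Phi^{(1)}_X$ of $\Phi_X$ containing $\beta$; this is one of the $e$'s in the formula above for $\chi(\A_X,t)$, hence a root of $\chi(\A_X,t)$, so Theorem~\ref{thm:Abe} gives that $\A'$ is free. The instance $X=\{0\}$ records the global count $|\A|-|\A^H|=\h-1$.

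With $\A$ and $\A'$ both free, Theorem~\ref{thm:AD} shows $\A''$ is free with $\exp(\A'')=\exp(\A)\setminus\{m_k\}$ for some $k$; comparing $|\A^H|=\sum\exp(\A'')=\ell\h/2-m_k$ with the value $|\A|-|\A^H|=\h-1$ forces $m_k=\h-1$, and since $m_{\ell-1}<m_\ell=\h-1$ (Theorem~\ref{exponents}(iii)) we get $k=\ell$, i.e.\ $\exp(\A^H)=\{m_1,\ldots,m_{\ell-1}\}$. Finally, applying Theorem~\ref{thm:basis-derived} to the homogeneous basis $\{\theta_{f_1},\ldots,\theta_{f_\ell}\}$ of $D(\A)$ yields a $p$ with $\{\theta^H_{f_1},\ldots,\theta^H_{f_\ell}\}\setminus\{\theta^H_{f_p}\}$ a basis of $D(\A^H)$; as the removed exponent $m_\ell=\h-1$ occurs with multiplicity one in $\exp(\A)$, the case analysis in the proof of Theorem~\ref{thm:basis-derived} (see Remark~\ref{rem:once}) forces $p=\ell$, so $\{\theta^H_{f_1},\ldots,\theta^H_{f_{\ell-1}}\}$ is a basis of $D(\A^H)$.

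I expect the main obstacle to be the cardinality identity $|\A(\Psi^+)|-|\A(\Psi^+)^{H_\beta}|=\h(\Psi^{(1)})-1$: setting up the fibre count and extracting exactly the right cancellation from Theorem~\ref{exponents}(vi) both require care, above all the bookkeeping of rank-$2$ subsystems of type $A_1^2$ and of the roots of $\Psi$ lying outside the component that contains $\beta$. Once the identity is in place, what remains is a routine assembly of the cited results.
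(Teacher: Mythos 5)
Your argument is correct and is essentially the proof the paper itself records: the official proof of Theorem \ref{thm:OST} is a citation to \cite[Theorem 1.12]{OST86}, but Remark \ref{rem:alternative} sketches exactly your route --- Combinatorial Deletion (Theorem \ref{thm:Abe}) to get freeness of $\A'$, Addition--Deletion plus the count $|\A|-|\A^H|=m_\ell$ to pin down $\exp(\A^H)$, and Theorem \ref{thm:basis-derived} with the multiplicity-one observation on $m_\ell$ to force $p=\ell$ in the basis statement. The only ingredient you supply that the paper outsources (to \cite[Theorem 3.7]{OST86}) is the cardinality identity $|\A(\Psi^+)|-|\A(\Psi^+)^{H_\beta}|=\h(\Psi^{(1)})-1$, and your fibre-count over planes through $\beta$ combined with Theorem \ref{exponents}(vi) is the same computation the paper performs in the codimension-two setting (Propositions \ref{prop:2 cases}--\ref{prop:X=A_1^2}), so it is sound.
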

\begin{proof} 
See \cite[Theorem 1.12]{OST86}.
\end{proof}

\begin{remark}\label{rem:alternative}
Theorem \ref{thm:OST} can be proved in a slightly different way. By  \cite[Theorem 3.7]{OST86}, $|\A|-|\A^H|=m_\ell$. 
Thus $|\A|-|\A^H|$ is a root of $\chi (\A, t)$ by Theorems \ref{thm:Saito} and \ref{thm:Factorization}. 
By Theorem \ref{thm:Abe}, $\A'$ is free (see also Theorem \ref{thm:main} for a similar and more detailed explanation). 
Thus $\A^H$ is free with $\exp(\A^H)=\{m_1,\ldots, m_{\ell-1}\}$ which follows from Theorem \ref{thm:AD}. 
A basis for $D(\A^H)$ can be constructed a bit more flexibly, without the need to introduce the basic derivations. 
Namely, let $\{\varphi_{1}, \ldots, \varphi_\ell\}$ be \emph{any} basis for $D(\A)$ with $\deg {\varphi}_j=m_j$ for $1 \le j \le \ell$. 
Note that $m_{\ell}$ appears exactly once in $\exp(\A)$.
Then by Theorem \ref{thm:basis-derived}, $\{ {\varphi}^H_1, \ldots, {\varphi}^H_{\ell-1}\}$ is a basis for $D(\A'')$.

\end{remark}

A  subset $\Gamma \subseteq \Phi$ is called a \emph{(root) subsystem} if it is a root system in $\mbox{span}_\R(\Gamma) \subseteq V$.
For any $J \subseteq \Delta$, set $\Phi(J):=\Phi \cap \mbox{span}(J)$.
Let $W(J)$ be the group generated by $\{s_{\delta}\mid \delta \in J\}$. 
By \cite[Proposition 2.5.1]{Ca72}, $\Phi(J)$ is a subsystem of $\Phi$, $J$ is a base of $\Phi(J)$ and the Weyl group of $\Phi(J)$ is $W(J)$.
For any subset $\Gamma\subseteq \Phi$ and $w \in W$, denote $w\Gamma:=\{w(\alpha) \mid \alpha \in \Gamma\}\subseteq \Phi$. 
A \emph{parabolic subsystem} is any subsystem of the form $w\Phi(J)$, likewise, a \emph{parabolic subgroup} of $W$ is any subgroup of the form $wW(J)w^{-1}$,
where $J \subseteq  \Delta$ and $w \in W$. 
 \begin{theorem}\label{thm:parabolic}
For $\Gamma \subseteq \Phi$, 
$\Phi \cap \mbox{span}(\Gamma)$ is a parabolic subsystem of $\Phi$ and its Weyl group is a parabolic subgroup of $W$.
\end{theorem}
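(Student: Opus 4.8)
The plan is to reduce the statement to the already-cited result that $\Phi(J)$ is a parabolic subsystem with Weyl group $W(J)$, by showing that $\Phi \cap \mathrm{span}(\Gamma)$ is $W$-conjugate to some $\Phi(J)$. Write $\Psi := \Phi \cap \mathrm{span}_\R(\Gamma)$; this is a root subsystem of $\Phi$ in the subspace $U := \mathrm{span}_\R(\Gamma)$, because intersecting a root system with a subspace it does not necessarily span still yields a root system in that subspace (closure under the relevant reflections and the integrality conditions are inherited). So the real content is: every root subsystem of the form $\Phi \cap U$, with $U$ a subspace of $V$, is $W$-conjugate to $\Phi(J)$ for some $J \subseteq \Delta$.

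First I would fix a positive system $\Psi^+ := \Psi \cap \Phi^+$ of $\Psi$ and let $\Delta_\Psi$ be the associated base; equivalently, one can choose a generic linear functional defining $\Phi^+$ and restrict it to $U$. Then I would invoke the standard fact (essentially the theory of closed subsystems, or directly the conjugacy of Weyl chambers) that there is $w \in W$ carrying $\Delta_\Psi$ into $\Phi^+$: indeed, the cone spanned by $\Delta_\Psi$ meets the interior of some Weyl chamber of $\Phi$, and acting by the corresponding element of $W$ we may assume $\Delta_\Psi \subseteq \Phi^+$. Next, one checks that a linearly independent subset of $\Phi^+$ consisting of pairwise "non-comparable" roots in the sense that no element is a nonnegative combination of the others — which $\Delta_\Psi$ satisfies since it is a base of $\Psi^+$ — must in fact be contained in the simple system $\Delta$: this is the classical lemma that a base of a subsystem, once made positive, is a subset of $\Delta$. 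Hence after conjugation $\Delta_\Psi = J$ for some $J \subseteq \Delta$, so $w\Psi$ has base $J$, forcing $w\Psi = \Phi(J) = \Phi \cap \mathrm{span}(J)$, and therefore $\Psi = w^{-1}\Phi(J)$ is parabolic. The statement about Weyl groups follows immediately: the Weyl group of $\Psi = w^{-1}\Phi(J)$ is $w^{-1}W(J)w$, which is by definition a parabolic subgroup of $W$.

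The main obstacle — really the only delicate point — is justifying that a base $\Delta_\Psi$ of the subsystem $\Psi$, after being conjugated so that $\Delta_\Psi \subseteq \Phi^+$, is actually a subset of the simple system $\Delta$ of $\Phi$, rather than merely a set of positive roots. I would handle this with the usual height/indecomposability argument: if some $\beta \in \Delta_\Psi$ were not simple in $\Phi$, write $\beta = \beta' + \beta''$ with $\beta', \beta'' \in \Phi^+$; one then argues, using that $\beta$ is indecomposable within $\Psi^+$ and a minimal-counterexample or genericity argument on the defining functional, that this is impossible. Alternatively, and perhaps more cleanly, I would cite \cite[Proposition 2.5.1]{Ca72} or the analogous statement in \cite{B68} directly, since the excerpt already uses \cite{Ca72} for the companion fact about $\Phi(J)$; this keeps the proof short and self-contained relative to the references the paper already relies on.
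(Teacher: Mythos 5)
The paper offers no proof of this theorem at all --- it simply cites \cite[Lemma 3.2.3]{Kr94} and \cite[Proposition 2.6]{HRT97} --- so your attempt to supply an argument is welcome, but it has a genuine gap at exactly the point you single out as ``the only delicate point,'' and that gap is not repairable by the indecomposability argument you sketch. The claim that a base $\Delta_\Psi$ of $\Psi=\Phi\cap\mathrm{span}(\Gamma)$, once contained in $\Phi^+$, must be contained in the simple system $\Delta$ is false. Take $\Phi=A_2$ with $\Delta=\{\alpha_1,\alpha_2\}$ and $\Gamma=\{\alpha_1+\alpha_2\}$: then $\Psi=\{\pm(\alpha_1+\alpha_2)\}$, its base relative to $\Psi\cap\Phi^+$ is $\{\alpha_1+\alpha_2\}\subseteq\Phi^+$, yet $\alpha_1+\alpha_2\notin\Delta$. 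The proposed height/indecomposability argument cannot rescue this: $\alpha_1+\alpha_2$ is indecomposable in $\Psi^+$ (it is the only positive root there) while decomposable in $\Phi^+$, because the summands $\alpha_1,\alpha_2$ simply do not lie in $\mathrm{span}(\Gamma)$; indecomposability inside $\Psi^+$ gives no control over decomposability inside $\Phi^+$. (The subsystem is of course still parabolic, via $s_{\alpha_2}(\alpha_1)=\alpha_1+\alpha_2$; it is only your route to that conclusion that fails.) Citing \cite[Proposition 2.5.1]{Ca72} does not close the gap either: that result is the converse direction, namely that $\Phi(J)$ is a subsystem with base $J$ and Weyl group $W(J)$.

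A correct proof must produce a genuinely nontrivial conjugating element $w$, not merely arrange positivity. Two standard repairs: (1) choose $v\in\mathrm{span}(\Gamma)^\perp$ generic, so that $\Psi=\{\alpha\in\Phi\mid(\alpha,v)=0\}$, move $v$ into the closed fundamental chamber by some $w\in W$, and invoke the standard fact that the roots vanishing at a point of the closed chamber are exactly $\Phi(J)$ with $J=\{\alpha\in\Delta\mid(\alpha,w(v))=0\}$ and stabilizer $W(J)$; or (2) keep your base strategy but build the positive system adapted to $U=\mathrm{span}(\Gamma)$, via a functional $f=f_0+\varepsilon f_1$ with $f_0|_U=0$, $f_0$ nonvanishing on $\Phi\setminus U$, $f_1$ generic on $U$ and $\varepsilon$ small, so that every root of $\Psi$ has strictly smaller $|f|$-value than every root outside $U$. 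With respect to the resulting base $\Delta_f$ your indecomposability argument does go through (a decomposition $\beta=\beta'+\beta''$ with a summand outside $U$ is impossible on $f$-value grounds, and one with both summands in $U$ contradicts indecomposability in $\Psi^+$), and since $\Delta_f=w\Delta$ for some $w\in W$, parabolicity follows.
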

\begin{proof} 
See, e.g., \cite[Lemma 3.2.3]{Kr94}, \cite[Proposition 2.6]{HRT97}.
\end{proof}

Recall the notation of $\Phi_X$, $\Phi^+_X$, $\Delta_X$ for $X \in L(\A)$ from Notation \ref{not:associated}. 
Note that if $X \in L_p(\A)$, then $\Phi_X$ is a parabolic subsystem of rank $p$ (Theorem \ref{thm:parabolic}).
For each $X \in L(\A)$, define the \textit{fixer} of $X$ by 
$$W_X:=\{w \in W \mid w(x)=x \mbox{ for all } x\in X\}.$$

\begin{proposition}\label{prop:Weyl arr}
If $X \in L(\A)$, then
\begin{enumerate}[(i)]
\item $W_X$ is the Weyl group of $\Phi_X$. Consequently, $W_X$ is a parabolic subgroup of $W$.
\item ${\A}_X$ is the Weyl arrangement of $\Phi_X^+$.
\end{enumerate}
\end{proposition}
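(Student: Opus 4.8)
The plan is to dispatch (ii) first, since it is a direct unwinding of the definitions, and then (i), which rests on a single standard structural fact about finite reflection groups. For (ii), recall that every hyperplane of $\A$ has the form $H_\alpha:=(\R\alpha)^\perp$ for a unique $\alpha\in\Phi^+$ (uniqueness uses that $\Phi$ is reduced, so that $\alpha$ and $-\alpha$ are the only roots proportional to $\alpha$, and exactly one of them is positive). For such an $\alpha$ one has $X\subseteq H_\alpha$ precisely when $(x,\alpha)=0$ for every $x\in X$, i.e. $\alpha\in X^\perp$. Hence $K\in\A_X$ exactly when $K=H_\alpha$ with $\alpha\in\Phi^+\cap X^\perp=\Phi^+_X$, so $\A_X=\{H_\alpha\mid\alpha\in\Phi^+_X\}$, which is by definition the Weyl arrangement of $\Phi^+_X$.

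For (i), the inclusion $W(\Phi_X)\subseteq W_X$ is immediate: for $\alpha\in\Phi_X=\Phi\cap X^\perp$ and $x\in X$ we have $(x,\alpha)=0$, hence $s_\alpha(x)=x$, so each generator $s_\alpha$ of $W(\Phi_X)$ fixes $X$ pointwise. For the reverse inclusion I would invoke Steinberg's fixed-point theorem (see, e.g., \cite[Chapter V, \S 3.3]{B68}): the pointwise stabilizer in $W$ of a subset of $V$ is generated by those reflections of $W$ that fix that subset pointwise. Since every reflection of $W$ is of the form $s_\alpha$ with $\alpha\in\Phi$, and $s_\alpha$ fixes $X$ pointwise if and only if $X\subseteq\alpha^\perp$, i.e. $\alpha\in X^\perp$, i.e. $\alpha\in\Phi_X$, we conclude $W_X=\langle s_\alpha\mid\alpha\in\Phi_X\rangle=W(\Phi_X)$, the Weyl group of $\Phi_X$.

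Finally, for the ``consequently'' clause I would apply Theorem \ref{thm:parabolic} with $\Gamma=\Phi_X$: since $\mathrm{span}(\Phi_X)\subseteq X^\perp$, one gets $\Phi\cap\mathrm{span}(\Phi_X)\subseteq\Phi\cap X^\perp=\Phi_X$, and the reverse inclusion is trivial, so $\Phi_X=\Phi\cap\mathrm{span}(\Phi_X)$ is a parabolic subsystem and its Weyl group $W(\Phi_X)=W_X$ is a parabolic subgroup of $W$. The only ingredient of substance here is Steinberg's theorem; the remaining steps are bookkeeping, so I do not anticipate a genuine obstacle in carrying out this proof.
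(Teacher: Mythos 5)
Your proof is correct and rests on the same key ingredient as the paper's: the fixed-point theorem that the pointwise stabilizer of a subset of $V$ is generated by the reflections it contains (the paper cites \cite[Proposition 2.5.5]{Ca72} and \cite[Chapter V, \S 3.3, Proposition 2]{B68} for exactly this), together with Theorem \ref{thm:parabolic} for the parabolic clause. The only cosmetic difference is that you prove (ii) by directly unwinding the definitions rather than deducing it from (i) as the paper does, which is a perfectly fine shortcut.
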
 
\begin{proof}
(i) The first statement follows from \cite[Proposition 2.5.5]{Ca72}. The second statement follows from Theorem \ref{thm:parabolic}. (ii) follows from (i) and \cite[Chapter V, $\S$3.3, Proposition 2]{B68}.
\end{proof}

\begin{definition} 
Two subsets $\Phi_1, \Phi_2\subseteq \Phi$ (resp., two subspaces $X_1, X_2\in L(\A)$) \emph{lie in the same $W$-orbit} if there exists $w\in W$ such that $\Phi_1 = w\Phi_2$ (resp., $X_1=w X_2$). 
Two subgroups $W_1, W_2$ of $W$ are \emph{$W$-conjugate} if there exists $w\in W$ such that $W_1 = w^{-1}W_2w$.
\end{definition}

\begin{lemma}\label{lem:compatible} 
Let $X_1,  X_2$ be subspaces in $L(\A)$. The following statements are equivalent:
\begin{enumerate}[(i)] 
\item $\Phi_{X_1}$ and $\Phi_{X_2}$ lie in the same $W$-orbit.
\item $\Delta_{X_1}$ and $\Delta_{X_2}$ lie in the same $W$-orbit.
\item $X_1$ and $X_2$ lie in the same $W$-orbit. 
\item $W_{X_1}$ and $W_{X_2}$ are $W$-conjugate.
\end{enumerate}
Consequently, if any one of the statements above holds, then $|\A^{X_1} | =|\A^{X_2}|$.
\end{lemma}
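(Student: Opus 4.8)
The plan is to establish the cyclic chain (i)$\Rightarrow$(ii)$\Rightarrow$(iii)$\Rightarrow$(i) together with the equivalence (iii)$\Leftrightarrow$(iv), and then to read off the cardinality statement. Everything becomes formal once three elementary facts are in hand, all of them consequences of the fact that each $w\in W$ acts on $V$ as an orthogonal transformation permuting $\Phi$. First, since $\Phi_X$ has rank $\codim(X)$ and is contained in $X^\perp$, one has $\mbox{span}(\Phi_X)=\mbox{span}(\Delta_X)=X^\perp$; hence $\Phi_X=\Phi\cap\mbox{span}(\Delta_X)$ and $X=\mbox{span}(\Delta_X)^\perp$. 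Second, $W_X$ is the Weyl group of $\Phi_X$ (Proposition \ref{prop:Weyl arr}(i)); in particular it is generated by the $s_\alpha$ with $\alpha\in\Phi_X$. Third, $X$ coincides with the fixed-point subspace $\mbox{Fix}(W_X):=\{v\in V\mid u(v)=v\text{ for all }u\in W_X\}$: the inclusion $X\subseteq\mbox{Fix}(W_X)$ is the definition of $W_X$, and conversely any $v$ fixed by all $s_\alpha$ ($\alpha\in\Phi_X$) is orthogonal to $\mbox{span}(\Phi_X)=X^\perp$, so $v\in X$.

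Granting these, the implications are quick. For (iii)$\Rightarrow$(i): if $X_1=wX_2$ then $X_1^\perp=w(X_2^\perp)$, whence $\Phi_{X_1}=\Phi\cap X_1^\perp=w(\Phi\cap X_2^\perp)=w\Phi_{X_2}$. For (i)$\Rightarrow$(ii): if $\Phi_{X_1}=w\Phi_{X_2}$, then $w$ carries the base $\Delta_{X_2}$ of $\Phi_{X_2}$ to a base $w\Delta_{X_2}$ of $\Phi_{X_1}$; since any two bases of a root system are conjugate under its Weyl group and that Weyl group is $W_{X_1}\subseteq W$, there is $u\in W_{X_1}$ with $\Delta_{X_1}=u(w\Delta_{X_2})=(uw)\Delta_{X_2}$. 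For (ii)$\Rightarrow$(iii): if $\Delta_{X_1}=w\Delta_{X_2}$, then by the first fact $X_1=\mbox{span}(\Delta_{X_1})^\perp=w\bigl(\mbox{span}(\Delta_{X_2})^\perp\bigr)=wX_2$. For (iii)$\Leftrightarrow$(iv): if $X_1=wX_2$, then an element $u\in W$ fixes $X_1$ pointwise iff $w^{-1}uw$ fixes $X_2$ pointwise, so $W_{X_1}=wW_{X_2}w^{-1}$; conversely, if $W_{X_1}=wW_{X_2}w^{-1}$, then by the third fact $X_1=\mbox{Fix}(W_{X_1})=\mbox{Fix}(wW_{X_2}w^{-1})=w\,\mbox{Fix}(W_{X_2})=wX_2$.

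For the final assertion, suppose one (hence each) of (i)--(iv) holds; by (iii) we may write $X_1=wX_2$ with $w\in W$, so that $w$ restricts to a linear isomorphism $X_2\to X_1$. If $K\in\A\setminus\A_{X_2}$, then $wK\in\A$ and $X_1=wX_2\not\subseteq wK$, so $wK\in\A\setminus\A_{X_1}$ and $w(K\cap X_2)=wK\cap X_1\in\A^{X_1}$; thus $w$ maps the hyperplanes of $\A^{X_2}$ into those of $\A^{X_1}$. Running the same argument with $w^{-1}$ shows this is a bijection $\A^{X_2}\to\A^{X_1}$, so $|\A^{X_1}|=|\A^{X_2}|$.

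The argument is entirely formal, so there is no serious obstacle; the only point that needs a moment's care is the third fact (and the identity $\mbox{span}(\Phi_X)=X^\perp$ behind it), which is precisely where the hypothesis $X\in L(\A)$ — equivalently, $\Phi_X$ having full rank $\codim(X)$ inside $X^\perp$ — is used, and one must throughout keep the conjugating/orbit element inside $W$ rather than merely inside the full orthogonal group of $V$.
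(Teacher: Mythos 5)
Your proof is correct. The paper itself does not argue the equivalences at all --- it simply cites Lemmas (3.4) and (3.5) of Orlik--Solomon \cite{OS83} (with a pointer to Kane's book) and declares the cardinality consequence straightforward --- so your submission is a genuinely self-contained replacement rather than a reproduction of the paper's route. The three preliminary facts you isolate are exactly the right ones and are each justified: $\mbox{span}(\Phi_X)=X^\perp$ follows from Notation \ref{not:associated} ($\Phi_X$ has rank $\codim X$ inside $X^\perp$), the identification of $W_X$ with the Weyl group of $\Phi_X$ is Proposition \ref{prop:Weyl arr}(i) (which precedes the lemma, so no circularity), and $X=\mbox{Fix}(W_X)$ follows since a vector fixed by every $s_\alpha$ with $\alpha\in\Phi_X$ is orthogonal to $\mbox{span}(\Phi_X)=X^\perp$. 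The cycle (iii)$\Rightarrow$(i)$\Rightarrow$(ii)$\Rightarrow$(iii) then only needs that $W$ permutes $\Phi$ by orthogonal maps and that the Weyl group of $\Phi_{X_1}$, realized inside $W$ as $W_{X_1}$, acts transitively on the bases of $\Phi_{X_1}$; the equivalence with (iv) via fixed-point subspaces and the bijection $K\cap X_2\mapsto wK\cap X_1$ on restricted hyperplanes are both clean. What your approach buys is transparency and independence from the external references; what it costs is nothing, since every ingredient is either standard root-system theory or already established earlier in the paper.
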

\begin{proof} 
The equivalence of the statements follows from \cite[Lemmas (3.4), (3.5)]{OS83} (see also \cite[Chapter VIII, 27-3, Proposition B]{Ka01}). 
The consequence is straightforward. 
\end{proof}


\section{Enumerate the cardinalities of $A_1^2$ restrictions}\label{sec:cardinality}

In this section, we present the first step towards proving conceptually the exponent formula in Theorem \ref{thm:combine}, the most important result in our paper.
When $X$ is of type $A_1^2$, we express the cardinality $|\A^X|$  in terms of the Coxeter number $\h$ and a certain sum of inner products of positive roots (Proposition \ref{prop:X=A_1^2}).

\begin{definition}\label{def:A_1^2}
A set $\{\beta_1,\beta_2\}\subseteq \Phi$ with $\beta_1\ne \pm\beta_2$ is called an \textit{$A_1^2$ set} if it spans a subsystem of type $A_1^2$, i.e.,
$\Phi\cap \mathrm{span}\{\beta_1,\beta_2\}=\{\pm\beta_1,\pm\beta_2\}$. 
\end{definition}

Thus, $X \in L(\A)$ is of type $A_1^2$ if and only if $\Delta_X$ is an $A_1^2$ set (Notation \ref{not:associated}). 

\begin{lemma}\label{lem:eg} 
 \quad
\begin{enumerate}[(i)]
\item For any $\beta=\sum_{\alpha \in \Delta}c_\alpha\alpha \in \Phi$, the set of $\alpha \in \Delta$ such that $c_\alpha \ne 0$  forms a non-empty connected induced subgraph of the Dynkin graph of $\Phi$. 
\item If $G$ is a non-empty connected subgraph of the Dynkin graph, then $\sum_{\alpha \in G}\alpha \in \Phi$.
\item If $\{\beta_1,\beta_2\} \subseteq \Delta$ and $(\beta_1,\beta_2)=0$, then $\{\beta_1,\beta_2\}$  is an $A_1^2$ set. 
\end{enumerate}
\end{lemma}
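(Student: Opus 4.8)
All three assertions are standard structural facts, and the plan is to derive them from two ingredients that are available from the discussion around Theorem~\ref{thm:parabolic}. First, for $J\subseteq\Delta$ the set $\Phi(J)=\Phi\cap\mathrm{span}(J)$ is a root system with base $J$ whose Dynkin graph is the subgraph induced on $J$; consequently $\Phi(J)$ decomposes as the orthogonal union of the parabolic subsystems attached to the connected components of that induced subgraph, and every root of $\Phi(J)$ lies in the span of a single component (this is the classical ``disconnected Dynkin diagram $\Leftrightarrow$ reducible root system'' fact, cf.\ \cite{B68}). Second, I will use the elementary fact that if $\gamma,\delta\in\Phi$ with $\gamma\ne-\delta$ and $(\gamma,\delta)<0$, then $\gamma+\delta\in\Phi$.

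For (i): replacing $\beta$ by $-\beta$ if necessary, I may assume $\beta\in\Phi^+$, which leaves the support $J:=\{\alpha\in\Delta\mid c_\alpha\ne0\}$ unchanged, and $J\ne\emptyset$ since $\beta\ne0$. Then $\beta\in\Phi\cap\mathrm{span}(J)=\Phi(J)$. If the subgraph induced on $J$ were disconnected, say $J=J_1\sqcup J_2$ with no edge between $J_1$ and $J_2$, then by the decomposition recalled above $\beta$ would have to lie in $\mathrm{span}(J_1)$ or in $\mathrm{span}(J_2)$, contradicting $c_\alpha\ne0$ for every $\alpha\in J_1\cup J_2$. Hence $J$ induces a connected subgraph, which is exactly the claim.

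For (ii): I regard $G$ as a set of vertices inducing a connected subgraph and order it as $G=\{\gamma_1,\dots,\gamma_k\}$ so that, for each $1\le m\le k-1$, the vertex $\gamma_{m+1}$ is joined by an edge to some element of $\{\gamma_1,\dots,\gamma_m\}$ (possible by growing a connected subset greedily; each prefix then induces a connected subgraph). I claim $\sigma_m:=\gamma_1+\cdots+\gamma_m\in\Phi^+$ for all $m$, by induction on $m$. The case $m=1$ is clear. For the inductive step, $\sigma_m$ has support $\{\gamma_1,\dots,\gamma_m\}$, so $(\sigma_m,\gamma_{m+1})=\sum_{j\le m}(\gamma_j,\gamma_{m+1})<0$: every summand is $\le0$ because distinct simple roots have non-positive inner product, and at least one summand, coming from a $\gamma_j$ adjacent to $\gamma_{m+1}$, is strictly negative. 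Since $\sigma_m$ is a positive root, $\sigma_m\ne-\gamma_{m+1}$, so the elementary fact above gives $\sigma_{m+1}=\sigma_m+\gamma_{m+1}\in\Phi$, and it is visibly positive. Taking $m=k$ yields $\sum_{\gamma\in G}\gamma\in\Phi$.

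For (iii): put $J=\{\beta_1,\beta_2\}$. Since $(\beta_1,\beta_1)>0$ and $(\beta_1,\beta_2)=0$, we have $\beta_1\ne\pm\beta_2$, so $J$ is a two-element base of the rank-$2$ root system $\Phi(J)=\Phi\cap\mathrm{span}\{\beta_1,\beta_2\}$; because $(\beta_1,\beta_2)=0$, its induced Dynkin graph has two vertices and no edge, i.e.\ it is of type $A_1^2$, so $\Phi(J)$ has exactly four roots and $\Phi\cap\mathrm{span}\{\beta_1,\beta_2\}=\{\pm\beta_1,\pm\beta_2\}$, which says $\{\beta_1,\beta_2\}$ is an $A_1^2$ set in the sense of Definition~\ref{def:A_1^2}. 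The only step with any real content is the induction in (ii); parts (i) and (iii) are immediate once the decomposition of $\Phi(J)$ along the components of its Dynkin graph is in hand, and in any case all three statements can be quoted from \cite{B68}.
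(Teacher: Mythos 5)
Your proposal is correct. The paper does not actually write out a proof: it cites Bourbaki (Ch.\ VI, \S1.6, Corollary 3 of Proposition 19) for (i) and (ii) and remarks that (iii) is an easy consequence of (i); the arguments you give are precisely the standard ones behind those citations (the inductive ``add an adjacent simple root'' step for (ii) is Bourbaki's own argument). The only stylistic divergence is in (iii): the paper deduces it directly from (i) --- any $\gamma\in\Phi\cap\mathrm{span}\{\beta_1,\beta_2\}$ has support contained in $\{\beta_1,\beta_2\}$, and connectedness of the support forces $\gamma=\pm\beta_i$ --- whereas you route through the rank-$2$ parabolic subsystem $\Phi(J)$ and its disconnected Dynkin diagram. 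Both are immediate, and your version has the mild advantage of not needing (i) at all, at the cost of invoking the ``base of $\Phi(J)$ is $J$'' fact from Theorem~\ref{thm:parabolic}'s circle of ideas. No gaps.
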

\begin{proof}
Proofs of (i) and (ii) can be found in \cite[Chapter VI, \S1.6, Corollary 3 of Proposition 19]{B68}. 
(iii) is an easy consequence of (i).
\end{proof}

Let $\T(A_1^2)$ (resp., $\T(RO)$) be the set consisting of $A_1^2$ (resp., RO) sets. 
\begin{proposition}\label{prop:characterize} \quad
\begin{enumerate}[(i)] 
\item
If $\{ \beta_1 ,\beta_2\}$ is $A_1^2$ (resp., RO), then $w\{ \beta_1 ,\beta_2\}$ is $A_1^2$ (resp., RO) for all $w \in W$. 
\item 
$\T(A_1^2)= \T(\Delta)$, where 
$\T(\Delta):=\{w\{\alpha_{i},\alpha_j\} \mid \{\alpha_{i},\alpha_j\} \subseteq \Delta,  (\alpha_i, \alpha_j)=0, w \in W\}$.
\end{enumerate}
 \end{proposition}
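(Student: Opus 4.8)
\textbf{Proof proposal for Proposition \ref{prop:characterize}.}

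The plan is to prove the two parts in order, using part (i) to deduce part (ii). For part (i), I would observe that the Weyl group $W$ acts on $V$ by orthogonal transformations preserving $\Phi$. Fix $w \in W$ and a set $\{\beta_1,\beta_2\}$. Since $w$ is a bijection of $\Phi$ onto itself sending $\pm\beta_i$ to $\pm w(\beta_i)$, we have $w(\Phi \cap \mathrm{span}\{\beta_1,\beta_2\}) = \Phi \cap \mathrm{span}\{w(\beta_1),w(\beta_2)\}$; hence $\Phi\cap\mathrm{span}\{\beta_1,\beta_2\}=\{\pm\beta_1,\pm\beta_2\}$ if and only if $\Phi\cap\mathrm{span}\{w(\beta_1),w(\beta_2)\}=\{\pm w(\beta_1),\pm w(\beta_2)\}$, which settles the $A_1^2$ case via Definition \ref{def:A_1^2}. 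For the RO case, I would use the characterization (b') from Remark \ref{rem:terminology}: since $w$ preserves inner products, $\gamma \in \beta_i^\perp$ if and only if $w(\gamma) \in w(\beta_i)^\perp$, so $w$ restricts to a bijection $\beta_i^\perp \to w(\beta_i)^\perp$ carrying $\{\pm\beta_{3-i}\}$ to $\{\pm w(\beta_{3-i})\}$; thus the equality $\beta_1^\perp\setminus\{\pm\beta_2\}=\beta_2^\perp\setminus\{\pm\beta_1\}$ transports to $w(\beta_1)^\perp\setminus\{\pm w(\beta_2)\}=w(\beta_2)^\perp\setminus\{\pm w(\beta_1)\}$, and orthogonality $(\beta_1,\beta_2)=0$ likewise transports. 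This handles (i).

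For part (ii), the inclusion $\T(\Delta) \subseteq \T(A_1^2)$ is immediate: by Lemma \ref{lem:eg}(iii), any $\{\alpha_i,\alpha_j\}\subseteq\Delta$ with $(\alpha_i,\alpha_j)=0$ is an $A_1^2$ set, and then by part (i) every $W$-translate $w\{\alpha_i,\alpha_j\}$ is $A_1^2$ as well. For the reverse inclusion, take an arbitrary $A_1^2$ set $\{\beta_1,\beta_2\}$ and let $X = H_{\beta_1}\cap H_{\beta_2} \in L_2(\A)$. Then $\Phi_X = \Phi\cap X^\perp = \{\pm\beta_1,\pm\beta_2\}$ is a parabolic subsystem of rank $2$ by Theorem \ref{thm:parabolic}, so there exist $J \subseteq \Delta$ and $w \in W$ with $\Phi_X = w\Phi(J)$. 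Since $\Phi(J)$ has rank $2$ and is of type $A_1^2$ (being $W$-conjugate, hence isomorphic as a root system, to $\Phi_X$), and $\Phi(J)$ has base $J$, the set $J$ must consist of two orthogonal simple roots $\{\alpha_i,\alpha_j\}$ (a rank-$2$ root system of type $A_1^2$ has exactly the base $\{\pm$ the two orthogonal roots up to sign$\}$, and $J \subseteq \Delta$ forces $J = \{\alpha_i,\alpha_j\}$ with $(\alpha_i,\alpha_j)=0$). Now $\Delta_X$ and $w J$ are both bases of $\Phi_X$, hence there is $u \in W_X$ with $\Delta_X = u w J$; writing $w' = uw$ we get $\{\beta_1,\beta_2\} = \Delta_X = w'\{\alpha_i,\alpha_j\} \in \T(\Delta)$. (Alternatively, one may argue directly from Lemma \ref{lem:compatible}: the equivalence of its conditions (i) and (ii) shows $\Phi_X$ and its base $\Delta_X=\{\beta_1,\beta_2\}$ lie in the same $W$-orbit as $\Phi(J)$ and its base $J$.)

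The main technical point — and the step I would be most careful about — is the passage from ``$\Phi_X$ is a parabolic subsystem of type $A_1^2$'' to ``its base $\Delta_X$ is a $W$-translate of a set of two orthogonal simple roots.'' This needs the facts that conjugate parabolic subsystems have $W$-conjugate bases (part of Lemma \ref{lem:compatible}, or directly from the theory of parabolic subsystems and the transitivity of $W_X$ on the bases of $\Phi_X$) and that within $\Delta$ a rank-$2$ type-$A_1^2$ parabolic $\Phi(J)$ forces $J$ to be a pair of orthogonal simple roots — which follows from $J$ being a base of $\Phi(J)$ together with Lemma \ref{lem:eg}(i) (a root supported on a disconnected subset of $\Delta$ cannot exist, so the two elements of $J$ must be non-adjacent, i.e., orthogonal, in the Dynkin graph). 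Everything else is a routine unwinding of definitions and of the orthogonality-preserving action of $W$.
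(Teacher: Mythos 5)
Your proposal is correct and follows essentially the same route as the paper, whose proof is just the one-line citation ``(i) is straightforward; (ii) follows from (i), Theorem \ref{thm:parabolic} and Lemma \ref{lem:eg}(iii)'' — you have simply filled in the details (the orthogonality-preserving $W$-action for (i), and for (ii) the identification $\Phi_X=w\Phi(J)$ as a parabolic subsystem together with the transitivity of $W_X$ on bases, which is exactly the content packaged in Lemma \ref{lem:compatible}). No gaps.
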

 \begin{proof}
(i) is straightforward. 
(ii) follows from (i), Theorem \ref{thm:parabolic} and Lemma \ref{lem:eg}(iii).
\end{proof}

\begin{remark}\label{rem:3}
By Proposition \ref{prop:characterize} and Definition \ref{def:RO}, $\T(A_1^2) \ne \emptyset$ (resp., $\T(RO)\ne \emptyset$) only when $\dim(V)\ge3$. 
\end{remark}

\begin{remark}\label{rem:numerical}
Only for giving additional information, we collect some numerical facts about $\T(A_1^2)$ and $\T(RO)$. 
These facts shall not be used in any of upcoming arguments that support the proof of Theorem \ref{thm:combine} or Theorem \ref{thm:basis-main}(i). 
Some of these facts will be rementioned in the proof of Theorem \ref{thm:basis-main}(ii) (Example \ref{ex:basis-RO}), which is the part we are unable to avoid the classification of all irreducible root systems.
By a direct check, $\T(RO)\cap\T(A_1^2)\ne\emptyset$ if and only if $\Phi=D_\ell$ with $\ell \ge 3$ ($D_3=A_3$). 
In general, $\T(RO) \setminus \T(A_1^2)\ne \emptyset$, for example when $\Phi=B_\ell$ ($\ell \ge 3$), $\{\epsilon_1-\epsilon_2,\epsilon_1+\epsilon_2\}$ is RO but spans a subsystem of type $B_2$ (notation in \cite{B68}).
There is only one orbit of $A_1^2$ sets, with the following exceptions:
\begin{enumerate}[(i)] 
\item when $\Phi=D_4$, $\T(RO)=\T(A_1^2)$, and there are three different orbits, 
\item when $\Phi=D_\ell$ $(\ell \ge 5)$, $\T(RO)  \subsetneq \T(A_1^2)$, and there are two different orbits: $\T(RO)=\{ w\{\epsilon_1-\epsilon_2,\epsilon_1+\epsilon_2\} \mid w \in W\}$ and $\T(A_1^2) \setminus \T(RO)$.
\item when $\Phi\in\{B_\ell, C_\ell\}$ $(\ell \ge 4)$, $\T(RO)\cap\T(A_1^2)=\emptyset$, and there are two different orbits: $\T(\Delta^{=}) :=\{ \{\alpha,\beta\} \in\T(A_1^2)  \mid \|\alpha\|= \|\beta\|\}$, and $\T(A_1^2) \setminus\T(\Delta^{=})$.
\end{enumerate}
\end{remark}

Recall the notation $\beta^\perp=\{\alpha \in \Phi \mid (\alpha,\beta)=0\}$ for $\beta \in \Phi$. 
 
\begin{definition}\label{def:N}  
For an $A_1^2$ set $\{ \beta_1, \beta_2\}\subseteq \Phi$, define
$$
\N_0=\N_0(\{ \beta_1, \beta_2\}):=\{\Psi \subseteq \Phi \mid \mbox{$\Psi$ is an irreducible subsystem of rank $3$, $\{ \beta_1, \beta_2\}\subseteq \Psi$}\},
$$
and for each $i \in \{1,2\}$, 
$$\M_{\beta_{3-i}}(\beta_i) :=
 \left\{ 
\Lambda \subseteq \beta_{3-i}^\perp\middle|
\begin{array}{c}
       \mbox{$\Lambda$ is an irreducible subsystem of rank $2$, $\beta_i \in  \Lambda$},  \\
\mbox{$\Phi\cap \mbox{span}(\{\beta_{3-i}\} \cup \Lambda)$ is a reducible subsystem of rank $3$}.
    \end{array}
\right\},$$
and 
$$
\N_{\beta_{3-i}}(\beta_i):=\{\Psi \subseteq \Phi \mid \Psi=\{\pm \beta_{3-i}\} \times \Lambda, \Lambda\in\M_{\beta_{3-i}}(\beta_i) \}.
$$

\end{definition} 
\begin{proposition}\label{prop:not empty}    
$\N_0$ is not empty.
\end{proposition}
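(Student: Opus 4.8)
The plan is to reduce to a pair of orthogonal simple roots and then build the desired subsystem along a geodesic in the Dynkin graph. First I observe that the nonemptiness of $\N_0(\{\beta_1,\beta_2\})$ depends only on the $W$-orbit of $\{\beta_1,\beta_2\}$: if $\Psi\in\N_0(\{\beta_1,\beta_2\})$ and $w\in W$, then $w\Psi$ is again an irreducible subsystem of rank $3$ and contains $w\{\beta_1,\beta_2\}$, so $\N_0(w\{\beta_1,\beta_2\})=\{w\Psi\mid\Psi\in\N_0(\{\beta_1,\beta_2\})\}$. Hence, by Proposition \ref{prop:characterize}(ii), it suffices to treat the case $\{\beta_1,\beta_2\}=\{\alpha_i,\alpha_j\}\subseteq\Delta$ with $(\alpha_i,\alpha_j)=0$ (such a pair exists only when $\ell\ge 3$, cf. Remark \ref{rem:3}).

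Next I exploit connectivity of the Dynkin graph. Since $\Phi$ is irreducible the Dynkin graph is connected, and since $(\alpha_i,\alpha_j)=0$ the vertices $\alpha_i$ and $\alpha_j$ are non-adjacent; choose a geodesic $\alpha_i=\gamma_0,\gamma_1,\ldots,\gamma_k=\alpha_j$ joining them, so $k\ge 2$ and, by minimality, the induced subgraph on $\{\gamma_0,\ldots,\gamma_k\}$ is exactly the path $\gamma_0-\gamma_1-\cdots-\gamma_k$ (no chords). Set $\delta_1:=\gamma_0$, $\delta_2:=\gamma_1+\cdots+\gamma_{k-1}$, and $\delta_3:=\gamma_k$. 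By Lemma \ref{lem:eg}(ii) applied to the connected subpath on $\{\gamma_1,\ldots,\gamma_{k-1}\}$ we get $\delta_2\in\Phi$ (for $k=2$ this is just $\delta_2=\gamma_1$). The roots $\delta_1,\delta_2,\delta_3$ are linearly independent, being supported on the pairwise disjoint subsets $\{\gamma_0\}$, $\{\gamma_1,\ldots,\gamma_{k-1}\}$, $\{\gamma_k\}$ of the independent set $\Delta$. Since $\gamma_0$ (resp. $\gamma_k$) is adjacent on the path only to $\gamma_1$ (resp. $\gamma_{k-1}$), and distinct simple roots have non-positive inner product which is non-zero precisely when they are adjacent, one computes $(\delta_1,\delta_3)=(\gamma_0,\gamma_k)=0$, while $(\delta_1,\delta_2)=(\gamma_0,\gamma_1)<0$ and $(\delta_2,\delta_3)=(\gamma_{k-1},\gamma_k)<0$.

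Finally I assemble the subsystem. Put $\Psi:=\Phi\cap\mathrm{span}\{\delta_1,\delta_2,\delta_3\}$; by Theorem \ref{thm:parabolic} this is a (parabolic) subsystem, of rank $3$ because $\delta_1,\delta_2,\delta_3$ are independent, and it contains $\beta_1=\delta_1$ and $\beta_2=\delta_3$. To see that $\Psi$ is irreducible, decompose it into irreducible components: each $\delta_t$ lies in exactly one component, and the relations $(\delta_1,\delta_2)\ne 0$ and $(\delta_2,\delta_3)\ne 0$ force $\delta_1,\delta_2,\delta_3$ into a common component, which therefore spans the whole $3$-dimensional ambient space of $\Psi$; since the remaining components would be orthogonal to it, they are empty, so $\Psi$ is irreducible. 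Thus $\Psi\in\N_0(\{\beta_1,\beta_2\})$, so $\N_0$ is not empty. The only points needing care are the two uses of minimality of the path — it is what makes $\{\gamma_1,\ldots,\gamma_{k-1}\}$ induce a connected subgraph (so that $\delta_2$ is a root) and what forces $\gamma_0,\gamma_k$ to have a unique path-neighbour (pinning down the signs of the inner products) — together with the short component argument for irreducibility; everything else is routine.
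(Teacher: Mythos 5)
Your proof is correct, but it takes a genuinely different route from the paper's. The paper disposes of the proposition in two lines: it cites \cite[Lemma 2.10]{HRT97}, which says that for any two roots $\beta_1,\beta_2$ there exists $\delta\in\Phi$ with $(\delta,\beta_1)\ne 0$ and $(\delta,\beta_2)\ne 0$, and then observes that $\Phi\cap\mathrm{span}\{\beta_1,\beta_2,\delta\}$ lies in $\N_0$ (rank $3$ because $\delta\notin\mathrm{span}\{\beta_1,\beta_2\}$ for an $A_1^2$ set, irreducible because $\delta$ links $\beta_1$ to $\beta_2$). You instead first normalize $\{\beta_1,\beta_2\}$ to a pair of orthogonal simple roots via Proposition \ref{prop:characterize}(ii), and then manufacture the linking root explicitly as $\delta_2=\gamma_1+\cdots+\gamma_{k-1}$, the sum along the Dynkin path joining them, using Lemma \ref{lem:eg}(ii); your $\delta_2$ plays exactly the role of the paper's $\delta$ (after conjugating back by $w^{-1}$). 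What your route buys is self-containedness and constructiveness: everything is proved from lemmas already stated in the paper, with an explicit root exhibited, at the cost of the extra orbit reduction and the graph-theoretic bookkeeping (note that for irreducibility you only need $(\delta_1,\delta_2)<0$ and $(\delta_2,\delta_3)<0$, which follow from adjacency of consecutive path vertices alone; the chord-freeness of the geodesic is only needed for the — here redundant — verification that $(\delta_1,\delta_3)=0$). What the paper's route buys is brevity and the fact that it works for the roots in their original position, but it leans on an external citation. Both arguments are sound.
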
 
\begin{proof}
For $\beta_1, \beta_2 \in \Phi$, there exists $\delta \in \Phi$ such that $(\delta,\beta_1)\ne0$ and $(\delta,\beta_2)\ne0$, e.g., see \cite[Lemma 2.10]{HRT97}.
Thus $\Phi\cap \mbox{span}\{\beta_1, \beta_2, \delta\} \in \N_0$.
\end{proof}

In the remainder of this section, we assume that $X=H_{\beta_1} \cap H_{\beta_2}$ is an $A_1^2$ subspace  with $\beta_1, \beta_2 \in \Phi^+$.
If $Y \in \A^X$, then $\Phi_Y$ is a subsystem of rank $3$ and contains $\Delta_X=\{\beta_1, \beta_2\}$. 
\begin{proposition}\label{prop:=}
If $X \in L_{2}(\A)$ is an $A_1^2$ subspace, then 
\begin{align*}
\N_0 & =\{ \Phi_Y \mid \mbox{$Y \in \A^X$, $\Phi_Y$ is irreducible} \}, \mbox{and for each $i \in \{1,2\}$,} \\
\N_{\beta_{3-i}}(\beta_i) & = \{ \Phi_Y \mid \mbox{$Y \in \A^X$, $\Phi_Y=\{\pm \beta_{3-i}\} \times \Xi_Y$ and $\Xi_Y$ is irreducible of rank $2$}\}.
\end{align*}
\end{proposition}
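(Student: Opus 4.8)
The plan is to prove both set equalities by showing mutual inclusion, relying on the dictionary between subspaces $Y \in \A^X$ and rank-$3$ subsystems $\Phi_Y \supseteq \Delta_X = \{\beta_1,\beta_2\}$. First I would unpack what $Y \in \A^X$ means: since $X \in L_2(\A)$ is of type $A_1^2$, every $Y \in \A^X$ has the form $Y = H_\gamma \cap X$ for some $\gamma \in \Phi^+ \setminus \{\beta_1,\beta_2\}$ with $H_\gamma \notin \A_X$, so $Y \in L_3(\A)$ and $\Phi_Y = \Phi \cap Y^\perp$ is a parabolic subsystem of rank $3$ (Theorem \ref{thm:parabolic}, Proposition \ref{prop:Weyl arr}). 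Crucially, $X^\perp \subseteq Y^\perp$ forces $\{\beta_1,\beta_2\} \subseteq \Phi_Y$, and in fact $\Phi_Y = \Phi \cap \mathrm{span}(\{\beta_1,\beta_2,\gamma\})$; conversely, any rank-$3$ subsystem $\Psi$ containing $\beta_1,\beta_2$ is of the form $\Phi \cap \mathrm{span}(\{\beta_1,\beta_2,\gamma\})$ for a suitable $\gamma$, and then $Y := (\mathrm{span}\,\Psi)^\perp$ lies in $\A^X$ with $\Phi_Y = \Psi$. This sets up a bijection between $\A^X$ and the set of rank-$3$ subsystems containing $\{\beta_1,\beta_2\}$, and the proposition just reorganizes this bijection according to whether $\Phi_Y$ is irreducible or decomposes with $\{\pm\beta_{3-i}\}$ as a rank-$1$ factor.

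For the first equality, the inclusion $\supseteq$ is immediate from the bijection above: any irreducible $\Phi_Y$ with $Y \in \A^X$ is a rank-$3$ irreducible subsystem containing $\{\beta_1,\beta_2\}$, hence in $\N_0$. For $\subseteq$, given $\Psi \in \N_0$, I produce the corresponding $Y = (\mathrm{span}\,\Psi)^\perp \in L_3(\A)$ and must check $Y \in \A^X$ rather than $Y \in \A_X$ — but $Y \subsetneq X$ since $\mathrm{span}\,\Psi \supsetneq \mathrm{span}\{\beta_1,\beta_2\} = X^\perp$, and $Y$ is covered by $X$ in the poset (both have the expected codimensions and $Y \subseteq X$), so $Y \in \A^X$ by definition of the restriction. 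For the second equality, I would argue similarly: $\N_{\beta_{3-i}}(\beta_i)$ consists of subsystems $\Psi = \{\pm\beta_{3-i}\} \times \Lambda$ where $\Lambda \subseteq \beta_{3-i}^\perp$ is irreducible of rank $2$ containing $\beta_i$, and where the ambient $\Phi \cap \mathrm{span}(\{\beta_{3-i}\}\cup\Lambda)$ is reducible of rank $3$ — but that ambient subsystem \emph{is} $\Psi$ once one observes $\beta_{3-i} \perp \Lambda$ forces $\Phi \cap \mathrm{span}(\{\beta_{3-i}\}\cup\Lambda) = (\Phi \cap \R\beta_{3-i}) \times (\Phi \cap \mathrm{span}\,\Lambda) = \{\pm\beta_{3-i}\} \times \Lambda$. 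So such $\Psi$ are exactly the rank-$3$ subsystems containing $\{\beta_1,\beta_2\}$ that split off $\{\pm\beta_{3-i}\}$ as a factor, with the complementary factor $\Xi_Y := \Lambda$ irreducible of rank $2$; translating through the bijection gives the claim.

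The main obstacle I anticipate is the bookkeeping in the second equality: one must be careful that the two descriptions of the "complementary factor" genuinely coincide, i.e., that if $Y \in \A^X$ with $\Phi_Y = \{\pm\beta_{3-i}\} \times \Xi_Y$ and $\Xi_Y$ irreducible of rank $2$, then $\Xi_Y \subseteq \beta_{3-i}^\perp$ contains $\beta_i$ and satisfies the reducibility condition defining $\M_{\beta_{3-i}}(\beta_i)$ — the containment $\beta_i \in \Xi_Y$ needs $\beta_i \not\in \R\beta_{3-i}$ (true, as $\{\beta_1,\beta_2\}$ is $A_1^2$) together with $\beta_i \perp \beta_{3-i}$, placing $\beta_i$ in the rank-$2$ factor. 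One should also note that the two cases (irreducible versus this specific reducible shape) need not exhaust all $Y \in \A^X$ in general, but the proposition only asserts the two displayed equalities, so no exhaustiveness claim is required here; that refinement is presumably handled in Proposition \ref{prop:X=A_1^2}. Throughout I would lean on Lemma \ref{lem:eg} and Proposition \ref{prop:characterize} only implicitly, as the argument is really about parabolic subsystems and orthogonal decompositions, which are classification-free.
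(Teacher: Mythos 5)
Your argument is correct and follows essentially the same route as the paper's (much terser) proof: identify each $\Psi$ in the relevant family with $\Phi_Y$ for $Y=X\cap H_\delta$, where $\delta\in\Psi\setminus\mathrm{span}\{\beta_1,\beta_2\}$, and run the correspondence in both directions. One caveat: your parenthetical claim that $\beta_{3-i}\perp\Lambda$ alone forces $\Phi\cap\mathrm{span}(\{\beta_{3-i}\}\cup\Lambda)$ to split as $\{\pm\beta_{3-i}\}\times(\Phi\cap\mathrm{span}\,\Lambda)$ is false in general (take $\Phi=B_3$ with $\beta_{3-i}$ a short root and $\Lambda$ the $B_2$ inside $\beta_{3-i}^{\perp}$: the parabolic closure is all of $B_3$, which is irreducible); the splitting should instead be deduced from the reducibility hypothesis built into the definition of $\M_{\beta_{3-i}}(\beta_i)$, which you already quote, so the argument is easily repaired.
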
 
\begin{proof}
We only give a proof for the first equality. The others follow by a similar method.
Let $\Psi \in \N_0$. 
There exists $\delta\in \Phi^+$ such that $\Psi=\Phi\cap \mbox{span}\{\beta_1, \beta_2, \delta\}$.
Thus  $\Psi=\Phi \cap Y^{\perp}$ where $Y:=X\cap H_{\delta}$. 
Therefore $\Psi = \Phi_{Y}$ with $Y \in \A^X$.
\end{proof}

\begin{lemma}\label{lem:partition} 
Let $X \in L(\A)$. 
\begin{enumerate}[(i)]
\item $\A = \bigcup_{Y \in \A^X}\A_{Y}$.
\item If $Y, Y' \in \A^X$ and $Y \ne Y'$, then $\A_{Y} \cap \A_{Y'}=\A_{X}$.
\item $\A \setminus \A_X= \bigsqcup_{Y \in \A^X}(\A_{Y}\setminus \A_X)$ (disjoint union).
\end{enumerate}
 \end{lemma}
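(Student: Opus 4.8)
The plan is to prove the three claims in order, each being an essentially set-theoretic manipulation once the definitions of localization $\A_Y$ and restriction $\A^X$ are unwound. For (i), I would argue both inclusions. The inclusion $\bigcup_{Y\in\A^X}\A_Y \subseteq \A$ is immediate since each $\A_Y$ is a subset of $\A$ by definition. For the reverse inclusion, take any $K\in\A$. If $X\subseteq K$, then $K\in\A_X$; but $\A_X\subseteq\A_Y$ for every $Y\in\A^X$ since $X\supseteq Y$ implies... wait — one must be careful with the poset direction: $Y\in\A^X$ means $Y=K'\cap X$ for some $K'\in\A\setminus\A_X$, so $Y\subseteq X$, hence $\A_Y\supseteq\A_X$. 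Thus $K\in\A_X\subseteq\A_Y$ for any fixed $Y\in\A^X$ (which exists since $\A\setminus\A_X\neq\emptyset$ whenever $X\neq V$; the degenerate case $X=V$ is trivial as then $\A^X=\{V\}$ essentially and $\A_V=\A$). If instead $X\not\subseteq K$, then $K\in\A\setminus\A_X$, so $Y:=K\cap X\in\A^X$, and clearly $X\subseteq K$ fails while $Y=K\cap X\subseteq K$, so $K\in\A_Y$. This proves $\A\subseteq\bigcup_{Y\in\A^X}\A_Y$.

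For (ii), let $Y,Y'\in\A^X$ be distinct. The inclusion $\A_X\subseteq\A_Y\cap\A_{Y'}$ follows from $\A_X\subseteq\A_Y$ and $\A_X\subseteq\A_{Y'}$, established above. Conversely, suppose $K\in\A_Y\cap\A_{Y'}$, so $Y\subseteq K$ and $Y'\subseteq K$; I must show $X\subseteq K$. Since $Y,Y'\in\A^X$ are hyperplanes of the arrangement $\A^X$ in the ambient space $X$, they are distinct codimension-one subspaces of $X$, so $Y+Y'=X$ (their span inside $X$ is all of $X$ because two distinct hyperplanes of $X$ span $X$). As $K$ is a subspace containing both $Y$ and $Y'$, it contains $Y+Y'=X$. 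Hence $K\in\A_X$, giving $\A_Y\cap\A_{Y'}\subseteq\A_X$.

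Finally, (iii) follows by combining (i) and (ii). From (i), $\A\setminus\A_X = \bigl(\bigcup_{Y\in\A^X}\A_Y\bigr)\setminus\A_X = \bigcup_{Y\in\A^X}(\A_Y\setminus\A_X)$. To see the union is disjoint, suppose $K\in(\A_Y\setminus\A_X)\cap(\A_{Y'}\setminus\A_X)$ for $Y\neq Y'$; then $K\in\A_Y\cap\A_{Y'}=\A_X$ by (ii), contradicting $K\notin\A_X$. Hence the union is over pairwise disjoint sets, as claimed. I do not anticipate a genuine obstacle here; the only point requiring slight care is the bookkeeping of the reverse-inclusion poset convention and handling the trivial boundary case $X=V$ (or more generally $\A=\A_X$), and in (ii) the observation that two distinct hyperplanes of the ambient space $X$ necessarily span $X$, which is what forces $K\supseteq X$.
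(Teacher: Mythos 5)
Your proof is correct and follows essentially the same route as the paper: the only substantive point in either argument is that two distinct elements $Y,Y'\in\A^X$ are distinct hyperplanes of $X$, hence $Y+Y'=X$, which forces $\A_Y\cap\A_{Y'}=\A_X$, and (i) and (iii) are routine unwindings of the definitions. (Your parenthetical aside about the degenerate case is slightly off --- for $X=V$ one has $\A_V=\emptyset$ and $\A^V=\A$, not the other way around --- but this does not affect the argument in the cases that matter.)
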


 \begin{proof} 
(i) is straightforward. For (ii), $\A_{X} \subseteq \A_{Y} \cap \A_{Y'}$  since $Y, Y' \subseteq X$. Arguing on the dimensions, and using the fact that $\dim(X)-\dim(Y)=1$ yield $X=Y+Y'$. Thus if $H \in \A_{Y} \cap \A_{Y'}$, then $X \subseteq H$, i.e., $H \in \A_X$. (iii) follows automatically from (i) and (ii).
\end{proof}

 \begin{corollary}\label{cor:passing}\quad
 Set $\N:=\N_0\bigsqcup \N_{\beta_2}(\beta_1)\bigsqcup \N_{\beta_1}(\beta_2)\bigsqcup \N_3$, where
 $$
\N_3= \N_3(\{ \beta_1, \beta_2\}):=\{\Psi \subseteq \Phi \mid \Psi=\{\pm \beta_1\}\times\{\pm \beta_2\} \times \{\pm \gamma\}, \gamma \in\Phi^+\}.
$$
For $\Psi \in \N$, set  $\Psi^+:=\Phi^+ \cap \Psi$. Then
$$\Phi^+ = \{\beta_1, \beta_2\} \sqcup \bigsqcup_{\Psi \in \N}(\Psi^+\setminus  \{\beta_1, \beta_2\}).$$
 \end{corollary}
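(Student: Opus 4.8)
The plan is to apply Lemma \ref{lem:partition}(iii) to the $A_1^2$ subspace $X$ and then sort the members $Y \in \A^X$ according to the structure of $\Phi_Y$. Recall that each such $\Phi_Y$ is a rank-$3$ subsystem containing $\Delta_X = \{\beta_1,\beta_2\}$. First I would observe that $\Phi_Y$ is either irreducible, or decomposes into a rank-$1$ and a rank-$2$ piece, or is of type $A_1^3$. In the irreducible case, $\Phi_Y \in \N_0$ by Proposition \ref{prop:=}. If $\Phi_Y$ is reducible with an irreducible rank-$2$ factor $\Xi_Y$, then since $\beta_1,\beta_2$ are mutually orthogonal and span the $A_1^2$ subsystem $\Phi_X$, the rank-$1$ factor must be spanned by exactly one of $\beta_1,\beta_2$ (the other one lying in $\Xi_Y$); this places $\Phi_Y$ in $\N_{\beta_2}(\beta_1)$ or $\N_{\beta_1}(\beta_2)$, again by Proposition \ref{prop:=}. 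Finally, if $\Phi_Y$ is totally reducible of type $A_1^3$, then it is of the form $\{\pm\beta_1\}\times\{\pm\beta_2\}\times\{\pm\gamma\}$ for some $\gamma \in \Phi^+$, hence $\Phi_Y \in \N_3$. Conversely, every $\Psi \in \N$ arises as $\Phi_Y$ for a unique $Y \in \A^X$, namely $Y = X \cap H_\delta$ for any $\delta \in \Psi^+ \setminus \{\beta_1,\beta_2\}$ (the argument of Proposition \ref{prop:=} and Proposition \ref{prop:not empty} shows $\N_0 \ne \emptyset$, and the same reasoning identifies $\N$ with $\{\Phi_Y \mid Y \in \A^X\}$).

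Next I would translate the hyperplane-level partition into the root-level statement. For $Y \in \A^X$ with $\Phi_Y = \Psi$, Proposition \ref{prop:Weyl arr}(ii) gives $\A_Y = \A(\Phi_Y^+) = \A(\Psi^+)$, and likewise $\A_X = \A(\Phi_X^+) = \{H_{\beta_1}, H_{\beta_2}\}$. Since the map $\alpha \mapsto H_\alpha$ is a bijection between $\Phi^+$ and $\A$ (as $\Phi^+$ contains exactly one root from each pair $\pm\alpha$), the set $\A_Y \setminus \A_X$ corresponds bijectively to $\Psi^+ \setminus \{\beta_1,\beta_2\}$, and $\A \setminus \A_X$ corresponds to $\Phi^+ \setminus \{\beta_1,\beta_2\}$. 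Therefore Lemma \ref{lem:partition}(iii), which says $\A \setminus \A_X = \bigsqcup_{Y \in \A^X}(\A_Y \setminus \A_X)$, transports directly to
$$
\Phi^+ \setminus \{\beta_1,\beta_2\} = \bigsqcup_{Y \in \A^X}(\Phi_Y^+ \setminus \{\beta_1,\beta_2\}) = \bigsqcup_{\Psi \in \N}(\Psi^+ \setminus \{\beta_1,\beta_2\}),
$$
which is exactly the claimed identity once $\{\beta_1,\beta_2\}$ is adjoined to both sides.

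The main obstacle I expect is the bookkeeping needed to justify that $\N = \{\Phi_Y \mid Y \in \A^X\}$ as a \emph{disjoint} union of the four pieces $\N_0, \N_{\beta_2}(\beta_1), \N_{\beta_1}(\beta_2), \N_3$ — that is, that the four families are pairwise disjoint and jointly exhaust all rank-$3$ subsystems containing $\{\beta_1,\beta_2\}$. Disjointness of $\N_0$ from the others is clear since its members are irreducible while the others are reducible; disjointness of $\N_3$ from the two $\N_{\beta_{3-i}}(\beta_i)$ follows because the rank-$2$ factor in the latter is required to be irreducible (so not $A_1^2$); and $\N_{\beta_2}(\beta_1)$ is disjoint from $\N_{\beta_1}(\beta_2)$ because in a reducible rank-$3$ subsystem $\{\pm\beta_i\}\times\Xi$ with $\Xi$ irreducible of rank $2$, the singled-out root $\beta_i$ is determined (it is the unique root of $\Phi_X$ orthogonal to all of $\Xi$ but lying in the $A_1$ factor). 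Exhaustiveness is the structural fact that a rank-$3$ root system is one of $A_1^3$, $A_1 \times (\text{irreducible rank }2)$, or irreducible; combined with the observation that any orthogonal decomposition of $\Phi_Y$ must respect the $A_1^2$ subsystem spanned by $\{\beta_1,\beta_2\}$. I would keep this casework brief since each step is elementary, and cite Proposition \ref{prop:=} to handle the translation between the $\N$-description and the $\A^X$-description.
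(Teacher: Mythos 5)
Your proposal is correct and follows essentially the same route as the paper: the published proof is precisely the combination of Lemma \ref{lem:partition}(iii), Proposition \ref{prop:=}, and Proposition \ref{prop:Weyl arr}(ii), which is exactly the decomposition-and-translation argument you carry out. The extra bookkeeping you supply (the trichotomy for rank-$3$ subsystems containing the $A_1^2$ set and the pairwise disjointness of the four families) is exactly the content the paper leaves implicit, and your handling of it is sound.
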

 \begin{proof} 
It follows from Proposition \ref{prop:Weyl arr}(ii), Proposition \ref{prop:=} and Lemma \ref{lem:partition}.
\end{proof}

\begin{proposition}\label{prop:3sums}
For each $i \in \{1,2\}$, set
\begin{align*}
\K_0 & := \sum_{\Psi\in\N_0}\sum_{ \delta \in \Psi^+\setminus\{\beta_i\}}\left( \widehat{\beta_i}, \widehat\delta\right)^2,\\
\K_{\beta_{3-i}}(\beta_i) & := \sum_{\Lambda\in\M_{\beta_{3-i}}(\beta_i)}\sum_{ \delta \in \Lambda^+\setminus\{\beta_i\}}\left( \widehat{\beta_i}, \widehat\delta\right)^2.
\end{align*}
Then $2(\K_0+\K_{\beta_{3-i}}(\beta_i)+1)={\rm h}$. In particular, $\K_{\beta_2}(\beta_1)=\K_{\beta_1}(\beta_2)$.
\end{proposition}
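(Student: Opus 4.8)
The plan is to evaluate the ``Kostant sum'' $\sum_{\mu\in\Phi^+}(\widehat{\beta_i},\widehat\mu)^2$ in two different ways. Globally, Theorem \ref{exponents}(vi) applied with $\gamma=\beta_i$ gives $\sum_{\mu\in\Phi^+}(\widehat{\beta_i},\widehat\mu)^2=\h/2$. Locally, I would insert the partition of $\Phi^+$ furnished by Corollary \ref{cor:passing},
\[
\Phi^+=\{\beta_1,\beta_2\}\sqcup\bigsqcup_{\Psi\in\N}\bigl(\Psi^+\setminus\{\beta_1,\beta_2\}\bigr),\qquad \N=\N_0\sqcup\N_{\beta_2}(\beta_1)\sqcup\N_{\beta_1}(\beta_2)\sqcup\N_3,
\]
and compute the contribution of each block, the point being that only the blocks indexed by $\N_0$ and $\N_{\beta_{3-i}}(\beta_i)$, together with the single root $\beta_i$, survive.

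Concretely, I would argue as follows. The block $\{\beta_1,\beta_2\}$ contributes $(\widehat{\beta_i},\widehat{\beta_i})^2+(\widehat{\beta_i},\widehat{\beta_{3-i}})^2=1$, where $(\widehat{\beta_i},\widehat{\beta_i})^2=1$ (e.g.\ by Theorem \ref{exponents}(vi) for the rank-one subsystem $\{\pm\beta_i\}$) and the second term vanishes since $(\beta_1,\beta_2)=0$ because $\{\beta_1,\beta_2\}$ is $A_1^2$. For $\Psi\in\N_3$ one has $\Psi^+\setminus\{\beta_1,\beta_2\}=\{\gamma\}$ with $\gamma$ orthogonal to both $\beta_1$ and $\beta_2$, so this block contributes $0$. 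For $\Psi=\{\pm\beta_i\}\times\Lambda\in\N_{\beta_i}(\beta_{3-i})$, Definition \ref{def:N} gives $\Lambda\subseteq\beta_i^{\perp}$, hence $\Psi^+\setminus\{\beta_1,\beta_2\}=\Lambda^+\setminus\{\beta_{3-i}\}$ lies in $\beta_i^{\perp}$ and contributes $0$. For $\Psi=\{\pm\beta_{3-i}\}\times\Lambda\in\N_{\beta_{3-i}}(\beta_i)$ with $\Lambda\in\M_{\beta_{3-i}}(\beta_i)$, we get $\Psi^+\setminus\{\beta_1,\beta_2\}=\Lambda^+\setminus\{\beta_i\}$, and summing over all such $\Psi$ (equivalently over $\Lambda\in\M_{\beta_{3-i}}(\beta_i)$) reproduces exactly $\K_{\beta_{3-i}}(\beta_i)$. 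Finally, for $\Psi\in\N_0$ the sets $\Psi^+\setminus\{\beta_1,\beta_2\}$ and $\Psi^+\setminus\{\beta_i\}$ differ only by $\beta_{3-i}$, whose contribution $(\widehat{\beta_i},\widehat{\beta_{3-i}})^2$ is again $0$; so summing over $\Psi\in\N_0$ gives $\K_0$. Adding the blocks yields $\h/2=1+\K_0+\K_{\beta_{3-i}}(\beta_i)$, i.e.\ the asserted identity.

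For the ``in particular'' part I would use that Theorem \ref{exponents}(vi) is local. Applied inside an irreducible rank-three subsystem $\Psi\in\N_0$, with its own Coxeter number $\h_\Psi$ and with $\gamma=\beta_i\in\Psi^+$, the same vanishing analysis carried out within $\Psi$ gives $\sum_{\delta\in\Psi^+\setminus\{\beta_i\}}(\widehat{\beta_i},\widehat\delta)^2=\h_\Psi/2-1$, which does not depend on $i$; summing over $\Psi\in\N_0$ shows $\K_0=\sum_{\Psi\in\N_0}(\h_\Psi/2-1)$ takes the same value for $i=1$ and $i=2$. Comparing the two instances $\h=2(\K_0+\K_{\beta_2}(\beta_1)+1)$ and $\h=2(\K_0+\K_{\beta_1}(\beta_2)+1)$ of the identity just proved then forces $\K_{\beta_2}(\beta_1)=\K_{\beta_1}(\beta_2)$.

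Most of this is bookkeeping once Corollary \ref{cor:passing} is available; the step needing care is the four-way case analysis of the families constituting $\N$ and keeping precise track of which inner products vanish by orthogonality. The single genuine observation is the locality of Theorem \ref{exponents}(vi): besides producing the symmetry needed for the last sentence, it lets one rewrite $\K_0$ (and likewise $\K_{\beta_{3-i}}(\beta_i)$, now via rank-two subsystems) as a sum of halved Coxeter numbers of subsystems minus one, a form likely to be convenient in the sequel.
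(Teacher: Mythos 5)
Your proof is correct and is essentially the paper's own argument: the paper likewise obtains the identity by combining the partition of $\Phi^+$ from Corollary \ref{cor:passing} with Theorem \ref{exponents}(vi), merely compressing your four-way orthogonality bookkeeping into a single line. The one thing you add is an explicit justification (via the local application of Theorem \ref{exponents}(vi) inside each $\Psi\in\N_0$) that $\K_0$ does not depend on $i$, which the paper leaves implicit but uses both for the ``in particular'' claim and later in Remark \ref{rem:K0}(ii).
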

 \begin{proof} 
By Corollary \ref{cor:passing},
$$2(\K_0+\K_{\beta_{3-i}}(\beta_i)+1)=2 \sum_{\delta \in \Phi^+}\left( \widehat{\beta_i}, \widehat\delta\right)^2,$$
which equals ${\rm h}$ by Theorem \ref{exponents}(vi).
\end{proof}

\begin{proposition}\label{prop:2 cases}   
$$|\Psi^+|=
\begin{cases}
3  \mbox{ if $\Psi \in \N_3$},\\
1+2 \sum_{\delta \in\Psi^+}\left( \widehat{\beta_i}, \widehat\delta\right)^2 \mbox{ if $\Psi \in \N_{\beta_{3-i}}(\beta_i) $}, i \in \{1,2\},\\
3\sum_{\delta \in \Psi^+}\left(\widehat{\beta_i}, \widehat\delta\right)^2 \mbox{ if $\Psi \in \N_{0}$}.
\end{cases}
$$
\end{proposition}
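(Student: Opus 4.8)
The plan is to treat each of the three cases separately, using the structure of $\Psi$ as a rank-$3$ subsystem together with the Coxeter-number identity from Theorem \ref{exponents}(vi) applied \emph{inside} $\Psi$. The crucial point to keep in mind throughout is that $\widehat{\beta_i} = \beta_i/(\beta_i,\beta_i)$ lies in $\mathrm{span}(\Psi)$ whenever $\beta_i \in \Psi$, so the sum $\sum_{\delta \in \Psi^+}(\widehat{\beta_i},\widehat{\delta})^2$ coincides with the corresponding sum taken over the positive system $\Psi^+$ of the root system $\Psi$; thus Theorem \ref{exponents}(vi), applied to the irreducible components of $\Psi$ containing $\beta_i$, evaluates it in terms of the Coxeter number(s) of those components. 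I will also use that for a root $\gamma$ orthogonal to $\beta_i$ (in particular for $\gamma$ in a component not containing $\beta_i$) one has $(\widehat{\beta_i},\widehat{\gamma})=0$, so such roots contribute nothing.

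First, the case $\Psi \in \N_3$: here $\Psi = \{\pm\beta_1\}\times\{\pm\beta_2\}\times\{\pm\gamma\}$ is of type $A_1^3$, so $|\Psi^+| = 3$ directly from the definition, with no further argument. Next, the case $\Psi \in \N_{\beta_{3-i}}(\beta_i)$: by Definition \ref{def:N} we have $\Psi = \{\pm\beta_{3-i}\}\times\Lambda$ with $\Lambda \in \M_{\beta_{3-i}}(\beta_i)$ an irreducible rank-$2$ subsystem containing $\beta_i$ and orthogonal to $\beta_{3-i}$. Then $|\Psi^+| = 1 + |\Lambda^+|$. Since $\beta_{3-i}\perp\mathrm{span}(\Lambda)$ we get $(\widehat{\beta_i},\widehat{\beta_{3-i}})=0$, so $\sum_{\delta\in\Psi^+}(\widehat{\beta_i},\widehat{\delta})^2 = \sum_{\delta\in\Lambda^+}(\widehat{\beta_i},\widehat{\delta})^2$, and applying Theorem \ref{exponents}(vi) to the irreducible rank-$2$ system $\Lambda$ with $\gamma := \beta_i \in \Lambda^+$ gives $2\sum_{\delta\in\Lambda^+}(\widehat{\beta_i},\widehat{\delta})^2 = \h(W(\Lambda))$. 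But an irreducible rank-$2$ root system has exactly $\h(W(\Lambda))$ positive roots: indeed $\h = 2|\Lambda^+|/\mathrm{rank}(\Lambda) = |\Lambda^+|$ by Theorem \ref{exponents}(iv). Hence $|\Lambda^+| = 2\sum_{\delta\in\Lambda^+}(\widehat{\beta_i},\widehat{\delta})^2$ and $|\Psi^+| = 1 + |\Lambda^+| = 1 + 2\sum_{\delta\in\Psi^+}(\widehat{\beta_i},\widehat{\delta})^2$, as claimed.

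Finally, the case $\Psi \in \N_0$: here $\Psi$ is an \emph{irreducible} rank-$3$ subsystem containing $\beta_i$. Applying Theorem \ref{exponents}(iv) to $\Psi$ gives $\h(W(\Psi)) = 2|\Psi^+|/3$, while Theorem \ref{exponents}(vi) applied to $\Psi$ with $\gamma := \beta_i$ gives $\h(W(\Psi)) = 2\sum_{\delta\in\Psi^+}(\widehat{\beta_i},\widehat{\delta})^2$. Equating the two expressions for $\h(W(\Psi))$ yields $|\Psi^+| = 3\sum_{\delta\in\Psi^+}(\widehat{\beta_i},\widehat{\delta})^2$. I expect the only mildly delicate point to be the bookkeeping that $\widehat{\beta_i}$ and the relevant $\widehat{\delta}$ genuinely live in the span of the (component of the) subsystem to which Theorem \ref{exponents} is being applied, so that the "global" inner-product sum restricts correctly to the "local" one; once this is noted, each case reduces to a one-line combination of parts (iv) and (vi) of Theorem \ref{exponents}.
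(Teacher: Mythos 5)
Your proposal is correct and is essentially the paper's own argument: the paper proves Proposition \ref{prop:2 cases} by exactly this combination of Theorem \ref{exponents}(iv) and (vi) applied to the irreducible component of $\Psi$ containing $\beta_i$, with the orthogonal $A_1$ factors contributing nothing to the inner-product sum. Your spelled-out case analysis just makes explicit what the paper leaves as a one-line citation.
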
 
\begin{proof} 
It follows from items (iv) and (vi) of Theorem \ref{exponents}.
\end{proof}

\begin{proposition}\label{prop:X=A_1^2} 
If $X=H_{\beta_1} \cap H_{\beta_2}$ is of type $A_1^2$, then for each $i \in \{1,2\}$
\begin{equation*}\label{eq:key}
|\A^{H_{\beta_i}}| - |\A^X|={\rm h}/2+\K_{\beta_{3-i}}(\beta_i).
\end{equation*}
\end{proposition}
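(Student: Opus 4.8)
The plan is to count $|\A^{H_{\beta_i}}|$ and $|\A^X|$ via the decompositions already set up, and to identify the difference with the stated quantity using the height/Coxeter-number identities of Theorem \ref{exponents}. First I would recall that $\A^{H_{\beta_i}}$ is the restriction of the Weyl arrangement to the hyperplane $H_{\beta_i}$, so by \cite[Theorem 3.7]{OST86} (cf.\ Remark \ref{rem:alternative}) we have $|\A| - |\A^{H_{\beta_i}}| = m_\ell = \h - 1$; equivalently $|\A^{H_{\beta_i}}| = |\Phi^+| - \h + 1$. On the other hand, using Theorem \ref{exponents}(iv), $|\Phi^+| = \ell\h/2$, but more importantly I would instead express $|\A^{H_{\beta_i}}|$ directly: the hyperplanes of $\A^{H_{\beta_i}}$ correspond to the intersections $H_\gamma \cap H_{\beta_i}$ for $\gamma \in \Phi^+ \setminus \{\beta_i\}$, grouped by which rank-$2$ subsystem $\Phi \cap \mathrm{span}\{\beta_i,\gamma\}$ they generate. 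So the real content is to compare this grouping with the grouping of $\Phi^+$ furnished by Corollary \ref{cor:passing}.

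The key step is to run Corollary \ref{cor:passing} and Proposition \ref{prop:2 cases} together. Corollary \ref{cor:passing} gives
$$
|\Phi^+| = 2 + \sum_{\Psi \in \N}(|\Psi^+| - 2),
$$
and $\N = \N_0 \sqcup \N_{\beta_2}(\beta_1) \sqcup \N_{\beta_1}(\beta_2) \sqcup \N_3$. For the cardinality $|\A^X|$, since $X \in L_2(\A)$ is of type $A_1^2$, by Proposition \ref{prop:=} the hyperplanes of $\A^X$ are in bijection with the rank-$3$ subsystems $\Phi_Y$ ($Y \in \A^X$), and these are exactly the members of $\N_0$, of $\N_{\beta_1}(\beta_2)$, of $\N_{\beta_2}(\beta_1)$, and — for the reducible type $A_1^3$ pieces — the members of $\N_3$; each such $Y$ contributes one hyperplane. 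So $|\A^X| = |\N_0| + |\N_{\beta_2}(\beta_1)| + |\N_{\beta_1}(\beta_2)| + |\N_3|$. Meanwhile $|\A^{H_{\beta_i}}|$ counts the rank-$2$ subsystems containing $\beta_i$: each $\Psi \in \N_0$ gives one such ($\Phi \cap \mathrm{span}\{\beta_i,\delta\}$ can vary, so actually each irreducible rank-$3$ $\Psi$ contributes $\tfrac{1}{3}|\Psi^+| - \tfrac{?}{}$ many rank-$2$ subsystems through $\beta_i$ — this is where Proposition \ref{prop:2 cases} enters), each $\Psi \in \N_{\beta_{3-i}}(\beta_i)$ contributes the rank-$2$ subsystems of $\Lambda$ through $\beta_i$, each $\Psi \in \N_3$ contributes just $\{\pm\beta_i,\pm\gamma\}$, and each $\Psi \in \N_{\beta_i}(\beta_{3-i})$ (the ones where $\beta_i$ is the isolated factor) contributes only the $A_1^2$ subsystem $\{\pm\beta_i,\pm\beta_{3-i}\}$ itself, shared across all of them. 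I would carefully tabulate, using Proposition \ref{prop:2 cases}, how many distinct hyperplanes of $\A^{H_{\beta_i}}$ come from each $\Psi$, subtract the corresponding count for $\A^X$, and collect: the $\N_0$ terms produce $\sum_{\Psi \in \N_0}\sum_{\delta \in \Psi^+\setminus\{\beta_i\}}(\widehat{\beta_i},\widehat\delta)^2 = \K_0$ minus $|\N_0|$-type corrections, the $\N_{\beta_{3-i}}(\beta_i)$ terms produce $\K_{\beta_{3-i}}(\beta_i)$ similarly, the $\N_3$ terms cancel, and the $\N_{\beta_i}(\beta_{3-i})$ terms collapse to a single shared hyperplane. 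Then Proposition \ref{prop:3sums}, which says $2(\K_0 + \K_{\beta_{3-i}}(\beta_i) + 1) = \h$, i.e.\ $\K_0 = \h/2 - 1 - \K_{\beta_{3-i}}(\beta_i)$, lets me eliminate $\K_0$ and arrive at $|\A^{H_{\beta_i}}| - |\A^X| = \h/2 + \K_{\beta_{3-i}}(\beta_i)$.

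The main obstacle I anticipate is the bookkeeping in the previous paragraph: correctly counting, for each type of $\Psi \in \N$, how many hyperplanes of $\A^{H_{\beta_i}}$ versus of $\A^X$ it accounts for, and making sure no hyperplane is counted twice (the subtlety is that different $\Psi$'s can share the lower-rank intersection $X = H_{\beta_1}\cap H_{\beta_2}$, but by Lemma \ref{lem:partition}(ii) that is the only overlap, so the disjointness in Corollary \ref{cor:passing} is exactly what prevents double-counting). In particular I must be careful that in $\A^{H_{\beta_i}}$ the hyperplane $X = H_{\beta_i} \cap H_{\beta_{3-i}}$ appears once, not once per containing subsystem, and that the rank-$3$ subsystems having $\beta_i$ as an isolated $A_1$ factor (the sets $\N_{\beta_i}(\beta_{3-i})$) contribute nothing new to $\A^{H_{\beta_i}}$ beyond that single hyperplane — whereas they each do contribute a distinct hyperplane to $\A^X$. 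Tracking this asymmetry correctly is what makes the $\h/2$ term appear. Once the combinatorial accounting is pinned down, the algebraic identities from Theorem \ref{exponents} and Proposition \ref{prop:3sums} finish the computation mechanically.
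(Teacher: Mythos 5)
Your inventory of ingredients is exactly the paper's (Corollary \ref{cor:passing}, the identification $|\A^X|=|\N|$, Proposition \ref{prop:2 cases}, Proposition \ref{prop:3sums}, and $|\A|-|\A^{H_{\beta_i}}|=m_\ell$), but the central bookkeeping step you outline is both left open and, where you do commit to specific counts, incorrect. Concretely: a member $\Psi=\{\pm\beta_i\}\times\Lambda$ of $\N_{\beta_i}(\beta_{3-i})$ does \emph{not} contribute ``only the single shared hyperplane $X$'' to $\A^{H_{\beta_i}}$. Since $\Lambda\subseteq\beta_i^{\perp}$, the hyperplanes $H_\gamma\cap H_{\beta_i}$ for $\gamma\in\Lambda^{+}$ are pairwise distinct (equality forces $\mathrm{span}\{\beta_i,\gamma\}=\mathrm{span}\{\beta_i,\gamma'\}$ and hence $\gamma=\pm\gamma'$), so $\Psi$ contributes $|\Lambda^{+}|-1$ elements of $\A^{H_{\beta_i}}$ besides $X$ while contributing only one element of $\A^X$; by Proposition \ref{prop:2 cases} and the symmetry $\K_{\beta_1}(\beta_2)=\K_{\beta_2}(\beta_1)$ from Proposition \ref{prop:3sums}, it is precisely this family that produces the summand $\K_{\beta_{3-i}}(\beta_i)$ in the answer. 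Conversely, a member $\{\pm\beta_{3-i}\}\times\Lambda$ of $\N_{\beta_{3-i}}(\beta_i)$ (now $\beta_i\in\Lambda$) contributes exactly one hyperplane besides $X$, namely $H_\gamma\cap H_{\beta_i}=\mathrm{span}(\Lambda)^{\perp}$ for every $\gamma\in\Lambda^{+}\setminus\{\beta_i\}$, so this family cancels against its contribution to $|\A^X|$ rather than ``producing $\K_{\beta_{3-i}}(\beta_i)$'' as you assert. Your attribution of the two $\K$-terms is thus exchanged, and the unresolved ``$\tfrac13|\Psi^{+}|-?$'' for the $\N_0$ family is a count you would still have to pin down (it is $|\Psi^{+}|/3$, obtained by applying Theorem \ref{thm:OST} inside $\Psi$); only after these corrections does the tally close to $1+\K_0+2\K_{\beta_i}(\beta_{3-i})$, whence Proposition \ref{prop:3sums} gives the claim.

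The gap is avoidable, because you already hold the two identities that make any hyperplane-by-hyperplane analysis of $\A^{H_{\beta_i}}$ unnecessary. From Corollary \ref{cor:passing} and $|\A^X|=|\N|$ one gets $|\A|-|\A^X|=2+\sum_{\Psi\in\N}\left(|\Psi^{+}|-3\right)$, which Proposition \ref{prop:2 cases} evaluates termwise as $3\K_0+2\K_{\beta_2}(\beta_1)+2\K_{\beta_1}(\beta_2)+2$; subtracting $m_\ell=\h-1$ (your first displayed fact, via Theorem \ref{thm:OST}) and eliminating $\K_0$ by Proposition \ref{prop:3sums} yields $\h/2+\K_{\beta_{3-i}}(\beta_i)$ directly. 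This shortcut is the paper's proof.
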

\begin{proof}
The proof is similar in spirit to the proof of  \cite[Proposition 3.6]{OST86}.
By Corollary \ref{cor:passing},
\begin{equation*}\label{start}
\left|\A \right| -2=  \sum_{\Psi \in \N}\left( \left|\Psi^+ \right|-2\right).
\end{equation*}
It is not hard to see that $|\A^X|=|\N|$ (via the bijection $Y \mapsto \Phi_Y$).
By Proposition \ref{prop:2 cases},
\begin{align*}
\left|\A \right| -\left|\A^X \right| 
& =  \sum_{\Psi \in \N}\left( \left|\Psi^+ \right|-3\right)+2 \\
& = 3\K_0+2\K_{\beta_2}(\beta_1)+2\K_{\beta_1}(\beta_2)+2.
\end{align*}
By  Theorem \ref{thm:OST} and Proposition \ref{prop:3sums},
$$
|\A^{H_{\beta_i}}| - |\A^X| 
 = \left|\A \right| -\left|\A^X \right| - m_\ell  = {\rm h}/2 +\K_{\beta_{3-i}}(\beta_i). 
$$

\end{proof}
\begin{remark}\label{rem:K0}\quad
\begin{enumerate}[(i)] 
\item The conclusion of Proposition \ref{prop:X=A_1^2} can also be written as
\begin{equation*}\label{eq:also}
|\A^{H_{\beta_i}}| - |\A^X|=m_\ell-\K_0.
\end{equation*}
\item 
For each $\Psi\in\N_0$, denote by ${\rm h}(\Psi)$  the Coxeter number of  $\Psi$ and write $m_1(\Psi) \le m_2(\Psi)\le m_3(\Psi)$ for the exponents of $\Psi$. 
In fact, ${\rm{h}}(\Psi) =2m_2(\Psi)$ since $\mathrm{rank}(\Psi)=3$. Thus 
\begin{equation*}\label{eq:K0}
\K_0= \sum_{\Psi\in\N_0}\left( m_2(\Psi)-1 \right). 
\end{equation*}
In particular, if $\ell = 3$, then $\N_0=\{\Phi\}$ and $\K_0=m_2-1$. 
In this case,  $|\A^{H_{\beta_i}}| - |\A^X| =m_2=\h/2$.
\end{enumerate}
 \end{remark}
 
 \begin{remark}\label{rem:A1k} 
 If $\codim(X)>2$, for example, $X$ is of type $A_1^k$ with $k>2$, the calculation in Proposition \ref{prop:X=A_1^2} is expected to be more difficult (and harder to avoid classifications) as it involves the consideration on the rank $k+1$ subsystems of $\Phi$ containing $\Phi_X$.
  \end{remark}

\section{Proof of Theorem \ref{thm:combine}}\label{sec:main}

Theorem \ref{thm:combine} is a combination of two theorems below:
\begin{itemize}
\item Theorem \ref{thm:main}:  the freeness  part,
\item Theorem \ref{thm:card}:  the exponent part.
\end{itemize}
Since we prove the freeness part as a consequence of the exponent part and the proof is much more simple, we give the proof for the freeness part first (provided that  the exponent part is given).
Let us recall some general facts on hyperplane arrangements.
\begin{lemma}\label{lem:start} \quad
\begin{enumerate}[(i)]
\item  If $X, Y \in L(\A)$, then $(\A^X)^{X \cap Y}=\A^{X \cap Y}$. 
Similarly, $(\phi^X)^{X \cap Y}=\phi^{X \cap Y}$ for any $\phi \in D(\A)$.
\item If $H \in \A$ and $X \in L(\A^H)$, then $(\A^H)_X=(\A_X)^{H}$. 
We will use the notation $\A_X^H$ to denote these arrangements.
\end{enumerate}
 \end{lemma}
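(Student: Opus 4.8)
\textbf{Proof proposal for Lemma \ref{lem:start}.}

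The plan is to verify both items directly from the definitions of restriction and localization, pushing everything down to the level of hyperplanes and defining forms. For item (i), first I would unwind the definitions: a hyperplane of $(\A^X)^{X\cap Y}$ is of the form $(K\cap X)\cap(X\cap Y)$ where $K\in\A$ and $K\cap X\notin(\A^X)_{X\cap Y}$, while a hyperplane of $\A^{X\cap Y}$ is of the form $K'\cap(X\cap Y)$ with $K'\in\A$ and $K'\notin\A_{X\cap Y}$. The key observation is that $(K\cap X)\cap(X\cap Y)=K\cap(X\cap Y)$ since $X\cap Y\subseteq X$, so the two sets of hyperplanes coincide once one checks that the ``distinctness'' conditions match up: $K\cap X$ fails to contain $X\cap Y$ in $\A^X$ precisely when $K$ fails to contain $X\cap Y$ in $\A$ (again because $X\cap Y\subseteq X$). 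For the derivation statement, I would compare the ideals: writing $I(X\cap Y)=\sum_{H\in\A_{X\cap Y}}\alpha_H S$, the claim $(\phi^X)^{X\cap Y}=\phi^{X\cap Y}$ amounts to showing that modding out $S$ by $I(X)$ first and then by the image of $I(X\cap Y)$ gives the same quotient as modding out directly by $I(X\cap Y)$; this follows because $\A_X\subseteq\A_{X\cap Y}$ (as $X\cap Y\subseteq X$ forces any $H\supseteq X$ to satisfy $H\supseteq X\cap Y$), so $I(X)\subseteq I(X\cap Y)$ and the iterated quotient collapses. I expect this bookkeeping to be entirely routine.

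For item (ii), I would again unwind: $(\A^H)_X=\{K\cap H\mid K\in\A\setminus\A_H,\ X\subseteq K\cap H\}$, whereas $(\A_X)^H=\{K\cap H\mid K\in\A_X\setminus(\A_X)_H\}=\{K\cap H\mid K\in\A,\ X\subseteq K,\ X\not\subseteq H\}$. Since $X\in L(\A^H)$ we have $X\subseteq H$, so the condition $X\subseteq K\cap H$ is equivalent to $X\subseteq K$, and the condition $K\notin\A_H$ (i.e. $H\not\subseteq K$) on the left must be reconciled with $X\not\subseteq H$ — but wait, $X\subseteq H$ always here, so the right-hand description as written needs $X\not\subseteq H$, which is false. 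The correct reading is that $(\A_X)^H$ restricts the localized arrangement $\A_X$ (viewed as an arrangement in $V$, or equivalently all hyperplanes containing $X$) to $H$, discarding those of $\A_X$ that already contain $H$; so $(\A_X)^H=\{K\cap H\mid K\in\A,\ X\subseteq K,\ H\not\subseteq K\}$. On the other side, $(\A^H)_X$ collects $K\cap H$ with $K\in\A$, $H\not\subseteq K$ (so $K\cap H\in\A^H$), and $X\subseteq K\cap H$, which given $X\subseteq H$ is just $X\subseteq K$. These two descriptions visibly agree, and the only thing to double-check is that the map $K\mapsto K\cap H$ is injective on the relevant set of $K$ (standard: if $K_1\cap H=K_2\cap H$ with neither containing $H$, then $K_1=K_2$), so the arrangements are genuinely equal as sets of hyperplanes.

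The main obstacle, such as it is, is purely notational: one must be careful that ``$\A_X$'' in item (ii) is being treated as an arrangement in the ambient $V$ so that restricting it to $H$ makes sense, and that the various ``distinctness'' clauses in the definitions of restriction line up correctly after intersecting with the smaller flat. There is no real mathematical content beyond set-theoretic manipulation of flats and a standard injectivity argument; accordingly I would present the proof tersely, perhaps as a one-paragraph verification for each item, and simply define $\A_X^H:=(\A^H)_X=(\A_X)^H$ as the common value.
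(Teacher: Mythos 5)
Your verification is correct and is exactly the ``straightforward'' unwinding of the definitions that the paper has in mind (its proof of this lemma is literally the single word ``Straightforward''), so there is nothing to compare at the level of strategy. One small caveat: the parenthetical claim at the end of item (ii), that $K\mapsto K\cap H$ is injective on the relevant hyperplanes (``if $K_1\cap H=K_2\cap H$ with neither containing $H$, then $K_1=K_2$''), is false in general --- e.g.\ three distinct lines through the origin in $\R^2$, or any three hyperplanes of a Weyl arrangement sharing a codimension-$2$ flat, give $K_1\cap H=K_2\cap H$ with $K_1\ne K_2$. Fortunately this injectivity is not needed: both $(\A^H)_X$ and $(\A_X)^H$ are, as you show, the image of the \emph{same} set $\{K\in\A\mid X\subseteq K,\ K\ne H\}$ under the \emph{same} map $K\mapsto K\cap H$, hence equal as sets of hyperplanes regardless of whether that map is injective. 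I would simply delete that sentence.
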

 \begin{proof} 
 Straightforward. 
\end{proof}
 
  \begin{lemma}\label{lem:decompose} 
Let $(\A_i, V_i)$ be irreducible arrangements ($1 \le i \le n$). 
Let $\A = \A_1 \times \cdots \times\A_n$ and $V= \oplus_{i=1}^n V_i$. 
Then
\begin{enumerate}[(i)]
\item  $\chi(\A, t)=\prod_{i=1}^n \chi(\A_i, t)$, 
\item for $H=V_1 \oplus  \cdots \oplus V_{k-1} \oplus H_k \oplus V_{k+1} \oplus  \cdots \oplus V_n \in \A$ ($H_k \in \A_k$), 
$$\A^H=\A_1 \times \ldots  \times \A_{k-1}  \times \A_k^{H_k}\times \A_{k+1}  \times \cdots \times \A_n.$$
\end{enumerate}
 \end{lemma}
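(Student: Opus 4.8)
The plan is to reduce both statements to the single structural fact that, for a product arrangement, hyperplanes and flats decompose coordinatewise. Concretely, I would first record that if $\A=\A_1\times\cdots\times\A_n$ on $V=\oplus_{i=1}^n V_i$, then every $H\in\A$ has the form $H=V_1\oplus\cdots\oplus V_{j-1}\oplus H_j\oplus V_{j+1}\oplus\cdots\oplus V_n$ for a unique index $j$ and a unique $H_j\in\A_j$, and that intersecting such hyperplanes one coordinate at a time yields an isomorphism of posets $L(\A)\xrightarrow{\ \sim\ }L(\A_1)\times\cdots\times L(\A_n)$, sending $X=\oplus_i X_i$ to $(X_1,\ldots,X_n)$ with $X_i\in L(\A_i)$ (this is standard; cf. \cite{OT92}).

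For part (i), I would then just combine two elementary facts about product posets: the M\"obius function of a product is the product of the M\"obius functions, and $\dim X=\sum_{i=1}^n\dim X_i$. This gives
\begin{equation*}
\chi(\A,t)=\sum_{X\in L(\A)}\mu(X)t^{\dim X}=\sum_{(X_1,\ldots,X_n)}\Bigl(\prod_{i=1}^n\mu(X_i)\Bigr)t^{\sum_i\dim X_i}=\prod_{i=1}^n\Bigl(\sum_{X_i\in L(\A_i)}\mu(X_i)t^{\dim X_i}\Bigr)=\prod_{i=1}^n\chi(\A_i,t),
\end{equation*}
which is pure bookkeeping once the lattice isomorphism is in hand.

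For part (ii), fix $H=V_1\oplus\cdots\oplus V_{k-1}\oplus H_k\oplus V_{k+1}\oplus\cdots\oplus V_n$ with $H_k\in\A_k$. First I would observe that $\A_H=\{H\}$: if $K\in\A$ satisfies $H\subseteq K$, comparing the coordinate decompositions forces $K$ to lie in the $k$-th factor with $H_k\subseteq K_k$, hence $K_k=H_k$ and $K=H$. Next, identify the ambient space $H$ of $\A^H$ with $V_1\oplus\cdots\oplus V_{k-1}\oplus H_k\oplus V_{k+1}\oplus\cdots\oplus V_n$. For $K\in\A\setminus\{H\}$ living in factor $j$: if $j\ne k$ then $K\cap H$ corresponds under this identification to the hyperplane $K_j$ of $\A_j$ (sitting in the $j$-th coordinate, with $H_k$ in the $k$-th coordinate), while if $j=k$ then $K\cap H$ corresponds to $K_k\cap H_k$, a hyperplane of the restriction $\A_k^{H_k}$. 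Since every hyperplane of the claimed product arrangement arises exactly once in this way, the assignment $K\cap H\mapsto(\text{its factor})$ identifies $\A^H$ with $\A_1\times\cdots\times\A_{k-1}\times\A_k^{H_k}\times\A_{k+1}\times\cdots\times\A_n$.

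There is no real obstacle here: the lemma is essentially a restatement of the coordinatewise behavior of intersection, and the only point requiring care is the bookkeeping identification of the ambient space of $\A^H$ with $\oplus_i V_i$ (with $H_k$ replacing $V_k$) and the compatibility of coordinatewise intersection with the product structure. An alternative, even shorter route would be to cite \cite{OT92} for both statements directly, but I would include the brief argument above to keep the exposition self-contained.
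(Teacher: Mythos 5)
Your proof is correct and follows the same (standard) route the paper takes: the paper simply cites \cite[Lemma 2.50]{OT92} for (i) and notes that (ii) follows from the definition of restriction, which is exactly the lattice-isomorphism/M\"obius-multiplicativity argument and the coordinatewise intersection bookkeeping you spell out. No discrepancies.
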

\begin{proof}
(i) is a well-known fact, e.g.,  \cite[Lemma 2.50]{OT92}. (ii) follows by the definition of restriction. 
\end{proof}

The freeness of every restriction is settled by a case-by-case study in \cite{OT93}, and by a uniform method in \cite{D99}. 
In Theorem \ref{thm:main} below, we give a different and more direct proof for the freeness of $A_1^2$ restrictions by using the Addition-Deletion Theorem (Theorem \ref{thm:AD}) and the Combinatorial Deletion Theorem (Theorem \ref{thm:Abe}). 
We also need the following immediate consequence of Theorem \ref{thm:OST} and Theorem \ref{thm:card}.

\begin{proposition}\label{prop:weaker} 
If $\{\alpha,\beta\}$ is an $A_1^2$ set, then $|\A^{H_{\alpha}}| - |\A^{H_{\alpha} \cap H_{\beta}}|$ is a root of $\chi(\A^{H_{\alpha}}, t)= \prod_{i=1}^{\ell-1} (t-m_i)$.
 \end{proposition}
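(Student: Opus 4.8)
The plan is to deduce Proposition~\ref{prop:weaker} directly from the cardinality computation in Proposition~\ref{prop:X=A_1^2} together with the exponent part of the main theorem (Theorem~\ref{thm:card}), which we are entitled to assume. Write $X := H_\alpha \cap H_\beta$. Since $\{\alpha,\beta\}$ is an $A_1^2$ set, $X$ is an $A_1^2$ subspace, so the hypotheses of Theorem~\ref{thm:card} apply and $\exp(\A^X)$ is $\exp(\A)$ with either $\{\h/2,m_\ell\}$ or $\{m_{\ell-1},m_\ell\}$ removed, according as $X$ is RO or not. On the other hand, $H_\alpha \in \A$ is of type $A_1$, so by Theorem~\ref{thm:OST} we know $\exp(\A^{H_\alpha})=\{m_1,\dots,m_{\ell-1}\}$ and hence $\chi(\A^{H_\alpha},t)=\prod_{i=1}^{\ell-1}(t-m_i)$ by the Factorization Theorem (Theorem~\ref{thm:Factorization}); this is exactly the factorization asserted in the statement. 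It remains to show that the integer $|\A^{H_\alpha}|-|\A^X|$ is one of $m_1,\dots,m_{\ell-1}$.

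The key computation is Proposition~\ref{prop:X=A_1^2}, which gives $|\A^{H_\alpha}|-|\A^X| = \h/2 + \K_\beta(\alpha)$, equivalently (Remark~\ref{rem:K0}(i)) $|\A^{H_\alpha}|-|\A^X| = m_\ell - \K_0$. I would argue by the two cases of Theorem~\ref{thm:card}. If $X$ is RO, then the removed exponents are $\{\h/2,m_\ell\}$; comparing multisets, $\h/2$ must appear in $\exp(\A)$ and the complementary bookkeeping forces $\K_\beta(\alpha)=0$ (indeed, in the RO case one expects $\Psi^+\setminus\{\beta_1,\beta_2\}$ to contribute nothing to the relevant sums, i.e., $\N_{\beta_{3-i}}(\beta_i)=\emptyset$), so $|\A^{H_\alpha}|-|\A^X| = \h/2 = m_{\ell/2+1}$ — which is an exponent of $W$ lying among $m_1,\dots,m_{\ell-1}$ since $\h/2 < \h-1 = m_\ell$ for $\ell\ge 3$. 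If $X$ is not RO, then $\exp(\A^X)=\exp(\A)\setminus\{m_{\ell-1},m_\ell\}=\{m_1,\dots,m_{\ell-2}\}$; by the Factorization Theorem applied to $\A^X$ (which is free by Theorem~\ref{thm:main}, or just by Theorem~\ref{thm:card}) we get $|\A^X|=\sum_{i=1}^{\ell-2}m_i$. Then $|\A^{H_\alpha}|-|\A^X| = \sum_{i=1}^{\ell-1}m_i - \sum_{i=1}^{\ell-2}m_i = m_{\ell-1}$, which is visibly a root of $\prod_{i=1}^{\ell-1}(t-m_i)$.

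Actually, the cleanest route avoids dissecting the RO case by hand: in \emph{both} cases, $|\A^X| = \sum_{d\in\exp(\A^X)}d$ by Theorems~\ref{thm:Saito}, \ref{thm:card} (or \ref{thm:main}) and \ref{thm:Factorization}, and likewise $|\A^{H_\alpha}|=\sum_{i=1}^{\ell-1}m_i$ by Theorem~\ref{thm:OST} and \ref{thm:Factorization}. Since Theorem~\ref{thm:card} says $\exp(\A^X)$ is obtained from $\exp(\A)=\{m_1,\dots,m_\ell\}$ by deleting $m_\ell$ and one other element $e\in\{\h/2,m_{\ell-1}\}$, subtracting the two sums gives $|\A^{H_\alpha}|-|\A^X| = \big(\sum_{i=1}^{\ell}m_i - e - m_\ell\big)\ +\ m_\ell - \sum_{i=1}^{\ell}m_i + \text{(nothing)}$; more directly, $|\A^{H_\alpha}|-|\A^X| = \big(\sum_{i=1}^{\ell-1}m_i\big) - \big(\sum_{i=1}^\ell m_i - m_\ell - e\big) = e$. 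So $|\A^{H_\alpha}|-|\A^X|$ equals either $m_{\ell-1}$ or $\h/2$, and in either case it lies in $\{m_1,\dots,m_{\ell-1}\}$: for $m_{\ell-1}$ this is immediate, and for $\h/2$ we use $\h/2=m_j$ for $j=\ell/2+1$ (note $\h$ even when $\h/2$ is an exponent, by Theorem~\ref{exponents}(i),(iii)) with $m_j\le m_{\ell-1}$ since $\h/2<\h-1=m_\ell$. Hence it is a root of $\chi(\A^{H_\alpha},t)=\prod_{i=1}^{\ell-1}(t-m_i)$, as claimed.

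The main obstacle I anticipate is purely logistical rather than mathematical: making sure the dependency chain is sound, since Proposition~\ref{prop:weaker} is stated as a consequence of Theorem~\ref{thm:OST} and Theorem~\ref{thm:card}, and Theorem~\ref{thm:main} (freeness) is proved \emph{using} Proposition~\ref{prop:weaker} — so the proof here must genuinely rely only on Theorem~\ref{thm:card} (the exponent part, which in the paper's logical order is established independently via Theorems~\ref{thm:half1} and \ref{thm:half2}) and not circularly on the freeness statement. Concretely, I would cite only Theorem~\ref{thm:card} for the shape of $\exp(\A^X)$ (it already asserts freeness of $\A^X$ as part of its content, or one invokes \cite{OT93} for freeness), apply Theorem~\ref{thm:Factorization} to turn exponents into the cardinality $|\A^X|=\sum\exp(\A^X)$, and finish with the one-line subtraction above. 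The verification that $\h/2$ sits among $m_1,\dots,m_{\ell-1}$ is the only spot needing a small argument, and it follows from Theorem~\ref{exponents}(iii) since $\ell\ge 3$ forces $\h/2<\h-1=m_\ell$.
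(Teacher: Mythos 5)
Your final conclusion and the key numerical facts are right, but the route you take is circular in a way you half-anticipate and then do not actually avoid. Theorem \ref{thm:card} does not assert the freeness of $\A^X$ or the shape of $\exp(\A^X)$; it asserts precisely the identity $|\A^{H_\alpha}|-|\A^X|=\h/2$ (RO case) or $m_{\ell-1}$ (non-RO case), and nothing more. The statement ``$\exp(\A^X)$ is $\exp(\A)$ with $\{\h/2,m_\ell\}$ or $\{m_{\ell-1},m_\ell\}$ removed'' is Theorem \ref{thm:combine}, whose proof goes through Theorem \ref{thm:main}, which in turn is proved using Proposition \ref{prop:weaker}. So every step where you invoke $\exp(\A^X)$ or the freeness of $\A^X$ (e.g.\ ``$|\A^X|=\sum_{i=1}^{\ell-2}m_i$ by Factorization applied to $\A^X$, which is free by Theorem \ref{thm:main}'', and the whole ``cleanest route'' subtraction) is off-limits at this point in the logical order; that detour merely re-derives from Theorem \ref{thm:combine} the very equality that Theorem \ref{thm:card} already states directly.

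The non-circular argument --- which is what the paper intends by calling the proposition an immediate consequence of Theorems \ref{thm:OST} and \ref{thm:card} --- is shorter than what you wrote. By Theorem \ref{thm:OST} and the Factorization Theorem, $\chi(\A^{H_\alpha},t)=\prod_{i=1}^{\ell-1}(t-m_i)$, as you correctly note. By Theorem \ref{thm:card}, the difference $|\A^{H_\alpha}|-|\A^{H_\alpha}\cap H_\beta|$ equals $m_{\ell-1}$ in the non-RO case (done) or $\h/2$ in the RO case. For the RO case, the fact that $\h/2$ is an exponent of $W$ is exactly the first assertion of Theorem \ref{thm:half1} (proved there via the Cartan matrix, independently of any freeness or restriction-exponent statement); since $\h/2<\h-1=m_\ell$, it must equal $m_j$ for some $j\le\ell-1$ and hence is a root of $\prod_{i=1}^{\ell-1}(t-m_i)$. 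You do state that last inequality correctly; what is missing is the citation of Theorem \ref{thm:half1} in place of the circular appeal to Theorem \ref{thm:combine} for ``$\h/2$ appears in $\exp(\A)$''. (Your parenthetical indexing $\h/2=m_{\ell/2+1}$ is not needed and fails for odd $\ell$.)
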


 \begin{theorem}\label{thm:main}
If $\{\alpha,\beta\}$ is an $A_1^2$ set, then the following statements are equivalent:
\begin{enumerate}[(1)]
\item $\A^{H_{\alpha} \cap H_{\beta}}$ is free and $\exp(\A^{H_{\alpha} \cap H_{\beta}})=\exp(\A)\setminus \{ m_i,m_\ell\} $ for some $i$ with $1 \le i \le \ell-1$. 
\item $|\A^{H_{\alpha}}| - |\A^{H_{\alpha} \cap H_{\beta}}|=m_i$.
\end{enumerate}
\end{theorem}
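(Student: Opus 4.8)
The plan is to prove the equivalence via a chain $(1)\Rightarrow(2)\Rightarrow(1)$, exploiting the two-step restriction $V \rightsquigarrow H_\alpha \rightsquigarrow H_\alpha\cap H_\beta$ and applying the Addition--Deletion and Combinatorial Deletion machinery along the inner hyperplane $H_\alpha$ of the arrangement $\A_{H_\alpha}$. Throughout, write $X:=H_\alpha\cap H_\beta$, $H:=H_\alpha$, and recall from Theorem~\ref{thm:OST} that $\A^H$ is free with $\exp(\A^H)=\{m_1,\dots,m_{\ell-1}\}$, so $\chi(\A^H,t)=\prod_{i=1}^{\ell-1}(t-m_i)$ by Theorem~\ref{thm:Factorization}. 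Note also that $X\in L_1(\A^H)$ via $X = H_\beta\cap H$, i.e.\ $X$ is a hyperplane of the rank-$(\ell-1)$ arrangement $\A^H$, and $(\A^H)^X = \A^X$ by Lemma~\ref{lem:start}(i).

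For $(1)\Rightarrow(2)$: assume $\A^X$ is free with $\exp(\A^X)=\exp(\A)\setminus\{m_i,m_\ell\} = \{m_1,\dots,m_{\ell-1}\}\setminus\{m_i\}$. Then the triple $(\A^H,(\A^H)',(\A^H)^X)=(\A^H,\,\A^H\setminus\{X\},\,\A^X)$ has its first and third members free, with $\exp(\A^H)=\{m_1,\dots,m_{\ell-1}\}$ and $\exp(\A^X)$ equal to this multiset with $m_i$ deleted. By the Addition--Deletion Theorem~\ref{thm:AD} (direction $(1)+(3)\Rightarrow(2)$), the middle arrangement is free and the exponent bookkeeping forces $|\A^H|-|\A^X| = \deg Q(\A^H)-\deg Q((\A^H)') = 1$ counted correctly — more precisely, comparing $|\A^X|=|\A^H|-(\text{number of hyperplanes of }\A^H\text{ through }X)$ against the exponent identity $\sum$ over $\exp(\A^H)$ minus $\sum$ over $\exp(\A^X)$, which equals $m_i$, gives $|\A^H|-|\A^X|=m_i$. (This last line is just Saito's criterion / the $|\A|=\sum\exp$ identity applied to $\A^H$ and $\A^X$.) This is statement~(2).

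For $(2)\Rightarrow(1)$: assume $|\A^H|-|\A^X|=m_i$ for some $i$. We want to apply the Combinatorial Deletion Theorem~\ref{thm:Abe} to the pair $(\A^H, X)$: it says $\A^H\setminus\{X\}$ is free iff $|(\A^H)_Z|-|(\A^H)^X_Z|$ is a root of $\chi((\A^H)_Z,t)$ for every $Z\in L(\A^X)$. The hypothesis handles the top stratum $Z=X$ itself (where $(\A^H)_X=\A^H$ up to localization and the number matches $m_i$, a root of $\chi(\A^H,t)$ by Theorem~\ref{thm:OST}). For the remaining $Z\subsetneq X$ one reduces to localizations $\A_Y$ where $Y$ ranges over subspaces with $\Phi_Y$ an irreducible or small-corank subsystem containing $\Phi_X=\{\pm\alpha,\pm\beta\}$; each such localization is itself a product of a Weyl arrangement of a lower-rank root system with an empty arrangement, hence free, and the relevant numerical condition is exactly Proposition~\ref{prop:weaker} applied inside that localization (the enumeration of $\N$ in \S\ref{sec:cardinality} is what controls these strata). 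Once $\A^H\setminus\{X\}$ is shown free, the Addition--Deletion Theorem~\ref{thm:AD} (now direction $(1)+(2)\Rightarrow(3)$, and the ``moreover'' clause) gives that $\A^X=(\A^H)^X$ is free with $\exp(\A^X)=\exp(\A^H)\setminus\{d\}$ for the unique $d$ with $\sum\exp(\A^H)-\sum\exp(\A^X)=|\A^H|-|\A^X|=m_i$; since the $m_j$ are the only candidates, $d=m_i$, and adjoining the already-deleted $m_\ell$ yields $\exp(\A^X)=\exp(\A)\setminus\{m_i,m_\ell\}$, which is~(1).

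The main obstacle is the verification of the Combinatorial Deletion hypothesis at the strata $Z\subsetneq X$: one must check that for every $Z\in L(\A^X)$ the quantity $|\A^H_Z|-|\A^X_Z|$ is a root of $\chi(\A^H_Z,t)$, and this requires identifying $\A^H_Z=(\A_Z)^H$ with a Weyl arrangement of the parabolic subsystem $\Phi_Z$ restricted to $H_\alpha$, then invoking the rank-reduced version of Proposition~\ref{prop:weaker}. This is where the careful subsystem enumeration of Section~\ref{sec:cardinality} (the sets $\N_0,\N_{\beta_j}(\beta_i),\N_3$) does the real work, and where — as the authors remark — the rank-$4$ classification is used to close the few irreducible rank-$3$ and rank-$4$ base cases. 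By contrast, both Addition--Deletion applications and the exponent arithmetic are purely formal.
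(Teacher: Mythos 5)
Your proposal follows essentially the same route as the paper: $(1)\Rightarrow(2)$ is just the sum-of-exponents identity for free arrangements, and $(2)\Rightarrow(1)$ restricts first to $H_\alpha$ via Theorem~\ref{thm:OST}, proves freeness of $\A^{H_\alpha}\setminus\{H_\alpha\cap H_\beta\}$ by verifying the Combinatorial Deletion hypothesis on the localizations $\A_Z$ (irreducible ones by Proposition~\ref{prop:weaker}), and concludes with Addition--Deletion. The one point to tighten is the reducible localizations: when $\alpha$ and $\beta$ fall into \emph{different} irreducible factors of $\A_Z$, the relevant difference is $|\A_j|-|\A_j^{H_j}|$, the largest exponent of that factor (Theorem~\ref{thm:OST}), which is not an instance of Proposition~\ref{prop:weaker} --- the paper handles this by the explicit two-case split via Lemma~\ref{lem:decompose}, and these strata are controlled by that product decomposition rather than by the enumeration of $\N$ in \S\ref{sec:cardinality}.
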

\begin{proof}
 It is sufficient to prove $(2) \Rightarrow (1)$. 
 By Theorem \ref{thm:OST}, $\A^{H_{\alpha}}$ is free with $\exp(\A^{H_{\alpha}})=\{ m_1,  \ldots, m_{\ell-1}\}$. 
By Lemma \ref{lem:start}, $\A^{H_{\alpha} \cap H_{\beta}}=(\A^{H_{\alpha}})^{H_{\alpha}\cap H_{\beta}}$.
 If we can prove that $\A^{H_{\alpha}} \setminus \{H_{\alpha}\cap H_{\beta}\}$ is free, then Theorem \ref{thm:AD} and Condition (2) guarantee that $\A^{H_{\alpha} \cap H_{\beta}}$ is free with $\exp(\A^{H_{\alpha} \cap H_{\beta}})=\exp(\A)\setminus \{ m_i,m_\ell\} $ for some $1 \le i \le \ell-1$. 

To show the freeness of $\A^{H_{\alpha}} \setminus \{H_{\alpha}\cap H_{\beta}\}$, we use Theorem \ref{thm:Abe}. We need to show that $|\A^{H_{\alpha}}_X| - |\A^{H_{\alpha} \cap H_{\beta}}_X|$ is a root of $\chi(\A^{H_{\alpha}}_X, t)$ for all $X \in L(\A^{H_{\alpha} \cap H_{\beta}})$. 
It is clearly true by Proposition \ref{prop:weaker} provided that $\A_X$ is irreducible. 
If $\A_X$ is reducible, write $\A_X =  \A_1 \times \cdots \times\A_n$ where each $\A_i$ is irreducible. 
By Lemma \ref{lem:decompose},  $|\A^{H_{\alpha}}_X| - |\A^{H_{\alpha} \cap H_{\beta}}_X|$ either equals $|\A^{H_k}_k| - |\A^{H_k \cap H'_k}_k|$ where $1 \le k \le n$, $H_k, H'_k \in \A_k$, $H_k \cap H'_k$ is $A_1^2$ with respect to $\A_k$, or equals $|\A_j| - |\A_j^{H_j}|$ for some $j \ne k$. 
In either case, $|\A^{H_{\alpha}}_X| - |\A^{H_{\alpha} \cap H_{\beta}}_X|$ is a root of $\chi(\A^{H_{\alpha}}_X, t)=\chi(\A_1, t)\ldots\chi(\A^{H_k}_k, t)\ldots\chi(\A_j, t)\ldots\chi(\A_n, t)$.
\end{proof}

\begin{remark}\label{rem:A2,B2}
If the subsystem $\Phi_{\{\alpha,\beta\}}\subseteq \Phi$ spanned by $\{\alpha,\beta\}$ is not of type $A_1^2$, then Condition (2) in Theorem  \ref{thm:main} may not occur, i.e., the number $|\A^{H_{\alpha}}| - |\A^{H_{\alpha} \cap H_{\beta}}|$ may not be an exponent of $W$. 
In this case, we are unable to use the Addition-Deletion Theorem (Theorem \ref{thm:AD}) to derive $\exp(\A^{H_{\alpha} \cap H_{\beta}})$.
To see an example, let $\Phi=E_6$ and $\Phi_{\{\alpha,\beta\}}$ be of type $A_2$, or let $\Phi=F_4$ and $\Phi_{\{\alpha,\beta\}}$ be of type $B_2$ (see, e.g., \cite[Tables V and VI]{OS83}). 
\end{remark}

 \begin{theorem}\label{thm:card} 
If $\{\beta_1, \beta_2\} \subseteq \Phi^+$ is $A_1^2$, then for each $i \in \{1,2\}$
$$|\A^{H_{\beta_i}}| - |\A^{H_{\beta_1} \cap H_{\beta_2}}|=
\begin{cases}
\h/2 \mbox{ if $\{\beta_1, \beta_2\}$ is RO}, \\
m_{\ell-1} \mbox{ if $\{\beta_1, \beta_2\}$ is not RO}.
\end{cases}
$$
 \end{theorem}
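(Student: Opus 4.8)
By Proposition \ref{prop:X=A_1^2}, the quantity $|\A^{H_{\beta_i}}| - |\A^{H_{\beta_1} \cap H_{\beta_2}}|$ equals ${\rm h}/2 + \K_{\beta_{3-i}}(\beta_i)$, and by Proposition \ref{prop:3sums} the two quantities $\K_{\beta_2}(\beta_1)$ and $\K_{\beta_1}(\beta_2)$ agree, so it suffices to compute $\K:=\K_{\beta_{3-i}}(\beta_i)$ and show that it is $0$ precisely when $\{\beta_1,\beta_2\}$ is RO, and equals $m_{\ell-1}-{\rm h}/2$ otherwise. Recalling Remark \ref{rem:K0}(i), this is equivalent to showing $\K_0 = m_\ell$ in the RO case and $\K_0 = m_\ell - m_{\ell-1}$ otherwise. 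The plan is therefore to interpret $\K$ combinatorially through Definition \ref{def:N}: $\K_{\beta_{3-i}}(\beta_i)$ counts (weighted by $(\widehat{\beta_i},\widehat\delta)^2$) contributions from the rank-$2$ irreducible subsystems $\Lambda \subseteq \beta_{3-i}^\perp$ containing $\beta_i$ for which $\Phi\cap\mathrm{span}(\{\beta_{3-i}\}\cup\Lambda)$ is reducible — i.e., $\beta_{3-i}$ is orthogonal to all of $\Lambda$.

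\textbf{The RO case.} First I would show that if $\{\beta_1,\beta_2\}$ is RO then $\M_{\beta_{3-i}}(\beta_i) = \emptyset$, hence $\K = 0$ and the difference is exactly ${\rm h}/2$. Indeed, suppose $\Lambda \in \M_{\beta_{3-i}}(\beta_i)$; then $\Lambda$ is irreducible of rank $2$, contains $\beta_i$, and $\beta_{3-i} \perp \Lambda$. Since $\Lambda$ is irreducible of rank $2$, it contains a root $\gamma$ with $(\gamma,\beta_i) \neq 0$ and $\gamma \notin \{\pm\beta_i\}$; but $(\gamma,\beta_{3-i}) = 0$ since $\beta_{3-i}\perp\Lambda$, so by condition (b) of Definition \ref{def:RO} (in the form (b') of Remark \ref{rem:terminology}) we would need $(\gamma,\beta_i) = 0$ unless $\gamma \in \{\pm\beta_1,\pm\beta_2\}$ — but $\gamma \in \Lambda \subseteq \beta_{3-i}^\perp$ and $\gamma$ non-proportional to $\beta_i$ forces $\gamma \notin \{\pm\beta_1,\pm\beta_2\}$, a contradiction. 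So $\M_{\beta_{3-i}}(\beta_i)$ is empty and $\K = 0$.

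\textbf{The non-RO case.} Here the RO condition fails, so there is a $\gamma \in \Phi\setminus\{\pm\beta_1,\pm\beta_2\}$ with, say, $(\gamma,\beta_{3-i}) = 0$ but $(\gamma,\beta_i)\neq 0$ (up to swapping indices, using that $(\beta_1,\beta_2)=0$ automatically holds for an $A_1^2$ set, so only condition (b) can fail). The task is to show $\K = m_{\ell-1} - {\rm h}/2$. I expect the cleanest route is the global identity: by Remark \ref{rem:K0}(i) the claim $\K_{\beta_{3-i}}(\beta_i) = m_{\ell-1}-{\rm h}/2$ is equivalent to $\K_0 = m_\ell - m_{\ell-1}$, and by Remark \ref{rem:K0}(ii), $\K_0 = \sum_{\Psi\in\N_0}(m_2(\Psi)-1)$, a sum of local second-exponent contributions over the rank-$3$ irreducible subsystems through $\{\beta_1,\beta_2\}$. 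So I would prove the combinatorial identity
\begin{equation*}
\sum_{\Psi\in\N_0}(m_2(\Psi)-1) = m_\ell - m_{\ell-1} \quad\text{(non-RO case)}, \qquad \sum_{\Psi\in\N_0}(m_2(\Psi)-1) = m_\ell \quad\text{(RO case)},
\end{equation*}
the second already being forced by the RO computation above together with Proposition \ref{prop:3sums}. A natural strategy for the first identity is to induct on $\ell$: localizing at a suitable $X' \in L(\A)$ with $\{\beta_1,\beta_2\}\subseteq \Phi_{X'}^+$ of smaller rank, relating $\N_0(\{\beta_1,\beta_2\})$ computed inside $\Phi$ to the one computed inside $\Phi_{X'}$, and invoking the local-global comparison of second-smallest exponents. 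The base case is rank $3$ (handled in Remark \ref{rem:K0}(ii): $\N_0 = \{\Phi\}$, so $\K_0 = m_2 - 1 = m_\ell - m_{\ell-1}$ since $m_1 = 1$ and $m_2 = m_{\ell-1}$, $m_3 = m_\ell$, consistent with non-RO; while RO cannot occur in rank $3$ for type other than... — wait, it can, in $A_3=D_3$), and the inductive step reduces to rank-$4$ considerations, consistent with the paper's stated reliance on the classification up to rank $4$.

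\textbf{Main obstacle.} The genuinely hard step is the non-RO identity $\sum_{\Psi\in\N_0}(m_2(\Psi)-1) = m_\ell - m_{\ell-1}$. The RO/$\M=\emptyset$ argument is formal, but extracting $m_{\ell-1}$ — the \emph{second largest} exponent of the ambient $W$ — from a sum of \emph{local} second-smallest exponents over rank-$3$ subsystems is exactly the "conceptual" core the introduction flags as non-trivial (and where the RO concept is used essentially). I anticipate needing the crucial result \ref{thm:crucial} referenced in the paper (the local-global inequality on second-smallest exponents, re-proved bijectively in \S\ref{sec:app}) to drive the induction, together with case-checking in rank $4$. A subtlety to watch: in small cases such as $\Phi = D_4$ all $A_1^2$ sets are RO (Remark \ref{rem:numerical}), so the non-RO branch is vacuous there and the induction must be seeded carefully; and one must confirm that exactly one of the two possibilities (RO / not RO) holds for every $A_1^2$ set, which is immediate since condition (a) of Definition \ref{def:RO} is automatic for $A_1^2$ sets.
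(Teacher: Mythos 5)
Your reduction via Proposition \ref{prop:X=A_1^2} and your treatment of the RO case are correct and essentially identical to the paper's Theorem \ref{thm:half1}: the RO condition forces $\M_{\beta_{3-i}}(\beta_i)=\emptyset$, hence $\K_{\beta_{3-i}}(\beta_i)=0$ and the difference is $\h/2$. (One arithmetic slip: the equivalent statement in the RO case is $\K_0=m_\ell-\h/2=\h/2-1$, not $\K_0=m_\ell$; this does not damage your argument, since you work with $\K_{\beta_{3-i}}(\beta_i)$ directly, but your displayed identity $\sum_{\Psi\in\N_0}(m_2(\Psi)-1)=m_\ell$ for the RO case is wrong as written.)

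The non-RO case, however, contains a genuine gap. You correctly isolate the target identity $\K_0=\sum_{\Psi\in\N_0}(m_2(\Psi)-1)=m_\ell-m_{\ell-1}=m_2-1$, but the proposed ``induction on $\ell$ by localization'' is not carried out and does not confront the actual difficulty. Theorem \ref{thm:crucial} (proved bijectively in the appendix via the partition $\U=\bigsqcup_{\Psi\in\N_0}\U_\Psi$) establishes this identity only for the particular pair of roots of height $m_{\ell-1}$, i.e., for a single $W$-orbit of $A_1^2$ sets; but in types $B_\ell$ and $C_\ell$ ($\ell\ge 4$) there are two distinct $W$-orbits of non-RO $A_1^2$ sets (Remark \ref{rem:numerical}), so conjugation alone does not reach an arbitrary non-RO set, and localizing to a smaller parabolic does not obviously transport the global quantity $m_{\ell-1}$ (nor does it capture all rank-$3$ subsystems through $\{\beta_1,\beta_2\}$). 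The paper bridges the orbits by a different mechanism: reduce any non-RO set containing a long root to $\{\theta,\lambda_1\}$ with $\lambda_1\in\Delta$ (Lemma \ref{lem:orbit}); produce a reference non-RO pair $\{\theta,\lambda_2\}$ conjugate to a height-$m_{\ell-1}$ pair (Proposition \ref{prop:existence}), for which $\K_\theta(\lambda_2)=m_{\ell-1}-\h/2$ is already known; show that $\lambda_1$ and $\lambda_2$ lie in the same irreducible component $\Omega$ of $\theta^\perp$ of rank $\ge 2$ (Corollary \ref{cor:same-component}, resting on Propositions \ref{prop:non-A1} and \ref{prop:A1-component}); and finally prove $\M'_\theta(\beta)=\emptyset$ for every $\beta\in\Omega$, which yields $\K_\theta(\lambda_1)=\K_\theta(\lambda_2)=\h(\Omega)/2-1$ independently of the orbit. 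None of this appears in your sketch, so the non-RO half of the theorem remains unproved.
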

 
Proof of Theorem   \ref{thm:card} will be divided into two halves: Theorems \ref{thm:half1} and \ref{thm:half2}. 


\subsection{First half of Proof of Theorem \ref{thm:card}}\label{sec:halfway1}

\begin{lemma}\label{lem:Cartan} 
 Let $C=(c_{ij})$ with $c_{ij} = 2(\alpha_i, \alpha_j)/(\alpha_j, \alpha_j)$ be the Cartan matrix of $\Phi$. The roots of the characteristic polynomial of $C$ are $2+2\cos(m_i\pi/{\rm h})$ ($1 \le i \le \ell$).
 \end{lemma}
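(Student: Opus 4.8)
The plan is to use the classical relationship between the Cartan matrix and a Coxeter element, together with the known spectrum of a Coxeter element. Write $C = (c_{ij})$ with $c_{ij} = 2(\alpha_i,\alpha_j)/(\alpha_j,\alpha_j)$ and factor $C = 2I - B$, where $B$ has zero diagonal and off-diagonal entries $-c_{ij}$; equivalently, $B = D^{-1}A$ for a suitable symmetric "adjacency-type" matrix $A$ and a positive diagonal $D$. The key step is to relate $B$ to the bipartite (Coxeter) product: since the Dynkin graph of $\Phi$ is a tree, hence bipartite, partition $\Delta = \Delta_1 \sqcup \Delta_2$ so that no two simple roots within the same part are adjacent. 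Then the two partial products $c_1 = \prod_{\alpha_i \in \Delta_1} s_{\alpha_i}$ and $c_2 = \prod_{\alpha_i \in \Delta_2} s_{\alpha_i}$ are each involutions (the reflections in each part commute), and $c = c_1 c_2$ is a Coxeter element, whose eigenvalues are $\exp(\pm 2\pi\sqrt{-1}\, m_j/{\rm h})$.

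The technical heart is the following computation, which I would carry out in the basis of simple roots (or rather coroots): in block form with respect to the bipartition, $C$ (up to the harmless symmetrizing conjugation that does not change eigenvalues) looks like $\begin{pmatrix} 2I & -P \\ -P^{\top} & 2I\end{pmatrix}$ for some matrix $P$, while the Coxeter element $c = c_1 c_2$ acts — in the same decomposition — by a matrix whose block structure is $\begin{pmatrix} -I + PP^{\top} & -P \\ P^{\top} & -I \end{pmatrix}$ or a similar explicit form. A direct block manipulation then shows that $c + c^{-1} + 2I = 2(2I - \tfrac12 C')$ for the symmetrized Cartan matrix $C'$, i.e. the eigenvalues $\lambda$ of $C$ and the eigenvalues $\zeta$ of $c$ are linked by $\zeta + \zeta^{-1} + 2 = 4 - \lambda$, equivalently $\lambda = 2 - (\zeta + \zeta^{-1})$. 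Substituting $\zeta = \exp(2\pi\sqrt{-1}\, m_j/{\rm h})$ gives $\lambda = 2 - 2\cos(m_j\pi/{\rm h} \cdot 2)$; one then reconciles the factor of $2$ in the cosine argument by noting that it is the \emph{half}-Coxeter angles that appear — the standard statement is that the eigenvalues of $C$ are $2 - 2\cos(m_j\pi/{\rm h})$, which is $2 + 2\cos((({\rm h}-m_j)\pi)/{\rm h})$, and since $\{m_j\} = \{{\rm h}-m_j\}$ as multisets by Theorem \ref{exponents}(i), this equals $2 + 2\cos(m_i\pi/{\rm h})$ as claimed.

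Alternatively, and perhaps more cleanly for write-up, I would cite or reproduce the Steinberg/Coxeter argument: it is classical (Bourbaki, Chapitre V) that the eigenvalues of the "Coxeter transformation" $c$ on $V$ are determined by the fact that the characteristic polynomial of $C$ and that of $c$ are related through the substitution $t \mapsto 2 - t - t^{-1}$ up to a power of $t$, because both are governed by the same tree and the Perron--Frobenius/bipartite structure. I would set $f(t) = \det(tI - C)$ and $g(t) = \det(tI - c)$, establish the identity $t^{\ell} f(2 - t - t^{-1}) = \pm g(t)^2 / (\text{something})$ — or more simply $f(2 - t - t^{-1}) = t^{-\ell}\prod(t - \zeta_j)(t - \zeta_j^{-1})$ — and read off the roots.

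\textbf{Main obstacle.} The bookkeeping in the block computation relating $c$ to $C$ is the delicate part: one must be careful about (a) the non-symmetry of $C$ in the non-simply-laced cases, handled by conjugating by $\mathrm{diag}(\sqrt{(\alpha_i,\alpha_i)})$, which preserves eigenvalues; (b) getting the correct power of $t$ in the determinant identity $t^{\ell}f(2-t-t^{-1}) = g(t)\,g(t^{-1})t^{\ell}$-type relation; and (c) the cosmetic but genuine discrepancy between $\cos(m_i\pi/{\rm h})$ and $-\cos(m_i\pi/{\rm h}) = \cos(({\rm h}-m_i)\pi/{\rm h})$, which is resolved only by invoking the symmetry $m_i \leftrightarrow {\rm h} - m_i$ of the exponent multiset. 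Everything else is a routine application of the spectral theory of Coxeter elements (Theorem \ref{exponents} and the conjugacy of Coxeter elements quoted from \cite{B68}).
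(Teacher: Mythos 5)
You should know that the paper offers no argument of its own here: its ``proof'' is the citation \cite[Theorem 2]{BLM89}, so simply invoking the classical spectrum of the Cartan matrix (your second, ``cite or reproduce'' option) already matches what the paper does. Your strategy for actually proving it --- symmetrize $C$, exploit the bipartiteness of the Dynkin tree to write a Coxeter element as $c=c_1c_2$ with $c_1,c_2$ involutions, and compare spectra --- is the standard route to that classical fact. The genuine gap is that the identity at the ``technical heart'' of your computation is false. Writing $\tilde C=2I-B$ for the symmetrized Cartan matrix with $B=\left(\begin{smallmatrix}0&P\\P^{\top}&0\end{smallmatrix}\right)$, the correct relation between the two spectra is the \emph{quadratic} one
\[
(\tilde C-2I)^2 \;=\; B^2 \;=\; c+c^{-1}+2I,
\]
not the linear $c+c^{-1}+2I=4I-\tilde C$ that you assert (equivalently $B=c+c^{-1}$). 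This fails already for $A_2$: there $c$ has order $3$, so $c+c^{-1}=-I$, while $B=\left(\begin{smallmatrix}0&1\\1&0\end{smallmatrix}\right)\ne -I$; note $B^2-2I=-I$ does hold. The ``factor of $2$ in the cosine argument'' that you then try to reconcile is precisely the symptom of this error, not a bookkeeping issue: from the quadratic identity one gets $(\lambda-2)^2=2+2\cos(2\pi m_j/{\rm h})=4\cos^2(m_j\pi/{\rm h})$, hence $\lambda=2\pm2\cos(m_j\pi/{\rm h})$, and the half-angle enters for an honest reason (a square root), not by fiat.

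Your closing step is sound and genuinely needed: the two signs in $2\pm2\cos(m_j\pi/{\rm h})$ are absorbed by the symmetry $m_j\leftrightarrow{\rm h}-m_j$ of the exponent multiset (Theorem \ref{exponents}(i)), since $2-2\cos(m_j\pi/{\rm h})=2+2\cos(({\rm h}-m_j)\pi/{\rm h})$; each conjugate pair $e^{\pm2\pi\sqrt{-1}\,m_j/{\rm h}}$ of eigenvalues of $c$ contributes the eigenvalues $2+2\cos(m_j\pi/{\rm h})$ and $2+2\cos(({\rm h}-m_j)\pi/{\rm h})$ of $C$, which is exactly the statement of the lemma. With the quadratic identity in place the argument closes; as written, it does not.
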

 \begin{proof} 
See, e.g.,  \cite[Theorem 2]{BLM89}.
\end{proof}

We denote by ${\D}(\Phi)$ the Dynkin graph and by $\widetilde{\D}(\Phi)$ the extended Dynkin graph of $\Phi$.
 
\begin{theorem}\label{thm:half1} 
Assume that there exists a set $\{\beta_1, \beta_2\} \subseteq \Phi^+$ such that $\{\beta_1, \beta_2\}$ is both $A_1^2$ and RO. 
Then $\h/2$ is an exponent of $W$. 
Moreover, for each $i \in \{1,2\}$, 
$$|\A^{H_{\beta_i}}| - |\A^{H_{\beta_1} \cap H_{\beta_2}}|= \h/2.$$
 \end{theorem}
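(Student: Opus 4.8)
The plan is to extract both conclusions from Proposition~\ref{prop:X=A_1^2} by pinning down the two local quantities $\K_0$ and $\K_{\beta_{3-i}}(\beta_i)$ attached to an $A_1^2$ set that is also RO. By Proposition~\ref{prop:X=A_1^2} we have $|\A^{H_{\beta_i}}| - |\A^{X}| = \h/2 + \K_{\beta_{3-i}}(\beta_i)$ for $X = H_{\beta_1}\cap H_{\beta_2}$, so the claimed equality $|\A^{H_{\beta_i}}| - |\A^{X}| = \h/2$ is \emph{equivalent} to showing $\K_{\beta_{3-i}}(\beta_i) = 0$, i.e. that $\M_{\beta_{3-i}}(\beta_i)$ is empty. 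Recall from Definition~\ref{def:N} that a set $\Lambda \in \M_{\beta_{3-i}}(\beta_i)$ is an irreducible rank-$2$ subsystem contained in $\beta_{3-i}^{\perp}$, containing $\beta_i$, such that $\Phi \cap \mathrm{span}(\{\beta_{3-i}\}\cup\Lambda)$ is a \emph{reducible} rank-$3$ subsystem. The first step is therefore to show that the RO condition (specifically condition (b'), $\beta_1^\perp \setminus \{\pm\beta_2\} = \beta_2^\perp \setminus \{\pm\beta_1\}$) forces no such $\Lambda$ to exist. If $\Lambda \subseteq \beta_{3-i}^\perp$ is irreducible of rank $2$ with $\beta_i \in \Lambda$ and $\Lambda \ne \{\pm\beta_i\}$, then $\Lambda$ contains a root $\gamma$ with $\gamma \perp \beta_{3-i}$ but $(\gamma,\beta_i) \ne 0$; since $\gamma \notin \{\pm\beta_1,\pm\beta_2\}$ this directly contradicts condition (b) of Definition~\ref{def:RO}. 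Hence $\M_{\beta_{3-i}}(\beta_i) = \emptyset$, so $\K_{\beta_{3-i}}(\beta_i) = 0$, which gives the ``Moreover'' part immediately.

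It then remains to deduce that $\h/2$ is an exponent of $W$. Combining $\K_{\beta_{3-i}}(\beta_i) = 0$ with Proposition~\ref{prop:3sums}, which states $2(\K_0 + \K_{\beta_{3-i}}(\beta_i) + 1) = \h$, we get $\K_0 = \h/2 - 1$. Now I want to exhibit $\h/2$ as an entry of the dual partition $\DP(\Phi^+)$, or equivalently (via Theorem~\ref{thm:dual}) of the exponent multiset. A clean route is through the Cartan matrix: by Lemma~\ref{lem:Cartan}, the eigenvalues of $C$ are $2 + 2\cos(m_i\pi/\h)$, so $\h/2$ being an exponent is equivalent to $2$ being an eigenvalue of $C$, i.e. to $\det(C - 2I) = 0$, equivalently to the existence of a nonzero vector $v$ with $(C - 2I)v = 0$. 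I would try to produce such an eigenvector explicitly from the RO pair: the orthogonality relations encoded in condition (b') should translate, after expressing $\beta_1,\beta_2$ in terms of simple roots and using that $\Phi_X$ has type $A_1^2$, into a symmetry of the Dynkin graph $\D(\Phi)$ (an involution swapping two nodes or two branches) whose ``$\pm$'' combination of fundamental coweights lies in the kernel of $C - 2I$. Alternatively, one can argue via the extended Dynkin graph $\widetilde{\D}(\Phi)$: an RO $A_1^2$ pair should force $\widetilde{\D}(\Phi)$ to contain a subgraph of affine type $\widetilde{A}_1$ or $\widetilde{D}$ in a position giving the eigenvalue $2$. I would also keep in reserve the completely safe fallback: the statement is vacuous unless $\T(RO)\cap\T(A_1^2)\ne\emptyset$, and by Remark~\ref{rem:numerical} this happens precisely for $\Phi = D_\ell$ ($\ell \ge 3$), where $\h = 2\ell - 2$ and the exponents are $\{1,3,5,\dots,2\ell-3,\ell-1\}$, so $\h/2 = \ell - 1$ is indeed an exponent; but I would prefer the Cartan-matrix argument as it is classification-light.

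The main obstacle I anticipate is the second step — showing $\h/2$ is an exponent without invoking the classification of root systems. The emptiness of $\M_{\beta_{3-i}}(\beta_i)$ is a direct unwinding of definitions and should be short. The delicate point is turning the combinatorial RO condition into the algebraic fact $2 \in \mathrm{Spec}(C)$: one must carefully relate the ``related roots'' condition to a concrete linear relation among simple roots or to a graph automorphism. A cautious version of the write-up would first reduce to the case where $\beta_1,\beta_2$ are simple (using Proposition~\ref{prop:characterize}, which gives $\T(A_1^2) = \T(\Delta)$, so after applying a suitable $w \in W$ we may take $\{\beta_1,\beta_2\} \subseteq \Delta$ with $(\beta_1,\beta_2)=0$; RO is $W$-invariant by Proposition~\ref{prop:characterize}(i)), and then condition (b') becomes a statement purely about the Dynkin graph $\D(\Phi)$: removing $\beta_1$ and removing $\beta_2$ disconnect $\D(\Phi)$ in the ``same way'', which is exactly the situation where $\D(\Phi)$ has a fork and $2$ appears in $\mathrm{Spec}(C)$. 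I expect this reduction, plus a short spectral computation on the resulting small family of graphs (of which there are essentially only the $D$-type forks in the simply-laced case, the only case where $A_1^2$ and RO can coincide), to carry the argument through.
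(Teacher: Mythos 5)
Your overall route is the same as the paper's: for the displayed equality you show $\M_{\beta_{3-i}}(\beta_i)=\emptyset$, hence $\K_{\beta_{3-i}}(\beta_i)=0$, and invoke Proposition \ref{prop:X=A_1^2}; for the exponent claim you reduce to $\{\beta_1,\beta_2\}\subseteq\Delta$ via Proposition \ref{prop:characterize} and aim to show that $2$ is an eigenvalue of the Cartan matrix via Lemma \ref{lem:Cartan}. The first half is complete and is exactly the paper's argument: any irreducible rank-$2$ subsystem $\Lambda\subseteq\beta_{3-i}^\perp$ containing $\beta_i$ would contain a root $\gamma\notin\{\pm\beta_1,\pm\beta_2\}$ orthogonal to $\beta_{3-i}$ but not to $\beta_i$, violating condition (b) of Definition \ref{def:RO}.

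The exponent half, however, is left as a plan with the decisive step missing. You never pin down what RO actually forces on the Dynkin graph, and your formulation (``removing $\beta_1$ and removing $\beta_2$ disconnect $\D(\Phi)$ in the same way,'' leading to ``a fork'') is not the right statement; moreover, your fallback through Remark \ref{rem:numerical} reintroduces exactly the classification the paper is at pains to avoid (the paper explicitly says those facts are not used in the proof of Theorem \ref{thm:combine}). The short completion, which is what the paper does: applying condition (b) to the \emph{simple} roots shows that $\beta_1$ and $\beta_2$ have identical neighbour sets in the tree $\D(\Phi)$; since $(\beta_1,\beta_2)=0$ they are not adjacent to each other, and a tree has no cycles, so this common neighbour set is a single vertex $\beta_3$ and $\beta_1,\beta_2$ are both leaves attached only to $\beta_3$. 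Consequently rows $1$ and $2$ of $xI_\ell-C$ are $(x-2,0,-c_{13},0,\dots,0)$ and $(0,x-2,-c_{23},0,\dots,0)$, and a Laplace expansion along the first row (equivalently, the explicit eigenvector $c_{32}e_1-c_{31}e_2$ of $C$ for the eigenvalue $2$ --- the vector you said you would ``try to produce'') shows that $x-2$ divides $\det(xI_\ell-C)$. With that paragraph supplied, your proof coincides with the paper's.
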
 
\begin{proof} 
By Lemma \ref{lem:Cartan}, it suffices to prove that the characteristic polynomial of the Cartan matrix $C$ admits $2$ as a root. 
By Proposition \ref{prop:characterize}, we may assume that $\{\beta_1, \beta_2\} \subseteq \Delta$.
Since the Dynkin graph $\D(\Phi)$ of $\Phi$ is a tree, there is a unique path in $\D(\Phi)$ which admits  $\beta_1$ and $\beta_2$ as endpoints. 
By the definition of RO sets, this path contains exactly one more vertex of $\D(\Phi)$, say $\beta_3$ with $(\beta_1, \beta_3)\ne0$ and  $(\beta_2, \beta_3)\ne0$.
Also, the other vertices of $\D(\Phi)$, if any, connect to the path only at $\beta_3$. 
The Cartan matrix has the following form:
$$
C =\begin{bmatrix}
2 & 0  & c_{13}  & 0 &  \dotsb &  & 0\\
0 & 2 &  c_{23}  & 0 &  \dotsb &  & 0\\
c_{13} & c_{23}  & 2  &  &  &  & \\
0  & 0 &   &  & \\
\vdots &\vdots &    &   & \ddots  &  & \\
0 & 0 &    &   &  & 2 & \\
0  & 0 &        &   &  &  & 2
\end{bmatrix}.
$$
By applying the Laplace's formula, it is easily seen that $\det(xI_\ell- C)$ is divisible by $x-2$.

Assume that $\M_{\beta_2}(\beta_1) \ne \emptyset$ and let $\Lambda\in\M_{\beta_2}(\beta_1)$ (notation in Definition \ref{def:N}). 
Since $\{\beta_1, \beta_2\}$ is RO, the fact that $\Lambda \subseteq \beta_2^\perp$ implies that $(\beta_1, \alpha)=0$ for all $ \alpha \in\Lambda \setminus \{\pm\beta_1\}$. 
This contradicts the irreducibility of $\Lambda$. 
Thus $\K_{\beta_2}(\beta_1)=0$. 
Proposition \ref{prop:X=A_1^2}  completes the proof.
 \end{proof}
\subsection{Root poset, Dynkin diagram and exponents}\label{subsec:elemetary}
The second half of Proof of Theorem \ref{thm:card} is complicated but uses nothing rather than combinatorial properties of root systems. 
We need to recall and prove several (technical) properties of the root poset, Dynkin diagram and exponents. 
This section is devoted to doing so, and every statement will be provided in great detail.

First, we study the positive roots of height $\ge m_{\ell-1}$ in the root poset and a certain set of vertices  in the extended Dynkin diagram.
For $\alpha \in V$, $\beta \in V\setminus\{0\}$,  denote $\langle \alpha,\beta \rangle := \frac{2( \alpha,\beta ) }{( \beta , \beta)}$. 
Let $\theta:= \sum_{i=1}^\ell  c_{\alpha_i}\alpha_i$ be the highest root of $\Phi$, and we call $c_{\alpha_i} \in \Z_{>0}$ the coefficient of $\theta$ at the simple root $\alpha_i$. 
Denote by $c_{\max}:=\max \{c_{\alpha_i}\mid1 \le i\le \ell\}$ the largest coefficient. 

\begin{proposition}\label{lem:coes} 
Let $\Phi$ be an irreducible root system in $\R^\ell$. 
Let  $\theta$ be the highest root of $\Phi$, and denote $\lambda_0 := -\theta$, $c_{\lambda_0}:=1$.
Suppose that the elements of a fixed base $\Delta=\{\lambda_1, \ldots, \lambda_\ell\}$ are labeled so that $\Lambda:=\{\lambda_0, \lambda_1, \ldots, \lambda_q\}$ is a set of minimal cardinality such that $c_{\max}=c_{\lambda_q}$ and $(\lambda_s,\lambda_{s+1}) < 0$ for $0 \le s \le q -1$. 
\begin{enumerate}[(i)] 
\item Then $c_{\lambda_s}=s + 1$ for $0\le s \le q$ and $|\Lambda|=c_{\max}$.
\item Assume that  $c_{\max}\ge 2$. Then $(\lambda_0,\lambda_1, \ldots, \lambda_{q-1})$ is a simple chain of $\widetilde{\D}(\Phi)$ connected to the other vertices only at $\lambda_{q-1}$. 
\end{enumerate}
\end{proposition}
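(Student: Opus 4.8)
\textbf{Proof proposal for Proposition \ref{lem:coes}.}

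The plan is to exploit the structure of the extended Dynkin diagram $\widetilde{\D}(\Phi)$, whose vertices are $\lambda_0=-\theta,\lambda_1,\ldots,\lambda_\ell$ with the affine relation $\lambda_0+\sum_{i=1}^\ell c_{\lambda_i}\lambda_i=0$ (setting $c_{\lambda_0}=1$). Each $\lambda_s$ has an associated coefficient $c_{\lambda_s}$, and these are exactly the marks on $\widetilde{\D}(\Phi)$. The key elementary fact I will use repeatedly is the following: if $\lambda$ is a vertex of $\widetilde{\D}(\Phi)$ with neighbours $\mu_1,\ldots,\mu_r$ in $\widetilde{\D}(\Phi)$, then applying the affine Cartan relation at $\lambda$ gives $2c_\lambda=\sum_{j=1}^r |\langle\lambda,\mu_j\rangle| \, c_{\mu_j}$; in the simply-laced portion, and in general along a simple chain, this reads $2c_\lambda=\sum_j c_{\mu_j}$. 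Equivalently, at any vertex the coefficient is at least the average of its neighbours' coefficients, so the coefficient function has no strict local maximum except where forced by the geometry.

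For part (i), I would argue by induction along the chain $\lambda_0,\lambda_1,\ldots,\lambda_q$. We have $c_{\lambda_0}=1$. Suppose the chain has been built so that $c_{\lambda_0}<c_{\lambda_1}<\cdots<c_{\lambda_s}$ are consecutive integers $1,2,\ldots,s+1$ with $s<q$; I must show $c_{\lambda_{s+1}}=c_{\lambda_s}+1$. Since $\Lambda$ was chosen of minimal cardinality reaching a vertex of coefficient $c_{\max}$, the vertex $\lambda_s$ with $s<q$ is not the branch vertex carrying $c_{\max}$, so $c_{\lambda_s}<c_{\max}$, hence $\lambda_s$ is not a local maximum of the coefficient function; combined with the chain condition $(\lambda_s,\lambda_{s+1})<0$ and the relation $2c_{\lambda_s}=c_{\lambda_{s-1}}+c_{\lambda_{s+1}}+(\text{contributions from other neighbours})$, and the fact that all coefficients are positive integers with the chain strictly increasing so far, one forces $c_{\lambda_{s+1}}=c_{\lambda_s}+1$ (any other neighbour of $\lambda_s$ along the minimal path would either shorten $\Lambda$ or violate the increasing pattern). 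This gives $c_{\lambda_s}=s+1$ for $0\le s\le q$; in particular $c_{\max}=c_{\lambda_q}=q+1=|\Lambda|$. I should double-check the edge case $c_{\max}=1$ (type $A_\ell$), where $q=0$, $\Lambda=\{\lambda_0\}$, and the statement holds trivially.

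For part (ii), assuming $c_{\max}\ge 2$ so $q\ge 1$: I want that $(\lambda_0,\lambda_1,\ldots,\lambda_{q-1})$ is a simple chain in $\widetilde{\D}(\Phi)$ attached to the rest only at $\lambda_{q-1}$. The chain property $(\lambda_s,\lambda_{s+1})<0$ for $0\le s\le q-1$ is given. I must show (a) no extra edges among $\lambda_0,\ldots,\lambda_{q-1}$, (b) all bonds on this stretch are single bonds, and (c) no $\lambda_s$ with $s\le q-2$ has a neighbour outside $\{\lambda_0,\ldots,\lambda_q\}$. For (a): an extra edge would create a cycle in $\widetilde{\D}(\Phi)$, and the only affine diagram with a cycle is $\widetilde{A}_\ell$, which has all marks $1$, contradicting $c_{\max}\ge 2$. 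For (b) and (c): suppose $\lambda_s$ ($0\le s\le q-1$) had either a multiple bond to $\lambda_{s+1}$ or an additional neighbour $\nu\notin\{\lambda_{s-1},\lambda_{s+1}\}$. Then the relation at $\lambda_s$ would read $2c_{\lambda_s}\ge c_{\lambda_{s-1}}+|\langle\lambda_s,\lambda_{s+1}\rangle|c_{\lambda_{s+1}}+c_\nu$ (with the multiplicity or the extra positive term), and using $c_{\lambda_{s-1}}=s$, $c_{\lambda_{s+1}}=s+2$ from part (i) — more precisely $c_{\lambda_s}=s+1$ — one gets $2(s+1)\ge s+(s+2)+c_\nu=2(s+1)+c_\nu$, forcing $c_\nu\le 0$, impossible since $c_\nu\ge 1$; similarly a double or triple bond gives $2(s+1)\ge s+2(s+2)>2(s+1)$, a contradiction. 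Hence the only freedom is at the far end $\lambda_{q-1}$, where a branch or multiple bond leading to $\lambda_q$ (and beyond) is allowed. This pins down the simple-chain structure and the attachment point, completing (ii).

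The main obstacle I anticipate is making the inductive forcing in part (i) genuinely rigorous without invoking the classification: one must be careful that the minimal-cardinality choice of $\Lambda$ truly rules out a neighbour of $\lambda_s$ (for $s<q$) having a coefficient $\ge c_{\lambda_s}$, and that no multiple bond occurs strictly inside the chain before we know (ii). I would handle this by proving a short preliminary lemma: along any path in $\widetilde{\D}(\Phi)$ starting at $\lambda_0$ on which the coefficients are nondecreasing, the bonds are single and the coefficients increase by exactly $1$ at each step until a branch vertex or a coefficient-maximum is reached — this is precisely the local averaging inequality applied inductively, and it feeds both parts uniformly. The arguments for (ii) are then essentially a reading-off of this lemma at the terminal vertex.
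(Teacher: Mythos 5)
The paper gives no proof of this proposition: it simply cites \cite[Lemma B.27, Appendix B]{MT11} and \cite[Proposition 3.1]{Tan17} (see also Remark \ref{rem:resolve} on the ``lemma of the string''). Your strategy --- local analysis of the marks of $\widetilde{\D}(\Phi)$ via the affine relation at each vertex --- is exactly the one used in those sources, so the route is the standard one. But two steps in your write-up are genuinely gapped. First, the strict increase in part (i): the relation at $\lambda_s$ only yields the \emph{upper} bound $c_{\lambda_{s+1}}\le c_{\lambda_s}+1$ (given $c_{\lambda_{s-1}}=c_{\lambda_s}-1$). Your justification of the lower bound (``any other neighbour \dots would either shorten $\Lambda$ or violate the increasing pattern'') is not an argument, and the preliminary lemma you propose to repair it is circular: it is stated only for paths ``on which the coefficients are nondecreasing,'' which is precisely what must be proved for the minimal path. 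The correct fix is to note that $2c_{\lambda_t}\ge c_{\lambda_{t-1}}+c_{\lambda_{t+1}}$ makes the increments $c_{\lambda_{t+1}}-c_{\lambda_t}$ non-increasing in $t$; hence if some increment were $\le 0$, all later ones would be, giving $c_{\lambda_q}\le c_{\lambda_s}<c_{\max}$ (the strict inequality holds for $s<q$ by minimality of $|\Lambda|$), a contradiction with $c_{\lambda_q}=c_{\max}$.

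Second, you state the mark relation as $2c_\lambda=\sum_j|\langle\lambda,\mu_j\rangle|c_{\mu_j}$, but pairing $\lambda_0+\sum_i c_{\lambda_i}\lambda_i=0$ against $\lambda$ gives $2c_\lambda=\sum_j|\langle\mu_j,\lambda\rangle|c_{\mu_j}$, with the \emph{neighbour} in the first slot. This is not cosmetic: for a double bond between $\lambda_s$ (long) and $\lambda_{s+1}$ (short) one has $|\langle\lambda_{s+1},\lambda_s\rangle|=1$, so the relation at $\lambda_s$ reads $2(s+1)\ge s+(s+2)$ and yields no contradiction; your displayed inequality $2(s+1)\ge s+2(s+2)$ is simply not what the relation at $\lambda_s$ says in that case. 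One must instead apply the relation at $\lambda_{s+1}$, where $c_{\lambda_s}$ enters with weight $|\langle\lambda_s,\lambda_{s+1}\rangle|\ge 2$, obtaining $2(s+2)\ge 2(s+1)+(s+3)$, which is absurd --- but this requires $\lambda_{s+2}$ to exist, i.e.\ $s\le q-2$, consistent with the fact that the bond $\lambda_{q-1}$--$\lambda_q$ genuinely can be multiple (types $F_4$, $G_2$; cf.\ Corollary \ref{cor:not-important}). With these two repairs (and, if you wish to avoid classifying affine diagrams, replacing the ``only $\widetilde{A}_\ell$ has a cycle'' step by the remark that an edge $\lambda_s$--$\lambda_t$ with $t\ge s+2$ would shortcut the path and contradict minimality of $|\Lambda|$), your argument goes through; the remaining pieces (no extra neighbours at $\lambda_s$ for $s\le q-1$, base case $c_{\lambda_1}=2$) are sound as written.
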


 \begin{proof} 
See, e.g., \cite[Lemma B.27, Appendix B]{MT11} and \cite[Proposition 3.1]{Tan17}.
\end{proof}

\begin{remark}\label{rem:resolve}
Proposition \ref{lem:coes} was first formulated and proved in terms of coroots in \cite[Lemma 1.5]{R75} under the name \emph{lemma of the string}. 
The proof of \cite[Lemma B.27, Appendix B]{MT11} contains a small error, which was resolved in \cite[Proposition 3.1]{Tan17}.
 \end{remark}

 \begin{corollary}\label{cor:c-max}\quad
 \begin{enumerate}[(i)]
\item If $c_{\max}=1$, then all roots of $\Phi$ have the same length. 
In addition, if $\ell \ge 2$, then ${\D}(\Phi)$ is a simple chain and $-\theta$ is connected only to two terminal vertices of ${\D}(\Phi)$.
\item  If $c_{\max}\ge 2$, then $-\theta$ is connected only to one vertex $\lambda$ of ${\D}(\Phi)$ with $\lan\theta,\lambda \ran\in \{1,2\}$ and $c_{\lambda}=2$. 
In particular, if $\lambda_i \in \Delta$ with $c_{\lambda_i}=1$ is connected to $\lambda_j \in \Delta$ with $c_{\lambda_j}\ge 2$, then $\lambda_i$ must be a terminal vertex of ${\D}(\Phi)$ and $\lan\lambda_j,\lambda_i\ran=-1$.
\end{enumerate}
\end{corollary}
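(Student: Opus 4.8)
The plan is to deduce everything from Proposition \ref{lem:coes} together with standard facts about the extended Dynkin graph $\widetilde{\D}(\Phi)$. Recall that $\lambda_0 := -\theta$ is attached to $\D(\Phi)$ to form $\widetilde{\D}(\Phi)$, and that the coefficients $c_{\lambda_i}$ (with $c_{\lambda_0}=1$) are exactly the labels on the affine Dynkin diagram; moreover every vertex $\lambda_i$ of $\widetilde{\D}(\Phi)$ satisfies $\sum_j \langle \lambda_i,\lambda_j\rangle\, c_{\lambda_j}=0$ (the defining relation of the affine Cartan matrix, i.e.\ $\theta$ being a root forces $\langle \theta,\lambda_i\rangle$ to take the value making $\lambda_0$ fit), which is what makes the local analysis at $\lambda_0$ work.

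For (i): if $c_{\max}=1$ then every $c_{\lambda_i}=1$. First I would observe that all roots have the same length; this is because the long/short root distinction in a non-simply-laced system always produces a coefficient $\ge 2$ in $\theta$ (visible from the affine diagrams $\widetilde B,\widetilde C,\widetilde F,\widetilde G$), or can be argued directly: $\theta$ is long, and if there were a short simple root $\alpha_i$ one checks using $\langle\theta,\alpha_i\rangle$ and the height recursion that $c_{\max}\ge 2$. For the second statement with $\ell\ge 2$: since all $c_{\lambda_i}=1$, the affine diagram $\widetilde{\D}(\Phi)$ has all labels equal to $1$, which among affine diagrams forces $\widetilde{\D}(\Phi)=\widetilde A_{\ell}$, a cycle; removing $\lambda_0$ leaves a simple chain $\D(\Phi)$, and $\lambda_0$ is joined precisely to the two endpoints of that chain. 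Alternatively, without invoking the classification of affine diagrams: the affine relation $\sum_j\langle\lambda_0,\lambda_j\rangle c_{\lambda_j}=0$ with $\langle\lambda_0,\lambda_0\rangle=2$ and all $c=1$ gives $\sum_{j\ge1}\langle\lambda_0,\lambda_j\rangle=-2$, and since each $\langle\lambda_0,\lambda_j\rangle\le 0$ with equal root lengths (so each term is $0$ or $-1$), $\lambda_0$ meets exactly two simple roots; a parallel count at any vertex of $\widetilde{\D}(\Phi)$ shows every vertex has degree $2$, whence the graph is a cycle and $\D(\Phi)$ a chain.

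For (ii): assume $c_{\max}\ge 2$. By Proposition \ref{lem:coes}(ii) applied with the labeling there, $(\lambda_0,\ldots,\lambda_{q-1})$ is a simple chain in $\widetilde{\D}(\Phi)$ attached to the rest only at $\lambda_{q-1}$; in particular $\lambda_0=-\theta$ is joined in $\widetilde{\D}(\Phi)$ only to $\lambda_1$, and (by Proposition \ref{lem:coes}(i)) $c_{\lambda_1}=2$, with $q\ge 1$ since $c_{\max}\ge2=c_{\lambda_q}$ forces $q\ge 1$. Translating back to $\D(\Phi)$: $-\theta$ is connected only to the single vertex $\lambda:=\lambda_1$, and $c_\lambda=2$. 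The value $\langle\theta,\lambda\rangle\in\{1,2\}$ comes from $(\lambda_0,\lambda_1)<0$: the Cartan integer $\langle\lambda_0,\lambda_1\rangle\in\{-1,-2,-3\}$, but if it were $-3$ we would be in type $G_2$ where $c_{\max}=3$ and $\lambda_1$ is the unique other vertex—then $-\theta=\lambda_1$ would contradict non-proportionality unless $q=1$, and a direct check of $\widetilde G_2$ shows $\langle\lambda_0,\lambda_1\rangle=-1$ there anyway; so $\langle\lambda_0,\lambda_1\rangle\in\{-1,-2\}$, i.e.\ $\langle\theta,\lambda\rangle\in\{1,2\}$. For the final assertion: suppose $\lambda_i$ with $c_{\lambda_i}=1$ is adjacent in $\D(\Phi)$ to $\lambda_j$ with $c_{\lambda_j}\ge2$. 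If $\lambda_i$ were not terminal it would have another neighbor $\lambda_k$ in $\D(\Phi)$, and the affine relation $\sum_m\langle\lambda_i,\lambda_m\rangle c_{\lambda_m}=0$ reads $2\cdot 1+\langle\lambda_i,\lambda_j\rangle c_{\lambda_j}+\sum_{\text{other neighbors}}\langle\lambda_i,\lambda_m\rangle c_{\lambda_m}=0$; since every Cartan integer here is $\le -1$ and $c_{\lambda_j}\ge 2$, the term $\langle\lambda_i,\lambda_j\rangle c_{\lambda_j}\le -2$ already makes the sum $\le 0$ with equality only if there are no other neighbors and $\langle\lambda_i,\lambda_j\rangle c_{\lambda_j}=-2$, i.e.\ $\lambda_i$ terminal and $\langle\lambda_i,\lambda_j\rangle=-1$ (as $c_{\lambda_j}\ge2$ rules out $\langle\lambda_i,\lambda_j\rangle=-2$). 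That is the claim.

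The main obstacle I anticipate is keeping the argument genuinely classification-light: the cleanest statements ("all labels $1$ $\Rightarrow$ cycle", "$c_{\max}\ge2$ local structure at $-\theta$") are easiest to justify by inspecting the list of affine Dynkin diagrams, whereas a fully uniform proof requires squeezing everything out of the affine relation $\sum_j\langle\lambda_i,\lambda_j\rangle c_{\lambda_j}=0$ and Proposition \ref{lem:coes}. Since Proposition \ref{lem:coes} is already cited from the literature, leaning on it for (ii) is unproblematic; the delicate point is part (i)'s passage from "all coefficients $1$" to "$\D(\Phi)$ is a simple chain," for which the degree-counting argument via the affine relation is the honest route and should be written out carefully.
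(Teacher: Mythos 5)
Your overall strategy (deduce everything from Proposition \ref{lem:coes} plus linear relations coming from $\theta=\sum_m c_{\lambda_m}\lambda_m$) is reasonable and more self-contained than the paper, which simply cites \cite[Proposition 3.1 and Remark 3.2]{Tan17} for part (i) and the first statement of (ii), and proves only the last assertion of (ii), exactly as you do, from the identity $\langle\theta,\lambda_i\rangle=0$. However, there are two concrete problems. First, your ``affine relation'' $\sum_m\langle\lambda_i,\lambda_m\rangle c_{\lambda_m}=0$ is false in general: with the paper's convention $\langle\alpha,\beta\rangle=2(\alpha,\beta)/(\beta,\beta)$, the Cartan integer is linear in its \emph{first} argument, so pairing $\sum_m c_{\lambda_m}\lambda_m=0$ against $\lambda_i$ gives $\sum_m c_{\lambda_m}\langle\lambda_m,\lambda_i\rangle=0$, not your transposed version; the two differ precisely when root lengths differ, which is the only situation in which the last assertion has content. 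As written, your conclusion is $\langle\lambda_i,\lambda_j\rangle=-1$, which is the transpose of what the corollary asserts and is actually false: in $C_\ell$ the long simple root $\lambda_i=\alpha_\ell$ has $c_{\lambda_i}=1$, its neighbour $\lambda_j=\alpha_{\ell-1}$ has $c_{\lambda_j}=2$, and $\langle\lambda_i,\lambda_j\rangle=-2$ while $\langle\lambda_j,\lambda_i\rangle=-1$ (and indeed $\sum_m\langle\lambda_i,\lambda_m\rangle c_{\lambda_m}=2-4=-2\ne 0$ there). Running your argument with the correct relation $2c_{\lambda_i}+\sum_{m\ne i}c_{\lambda_m}\langle\lambda_m,\lambda_i\rangle=0$ does yield the stated conclusion, so the repair is purely a matter of which slot the Cartan integers occupy, but the computation and its output are wrong as written.

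Second, in (ii) you derive ``$-\theta$ has a unique neighbour'' from Proposition \ref{lem:coes}(ii), but that proposition only controls the chain $(\lambda_0,\dots,\lambda_{q-1})$, which degenerates to the single vertex $\lambda_0$ when $q=1$, i.e.\ when $c_{\max}=2$; in that case it says nothing about the neighbours of $\lambda_0$, and $c_{\max}=2$ covers types $B$, $C$, $D$ among others. You need a supplementary argument there, e.g.\ $2=\langle\theta,\theta\rangle=\sum_m c_{\lambda_m}\langle\lambda_m,\theta\rangle=\sum_{\lambda_m\ \mathrm{adjacent}}c_{\lambda_m}$ (using that $\theta$ is long, so $\langle\lambda_m,\theta\rangle=1$ for every neighbour), combined with Proposition \ref{lem:coes}(i), which supplies a neighbour $\lambda_1$ with $c_{\lambda_1}=2$ and hence forces it to be the only one. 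Finally, the first claim of (i) (all roots have the same length when $c_{\max}=1$) is only gestured at (``one checks using the height recursion''); it is standard, and the paper outsources it to \cite{Tan17}, but as it stands this step is asserted rather than proved.
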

 \begin{proof} 
The second statement of (ii) follows from the equation $\lan\theta,\lambda_i \ran=0$.
The other statements can be found in \cite[Proposition 3.1 and Remark 3.2]{Tan17}.
\end{proof}

\begin{corollary}\label{cor:not-important}
Assume that $c_{\max}\ge 2$. Either $\lan\lambda_{q-1},\lambda_{q}\ran\in\{-2,-3\}$ or $\lambda_{q}$ is a ramification point of $\widetilde{\D}(\Phi)$.
\end{corollary}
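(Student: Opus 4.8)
\textbf{Proof plan for Corollary \ref{cor:not-important}.}
The statement is a dichotomy about the last edge of the chain $\Lambda=\{\lambda_0,\ldots,\lambda_q\}$ produced by Proposition \ref{lem:coes}: either the bond $\lan\lambda_{q-1},\lambda_q\ran$ is a multiple bond (value $-2$ or $-3$) or $\lambda_q$ is a branch point of $\widetilde{\D}(\Phi)$. The plan is to argue by contradiction: assume $\lan\lambda_{q-1},\lambda_q\ran=-1$ \emph{and} $\lambda_q$ is not a ramification point of $\widetilde{\D}(\Phi)$, and derive a contradiction with the minimality of $|\Lambda|$, i.e.\ with $c_{\max}=c_{\lambda_q}$ being attained first at $\lambda_q$.

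The key input is the standard identity for the coefficients of the highest root: for each simple root $\mu\in\Delta$ one has $\sum_{\nu}c_\nu\lan\nu,\mu\ran=0$ where $\nu$ ranges over $\widetilde{\D}(\Phi)$ (equivalently $(\theta+\lambda_0,\mu)=0$ together with $\lan\theta,\mu\ran=0$ for $\mu\ne$ the affine node), which says $2c_\mu=-\sum_{\nu\sim\mu}c_\nu\lan\nu,\mu\ran$, a sum over the neighbors of $\mu$ in $\widetilde{\D}(\Phi)$. First I would apply this at $\mu=\lambda_q$. By Proposition \ref{lem:coes}(ii), $\lambda_{q-1}$ is joined to the rest of $\widetilde{\D}(\Phi)$ only at $\lambda_{q-1}$ itself, so among the neighbors of $\lambda_q$ one of them is $\lambda_{q-1}$, and under the contradiction hypothesis $\lambda_q$ has at most one other neighbor $\nu_0$ (since it is not a ramification point; here one must be slightly careful about whether $\lambda_q$ is terminal, in which case there is no $\nu_0$ at all). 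In the terminal case the identity reads $2c_{\lambda_q}=-c_{\lambda_{q-1}}\lan\lambda_{q-1},\lambda_q\ran=c_{\lambda_{q-1}}=c_{\lambda_q}-1$ using $c_{\lambda_s}=s+1$ from Proposition \ref{lem:coes}(i) and $\lan\lambda_{q-1},\lambda_q\ran=-1$; this forces $c_{\lambda_q}=-1$, absurd. In the case with exactly one further neighbor $\nu_0$, the identity gives $2c_{\lambda_q}=c_{\lambda_{q-1}}-c_{\nu_0}\lan\nu_0,\lambda_q\ran=c_{\lambda_q}-1-c_{\nu_0}\lan\nu_0,\lambda_q\ran$, so $c_{\nu_0}\lan\nu_0,\lambda_q\ran=-c_{\lambda_q}-1<0$, whence $\lan\nu_0,\lambda_q\ran\le-1$ and $c_{\nu_0}\ge c_{\lambda_q}+1>c_{\max}$, contradicting the maximality of $c_{\max}$. (If $\lan\nu_0,\lambda_q\ran\le-2$ the inequality $c_{\nu_0}\ge(c_{\lambda_q}+1)/2$ still needs checking, but combined with $c_{\max}\ge2$ and a short case inspection it still leads to $c_{\nu_0}$ exceeding $c_{\max}$ or else $\nu_0=\lambda_{q-1}$, which is impossible by the chain structure.) I would present this cleanly by first noting that $\lambda_q\ne\lambda_0$ (since $c_{\max}\ge2=c_{\lambda_1}$ forces $q\ge1$), so the node $\lambda_{q-1}$ genuinely exists and is a neighbor of $\lambda_q$ with $c_{\lambda_{q-1}}=c_{\lambda_q}-1$.

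The main obstacle I anticipate is bookkeeping around the special role of the affine node and edge multiplicities: one must make sure the "neighbor sum" identity is applied at a node of $\widetilde{\D}(\Phi)$ and that $\lambda_q$ could a priori be adjacent to $\lambda_0=-\theta$; but Proposition \ref{lem:coes}(ii) and Corollary \ref{cor:c-max}(ii) pin down that $\lambda_0$ attaches to $\widetilde{\D}(\Phi)$ along the chain (at $\lambda_1$), and the chain $\lambda_0,\ldots,\lambda_{q-1}$ is simple and isolated from the rest except at $\lambda_{q-1}$, so $\lambda_q$'s only "old" neighbor on the chain side is $\lambda_{q-1}$. Once that is in place, the counting identity does all the work. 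Alternatively — and perhaps more in the spirit of the paper's "combinatorial properties of the root system" philosophy — one can bypass the general argument entirely: by Corollary \ref{cor:c-max}(ii) we have $c_{\max}\ge2$ and the chain has a very constrained shape, so one may simply inspect the finitely many possibilities ($B_\ell,C_\ell,F_4,G_2$ among the non-simply-laced types, and $D_\ell,E_6,E_7,E_8$ among those with $c_{\max}\ge2$) and observe the claim in each; this is the fallback if the uniform argument gets mired in edge-multiplicity cases.
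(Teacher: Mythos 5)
Your argument is sound in its main line, and it is worth noting that the paper itself gives no proof of this corollary --- it only cites \cite[Corollary 3.3]{Tan17} --- so your write-up is a genuine self-contained argument rather than a reproduction. The engine of your proof, the identity $2c_\mu=-\sum_{\nu\sim\mu}c_\nu\lan\nu,\mu\ran$ obtained from $\sum_{\nu\in\Delta\cup\{-\theta\}}c_\nu\,\nu=0$ applied at $\mu=\lambda_q$, together with the fact from Proposition \ref{lem:coes}(ii) that the only neighbour of $\lambda_q$ on the chain side is $\lambda_{q-1}$, is exactly the right tool, and your two cases (terminal $\lambda_q$ giving $c_{\lambda_q}=-1$; one extra neighbour $\nu_0$ giving $c_{\nu_0}\bigl(-\lan\nu_0,\lambda_q\ran\bigr)=c_{\max}+1$) are computed correctly.

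The one loose end is the sub-case $\lan\nu_0,\lambda_q\ran\in\{-2,-3\}$, which you flag but do not close: there the identity only yields $c_{\nu_0}=(c_{\max}+1)/k$ with $k\in\{2,3\}$, which does not exceed $c_{\max}$ and so produces no contradiction by coefficient-counting alone. This gap closes cleanly with a length observation, so no case inspection is needed. Under your contradiction hypothesis $\lan\lambda_{q-1},\lambda_q\ran=-1$, one has $\|\lambda_{q-1}\|^2/\|\lambda_q\|^2=1/|\lan\lambda_q,\lambda_{q-1}\ran|\le 1$, so $\lambda_q$ is at least as long as $\lambda_{q-1}$. Since the chain $(\lambda_0,\ldots,\lambda_{q-1})$ is simple and starts at the long root $-\theta$, every $\lambda_s$ with $s\le q-1$ is long (for interior chain vertices the same neighbour-sum identity forces all chain bonds to be single, hence equal lengths), so $\lambda_q$ is long. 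But then $\lan\nu,\lambda_q\ran=2(\nu,\lambda_q)/(\lambda_q,\lambda_q)\in\{0,-1\}$ for every other vertex $\nu$ of $\widetilde{\D}(\Phi)$, i.e.\ only your case $k=1$ can occur, and the contradiction $c_{\nu_0}=c_{\max}+1>c_{\max}$ is immediate. With that insertion your uniform argument is complete and the classification fallback is unnecessary.
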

\begin{proof}
See  \cite[Corollary 3.3]{Tan17}. 
\end{proof}

Denote $\U:=\{\theta_i\in \Phi^+ \mid {\rm ht}(\theta_i) >m_{\ell-1}\}$, and set $m:=|\U|$. 
By Theorems \ref{thm:dual} and \ref{exponents}(i), (iii), we have $m=m_{\ell}-m_{\ell-1}=m_{2}-1>0$.
Suppose that the elements of $\U$ are labeled so that $\theta_1$ denotes the highest root, and $\xi_i=\theta_i-\theta_{i+1} \in\Delta$ for $1 \le i \le m-1$. 
We also adopt a convention $\xi_0:=-\theta_1$. 
Set $\Xi:=\{\xi_i \mid 0 \le i \le m-1\}$.
Note that $\Xi$ is a multiset, not necessarily a set. 
For a finite multiset $S=\{(a_1)^{b_1},\ldots, (a_n)^{b_n}\}$, write $\overline{S}$ for the base set of $S$, i.e., $\overline{S}=\{a_1,\ldots, a_n\}$.
Let us call 
\begin{enumerate}
\item[Case 1:]  ``there is an integer $t$ such that $1 \le t \le m-1$ and $\lan\theta_t,\xi_t \ran= 3$".
\item[Case 2:]  Negation of Case $1$.
\end{enumerate}
\begin{proposition}\label{prop:b-a}
\quad
\begin{enumerate}
\item[(i)]  If Case $1$ occurs, then $t=m-2$ and $\Xi=\{\xi_0, \xi_1, \ldots, (\xi_{m-2})^2\}$ with $\xi_i \ne \xi_j$ for $0 \le i < j \le m-2$. 
As a result, $|\overline{\Xi}|=m_2-2$. 
\item[(ii)]   If Case $2$ occurs,  then  $\Xi=\{\xi_0, \xi_1, \ldots, \xi_{m-1}\}$ with $\xi_i \ne \xi_j$ for $0 \le i < j \le m-1$. 
As a result, $|\Xi|=m_2-1$. 
\end{enumerate}
\end{proposition}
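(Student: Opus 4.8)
The statement is about the multiset $\Xi = \{\xi_0, \xi_1, \ldots, \xi_{m-1}\}$ of ``consecutive differences'' along the chain of highest-height positive roots $\theta_1 > \theta_2 > \cdots > \theta_m$ (together with $\xi_0 = -\theta_1$), and it asserts that the only way $\Xi$ can fail to be a genuine set is the ``triple-bond'' Case~1, in which exactly one simple root is repeated (appearing twice) and the repetition happens at the very end of the chain, forcing $t = m-2$. I would organize the argument around two observations: first, that each step $\theta_i \rightsquigarrow \theta_{i+1}$ corresponds to subtracting a simple root $\xi_i$ with $\langle \theta_{i+1}, \xi_i\rangle < 0$ (equivalently $\langle \theta_i, \xi_i \rangle > 0$), since $\theta_i - \theta_{i+1} = \xi_i \in \Delta$ and heights drop by $1$; and second, that along a chain in the root poset the same simple root cannot be subtracted twice in a row, and more generally the number of times $\xi$ can be subtracted on the way down from $\theta_1$ is controlled by the integer $\langle \theta_1, \xi^\vee\rangle$-type data, i.e. by the coefficient structure governed by Proposition~\ref{lem:coes} and Corollaries~\ref{cor:c-max}--\ref{cor:not-important}.

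\emph{Step 1: reduce to counting multiplicities via coefficients.} For a simple root $\xi = \alpha_j$, the multiplicity of $\alpha_j$ in $\Xi$ counts how many indices $i \in \{0, \ldots, m-1\}$ have $\xi_i = \alpha_j$. Writing $\theta_i = \sum_k c_k^{(i)} \alpha_k$ with $c_k^{(1)} = c_{\alpha_k}$ the highest-root coefficients and $c_k^{(m+1)} := c_k$ of the lowest root in $\U$'s ``successor'' (more precisely tracking $\theta_m$ and the next root of height $m_{\ell-1}$), the multiplicity of $\alpha_j$ among $\xi_1, \ldots, \xi_{m-1}$ equals $c_j^{(1)} - c_j^{(m)} = c_{\alpha_j} - c_j^{(m)}$, and including $\xi_0 = -\theta_1$ adds nothing to the count of any positive simple root. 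So I need: for all but possibly one $j$, $c_{\alpha_j} - c_j^{(m)} \le 1$; and for the exceptional $j$ it is exactly $2$, with that second subtraction happening last. Here $m = m_2 - 1$ and $\sum_j (c_{\alpha_j} - c_j^{(m)})$ should be $m - 1$ (the number of steps $\xi_1, \ldots, \xi_{m-1}$), so the total ``excess over a set'' is tightly constrained: the base set $\overline{\Xi}$ has size $m+1$ minus the number of repetitions.

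\emph{Step 2: locate where a double subtraction can occur.} A simple root $\alpha_j$ is subtracted at step $i$ only if $\langle \theta_i, \alpha_j\rangle > 0$. Using that $\langle \theta_1, \alpha_j\rangle = \langle \theta, \alpha_j\rangle \ge 0$ with equality unless $\alpha_j$ is the unique node adjacent to $-\theta$ (Corollary~\ref{cor:c-max}), and that the only place in the descending chain where one can have $\langle \theta_i, \xi_i\rangle \ge 2$ at all is forced by the bond data, I would argue: between two subtractions of the same $\alpha_j$, the value $\langle \cdot, \alpha_j\rangle$ must go negative and come back positive, which requires a neighbor of $\alpha_j$ in the Dynkin diagram to be subtracted in between; iterating this and invoking Corollary~\ref{cor:not-important} (the $\langle \lambda_{q-1}, \lambda_q\rangle \in \{-2,-3\}$ or ramification dichotomy) pins the only possible repetition to the neighborhood of the triple bond, i.e. to Case~1, and shows it is the short root at the $G_2$/$F_4$-type end. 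The hypothesis of Case~1, $\langle \theta_t, \xi_t\rangle = 3$, is exactly the $G_2$ local picture, which also forces $t$ to be the penultimate step $m-2$ (after the triple-bond subtraction only one more simple-root subtraction is possible, reaching height $m_{\ell-1}$, and the structure of $G_2 \subseteq \Phi$ makes that last root distinct from all earlier $\xi_i$).

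\emph{Main obstacle.} The delicate part is Step~2: ruling out \emph{any other} repetition in $\Xi$ in Case~2, i.e. proving $\xi_i \ne \xi_j$ for all $i < j$ when no triple bond is met. This is where I expect to need a genuinely careful poset/diagram argument rather than a one-line estimate — tracking how $\langle \theta_i, \alpha\rangle$ evolves as we walk down from $\theta_1$, using the long-root vs. short-root coefficient bounds ($c_{\alpha_j} \le 2$ unless there's a ramification point, and $c_{\alpha_j} \le c_{\max}$ with $c_{\max} \le 3$ only in $G_2$), and possibly falling back on the classification of rank-$\le 4$ systems for the small-rank base cases exactly as the paper warns it must. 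I would also double-check the height arithmetic $m = m_2 - 1 = m_\ell - m_{\ell-1}$ (from Theorems~\ref{thm:dual} and \ref{exponents}) so that the ``total excess'' bookkeeping in Step~1 is airtight, since the whole dichotomy hinges on exactly one unit of slack being available.
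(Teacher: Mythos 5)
A preliminary remark on the comparison: the paper does not prove Proposition \ref{prop:b-a} internally --- its ``proof'' is a citation to \cite{Tan17} --- so your proposal must be judged on its own terms, and on those terms it is a plan rather than a proof, with the decisive step missing. Your Step 1 is correct and is the right reduction: by telescoping, the multiplicity of a simple root $\alpha_j$ in the multiset $\{\xi_1,\ldots,\xi_{m-1}\}$ equals the coefficient of $\alpha_j$ in $\theta_1-\theta_m$, and $\xi_0=-\theta_1$ can never coincide with a simple root, so the proposition reduces to showing that no simple root is subtracted twice along the chain except in Case 1, where exactly one is subtracted twice, consecutively, at the last two steps. But that reduction \emph{is} the statement to be proved, and you explicitly defer its proof (``Main obstacle''), so nothing in the proposal actually establishes it.

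Moreover, the two ``observations'' you propose to organize the argument around are false as stated, and they fail at precisely the point where Cases 1 and 2 must be separated. Since $\theta_{i+1}=\theta_i-\xi_i$, one has $\langle\theta_{i+1},\xi_i\rangle=\langle\theta_i,\xi_i\rangle-2$, so ``$\langle\theta_{i+1},\xi_i\rangle<0$'' is not equivalent to ``$\langle\theta_i,\xi_i\rangle>0$'' (in $G_2$ one finds $\langle\theta_{m-1},\xi_{m-2}\rangle=1>0$ even though $\xi_{m-2}$ has just been subtracted). And ``the same simple root cannot be subtracted twice in a row'' is contradicted by $G_2$ itself, where $\xi_{m-2}=\xi_{m-1}$ are consecutive --- which is exactly the content of part (i). Because of this, your proposed mechanism for locating repetitions (``between two subtractions of $\alpha_j$ the value $\langle\cdot,\alpha_j\rangle$ must dip negative and return, so a Dynkin neighbor of $\alpha_j$ is subtracted in between'') does not see consecutive repetitions at all. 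In particular it does not rule out a consecutive double subtraction with $\langle\theta_t,\xi_t\rangle=2$ (a length-three $\xi_t$-string at the top of the root poset, with $\theta_t$ long and $\xi_t$ short), which is not excluded by the Case 1 hypothesis $\langle\theta_t,\xi_t\rangle=3$ and would directly violate part (ii). Ruling that configuration out, and showing that a non-consecutive repetition forces intermediate subtractions incompatible with the string and coefficient constraints of Proposition \ref{lem:coes} and Corollaries \ref{cor:c-max}--\ref{cor:not-important}, is the actual content of the proposition; your write-up would need to supply that analysis (as is done in \cite{Tan17}) rather than gesture at it.
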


 \begin{proof} 
 See \cite[Propositions 3.9 and 3.10]{Tan17}.
 \end{proof}

 \begin{theorem}\label{thm:iso}
With the notations we have seen from Proposition \ref{lem:coes} to Proposition \ref{prop:b-a}, we have that $q=m-1$ and $\lambda_i = \xi_i$ for all $1 \le i \le q$. 
In particular, $\overline{\Xi}=\Lambda$ (as sets). 
Moreover, if Case $1$ occurs, then $m_2 = c_{\max}+2$; and if Case $2$ occurs, then $m_2 = c_{\max}+1$.
\end{theorem}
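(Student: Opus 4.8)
The plan is to show that the two combinatorial chains encoded in the hypotheses trace out the same walk in the extended Dynkin diagram: on one side the height‑descending chain $\theta=\theta_1>\theta_2>\cdots$ of top roots, recorded by $\xi_0=-\theta,\xi_1,\ldots,\xi_{m-1}$; on the other the ``string to the maximal mark'' $\lambda_0=-\theta,\lambda_1,\ldots,\lambda_q$ of Proposition~\ref{lem:coes}. Once these are matched up one gets $\lambda_i=\xi_i$ for $1\le i\le q$, then $\overline\Xi=\Lambda$, and finally the formulas for $m_2$ by comparing cardinalities via Proposition~\ref{lem:coes}(i) and Proposition~\ref{prop:b-a}. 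First I would dispose of the degenerate case $c_{\max}=1$: then $\Phi=A_\ell$, $\Lambda=\{-\theta\}$, so $q=0$, and directly $m_2=2=c_{\max}+1$, $m=1=q+1$; everything is then vacuous and we are in Case~2. So assume henceforth $c_{\max}\ge 2$, equivalently $\Phi\ne A_\ell$, so that $m=m_2-1\ge 2$.

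The core step is an induction showing the $\lambda$‑string lifts to a chain in the root poset. Set $\mu_s:=\theta-\lambda_1-\cdots-\lambda_s=-(\lambda_0+\lambda_1+\cdots+\lambda_s)$ for $0\le s\le q$; I claim each $\mu_s$ is a positive root with $\mathrm{ht}(\mu_s)=h-1-s$. For $s=0$ this is $\theta$, of height $h-1$ by Theorem~\ref{exponents}(v). For the step, assume it for some $s$ with $0\le s\le q-1$. By construction $(\lambda_s,\lambda_{s+1})<0$, while by Proposition~\ref{lem:coes}(ii) the vertices $\lambda_0,\ldots,\lambda_{q-1}$ form a simple chain of $\widetilde{\D}(\Phi)$ attached to the rest of the diagram only at $\lambda_{q-1}$; hence $(\lambda_j,\lambda_{s+1})=0$ for $0\le j\le s-1$, whether $s+1\le q-1$ or $s+1=q$. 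Therefore
$$(\mu_s,\lambda_{s+1})=-\sum_{j=0}^{s}(\lambda_j,\lambda_{s+1})=-(\lambda_s,\lambda_{s+1})>0 .$$
Since $\mu_s$ and $\lambda_{s+1}$ are distinct roots with positive inner product, $\mu_{s+1}=\mu_s-\lambda_{s+1}\in\Phi$; its height $h-2-s$ is positive (as $c_{\max}\le h-2$), so $\mu_{s+1}\in\Phi^+$. This completes the induction.

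Next I would match the two chains. For $0\le s\le\min(q,m-1)$ the root $\mu_s$ has height $h-1-s>m_{\ell-1}$, hence lies in $\U$; being the unique positive root of its height (Theorems~\ref{thm:dual} and~\ref{exponents}(i),(iii)), it equals $\theta_{s+1}$, and consequently $\lambda_s=\mu_{s-1}-\mu_s=\theta_s-\theta_{s+1}=\xi_s$ for $1\le s\le\min(q,m-1)$. The remaining, and hardest, point is to compare the two chain lengths. Case~1 requires $\langle\theta_t,\xi_t\rangle=3$ for some $t$, which forces a root‑length ratio $\sqrt 3$ and hence $\Phi=G_2$; this case I would settle by a direct check, obtaining $q=m-2$, $\overline\Xi=\Lambda$ and $m_2=c_{\max}+2$. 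In Case~2 I must show $q\le m-1$: if $q\ge m$, the matching above already gives $\xi_s=\lambda_s$ for all $1\le s\le m-1$, and one derives a contradiction from the extra vertex $\lambda_m$ together with the coefficient bookkeeping of $\mu_m$ — a positive root of height $m_{\ell-1}$ which (if $q>m$) still carries the maximal mark $c_{\max}$ at the vertex $\lambda_q$, incompatible with the attachment point of $\lambda_q$ and Corollary~\ref{cor:not-important}. Granting $q\le m-1$, the reverse inclusion $\{\lambda_0,\ldots,\lambda_q\}=\{-\theta,\xi_1,\ldots,\xi_q\}\subseteq\overline\Xi$ gives $c_{\max}=|\Lambda|\le|\overline\Xi|$; combined with $q\le m-1$ and Proposition~\ref{prop:b-a}(ii) this forces $q=m-1$, $\overline\Xi=\Lambda$ and $m_2=|\overline\Xi|+1=c_{\max}+1$.

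The main obstacle is exactly this last length comparison in Case~2, i.e.\ the bound $c_{\max}\le m_2-1$: the purely local inputs (the simple‑chain structure of Proposition~\ref{lem:coes}, Corollaries~\ref{cor:c-max} and~\ref{cor:not-important}, and standard root‑string arguments) pin down the shape of both chains, but one still has to rule out the $\lambda$‑string overshooting the last top root, which seems to require treating $G_2$ (the unique Case~1 system) separately. The $\mu$‑induction and the height‑uniqueness inside $\U$, by contrast, are routine given the results already in place.
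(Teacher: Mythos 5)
The paper offers no in-house argument for this statement---its ``proof'' is the citation to [Tan17, Theorem 4.1]---so your attempt has to stand entirely on its own, and it does not. The correct and well-executed part is the construction of the chain $\mu_s=\theta-\lambda_1-\cdots-\lambda_s$: the induction using the simple-chain structure from Proposition \ref{lem:coes}(ii) to get $(\mu_s,\lambda_{s+1})=-(\lambda_s,\lambda_{s+1})>0$, hence $\mu_{s+1}\in\Phi^+$ of height $\h-2-s$, is sound, and the identification $\mu_s=\theta_{s+1}$ for $s\le\min(q,m-1)$ via the fact that $t_r=1$ for $r>m_{\ell-1}$ (Theorem \ref{thm:dual}) correctly yields $\lambda_s=\xi_s$ on that range. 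But the theorem's entire content in Case 2 is the equality of the two chain lengths, i.e.\ $c_{\max}=q+1\le m=m_2-1$, and for this you give only a sketch that does not close. Concretely: if $q\ge m$, then $\mu_m=\theta-\lambda_1-\cdots-\lambda_m$ is a positive root of height $m_{\ell-1}$ whose coefficients are $c_{\lambda_s}-1=s\ge 1$ at $\lambda_1,\dots,\lambda_m$, unchanged (hence still positive, still equal to $c_{\max}$ at $\lambda_q$) elsewhere, and whose height identity $\mathrm{ht}(\mu_m)=\h-1-m$ is automatically consistent with this bookkeeping. There is no contradiction to be extracted from ``the coefficient bookkeeping of $\mu_m$'' alone, and you do not say how Corollary \ref{cor:not-important} (which constrains the bond or ramification at $\lambda_q$) interacts with it. Since $t_{m_{\ell-1}}\ge 2$, the mere existence of a second positive root at height $m_{\ell-1}$ is certainly not absurd. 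This inequality $m_2\ge c_{\max}+1$ is exactly the non-trivial half of the theorem, and as written your proof does not establish it.

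Two smaller points. First, your Case 1 analysis is internally fine (the length ratio forced by $\lan\theta_t,\xi_t\ran=3$ does pin down $G_2$ without circular appeal to Corollary \ref{cor:criterion}), but note that the conclusion you reach there, $q=m-2$, contradicts the literal claim ``$q=m-1$'' of the statement you are proving: for $G_2$ one has $q=c_{\max}-1=2$ while $m=m_2-1=4$. You are in fact right and the statement is imprecise in Case 1 (harmless for the paper, which applies it only for $\ell\ge 3$, where Case 2 holds by Corollary \ref{cor:criterion}), but a proof should say this explicitly rather than absorb it silently. Second, once $q\le m-1$ is granted, your concluding count is fine: $\Lambda=\{\lambda_0,\xi_1,\dots,\xi_q\}\subseteq\overline{\Xi}$ together with $|\Lambda|=c_{\max}$, $|\overline{\Xi}|=m_2-1$ (Proposition \ref{prop:b-a}(ii)) and $q=c_{\max}-1\le m-1=m_2-2$ does force $q=m-1$, $\overline{\Xi}=\Lambda$ and $m_2=c_{\max}+1$. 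So the one missing ingredient is a genuine proof that the $\lambda$-string cannot descend below height $m_{\ell-1}+1$, and that is precisely where the substance of [Tan17, Theorem 4.1] lies.
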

 \begin{proof} 
 See  \cite[Theorem 4.1]{Tan17}.
 \end{proof}
 
 \begin{corollary}\label{cor:differences}
 \quad
\begin{enumerate}
\item[(i)] If Case 1 occurs, then $\theta_i-\theta_j \in \Phi^+$ for $1 \le i<j \le m$, $\{i,j\} \ne \{m-2,m\}$, and $\theta_{m-2}-\theta_m \in 2\Delta$.
\item[(ii)] If Case 2 occurs, then $\theta_i-\theta_j \in \Phi^+$ for $1 \le i<j \le m$.
\end{enumerate}
\end{corollary}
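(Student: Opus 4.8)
The plan is to reduce both parts to the telescoping identity
\[
\theta_i-\theta_j=\xi_i+\xi_{i+1}+\cdots+\xi_{j-1}\qquad (1\le i<j\le m),
\]
which is immediate from $\xi_k=\theta_k-\theta_{k+1}$. After this, both assertions become the question of when a consecutive block of the simple roots $\xi_i,\ldots,\xi_{j-1}$ sums to a (positive) root, and I would settle it using Lemma \ref{lem:eg}(ii) applied to the chain structure of the $\xi_i$ inside the (extended) Dynkin graph that is supplied by Proposition \ref{prop:b-a} and Theorem \ref{thm:iso}.

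For part (ii) I may assume $m\ge 2$, as otherwise there is nothing to prove; then $q=m-1\ge 1$ by Theorem \ref{thm:iso}. By Proposition \ref{prop:b-a}(ii) the roots $\xi_0,\ldots,\xi_{m-1}$ are pairwise distinct simple roots, and by Theorem \ref{thm:iso} one has $\xi_k=\lambda_k$ for $1\le k\le q$. Since $(\lambda_s,\lambda_{s+1})<0$ for $0\le s\le q-1$ (the defining property of the chain $\Lambda$ in Proposition \ref{lem:coes}), the vertices $\lambda_1,\ldots,\lambda_q$ form a path in the Dynkin graph $\D(\Phi)$. Hence, for $1\le i<j\le m$ (so $i\le j-1\le m-1=q$), the set $\{\xi_i,\ldots,\xi_{j-1}\}=\{\lambda_i,\ldots,\lambda_{j-1}\}$ is a consecutive stretch of this path, in particular the vertex set of a non-empty connected subgraph of $\D(\Phi)$; Lemma \ref{lem:eg}(ii) then gives $\theta_i-\theta_j=\sum_{k=i}^{j-1}\lambda_k\in\Phi$, and being a non-zero sum of distinct simple roots it lies in $\Phi^+$.

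For part (i) I would first note that $\langle\theta_t,\xi_t\rangle=3$ with $\theta_t,\xi_t\in\Phi$ forces $\Phi$ to contain two roots whose squared lengths are in ratio $3$, which among the irreducible root systems happens only for $G_2$; thus $\Phi=G_2$, $c_{\max}=3$, and $m=m_2-1=4$. By Proposition \ref{prop:b-a}(i), $t=m-2$ and the only coincidence among $\xi_0,\ldots,\xi_{m-1}$ is $\xi_{m-1}=\xi_{m-2}$; this is precisely what singles out the pair $\{m-2,m\}$, for which the telescoping sum collapses to $\theta_{m-2}-\theta_m=\xi_{m-2}+\xi_{m-1}=2\xi_{m-2}\in 2\Delta$. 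For every other pair $1\le i<j\le m$ the block $\xi_i,\ldots,\xi_{j-1}$ involves only distinct simple roots (a consecutive stretch of a path in $\D(G_2)$ when $j\le m-1$, a single simple root when $(i,j)=(m-1,m)$, and $\xi_i+\cdots+\xi_{m-3}+2\xi_{m-2}$ when $j=m$ and $i<m-2$), and in each case $\theta_i-\theta_j\in\Phi^+$: most directly by inspecting the (tiny) root poset of $G_2$, or by combining Lemma \ref{lem:eg}(ii) with the fact that $\langle\theta_{m-2},\xi_{m-2}\rangle=3$ makes the $\xi_{m-2}$-string through $\theta_{m-2}$ descend past $\theta_{m-1}=\theta_{m-2}-\xi_{m-2}$, so $\theta_m=\theta_{m-2}-2\xi_{m-2}$ and $\theta_i-\theta_m=(\theta_i-\theta_{m-1})+\xi_{m-2}$ is again a root.

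The only real obstacle I anticipate is bookkeeping: one must invoke Proposition \ref{prop:b-a} and Theorem \ref{thm:iso} carefully enough to pin down the exact index range on which the $\xi_k$ are distinct, to see how they sit inside $\D(\Phi)$ (respectively $\widetilde{\D}(\Phi)$), and to recognize that in Case $1$ the index $j=m$ is exactly where the repeated simple root $\xi_{m-1}=\xi_{m-2}$ is absorbed into the telescoping sum --- which is what produces the lone exception $\{i,j\}=\{m-2,m\}$. Once this chain structure is set up, every individual claim follows immediately from Lemma \ref{lem:eg}(ii) or a short root-string computation.
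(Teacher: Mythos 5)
Your argument is correct. Note that the paper itself does not prove this corollary in-text: it simply cites \cite[Corollary 3.11]{Tan17}, so there is no internal proof to compare against. Your derivation is the natural self-contained one from the surrounding machinery: the telescoping identity $\theta_i-\theta_j=\xi_i+\cdots+\xi_{j-1}$, the distinctness/chain structure of the $\xi_k$ supplied by Proposition \ref{prop:b-a} and Theorem \ref{thm:iso} together with the defining property $(\lambda_s,\lambda_{s+1})<0$ of $\Lambda$ in Proposition \ref{lem:coes}, and Lemma \ref{lem:eg}(ii) applied to consecutive stretches of that path. Part (ii) is airtight as written (the indices $i,\dots,j-1$ stay in the range $1,\dots,q$ where $\xi_k=\lambda_k$, and the sum of a connected set of distinct simple roots is a positive root). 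In part (i) the reduction to $\Phi=G_2$ via the length-ratio observation (equivalently Corollary \ref{cor:criterion}) is fine, and while your root-string justification for the pairs with $j=m$, $i<m-2$ is slightly terse (you assert $(\theta_i-\theta_{m-1})+\xi_{m-2}\in\Phi$ without fully arguing it), your fallback of directly inspecting the six positive roots of $G_2$ settles it completely and is entirely in the spirit of the paper, which freely uses the classification in small rank.
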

\begin{proof}
 See  \cite[Corollary 3.11]{Tan17}.
\end{proof}

 \begin{corollary}\label{cor:criterion}
 The following statements are equivalent: (i) Case $1$ occurs, (ii) $\rank(\Phi)=2$, (iii) $\Phi=G_2$, (iv) $c_{\max}=m_2-2$.
\end{corollary}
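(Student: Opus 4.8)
The plan is to establish the equivalences in a cycle, using the structural results collected in this subsection, especially Proposition \ref{prop:b-a} and Theorem \ref{thm:iso}, together with the fact that Case $1$ is by definition the statement that some $\langle \theta_t,\xi_t\rangle=3$, i.e. that a triple bond $(-3$ in the Cartan-matrix normalization$)$ appears in the ``string'' $\xi_0,\xi_1,\dots,\xi_{m-1}$ running down from the highest root. First I would prove (i)$\Rightarrow$(iii): if Case $1$ occurs, then by Proposition \ref{prop:b-a}(i) we have $|\overline{\Xi}|=m_2-2$, while Theorem \ref{thm:iso} gives $m_2=c_{\max}+2$, hence $c_{\max}=|\overline{\Xi}|=|\Lambda|-1$ and $\overline{\Xi}=\Lambda$. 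The point is that the only irreducible root system whose highest root admits a triple bond in the relevant string is $G_2$: among all Dynkin diagrams, a bond of multiplicity $3$ occurs only in $G_2$, so the simple root $\xi_t$ with $\langle\theta_t,\xi_t\rangle=3$ forces $\Phi=G_2$ directly. (Alternatively, one invokes Corollary \ref{cor:not-important}: $c_{\max}\ge 2$ must hold in Case $1$ since $m_2=c_{\max}+2\ge 3$ already shows $c_{\max}\ge1$, and a short case check on $\langle\lambda_{q-1},\lambda_q\rangle\in\{-2,-3\}$ versus ``$\lambda_q$ a ramification point'' isolates $G_2$.)

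Next (iii)$\Rightarrow$(ii) is immediate since $\operatorname{rank}(G_2)=2$. For (ii)$\Rightarrow$(iv): if $\operatorname{rank}(\Phi)=\ell=2$, then $\Phi\in\{A_2,B_2,G_2\}$; one computes $m_2$ and $c_{\max}$ in each case ($A_2$: $m_2=2$, $c_{\max}=1$; $B_2$: $m_2=3$, $c_{\max}=2$; $G_2$: $m_2=5$, $c_{\max}=3$) and checks that $c_{\max}=m_2-2$ holds precisely — and one must confirm that for $\ell=2$ Case $2$ cannot occur, i.e. rank $2$ forces Case $1$. Actually the cleanest route is to bundle (ii) and (iii): rank $2$ irreducible with Case $1$ possible narrows to $G_2$, and for $A_2,B_2$ one verifies Case $2$ holds, so that $m_2=c_{\max}+1\ne c_{\max}+2$, giving (ii)$\Leftrightarrow$Case $1$ via Theorem \ref{thm:iso}'s dichotomy. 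Finally (iv)$\Rightarrow$(i): if $c_{\max}=m_2-2$, then by the two alternatives of Theorem \ref{thm:iso} ($m_2=c_{\max}+1$ in Case $2$, $m_2=c_{\max}+2$ in Case $1$), the equation $m_2=c_{\max}+2$ rules out Case $2$, so Case $1$ occurs. This closes the cycle (i)$\Rightarrow$(iii)$\Rightarrow$(ii)$\Rightarrow$(iv)$\Rightarrow$(i).

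The main obstacle I anticipate is the implication into $G_2$ — pinning down that Case $1$ truly forces $\Phi=G_2$ and not merely ``$\Phi$ has a triple bond somewhere.'' The subtlety is that $\langle\theta_t,\xi_t\rangle=3$ is a statement about the pairing of a (possibly non-simple) root $\theta_t$ with a simple root $\xi_t$ along the descending string, not immediately a statement about an edge of the Dynkin diagram; one needs Theorem \ref{thm:iso}'s identification $\lambda_i=\xi_i$ and $\overline{\Xi}=\Lambda$ together with Proposition \ref{lem:coes}(ii) (the string $\lambda_0,\dots,\lambda_{q-1}$ is a simple chain in $\widetilde{\D}(\Phi)$ attached elsewhere only at $\lambda_{q-1}$) and Corollary \ref{cor:not-important} to convert the pairing condition into a genuine triple edge, at which point the classification of extended Dynkin diagrams leaves only $\widetilde{G_2}$. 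I would present this conversion carefully and treat the small-rank verifications ($A_2,B_2,G_2$) as a brief explicit table, since those are the only places classification enters and they are entirely routine.
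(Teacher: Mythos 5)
The paper offers no internal proof of this corollary: it is quoted from \cite[Theorem 4.2]{Tan17}, so there is no in-paper argument to match yours against and I can only judge your proposal on its own terms. The portion of your cycle connecting (i), (iii) and (iv) is sound. Case $1$ gives $m_2=c_{\max}+2$ and Case $2$ gives $m_2=c_{\max}+1$ by Theorem \ref{thm:iso}, so (i) $\Leftrightarrow$ (iv) falls out of the dichotomy exactly as you say; and for (i) $\Rightarrow$ (iii) there is a shorter route than the diagram analysis you sketch: $\lan\theta_t,\xi_t\ran=3$ forces $\lan\xi_t,\theta_t\ran=1$ and $\|\theta_t\|^2=3\|\xi_t\|^2$, and a squared length ratio of $3$ occurs only in $G_2$, so you do not need Corollary \ref{cor:not-important} or the identification $\overline{\Xi}=\Lambda$ at all.

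The genuine gap is at (ii) $\Rightarrow$ (iv), which is where your cycle must close, and your own table refutes the step. For $A_2$ you record $m_2=2$, $c_{\max}=1$, so $c_{\max}=m_2-2$ reads $1=0$; for $B_2$ it reads $2=1$. The check does not ``hold precisely''; it fails in two of the three rank-$2$ cases. Likewise ``rank $2$ forces Case $1$'' is false: for $A_2$ we have $m=m_2-1=1$, so the range $1\le t\le m-1=0$ is empty and Case $1$ is vacuously impossible, while for $B_2$ one computes $\lan\theta_1,\xi_1\ran=\lan\alpha_1+2\alpha_2,\alpha_2\ran=2$, so both are in Case $2$ (consistently, $m_2=c_{\max}+1$ for both). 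Your closing sentence even concedes that $A_2,B_2$ satisfy Case $2$ and then concludes ``(ii) $\Leftrightarrow$ Case $1$,'' which is self-contradictory. What your methods actually establish is the equivalence of (i), (iii) and (iv), together with the trivial (iii) $\Rightarrow$ (ii); condition (ii) does not imply the other three as the statement is literally written, and notably the paper only ever invokes the implications out of (i) and (iv) (e.g.\ in Proposition \ref{prop:long}, Lemma \ref{lem:repeat} and Corollary \ref{cor:Local-global}). The right move was to prove the (i)--(iii)--(iv) equivalence and explicitly flag that $A_2$ satisfies (ii) and nothing else, rather than to assert a verification that your own numbers contradict.
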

 \begin{proof} 
 See  \cite[Theorem 4.2]{Tan17}.
 \end{proof}
  Recall the notation $\Theta^{(r)}=\{\alpha \in \Phi^+ \mid {\rm ht}(\alpha)=r\}$.

\begin{proposition}\label{prop:long}
There is always a long root in $\Theta^{(m_{\ell-1})}$.
\end{proposition}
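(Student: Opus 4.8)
The plan is to dispose of the simply-laced and rank-$2$ cases by hand, and then to settle the remaining three non-simply-laced families by exhibiting an explicit long root of height $m_{\ell-1}$.

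First I would recall that the root poset of $\Phi$ is graded by height, the top element $\theta$ having height $\h-1=m_\ell$; since any positive root of height at least $2$ has a simple root that can be subtracted from it without leaving $\Phi^+$, every value in $\{1,\dots,m_\ell\}$ is attained as a height, so in particular $\Theta^{(m_{\ell-1})}\neq\emptyset$. Consequently, if $\Phi$ is simply-laced then every element of $\Theta^{(m_{\ell-1})}$ is long and there is nothing more to do. If instead $\ell=2$, then $m_{\ell-1}=m_1=1$ by Theorem~\ref{exponents}(iii), so $\Theta^{(1)}=\Delta$; and a non-simply-laced $\Phi$ cannot have all of $\Delta$ short, since $\Phi=W\Delta$ and $W$ preserves root lengths, so $\Delta$ contains a long root.

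It then remains to treat $\Phi\in\{B_\ell,C_\ell,F_4\}$ with $\ell\ge 3$. Here I would use the classical exponents $\exp(B_\ell)=\exp(C_\ell)=\{1,3,5,\dots,2\ell-1\}$ and $\exp(F_4)=\{1,5,7,11\}$ (see, for instance, the plates of~\cite{B68}), which give $m_{\ell-1}=2\ell-3$ for $B_\ell$ and $C_\ell$ and $m_{\ell-1}=7$ for $F_4$. In the notation of~\cite{B68} I would then point to the following positive roots, each lying in the long $W$-orbit and of exactly the required height: in $B_\ell$ the root $\epsilon_2+\epsilon_3=\alpha_2+2\alpha_3+\cdots+2\alpha_\ell$, of height $2\ell-3$; in $C_\ell$ the root $2\epsilon_2=2\alpha_2+2\alpha_3+\cdots+2\alpha_{\ell-1}+\alpha_\ell$, of height $2\ell-3$; and in $F_4$ the root $\epsilon_1-\epsilon_4=\alpha_1+2\alpha_2+2\alpha_3+2\alpha_4$, of height $7$. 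Recalling that $\pm\epsilon_i\pm\epsilon_j$ are long in $B_\ell$ and in $F_4$ while $\pm2\epsilon_i$ are long in $C_\ell$, this completes the proof.

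Since the argument becomes entirely elementary once the four types are separated, I do not expect a real obstacle; the only thing that requires care is keeping the simple-root labelling of~\cite{B68} straight so that the height computations come out right, and reading off the long/short status of each exhibited root from the standard length conventions. I would add that none of the preceding structural material (the sets $\U$, $\Xi$, $\Lambda$, or the Case~$1$/Case~$2$ dichotomy) is needed for this statement, and that a uniform, classification-free proof looks unlikely here: the long root of the correct height is produced by rather different reflections in the $B$, $C$, and $F_4$ cases, so it does not seem to come from a single formula valid across all types.
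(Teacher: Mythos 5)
Your proof is correct: the reduction to the non-simply-laced, rank $\ge 3$ cases is sound, the exponent values you quote are right, and each of the three exhibited roots ($\epsilon_2+\epsilon_3$ in $B_\ell$, $2\epsilon_2$ in $C_\ell$, $\epsilon_1-\epsilon_4$ in $F_4$) is indeed long and of height $m_{\ell-1}$ in Bourbaki's labelling. However, your route is genuinely different from the paper's, and your closing remark that a uniform proof ``looks unlikely'' is mistaken: the paper gives one. Its argument first disposes of the trivial cases via the structural facts already established ($c_{\max}=1$ forces all roots to have equal length by Corollary~\ref{cor:c-max}, and Case~1 forces $\Phi=G_2$ by Corollary~\ref{cor:criterion}), and then, in the remaining Case~2, works locally in the extended Dynkin diagram around the simple root $\xi_{m-1}$: if $\lan\xi_{m-2},\xi_{m-1}\ran=-2$ the long root $\theta_{m-1}-2\xi_{m-1}$ has height $m_{\ell-1}$, and otherwise a counting argument on the coefficients $c_{\mu_i}$ of the neighbours of $\xi_{m-1}$ (using $\lan\theta,\mu_i\ran=0$) produces a neighbour $\mu_i$ with $\lan\xi_{m-1},\mu_i\ran=-1$, whence $\theta_m-\mu_i$ is a long root of height $m_{\ell-1}$. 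Your approach buys concreteness and brevity at the cost of invoking the full classification, which runs against the paper's explicitly stated aim of minimizing case-by-case reasoning; the paper's approach is longer but type-free, and its coefficient-counting technique is reused almost verbatim in the proof of Corollary~\ref{cor:exactly2}, so it earns its keep elsewhere in the argument.
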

 \begin{proof} 
We may assume that  $c_{\max}\ge 2$.
If not,  by Corollary \ref{cor:c-max}, all roots of $\Phi$ have the same length. The assertion is trivial. 
 If Case $1$ occurs,  then by Corollary \ref{cor:criterion}, $\Phi=G_2$. 
 The assertion is also trivial. 

Now we can assume that Case $2$ occurs. 
By  Proposition \ref{prop:b-a}, $\xi_i \ne \xi_j$ for $0 \le i < j \le m-1$. 
If $\lan \xi_{m-2},\xi_{m-1}\ran=-2$, then $\lan \theta_{m-1},\xi_{m-1}\ran=2$.
Thus $\theta_{m-1}-2\xi_{m-1} \in \Theta^{(m_{\ell-1})}$, that is a long root and we are done. 
We are left with the case $\lan \xi_{m-2},\xi_{m-1}\ran=-1$. 
By Corollary \ref{cor:not-important}, $\xi_{m-1}$ is connected to at least two vertices of $\widetilde{\D}(\Phi)$ apart from $\xi_{m-2}$, say $\mu_1,\ldots, \mu_k$ ($k\ge2$). 

We claim that there exists $\mu_i$ such that $\lan \xi_{m-1},\mu_i\ran=-1$. 
Proof of the claim when $m=2$ (i.e., $c_{\max}=2$) and $m\ge3$ uses very similar technique (the case $m=2$ is actually Lemma \ref{lem:cases}(2)).
We only give a proof when $m\ge3$.
From $\lan\theta, \xi_{m-1}\ran =0$ (it equals $1$ if $m=2$), $c_{\xi_{m-1}}=c_{\max}$, $c_{\xi_{m-2}}=c_{\max}-1$, and $\xi_{m-1}$ is a long root, we have $c_{\max}+1-\sum_{i=1}^k c_{\mu_i}=0$. 
Suppose to the contrary that $\lan \xi_{m-1},\mu_i\ran\le-2$ for all $1 \le i \le k$. 
From $0=\lan\theta, \mu_i\ran \le 2c_{\mu_i}+c_{\max}\lan\xi_{m-1}, \mu_i\ran$, we obtain $c_{\mu_i}=c_{\max}$. 
Thus, $c_{\max}+1-kc_{\max}=0$, a contradiction. 
So we can choose $\mu_i$ so that $\lan \xi_{m-1},\mu_i\ran=-1$. 
Therefore, $\lan \theta_m, \mu_i\ran=-\lan \xi_{m-1},\mu_i\ran=1$ and $\theta_{m}-\mu_i \in \Theta^{(m_{\ell-1})}$, that is a long root. 
\end{proof}

\begin{lemma}\label{lem:3roots} 
Suppose $\beta_1,  \beta_2, \beta_3 \in \Phi$ with $\beta_1+  \beta_2+ \beta_3 \in \Phi$ and $\beta_i+  \beta_j \ne 0$ for $i \ne j$. Then at least two of the three partial sums $\beta_i+  \beta_j$ belong to $ \Phi$.
\end{lemma}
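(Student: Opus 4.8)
The plan is to reduce everything to sign considerations for the inner products $(\beta_i,\sigma)$, where $\sigma:=\beta_1+\beta_2+\beta_3\in\Phi$ (so in particular $\sigma\neq 0$), using only two elementary facts about roots: for $\mu,\nu\in\Phi$ with $\mu\neq-\nu$, if $(\mu,\nu)<0$ then $\mu+\nu\in\Phi$ (equivalently, $\mu+\nu\notin\Phi$ implies $(\mu,\nu)\geq0$); and for $\mu,\nu\in\Phi$ with $\mu\neq\pm\nu$, if $(\mu,\nu)>0$ then $\mu-\nu\in\Phi$. I would also record at the outset the simple remark that, under the hypotheses, $\beta_i\neq\sigma$ for every $i$, since otherwise the other two roots would sum to zero.

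First I would observe that $(\sigma,\sigma)=(\beta_1,\sigma)+(\beta_2,\sigma)+(\beta_3,\sigma)>0$, so at least one $(\beta_i,\sigma)$ is positive; and whenever $(\beta_i,\sigma)>0$ we have $\beta_i\neq\pm\sigma$ (not $\sigma$, by the opening remark; not $-\sigma$, since the inner product is positive), so the second elementary fact applied to $\mu=\sigma$, $\nu=\beta_i$ gives $\beta_j+\beta_k=\sigma-\beta_i\in\Phi$ for $\{i,j,k\}=\{1,2,3\}$. Consequently, if $(\beta_i,\sigma)>0$ for at least two indices $i$, then two of the partial sums are roots and we are done. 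The only case left is when exactly one of them, say $(\beta_3,\sigma)$, is positive; then $\beta_1+\beta_2=\sigma-\beta_3\in\Phi$ already gives one root, and the task is to produce a second.

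For that, I would argue by contradiction: assume $\beta_1+\beta_3\notin\Phi$ and $\beta_2+\beta_3\notin\Phi$. Since both sums are nonzero by hypothesis, the first elementary fact forces $(\beta_1,\beta_3)\geq0$ and $(\beta_2,\beta_3)\geq0$. Then from $(\beta_1,\sigma)=(\beta_1,\beta_1)+(\beta_1,\beta_2)+(\beta_1,\beta_3)\leq0$ we deduce $(\beta_1,\beta_2)\leq-(\beta_1,\beta_1)$, and symmetrically $(\beta_1,\beta_2)\leq-(\beta_2,\beta_2)$; multiplying these two inequalities between positive quantities gives $(\beta_1,\beta_2)^2\geq(\beta_1,\beta_1)(\beta_2,\beta_2)$, so by Cauchy--Schwarz $\beta_1$ and $\beta_2$ are proportional, hence $\beta_1=\pm\beta_2$. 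This contradicts either $\beta_1+\beta_2\in\Phi$ (if $\beta_1=\beta_2$, since $2\beta_1\notin\Phi$) or the hypothesis $\beta_1+\beta_2\neq0$ (if $\beta_1=-\beta_2$). Therefore at least one of $\beta_1+\beta_3$, $\beta_2+\beta_3$ lies in $\Phi$, and together with $\beta_1+\beta_2$ we obtain the two required partial sums.

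The only delicate point is this last ``exactly one positive inner product'' case, where the Cauchy--Schwarz step is the crux; the rest is routine bookkeeping with the two standard facts, and no classification of root systems is needed.
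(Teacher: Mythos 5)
Your proof is correct. Note that the paper does not actually prove this lemma; it simply cites \cite[ \S 11, Lemma 11.10]{LN04}, so there is no in-paper argument to compare against. What you supply is a self-contained elementary proof, and each step checks out: the observation $(\sigma,\sigma)>0$ forces some $(\beta_i,\sigma)>0$; the non-degeneracy conditions $\beta_i\neq\pm\sigma$ needed to invoke the ``positive inner product implies difference is a root'' fact are correctly justified (via your opening remark and the sign of the inner product); and in the residual case the contrapositive of the ``negative inner product implies sum is a root'' fact, combined with $(\beta_1,\sigma)\le 0$ and $(\beta_2,\sigma)\le 0$, yields $(\beta_1,\beta_2)\le -(\beta_1,\beta_1)$ and $(\beta_1,\beta_2)\le -(\beta_2,\beta_2)$, whence equality in Cauchy--Schwarz and the contradiction $\beta_1=\pm\beta_2$. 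The only implicit hypothesis is that $\Phi$ is reduced (you use $2\beta_1\notin\Phi$ to dispose of the case $\beta_1=\beta_2$), which holds for all the crystallographic root systems considered in this paper. Compared with outsourcing the statement to \cite{LN04}, your argument buys a short, classification-free, purely metric proof using nothing beyond the two standard facts about angles between roots, at the cost of a slightly delicate case split; it would serve as a legitimate replacement for the citation.
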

\begin{proof}
 See, e.g., \cite[\S11, Lemma 11.10]{LN04}.
\end{proof}

\begin{proposition}\label{prop:irr}
If $\gamma\in \Theta^{(m_{\ell-1})}$, then $\theta_i - \gamma \in k\Phi^+$ with $k\in\{1,2,3\}$ for every  $1 \le i \le m$. 
\end{proposition}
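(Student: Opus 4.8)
The goal is to show that if $\gamma \in \Theta^{(m_{\ell-1})}$, then for every $1 \le i \le m$ the difference $\theta_i - \gamma$ lies in $k\Phi^+$ for some $k \in \{1,2,3\}$; equivalently, $\theta_i - \gamma$ is a positive multiple (by $1$, $2$, or $3$) of a positive root. The plan is to proceed by downward induction on $i$, starting from $i = m$. The base case $i = m$: here $\theta_m \in \Theta^{(m_{\ell-1})}$ and $\gamma \in \Theta^{(m_{\ell-1})}$ both have height $m_{\ell-1}$, so $\theta_m - \gamma$ has height $0$. If $\gamma = \theta_m$ this is the zero vector, which I would need the statement to tacitly exclude or interpret as a degenerate case; otherwise $\theta_m - \gamma$ is a nonzero vector of height $0$, and I would argue it cannot be (a multiple of) a root unless... — actually the cleaner route is: in $\Theta^{(m_{\ell-1})}$ the roots differ by elements of the root lattice of height $0$, and I expect the intended reading is that the proposition is vacuous or trivial at $i=m$ when $\gamma \ne \theta_m$ because then $\theta_i - \gamma$ with $i=m$ is not required to be a root-multiple — so I would instead organize the induction to run from larger $i$ downward using $\theta_i = \theta_{i+1} + \xi_i$ with $\xi_i \in \Delta$ (Corollary \ref{cor:differences} and the setup before Proposition \ref{prop:b-a}), reducing the claim for $\theta_i - \gamma$ to the claim for $\theta_{i+1} - \gamma$ plus the simple root $\xi_i$.

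The key step is the inductive passage. Suppose $\theta_{i+1} - \gamma \in k\Phi^+$ with $k \in \{1,2,3\}$; I want to control $\theta_i - \gamma = (\theta_{i+1} - \gamma) + \xi_i$. Write $\theta_{i+1} - \gamma = k\delta$ with $\delta \in \Phi^+$. I would apply Lemma \ref{lem:3roots} to a suitable triple built from $\xi_i$, $\delta$ (or copies thereof), and a root whose sum with them is a root — the natural candidate sum being $\theta_i - \gamma$ itself when $k = 1$, giving $\xi_i + \delta + (-\gamma)$-type relations or $\xi_i + \delta$ directly. More precisely, when $k = 1$: $\theta_i - \gamma = \xi_i + \delta$ is a sum of two roots; I need to show it is a root or twice/thrice a root. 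Since $\theta_i$ is a positive root and $\gamma$ is a positive root of strictly smaller height (as $\mathrm{ht}(\theta_i) > m_{\ell-1} = \mathrm{ht}(\gamma)$ for $i \le m-1$, and for $i=m$ we have equality so that case is handled separately), the difference $\theta_i - \gamma$ is a nonzero vector in the positive cone of the root lattice; standard root-system facts (repeated use of the fact that if $\alpha, \beta \in \Phi$ with $\alpha + \beta \ne 0$ then $\alpha + \beta \in \Phi$ iff $(\alpha,\beta) < 0$, combined with Lemma \ref{lem:3roots}) should force it into $k\Phi^+$. When $k = 2$ or $3$, I would feed the triple $\xi_i, \delta, \delta$ (resp. three copies of $\delta$) into an iterated version of Lemma \ref{lem:3roots}, or argue directly on inner products $(\xi_i, \delta)$ to see that $\xi_i + k\delta$ is again $k'\Phi^+$; note $k=3$ only occurs in $G_2$ by Corollary \ref{cor:criterion}, so that branch can be checked by hand.

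The main obstacle I anticipate is the bookkeeping of the multiplier $k$ across the induction — ensuring it never escapes $\{1,2,3\}$ — and handling the short-vs-long root length subtleties. Proposition \ref{prop:long} guarantees a long root in $\Theta^{(m_{\ell-1})}$, which I expect is used to pin down the length of $\gamma$ relative to $\theta_i$ and thereby bound $\langle \theta_i - \gamma, \cdot\rangle$-type quantities; the case analysis will likely split along Case $1$ vs. Case $2$ (Proposition \ref{prop:b-a}, Corollary \ref{cor:differences}, Corollary \ref{cor:criterion}), with Case $1$ being exactly $G_2$ and thus amenable to direct verification, and Case $2$ requiring the inductive argument above. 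A secondary subtlety is that the $\xi_i$ need not be distinct (the multiset issue flagged before Proposition \ref{prop:b-a}), but since in Case $2$ they are distinct and in Case $1$ we are in $G_2$, this should not cause real trouble. I would first nail down the base of the induction and the precise statement at $i = m$, then treat $G_2$ separately, and finally run the inner-product/Lemma \ref{lem:3roots} argument for the generic Case $2$ passage $\theta_{i+1} - \gamma \rightsquigarrow \theta_i - \gamma$.
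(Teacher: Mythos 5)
There is a genuine gap, and it starts at the base case. You assert that $\theta_m\in\Theta^{(m_{\ell-1})}$, so that $\theta_m-\gamma$ has height $0$ and the case $i=m$ is degenerate or vacuous. This is a misreading of the setup: $\U=\{\theta_i\mid {\rm ht}(\theta_i)>m_{\ell-1}\}$ and the heights in $\U$ are consecutive, so ${\rm ht}(\theta_m)=m_{\ell-1}+1$. Hence $\theta_m-\gamma$ has height $1$, and since every root of height $m_{\ell-1}$ is covered in the root poset by the unique root $\theta_m$ of height $m_{\ell-1}+1$, one gets $\mu:=\theta_m-\gamma\in\Delta$. Far from being a case to exclude, this simple root $\mu$ is the anchor of the paper's entire argument: the proof splits on whether $\mu=\xi_{m-1}$ or not, and everything downstream depends on that dichotomy.

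The inductive step also does not close as described. From $\theta_{i+1}-\gamma=k\delta$ you want to conclude that $k\delta+\xi_i\in k'\Phi^+$ by ``standard root-system facts,'' but a sum of two positive roots lying in the positive cone need not be a root or a multiple of one (e.g.\ $\alpha_1+\alpha_3$ in $A_3$); nothing in your setup rules this out, and Lemma \ref{lem:3roots} applied to generic triples does not supply the missing adjacency. What actually makes the sums roots in the paper is structural: when $\mu\ne\xi_{m-1}$, applying Lemma \ref{lem:3roots} to $\theta_{m-1}=\gamma+\mu+\xi_{m-1}$ and observing $\gamma+\xi_{m-1}\notin\Phi$ (it would have to equal $\theta_m$) forces $\mu+\xi_{m-1}\in\Phi$, i.e.\ $\mu$ is adjacent to $\xi_{m-1}$ in the Dynkin diagram; then $\theta_i-\gamma=\xi_i+\cdots+\xi_{m-1}+\mu$ is a sum over a connected subgraph and hence a root by Lemma \ref{lem:eg}(ii), after the subcase $\mu=\xi_{m-2}$ is killed by an inner-product contradiction. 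The branch $\mu=\xi_{m-1}$ yields $\theta_{m-1}-\gamma=2\xi_{m-1}$ and then $\theta_i-\gamma=\alpha+2\xi_{m-1}\in\Phi^+$ via a root-string computation with $\langle\alpha,\xi_{m-1}\rangle=-2$ --- so the multiplier $2$ appears once, at $i=m-1$, rather than propagating through an induction, and your worry about bookkeeping $k$ across steps is an artifact of the wrong framework. Your instincts about which tools are relevant (Lemma \ref{lem:3roots}, the chain $\theta_i=\theta_{i+1}+\xi_i$, treating $G_2$ by hand) are sound, but the proposal as written is missing the identification of $\mu$, the dichotomy on $\mu$, and the connected-subgraph argument that actually produce the roots.
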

 \begin{proof} 
To avoid the triviality, we assume that Case $2$ occurs and $1 \le i \le m-1$. 
Denote $\mu:= \theta_m -\gamma \in \Delta$. 

Assume that $ \mu=\xi_{m-1}$. Then $\theta_{m-1}- \gamma=2\xi_{m-1}  \in 2\Delta$. 
We have $\langle\theta_{m-1},\xi_{m-1} \rangle =\langle\gamma,\xi_{m-1} \rangle  +4$. 
It follows that  $\langle\theta_{m-1},\xi_{m-1} \rangle =2$.
Fix $i$ with $1 \le i \le m-2$, and set $\alpha:= \theta_i -\theta_{m-1}\in \Phi^+$ (by Corollary \ref{cor:differences}). 
Since $ \theta_i = \theta_1 - (\xi_1+\cdots+\xi_{i-1})$, we have $\langle\theta_i,\xi_{m-1} \rangle =0$. Thus $\langle\alpha ,\xi_{m-1} \rangle=-\langle\theta_{m-1},\xi_{m-1} \rangle =-2$. 
Then $\theta_i -\gamma =\alpha +2\xi_{m-1}\in \Phi^+$.
 
Assume that $ \mu\ne\xi_{m-1}$. Then $\theta_{m-1}= \gamma+\mu+\xi_{m-1}$. 
By Lemma \ref{lem:3roots}, $\mu+\xi_{m-1}\in \Phi^+$ since $\gamma+\xi_{m-1}\notin \Phi^+$. 
Thus $\mu$ and $\xi_{m-1}$ are adjacent on ${\D}(\Phi)$. 
If $\mu \ne \xi_{m-2}$ (of course, $\mu \ne \xi_i$ for all $1 \le i \le m-3$ since ${\D}(\Phi)$ is a tree), 
then by Lemma \ref{lem:eg}(ii), $\theta_i -\gamma =\xi_i+\cdots+\xi_{m-1}+\mu\in \Phi^+$ for each $1 \le i \le m-1$. 
If $\mu = \xi_{m-2}$, then $\theta_{m-2}= \gamma+\xi_{m-1}+2\xi_{m-2}$. 
Thus $\langle\theta_{m-2},\xi_{m-2} \rangle =\langle\gamma,\xi_{m-2} \rangle  +\langle\xi_{m-1} ,\xi_{m-2} \rangle+4$. 
Using the fact that $\xi_{m-2}$ is a long root, we obtain a contradiction since the left-hand side is at most $1$ while the right-hand side is at least $2$.
 \end{proof}

 \begin{corollary}\label{cor:exactly2}
If $\ell \ge 5$, then $m_{\ell-2}<m_{\ell-1}$.
\end{corollary}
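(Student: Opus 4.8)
The plan is to translate the inequality into a statement about how many roots have a prescribed height, and then to count those roots using the structural results of \S\ref{subsec:elemetary}.

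Since $\ell\ge 5$ we have $\rank(\Phi)>2$, so by Corollary \ref{cor:criterion} we are in Case $2$; hence, by Proposition \ref{prop:b-a}(ii), the $m=|\U|=m_2-1$ roots $\theta_1,\dots,\theta_m$ of height $>m_{\ell-1}$ have pairwise distinct heights, necessarily $\h-1,\h-2,\dots,\h-m=m_{\ell-1}+1$. Thus $|\Theta^{(r)}|=1$ for every $r$ with $m_{\ell-1}<r\le \h-1$, and in particular $\theta_m$ is the unique root of height $m_{\ell-1}+1$. By the dual partition description of the exponents (Theorem \ref{thm:dual}), the multiplicity of the exponent $m_{\ell-1}$ equals $|\Theta^{(m_{\ell-1})}|-|\Theta^{(m_{\ell-1}+1)}|=|\Theta^{(m_{\ell-1})}|-1$. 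Since $m_{\ell-1}<m_\ell$ always (Theorem \ref{exponents}(iii)), $m_{\ell-2}<m_{\ell-1}$ is equivalent to this multiplicity being $1$, i.e. to $|\Theta^{(m_{\ell-1})}|=2$; as $m_{\ell-1}$ is an exponent we already have $|\Theta^{(m_{\ell-1})}|\ge 2$, so only the inequality $|\Theta^{(m_{\ell-1})}|\le 2$ remains to be shown.

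Next I would identify $\Theta^{(m_{\ell-1})}$ explicitly. If $\gamma\in\Theta^{(m_{\ell-1})}$, Proposition \ref{prop:irr} with $i=m$ gives $\theta_m-\gamma\in k\Phi^+$ with $k\in\{1,2,3\}$; since $\theta_m-\gamma$ has height $1$ this forces $k=1$ and $\theta_m-\gamma\in\Delta$, and conversely every root of height $m_{\ell-1}$ is obtained by subtracting a simple root from the unique root $\theta_m$ of the next height. So $\Theta^{(m_{\ell-1})}=\{\theta_m-\alpha\mid \alpha\in\Delta,\ \theta_m-\alpha\in\Phi\}$. If $m=1$ then $c_{\max}=1$ (Theorem \ref{thm:iso}), so $\theta_m=\theta=\sum_{\alpha\in\Delta}\alpha$ and $\D(\Phi)$ is a simple chain (Corollary \ref{cor:c-max}(i)); by Lemma \ref{lem:eg}, $\theta-\alpha\in\Phi$ exactly for the two terminal vertices $\alpha$ of the chain, so $|\Theta^{(m_{\ell-1})}|=2$. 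If $m\ge 2$, then applying Proposition \ref{prop:irr} with $i=m-1$ to $\gamma=\theta_m-\alpha$ shows $\xi_{m-1}+\alpha=\theta_{m-1}-\gamma\in k\Phi^+$ with $k\le 3$; since this element has height $2$ we get $k\le 2$, which forces either $\alpha=\xi_{m-1}$ or $\alpha$ adjacent to $\xi_{m-1}$ in $\D(\Phi)$. Hence $|\Theta^{(m_{\ell-1})}|\le 1+\deg_{\D(\Phi)}(\xi_{m-1})$, and if $\deg_{\D(\Phi)}(\xi_{m-1})=1$ we are done.

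It remains to treat $\deg_{\D(\Phi)}(\xi_{m-1})\ge 2$. Here I would write $\xi_{m-1}=\lambda_q$ as in Theorem \ref{thm:iso} and Proposition \ref{lem:coes}, recall that $\lambda_q$ meets the chain $\lambda_0,\dots,\lambda_{q-1}$ only at $\lambda_{q-1}$, and use Corollary \ref{cor:not-important} together with $\alpha$-string computations in the spirit of the proof of Proposition \ref{prop:irr}: each neighbour $\mu\neq\lambda_{q-1}$ of $\lambda_q$ contributes the root $\theta_m-\mu$, while $\theta_m-\xi_{m-1}\in\Phi$ only if $\langle\lambda_{q-1},\lambda_q^\vee\rangle=-2$ and $\theta_m-\lambda_{q-1}\in\Phi$ only if $\langle\lambda_q,\lambda_{q-1}^\vee\rangle\le -2$, so (the two Cartan integers cannot both be $\le -2$) at most one of these last two is a root. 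When $\deg_{\D(\Phi)}(\xi_{m-1})=2$ this already gives $|\Theta^{(m_{\ell-1})}|=2$; when $\deg_{\D(\Phi)}(\xi_{m-1})=3$, $\lambda_q$ is a ramification point, hence (a standard property of Dynkin diagrams) lies in a simply-laced part, so neither of the last two candidates is a root and again $|\Theta^{(m_{\ell-1})}|=2$ --- except that if moreover $q=1$, then $\lambda_q=\lambda_1$ is also joined to $\lambda_0=-\theta$, so $\lambda_1$ is a valency-$4$ vertex of $\widetilde{\D}(\Phi)$, forcing $\widetilde{\D}(\Phi)$ to be the extended Dynkin diagram of type $D_4$ and hence $\ell=4$, contradicting $\ell\ge 5$. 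I expect this final case distinction --- showing that a third root of height $m_{\ell-1}$ can occur only for the exceptional $D_4$ --- to be the main obstacle, and to be the point at which a little case-checking (over the few root systems with small $c_{\max}$) is hard to avoid.
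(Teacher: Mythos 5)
Your reduction is exactly the paper's: by Theorem \ref{thm:dual} the claim is equivalent to $|\Theta^{(m_{\ell-1})}|=2$, and your use of Proposition \ref{prop:irr} (with $i=m$ and $i=m-1$) to show that every root of height $m_{\ell-1}$ has the form $\theta_m-\alpha$ with $\alpha$ equal or adjacent to $\xi_{m-1}$ is the same localization step the paper performs when it writes $\theta_{m-1}-\gamma_i=\xi_{m-1}+\mu_i$ with $\mu_i$ adjacent to $\xi_{m-1}$ on $\widetilde{\D}(\Phi)$. Your treatment of $m=1$ (i.e.\ $c_{\max}=1$, via Corollary \ref{cor:c-max}(i) and Lemma \ref{lem:eg}) and of the case $\deg_{\D(\Phi)}(\xi_{m-1})=1$ is correct.

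The genuine gap is the endgame, which you yourself flag: when $\xi_{m-1}$ has two or more neighbours the count is not actually completed, and several assertions are leaned on without proof. Concretely: (a) the claim that every neighbour $\mu\ne\lambda_{q-1}$ of $\lambda_q$ yields a root $\theta_m-\mu$ requires the computation $\langle\theta_m,\mu^\vee\rangle=-\langle\xi_{m-1},\mu^\vee\rangle>0$, which in turn uses $\langle\theta,\mu^\vee\rangle=0$ and the chain structure of Proposition \ref{lem:coes}; (b) your criteria for $\theta_m-\xi_{m-1}$ and $\theta_m-\lambda_{q-1}$ to be roots are asserted, not derived, and the $q=1$ case (where $\lambda_{q-1}=-\theta$ is not a simple root) needs separate wording; and (c) in the subcase $q=1$ with $\deg_{\D(\Phi)}(\lambda_1)=2$ your exclusion principle (``the two Cartan integers cannot both be $\le-2$'') does not rule out $\theta-2\lambda_1$ as a third root of height $m_{\ell-1}$ --- for that you need Lemma \ref{lem:cases}(1), which says $\langle\theta,\lambda_1\rangle=2$ forces $\lambda_1$ to have a single neighbour. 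The paper closes exactly this hole with a different device: assuming $k\ge3$ roots $\gamma_i$, it combines $\sum_i c_{\mu_i}=c_{\max}+1$ with $0=\langle\theta,\mu_i\rangle\le 2c_{\mu_i}-c_{\max}$ to get $c_{\mu_i}\ge c_{\max}/2$, forcing $k=3$ and $c_{\max}\le 2$, and then disposes of $m=2$ via the terminal-vertex statement of Corollary \ref{cor:c-max}(ii), which pins $\ell=4$. That coefficient inequality is the missing idea; without it (or an equivalent), your enumeration by $\deg_{\D(\Phi)}(\xi_{m-1})$ keeps generating subcases each needing its own Cartan-integer verification, and as written at least one of them is not closed.
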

 
 \begin{proof} 
By Theorem \ref{thm:dual}, it suffices to prove that there are exactly two roots of height $m_{\ell-1}$, i.e., $|\Theta^{(m_{\ell-1})}|=2$. To avoid the triviality, we assume that Case $2$ occurs and $c_{\max}\ge 2$. 
We need to consider two cases: $\lan \xi_{m-2},\xi_{m-1}\ran=-2$ or $\lan \xi_{m-2},\xi_{m-1}\ran=-1$. Since the proofs are very similar, we only give a proof for the latter (slightly harder case). 

Suppose to the contrary that $\Theta^{(m_{\ell-1})}=\{\gamma_1,\ldots,\gamma_k\}$ with $k \ge 3$. 
By Proposition \ref{prop:irr}, $\theta_{m-1}-\gamma_i =\xi_{m-1}+\mu_i \in \Phi^+$, where $\mu_i \in \Delta$ ($1 \le i \le k$). 
Thus $\mu_i$ is adjacent to $\xi_{m-1}$ on $\widetilde{\D}(\Phi)$.
By the same argument as the one used in the end of Proof of Proposition \ref{prop:irr}, we obtain $\mu_i \ne \xi_{m-2}$ for all $1 \le i \le k$. 
If $m\ge3$, then the same argument as in Proof of Proposition \ref{prop:long} gives $c_{\max}+1=\sum_{i=1}^k c_{\mu_i}$.
From $0=\lan\theta, \mu_i\ran \le 2c_{\mu_i}-c_{\max}$, we obtain $c_{\mu_i}\ge c_{\max}/2$ for all $1 \le i \le k$. 
This forces $k=3$. 
But it implies that $c_{\max}\le2$, i.e., $m\le2$, a contradiction. 
Now consider $m=2$. 
A similar argument as above shows that $k=3$ and $c_{\mu_1}=c_{\mu_2}=c_{\mu_3}=1$. 
The second statement of Corollary \ref{cor:c-max}(ii) implies that $c_{\mu_i}$ must be all terminal. 
Thus $\ell=4$, a contradiction. 
\end{proof}

From now on, we require the classification of root systems of rank $\le 4$ to make some arguments work. The classification of rank $3$ or $4$ root systems will be announced before use, while that of rank $2$ root systems (has been and) will be used without announcing.

\begin{lemma}\label{lem:repeat} 
If $m_2={\rm h}/2$, then $\ell \le 4$. 
More specifically, when $\ell=4$, $m_2={\rm h}/2$ if and only if $\Phi=D_4$.
 \end{lemma}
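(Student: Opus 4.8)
The plan is to reduce the identity $m_2={\rm h}/2$ to a statement about the second exponent and to exploit the general relation $m_2-1=\K_0$ from Remark \ref{rem:K0}(ii), combined with the local-global information packaged in Proposition \ref{prop:X=A_1^2}. First I would observe that by Theorem \ref{exponents}(i), the condition $m_2={\rm h}/2$ is equivalent to $m_2=m_{\ell-1}$, i.e.\ the second and the second-to-last exponents coincide. By Corollary \ref{cor:exactly2}, if $\ell\ge5$ then $m_{\ell-2}<m_{\ell-1}$; on the other hand, $m_2=m_{\ell-1}$ forces $m_2=m_3=\cdots=m_{\ell-1}$, so in particular $m_{\ell-2}=m_{\ell-1}$, a contradiction as soon as $\ell\ge5$. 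This immediately gives $\ell\le4$, which is the first (and easier) half of the statement.

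For the refined claim when $\ell=4$, I would argue directly with the height distribution via Theorem \ref{thm:dual}. Writing $\exp(\A)=\{1,m_2,m_3,m_4\}_{\le}$ with $m_2+m_3={\rm h}$ and $m_1+m_4={\rm h}$, the condition $m_2={\rm h}/2$ is $m_2=m_3={\rm h}/2$, so the exponents are $\{1,{\rm h}/2,{\rm h}/2,{\rm h}-1\}$ and in particular ${\rm h}$ is even and the multiset of exponents has $m_2$ with multiplicity (at least) two. I would then run through the rank-$4$ classification — the only rank-$4$ irreducible root systems are $A_4$, $B_4$, $C_4$, $D_4$, $F_4$ — and compare exponents: $A_4$ has $\{1,2,3,4\}$ (${\rm h}=5$, odd, excluded), $B_4$ and $C_4$ have $\{1,3,5,7\}$ (${\rm h}=8$, but $m_2=3\ne4$), $F_4$ has $\{1,5,7,11\}$ (${\rm h}=12$, $m_2=5\ne6$), and $D_4$ has $\{1,3,3,5\}$ with ${\rm h}=6$, so $m_2=3={\rm h}/2$. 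Hence among rank-$4$ systems, $m_2={\rm h}/2$ holds exactly for $\Phi=D_4$. (If a classification-free flavor is preferred, one can instead note that $m_2={\rm h}/2$ with the palindromy $m_2+m_3={\rm h}$ forces $t_r$, the number of positive roots of height $r$, to be constant equal to $2$ for $2\le r\le {\rm h}/2$ and then drop, which via $|\Phi^+|=\ell{\rm h}/2=2{\rm h}$ pins down ${\rm h}$ and the shape of the root poset; but invoking the rank-$4$ classification is cleaner and is explicitly allowed from this point in the paper onward.)

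I expect the rank bound $\ell\le4$ to be essentially immediate given Corollary \ref{cor:exactly2}, so the only real content is the $\ell=4$ identification, and there the main obstacle is merely bookkeeping: making sure the comparison of exponents and Coxeter numbers is done for the correct normalization (exponents as defined via the Coxeter element eigenvalues, matching $\DP(\Phi^+)$) and that the degenerate small-rank coincidences ($D_3=A_3$, etc.) do not cause confusion — but since we are in rank exactly $4$ these do not arise. An alternative route, which I would keep in reserve, is to use Remark \ref{rem:K0}(ii): when $\ell=4$ and $X$ is an $A_1^2$ subspace, $\K_0=\sum_{\Psi\in\N_0}(m_2(\Psi)-1)$ ranges over rank-$3$ parabolic subsystems, and $m_2={\rm h}/2=\K_0+1$ together with Theorem \ref{thm:half1}/the RO analysis forces a very constrained subsystem structure; tracing which rank-$4$ root systems admit such a configuration again isolates $D_4$. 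This is more in the spirit of the paper but longer, so I would present the classification argument as the main proof.
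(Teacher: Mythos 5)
Your proposal is correct, and the first half takes a genuinely different route from the paper. The paper deduces $\ell\le 4$ from the arithmetic-progression structure of the coefficients of the highest root (Proposition \ref{lem:coes}): writing $m_2=c_{\max}+1$ (the case $m_2=c_{\max}+2$ forces $\Phi=G_2$, where $m_2>{\rm h}/2$), it bounds $\sum_{\lambda\in\Delta}c_\lambda={\rm h}-1=2c_{\max}+1$ from below by $c_{\max}(c_{\max}+1)/2-1+\ell-(c_{\max}-1)$, obtaining $\ell\le(-c_{\max}^2+5c_{\max}+2)/2\le 4$. You instead use the palindromy $m_2+m_{\ell-1}={\rm h}$ to rewrite the hypothesis as $m_2=m_{\ell-1}$, squeeze $m_{\ell-2}=m_{\ell-1}$ for $\ell\ge 5$, and contradict Corollary \ref{cor:exactly2}; this is shorter and logically legitimate, since Corollary \ref{cor:exactly2} is established before Lemma \ref{lem:repeat} and does not depend on it. What the paper's argument buys is independence from Corollary \ref{cor:exactly2} (whose proof is itself somewhat involved) and an explicit bound in terms of $c_{\max}$ alone; what yours buys is brevity and a cleaner conceptual reason (an exponent of multiplicity $\ge 2$ in the "upper middle" is impossible in rank $\ge 5$). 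For the $\ell=4$ case both you and the paper invoke the rank-$4$ classification; you merely spell out the comparison of $\{m_i\}$ with ${\rm h}/2$ for $A_4$, $B_4$, $C_4$, $D_4$, $F_4$, which is exactly what the paper leaves implicit. Your parenthetical "classification-free" sketch for rank $4$ is not developed enough to count as a proof, but since it is offered only as an aside this does not affect the correctness of the proposal.
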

 \begin{proof} 
If $m_2 = c_{\max}+2$, then by Corollary \ref{cor:criterion}, $\Phi=G_2$. 
However, $m_2>{\rm h}/2$ by a direct check.
Now assume that $m_2 = c_{\max}+1$. 
Recall from Proposition \ref{lem:coes} that the coefficients of $\theta$ at elements of $\Lambda=\{\lambda_0, \lambda_1, \ldots, \lambda_q\}$ form an arithmetic progression, starting with  $c_{\lambda_0}=1$ and ending with $c_{\lambda_q}= c_{\max}=q+1$.
From $\sum_{\lambda \in \Lambda}c_{\lambda}+\sum_{\lambda \in \Delta\setminus \Lambda}c_{\lambda} = \h$, we have
$$c_{\max}( c_{\max}+1)/2+ \ell-(c_{\max}-1) \le 2c_{\max}+2.$$
Thus $\ell \le (-c_{\max}^2+5c_{\max} +2)/2$. Therefore, $\ell \le 4$.
The second statement is clear from the classification of irreducible root systems of rank $4$.
\end{proof}

\begin{remark}\label{rem:well-known}
The first statement of Lemma \ref{lem:repeat} is an easy consequence of the well-known fact that every exponent of $\Phi$ appears at most twice. A uniform proof of this fact is probably well-known among experts.
 \end{remark}
 
 Now we investigate the ``local" picture of the extended Dynkin graph at the subgraph induced by the negative $-\theta$ of the highest root  and the simple root adjacent to it, and find a connection with RO properties. 

\begin{lemma}\label{lem:cases} 
Assume that $\ell \ge 2 $ and $c_{\max}\ge 2$. 
Let $\lambda$ be the unique simple root connected to $-\theta$ (Corollary \ref{cor:c-max}). 
Denote by $\gamma_1,\ldots, \gamma_k$ ($k\ge1$) the simple roots connected to $\lambda$. 
Then there are the following possibilities:
 \begin{enumerate}[(1)]
\item If  $\lan\theta,\lambda \ran=2$, then $k=1$, $c_{\gamma_1}\in \{1,2\}$.
\item If  $\lan\theta,\lambda \ran=1$ (in particular, $\lambda$ is long), then either $(2a)$ $k=3$, $c_{\gamma_1}=c_{\gamma_2}=c_{\gamma_3}=1$ (i.e., $\Phi=D_4$), or $(2b)$ $k=2$, $c_{\gamma_1}=2$, $c_{\gamma_2}=1$ ($\gamma_2$ is terminal and long), or $(2c)$ $k=1$, $c_{\gamma_1}=3$.
\end{enumerate}
 \end{lemma}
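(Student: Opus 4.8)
The strategy is a finite-case analysis driven by the constraint $\langle\theta,\lambda_i\rangle=0$ for every simple root $\lambda_i$ (which holds because $-\theta$ together with $\Delta$ spans a positive semi-definite form, cf.\ the extended Dynkin graph), combined with the structural information from Proposition \ref{lem:coes} and Corollary \ref{cor:c-max}. First I would fix $\lambda$ as the unique simple root connected to $-\theta$ and record, from Corollary \ref{cor:c-max}(ii), that $c_\lambda=2$ and $\langle\theta,\lambda\rangle\in\{1,2\}$. Writing $\theta=\sum_i c_{\alpha_i}\alpha_i$ and using $\langle\theta,\lambda\rangle=0$ on $\Delta$ (i.e.\ the orthogonality of $\theta$ to the simple roots), I would expand $0=\langle\theta,\lambda\rangle_{\mathrm{root\ side}}$ to obtain the key local equation
$$
2c_\lambda \;=\; -\,c_{-\theta}\langle -\theta,\lambda\rangle \;+\; \sum_{j=1}^k c_{\gamma_j}\langle \gamma_j,\lambda\rangle^{\!*},
$$
which, after normalizing with $c_{-\theta}=1$ and $c_\lambda=2$, becomes a balance between $\langle\theta,\lambda\rangle$ and the contributions $|\langle\gamma_j,\lambda\rangle|\,c_{\gamma_j}$ of the neighbors $\gamma_1,\dots,\gamma_k$ of $\lambda$ other than $-\theta$. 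This single Diophantine-type identity, together with $c_{\gamma_j}\ge 1$ and the Cartan-integer constraints $\langle\gamma_j,\lambda\rangle\langle\lambda,\gamma_j\rangle\in\{1,2,3\}$, pins down the possibilities.

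Next I would split on $\langle\theta,\lambda\rangle$. If $\langle\theta,\lambda\rangle=2$, then $-\theta$ contributes $2$ to the balance at $\lambda$ already, forcing the total neighbor contribution to be $2$ as well; since each neighbor contributes at least $1$ and a ramification of order $\ge 2$ at $\lambda$ would overshoot, we get $k=1$, and the remaining slack allows only $c_{\gamma_1}\in\{1,2\}$ (the value $c_{\gamma_1}=2$ occurring when $\langle\gamma_1,\lambda\rangle=-1$, $c_{\gamma_1}=1$ when $\langle\gamma_1,\lambda\rangle=-2$), giving case (1). If $\langle\theta,\lambda\rangle=1$, then $\lambda$ is long (as $\langle\theta,\lambda\rangle$ odd forces equal lengths of $\theta$ and $\lambda$ up to the usual argument), so $\langle\gamma_j,\lambda\rangle=\langle\lambda,\gamma_j\rangle\in\{-1,-2,-3\}$ with the bond to $-\theta$ simply-laced. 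The balance now reads $2=1+\sum_{j=1}^k |\langle\gamma_j,\lambda\rangle|\,c_{\gamma_j}$ after accounting for $c_\lambda=2$ on the left and the $-\theta$-term on the right — more precisely I would write it so that $\sum_j |\langle\gamma_j,\lambda\rangle|\,c_{\gamma_j} = 2c_\lambda - c_{-\theta} = 3$ and then enumerate: either three neighbors each with $|\langle\gamma_j,\lambda\rangle|\,c_{\gamma_j}=1$ (so all simply-laced, all $c_{\gamma_j}=1$, i.e.\ $\lambda$ is a triple point with three unit-coefficient neighbors, which is exactly $D_4$ — case $(2a)$); or two neighbors with contributions $2$ and $1$, forcing $c_{\gamma_1}=2$ with a single bond, $c_{\gamma_2}=1$, and by Corollary \ref{cor:c-max}(ii) $\gamma_2$ terminal and long — case $(2b)$; or one neighbor with $|\langle\gamma_1,\lambda\rangle|\,c_{\gamma_1}=3$, which by Corollary \ref{cor:not-important} (giving $\langle\lambda,\gamma_1\rangle\in\{-2,-3\}$ or $\gamma_1$ ramified) combined with coefficient bounds leaves only $c_{\gamma_1}=3$, $\langle\gamma_1,\lambda\rangle=-1$ — case $(2c)$.

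The main obstacle I anticipate is bookkeeping the interaction between the Cartan integers and the coefficient equalities cleanly, especially disentangling which neighbor is long and which is short in case $(2b)$ and $(2c)$ without accidentally invoking the classification. I would lean on Proposition \ref{lem:coes}(ii) (the chain $\lambda_0,\dots,\lambda_{q-1}$ is simple and attaches to the rest only at $\lambda_{q-1}$, and $c_{\lambda_s}=s+1$) to locate $\lambda=\lambda_1$ on the string, so that one of the $\gamma_j$ is $\lambda_2$ with $c_{\lambda_2}=3$ exactly when $q\ge 2$; this converts "the coefficient at the string-continuing neighbor is $c_{\max}-\text{something}$" into concrete small integers. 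A secondary subtlety is justifying that $\langle\theta,\lambda\rangle=1$ forces $\lambda$ long: I would argue that an odd Cartan integer between two roots occurs only when they have the same length (the off-diagonal Cartan entries are $\{-1\}$ for equal length and $\{-2,-3\}$ or $0$ otherwise on the nontrivial side), and $\theta$ is long, so $\lambda$ is long; the statement "$\gamma_2$ long" in $(2b)$ then follows from $\langle\gamma_2,\lambda\rangle=-1$ again forcing equal length with the long root $\lambda$. Once these length bookkeeping points are settled, the rest is the short finite enumeration sketched above, and each branch is closed by the balance equation together with one of the already-proved corollaries.
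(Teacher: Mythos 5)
Your overall route is the same as the paper's: expand $\langle\theta,\lambda\rangle = 2c_{\lambda} + \sum_{i=1}^{k} c_{\gamma_i}\langle\gamma_i,\lambda\rangle$ using $c_{\lambda}=2$ from Corollary \ref{cor:c-max}(ii), turn this into a small integer budget for the neighbors of $\lambda$, and enumerate. The paper records the same identity as the inequality $\langle\theta,\lambda\rangle\le 4-\sum_{i}c_{\gamma_i}$ and then ``lists all possibilities and rules out impossibilities''; your exact balance $\sum_{j}|\langle\gamma_j,\lambda\rangle|\,c_{\gamma_j}=4-\langle\theta,\lambda\rangle$ is the same bookkeeping in a slightly sharper form.

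There is, however, a genuine gap in your treatment of case (1). You assert that $k\ge 2$ ``would overshoot'' the neighbor budget of $2$, but it does not: $k=2$ with $c_{\gamma_1}=c_{\gamma_2}=1$ and both bonds simple contributes exactly $1+1=2$ and survives your balance equation, so the conclusion $k=1$ does not follow from the budget alone. This subcase must be excluded by a separate argument. For instance, by the second statement of Corollary \ref{cor:c-max}(ii) each $\gamma_i$ (coefficient $1$, adjacent to $\lambda$ with $c_{\lambda}=2$) is terminal with $\langle\lambda,\gamma_i\rangle=-1$; combined with $\langle\gamma_i,\lambda\rangle=-1$ all three simple roots have equal length and $\ell=3$, so $\Phi$ would be simply-laced of rank $3$, i.e.\ $A_3$, contradicting $c_{\max}\ge 2$. (The paper disposes of the exactly analogous subcase for $\langle\theta,\lambda\rangle=1$ by invoking the classification of rank-$3$ root systems.) A smaller omission of the same kind occurs in case (2): a neighbor contribution of $2$ (resp.\ $3$) could a priori be realized by $c_{\gamma_1}=1$ together with $|\langle\gamma_1,\lambda\rangle|=2$ (resp.\ $3$). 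These are excluded because $\lambda$ is long: $|\langle\gamma,\lambda\rangle|\ge 2$ forces $(\gamma,\gamma)\ge 2(\lambda,\lambda)$, impossible when $\lambda$ has maximal length. You establish that $\lambda$ is long but do not apply it at the points where it is actually needed to prune these branches.
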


 \begin{proof} 
We have $\lan\theta,\lambda\ran =2c_{\lambda}+ \sum_{i=1}^k c_{\gamma_i}\lan\gamma_i,\lambda\ran \le 4- \sum_{i=1}^k c_{\gamma_i}$. 
Since $\lan\theta,\lambda \ran \ge 1$, we have $\sum_{i=1}^k c_{\gamma_i} \in \{1,2,3\}$.
Then we can list all possibilities and rule out impossibilities. 
For example, if $\lan\theta,\lambda \ran=1$ and $\sum_{i=1}^k c_{\gamma_i} =2$, then either (i) $k=1$, $c_{\gamma_1}=2$, or (ii) $k=2$, $c_{\gamma_1}=c_{\gamma_2}=1$. 
For (i), it follows that $1=\lan\theta,\lambda\ran =4+2\lan\gamma_i,\lambda\ran$, which is a contradiction since $\lambda$ is long. 
For (ii), the second statement of Corollary \ref{cor:c-max}(ii) implies that $\gamma_1,\gamma_2$ must be all terminal. 
Thus $\ell=3$. 
However, such root system does not exist by the classification of irreducible root systems of rank $3$.
Similarly, to conclude that $\Phi=D_4$ in (2a), we need the classification of irreducible root systems of rank $4$.
\end{proof}

It is known that $\theta^\perp$ is the standard parabolic subsystem of $\Phi$ generated by $\{\alpha \in \Delta \mid (\alpha,\theta)=0\}$. 
Also, $\theta^\perp$ may be reducible and decomposed into  irreducible, mutually orthogonal components. 

\begin{corollary}\label{cor:reducible}
If $\theta^\perp$ is reducible, then either Possibility (2a) or (2b) occurs. 
\end{corollary}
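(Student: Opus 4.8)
The plan is to observe that, as soon as $c_{\max}\ge 2$, the subsystem $\theta^{\perp}$ is precisely $\Phi(\Delta\setminus\{\lambda\})$, where $\lambda$ is the unique simple root adjacent to $-\theta$ in $\widetilde{\D}(\Phi)$; hence the irreducible components of $\theta^{\perp}$ are in bijection with the connected components of the graph obtained by deleting $\lambda$ from ${\D}(\Phi)$, and, ${\D}(\Phi)$ being a tree, there are exactly as many of these as there are simple roots adjacent to $\lambda$. Comparing this count with the list in Lemma \ref{lem:cases} will force Possibility (2a) or (2b).

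First I would dispose of the case $c_{\max}=1$. If $c_{\max}=1$, then by Corollary \ref{cor:c-max}(i) either $\ell=1$ and $\theta^{\perp}=\emptyset$, or $\ell\ge 2$, ${\D}(\Phi)$ is a simple chain, and $-\theta$ is joined only to the two terminal vertices of ${\D}(\Phi)$; in the latter situation $\{\alpha\in\Delta\mid(\alpha,\theta)=0\}$ is the complement in $\Delta$ of those two endpoints and spans a connected subchain (possibly empty). In every case $\theta^{\perp}$ is empty or irreducible, contradicting reducibility, so henceforth $c_{\max}\ge 2$. By Corollary \ref{cor:c-max}(ii), $-\theta$ is joined in $\widetilde{\D}(\Phi)$ to a unique simple root $\lambda$. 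Since an edge between $-\theta$ and $\alpha_i$ in $\widetilde{\D}(\Phi)$ is present exactly when $(\theta,\alpha_i)\ne 0$, we get $(\theta,\alpha_i)=0$ for all $\alpha_i\in\Delta\setminus\{\lambda\}$ while $(\theta,\lambda)\ne 0$, so by the description of $\theta^{\perp}$ recalled before the statement, $\theta^{\perp}=\Phi(\Delta\setminus\{\lambda\})$. As ${\D}(\Phi)$ is a tree, removing the single vertex $\lambda$ leaves exactly $k$ connected components, where $\gamma_1,\ldots,\gamma_k$ are the simple roots adjacent to $\lambda$ as in Lemma \ref{lem:cases}; these components are the irreducible components of $\theta^{\perp}$. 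Consequently $\theta^{\perp}$ is reducible if and only if $k\ge 2$.

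Finally, inspecting Lemma \ref{lem:cases}: among Possibilities (1), (2a), (2b), (2c), exactly (2a) (with $k=3$) and (2b) (with $k=2$) have $k\ge 2$, whereas (1) and (2c) have $k=1$. Hence reducibility of $\theta^{\perp}$ forces Possibility (2a) or (2b), which is the claim. The argument is brief; the only point demanding care is the preliminary reduction to $c_{\max}\ge 2$ so that Lemma \ref{lem:cases} is applicable, together with the translation of reducibility of $\theta^{\perp}$ into the statement that $\lambda$ has at least two simple-root neighbors in ${\D}(\Phi)$.
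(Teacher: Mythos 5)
Your proof is correct and follows essentially the same route as the paper, which simply observes that reducibility of $\theta^\perp$ forces $k\ge 2$ in Lemma \ref{lem:cases}, leaving only Possibilities (2a) and (2b). You merely make explicit two steps the paper leaves implicit — ruling out $c_{\max}=1$ via Corollary \ref{cor:c-max}(i) so that Lemma \ref{lem:cases} applies, and identifying the components of $\theta^\perp=\Phi(\Delta\setminus\{\lambda\})$ with the components of the tree ${\D}(\Phi)$ minus $\lambda$ — both of which are sound.
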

\begin{proof}
It follows immediately from Lemma \ref{lem:cases} (taking $k\ge 2$).
\end{proof}

\begin{corollary}\label{cor:existence-D4}
When $\ell=4$, there exists a set that is both $A_1^2$ and RO if and only if $\Phi=D_4$.
Moreover, if  $\Phi=D_4$, then every $A_1^2$ set is RO.
\end{corollary}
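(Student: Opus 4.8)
The plan is to split the statement into its two assertions and handle each by exhausting the rank-$4$ classification, which we are already permitted to invoke. For the first assertion, one direction is easy: if $\Phi = D_4$, then (as will be needed for the second assertion anyway) one exhibits an explicit $A_1^2$ set that is RO, for instance $\{\epsilon_1-\epsilon_2,\epsilon_1+\epsilon_2\}$ in the standard coordinates of $D_4$, and checks Definition \ref{def:RO}(a),(b) directly. For the converse, suppose there exists a set $\{\beta_1,\beta_2\}$ that is both $A_1^2$ and RO. By Remark \ref{rem:3} this already forces $\ell \ge 3$, and here $\ell = 4$. Since the set is in particular RO, Theorem \ref{thm:half1} applies (its hypothesis is exactly the existence of such a set), so $\h/2$ is an exponent of $W$, i.e. $m_2 = \h/2$ for some choice among the middle exponents — more precisely $\h/2 \in \exp(W)$. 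By Lemma \ref{lem:repeat}, when $\ell = 4$ the relation $m_2 = \h/2$ holds if and only if $\Phi = D_4$. One only needs to confirm that "$\h/2$ is an exponent" forces "$m_2 = \h/2$" when $\ell = 4$: this is immediate from Theorem \ref{exponents}(i), since the exponents pair up as $m_j + m_{5-j} = \h$, so an exponent equal to $\h/2$ must be self-paired, and with $\ell = 4$ that exponent is $m_2$ or $m_3$, both equal to $\h/2$ in that case. This gives $\Phi = D_4$.

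For the second assertion — that in $D_4$ every $A_1^2$ set is RO — the plan is to combine the orbit structure with the classification. By Proposition \ref{prop:characterize}(i), being $A_1^2$ and being RO are both $W$-invariant properties, so it suffices to check the claim on a set of $W$-orbit representatives of $A_1^2$ sets. Here I would invoke (as recorded in Remark \ref{rem:numerical}(i), or re-derive by direct inspection of the $D_4$ root system) that in $D_4$ one has $\T(RO) = \T(A_1^2)$; but since the present corollary is meant to be part of what justifies that remark, the cleaner route is to argue directly: list the $A_1^2$ sets of $D_4$ up to $W$-action using the Dynkin diagram together with Proposition \ref{prop:characterize}(ii) (every $A_1^2$ set is $W$-conjugate to a pair of orthogonal simple roots, and in the $D_4$ diagram the orthogonal pairs of simple roots are exactly the pairs of "leaves"), and verify condition (b') of Remark \ref{rem:terminology} for each such pair by computing $\beta_1^\perp$ and $\beta_2^\perp$ explicitly. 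By the triality symmetry of $D_4$ all three leaves are equivalent, so effectively a single case must be checked.

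The main obstacle, and the only genuinely non-formal step, is the explicit verification inside $D_4$ that an orthogonal pair of leaf simple roots satisfies the related-orthogonality condition (b)/(b'): one must show $\beta_1^\perp \setminus \{\pm\beta_2\} = \beta_2^\perp \setminus \{\pm\beta_1\}$. This is a finite, small computation in the $24$-root system $D_4$ (the orthogonal complement of such a pair inside $\Phi$ is an $A_1^2$, and one checks that removing $\pm\beta_i$ from $\beta_{3-i}^\perp$ leaves exactly the roots involving the central node with the appropriate coefficients), but it is the place where one actually uses the geometry rather than general nonsense. Everything else is bookkeeping with Theorem \ref{exponents}(i), Lemma \ref{lem:repeat}, Theorem \ref{thm:half1}, and $W$-invariance of the two notions. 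I would present the forward direction first, then the $D_4$ verification, then deduce the "every $A_1^2$ set is RO" statement from the orbit reduction.
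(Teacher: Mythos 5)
Your proposal is correct, but it takes a genuinely different route from the paper, whose entire proof of this corollary is the single line ``Use the classification of irreducible root systems of rank $4$'' --- i.e.\ a direct inspection of $A_4$, $B_4$, $C_4$, $D_4$, $F_4$. You replace most of that inspection with structure already established earlier in the paper: for the ``only if'' direction you chain Theorem \ref{thm:half1} (an $A_1^2$ RO set forces $\h/2\in\exp(W)$), the pairing $m_j+m_{5-j}=\h$ from Theorem \ref{exponents}(i) (so $\h/2$ being an exponent forces $m_2=\h/2$ when $\ell=4$), and Lemma \ref{lem:repeat} (whose rank-$4$ statement is exactly $m_2=\h/2\iff\Phi=D_4$); this reduces the classification input to a comparison of exponent lists, which is already packaged in Lemma \ref{lem:repeat} and involves no circularity since both ingredients are proved before the corollary. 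For the second assertion you reduce via Proposition \ref{prop:characterize}(ii) to orthogonal pairs of simple roots, note these are the three pairs of leaves of the $D_4$ diagram, and use the diagram symmetry to check a single pair explicitly against condition (b') --- a correct and complete verification (the common orthogonal complement of $\{\epsilon_3-\epsilon_4,\epsilon_3+\epsilon_4\}$ in $D_4$ is $\{\pm(\epsilon_1-\epsilon_2),\pm(\epsilon_1+\epsilon_2)\}$, and (b') holds). What your approach buys is a proof in which the only irreducible case analysis is one small computation inside $D_4$ plus the exponent table of rank-$4$ systems; what the paper's terser approach buys is brevity, at the cost of leaving all five rank-$4$ checks (including the orthogonality structure of each) implicit. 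One cosmetic caveat: your parenthetical description of $\beta_{3-i}^\perp\setminus\{\pm\beta_i\}$ as ``the roots involving the central node'' is not quite right (e.g.\ $\alpha_1=\epsilon_1-\epsilon_2$ lies in this set but has no central-node coefficient), but this does not affect the argument since the actual set equality you need is verified directly.
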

\begin{proof}
Use the classification of irreducible root systems of rank $4$.
\end{proof}

\begin{proposition}\label{prop:non-A1}
Assume that $\ell \ge 4$.
If $\theta^\perp$ is reducible and there exists an $A_1^2$ set that is not RO, then Possibility (2b) in Lemma \ref{lem:cases} occurs. 
In particular, $\theta^\perp = \{\pm\gamma_2\} \times\Omega$ for a long simple root $\gamma_2$ and $\Omega$ is irreducible with $\mathrm{rank}(\Omega)\ge 2$.
\end{proposition}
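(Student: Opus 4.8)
The plan is to feed the structural facts from \S\ref{subsec:elemetary} into Lemma \ref{lem:cases}. Since $\theta^\perp$ is assumed reducible, Corollary \ref{cor:reducible} already confines us to Possibility $(2a)$ or Possibility $(2b)$ of Lemma \ref{lem:cases}; the remaining work is to rule out $(2a)$ and then to read off $\theta^\perp$ explicitly in case $(2b)$.

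To rule out $(2a)$, I would note that this possibility forces $\Phi = D_4$, hence $\ell = 4$, and then Corollary \ref{cor:existence-D4} says every $A_1^2$ set of $\Phi$ is RO --- contradicting the hypothesis that some $A_1^2$ set is not RO. (For $\ell \ge 5$, Possibility $(2a)$ is vacuous anyway since $D_4$ has rank $4$, so in all cases we land in $(2b)$.) Thus Possibility $(2b)$ holds: writing $\lambda$ for the unique simple root adjacent to $-\theta$ (Corollary \ref{cor:c-max}(ii)), $\lambda$ has exactly two neighbours $\gamma_1, \gamma_2$ in $\D(\Phi)$, with $c_{\gamma_1}=2$, and $\gamma_2$ terminal and long with $c_{\gamma_2}=1$. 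To describe $\theta^\perp$, recall that $\theta^\perp$ is the standard parabolic subsystem generated by $\{\alpha \in \Delta \mid (\alpha,\theta)=0\}$, and by Corollary \ref{cor:c-max}(ii) this set is exactly $\Delta \setminus \{\lambda\}$ (only $\lambda$ is non-orthogonal to $\theta$). Since $\D(\Phi)$ is a tree and $\lambda$ has degree $2$ in it, deleting $\lambda$ leaves precisely two connected components: the isolated vertex $\gamma_2$ (whose only $\D(\Phi)$-neighbour was $\lambda$, $\gamma_2$ being terminal), and the component through $\gamma_1$ carrying the remaining $\ell-2$ simple roots. The former gives the $A_1$ factor $\{\pm\gamma_2\}$; the latter, being connected, is the Dynkin diagram of an irreducible subsystem $\Omega$. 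Since $\ell \ge 4$, $\mathrm{rank}(\Omega)=\ell-2\ge 2$, so $\theta^\perp = \{\pm\gamma_2\}\times\Omega$ as claimed.

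I do not expect a genuinely hard step here; the only points deserving care are verifying that the hypotheses of the cited results are met --- in particular, that reducibility of $\theta^\perp$ already forces $c_{\max}\ge 2$ (if $c_{\max}=1$ then by Corollary \ref{cor:c-max}(i) $\D(\Phi)$ is a chain with $-\theta$ attached at the two ends, so $\theta^\perp$ is irreducible of type $A_{\ell-2}$), which is what makes Lemma \ref{lem:cases} and Corollary \ref{cor:reducible} applicable --- and the elementary count of connected components obtained by removing $\lambda$ from the tree $\D(\Phi)$. Both hypotheses of the proposition are used essentially: ``some $A_1^2$ set is not RO'' to discard Possibility $(2a)$, and $\ell \ge 4$ to promote $\mathrm{rank}(\Omega)=\ell-2$ to $\ge 2$ (for $\ell = 3$, e.g.\ $\Phi = B_3$, one genuinely can have $\Omega$ of rank $1$).
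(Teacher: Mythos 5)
Your proposal is correct and follows essentially the same route as the paper: dispose of $c_{\max}=1$ via Corollary \ref{cor:c-max}(i), invoke Corollary \ref{cor:reducible} to reduce to Possibilities $(2a)$ or $(2b)$, eliminate $(2a)$ using Corollary \ref{cor:existence-D4} against the existence of a non-RO $A_1^2$ set, and read off the decomposition $\theta^\perp=\{\pm\gamma_2\}\times\Omega$ from $(2b)$. The only difference is that you spell out the tree-deletion argument giving $\mathrm{rank}(\Omega)=\ell-2\ge 2$, which the paper leaves implicit.
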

 \begin{proof} 
 If $c_{\max}=1$, then  Corollary \ref{cor:c-max}(i) implies that $\theta^\perp$ is irreducible and of rank at least $2$, a contradiction.
Now consider $c_{\max}\ge 2$. 
By Corollary \ref{cor:reducible}, either Possibility (2a) or (2b) occurs. 
However, Corollary \ref{cor:existence-D4} ensures that Possibility (2a) can not occur because a non-RO $A_1^2$ set exists. 
Thus Possibility (2b) must occur. 
Then $\theta^\perp = \{\pm\gamma_2\} \times\Omega$ where $\Omega$ is irreducible with $\mathrm{rank}(\Omega)\ge 2$.
\end{proof}
\begin{proposition}\label{prop:A1-component}
Assume that $\ell \ge 4$.
Suppose that $\{\theta, \alpha\}$ is an $A_1^2$ set with $\alpha \in \Delta$. 
If $\{\pm\alpha\}$ is a component of $\theta^\perp$, then $\{\theta, \alpha\}$ is RO.

\end{proposition}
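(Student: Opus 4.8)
The plan is to verify condition (b) in Definition \ref{def:RO} directly, using the hypothesis that $\{\pm\alpha\}$ is an irreducible component of $\theta^\perp$. Since $\{\theta,\alpha\}$ is assumed to be $A_1^2$, it is automatically orthogonal, so condition (a) holds; it remains to check condition (b'), namely $\theta^\perp\setminus\{\pm\alpha\}=\alpha^\perp\setminus\{\pm\theta\}$. The inclusion $\theta^\perp\setminus\{\pm\alpha\}\subseteq\alpha^\perp\setminus\{\pm\theta\}$ is the nontrivial one: I must show that every root $\gamma$ orthogonal to $\theta$ (other than $\pm\alpha$) is also orthogonal to $\alpha$. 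But such a $\gamma$ lies in $\theta^\perp$, and since $\{\pm\alpha\}$ splits off as a component of $\theta^\perp$ which is orthogonal to the complementary component, and $\gamma\notin\{\pm\alpha\}$ forces $\gamma$ into that complementary component, we get $(\gamma,\alpha)=0$ immediately from the orthogonal decomposition.

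For the reverse inclusion $\alpha^\perp\setminus\{\pm\theta\}\subseteq\theta^\perp\setminus\{\pm\alpha\}$, I would take $\gamma\in\Phi$ with $(\gamma,\alpha)=0$ and $\gamma\neq\pm\theta$, and show $(\gamma,\theta)=0$. Here the hypothesis $\ell\ge 4$ and the structural results of the previous section should come into play: $\alpha\in\Delta$ and $\{\theta,\alpha\}$ being $A_1^2$ with $\{\pm\alpha\}$ a component of $\theta^\perp$ is a rather rigid situation. The natural route is to invoke Corollary \ref{cor:reducible} and Lemma \ref{lem:cases} to pin down the local shape of $\widetilde{\D}(\Phi)$ near $-\theta$; since $\theta^\perp$ has an $A_1$ component, Possibility (2a) or (2b) occurs. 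Then $\alpha$ is (up to the $W$-action, using Proposition \ref{prop:characterize} and Lemma \ref{lem:compatible} to reduce to $\alpha\in\Delta$, which is already assumed) forced to be essentially the simple root denoted $\gamma_2$ (terminal, long) in case (2b), or one of the three outer nodes in the $D_4$ case (2a). In either case one reads off that the full orthogonal complement of $\alpha$ inside $\Phi$ coincides, outside $\pm\theta$, with $\theta^\perp$ — equivalently, that $\alpha^\perp$ and $\theta^\perp$ have the ``same'' complement, so that (b') holds.

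The cleanest way to organize the reverse inclusion is probably to argue by the symmetry of the relatedness relation together with a dimension/cardinality count: if $\{\theta,\alpha\}$ is $A_1^2$ and $\{\pm\alpha\}$ is a component of $\theta^\perp$, I would like to show symmetrically that $\{\pm\theta\}$ plays the analogous role relative to $\alpha$. Concretely, $\theta^\perp=\{\pm\alpha\}\times\Omega$ for some subsystem $\Omega$ (irreducible in case (2b), of type $A_1^2$ in the $D_4$ case), and I want $\alpha^\perp=\{\pm\theta\}\times\Omega'$ with $\Omega'=\Omega$. Since $\alpha$ is a terminal long simple root whose unique neighbour is the node $\lambda$ adjacent to $-\theta$, the subgraph of $\widetilde{\D}(\Phi)$ obtained by deleting $\alpha$ is connected and contains $-\theta$; this identifies $\alpha^\perp$ with the parabolic generated by the simple roots orthogonal to $\alpha$, and one checks this equals $\{\pm\theta\}\times\Omega$ after translating via the affine node. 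This is where the classification of rank-$4$ systems (already permitted in this section) handles the $D_4$ exception cleanly.

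The main obstacle I anticipate is the reverse inclusion, specifically ruling out a ``rogue'' root $\gamma$ with $(\gamma,\alpha)=0$ but $(\gamma,\theta)\neq 0$ and $\gamma\neq\pm\theta$. The forward inclusion is essentially immediate from the orthogonal-direct-product structure of $\theta^\perp$, but the reverse one genuinely uses that $\theta$ is \emph{the highest root} (so $-\theta$ is the affine node, with the coefficient and adjacency constraints of Corollary \ref{cor:c-max} and Lemma \ref{lem:cases}) rather than an arbitrary root orthogonal to $\alpha$. I expect the argument to split along Possibilities (2a) and (2b), with case (2a) disposed of by the $D_4$ classification (Corollary \ref{cor:existence-D4}, where every $A_1^2$ set is RO anyway, so there is nothing to prove) and case (2b) handled by the explicit description $\theta^\perp=\{\pm\gamma_2\}\times\Omega$ from Proposition \ref{prop:non-A1}, combined with matching $\alpha$ to $\gamma_2$.
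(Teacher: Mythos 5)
Your overall framing matches the paper's: reduce to condition (b$'$), and note that the inclusion $\theta^\perp\setminus\{\pm\alpha\}\subseteq\alpha^\perp\setminus\{\pm\theta\}$ is immediate from $\{\pm\alpha\}$ being a component of $\theta^\perp$. The gap is in the reverse inclusion. You propose to handle it by asserting that, since the diagram minus $\alpha$ is connected and contains $-\theta$, "this identifies $\alpha^\perp$ with the parabolic generated by the simple roots orthogonal to $\alpha$." That identification is false for a general simple root, even a terminal long one: for the long terminal simple root $\alpha_3=2\epsilon_3$ of $C_3$, the subsystem $\alpha_3^\perp$ is a full $C_2$ (containing $\pm 2\epsilon_1,\pm 2\epsilon_2,\pm(\epsilon_1\pm\epsilon_2)$), while $\Delta\cap\alpha_3^\perp=\{\alpha_1\}$ generates only $\{\pm\alpha_1\}$. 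The fact that $\theta^\perp$ is the \emph{standard} parabolic on $\Delta\cap\theta^\perp$ is special to the highest root (it is dominant) and does not transfer to $\alpha$. Excluding a "rogue" root $\beta$ with $(\beta,\alpha)=0$, $(\beta,\theta)\neq 0$, $\beta\neq\pm\theta$ lying outside that standard parabolic is precisely the content of the proposition, so at this point your argument begs the question.

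The paper closes exactly this gap with a quantitative symmetry that your sketch does not invoke. Because $\{\pm\alpha\}$ is a component of $\theta^\perp$, there is no irreducible rank-$2$ subsystem of $\theta^\perp$ through $\alpha$, so $\M_\theta(\alpha)=\emptyset$ and $\K_\theta(\alpha)=0$; Proposition \ref{prop:3sums} (a consequence of the Coxeter-number identity in Theorem \ref{exponents}(vi)) gives $\K_\alpha(\theta)=\K_\theta(\alpha)=0$, hence $\M_\alpha(\theta)=\emptyset$. A rogue $\beta$ would then force $\Gamma:=\Phi\cap\operatorname{span}\{\beta,\theta,\alpha\}$ to be an \emph{irreducible} rank-$3$ subsystem in which $\{\theta,\alpha\}$ is $A_1^2$ and $\alpha^\perp\cap\Gamma$ is irreducible of rank $2$; the rank-$3$ classification then forces $\Gamma=B_3$ with $\alpha$ short, and the local analysis at $-\theta$ (Lemma \ref{lem:cases}, Possibility (2b)) forces $\Delta=\{\gamma_2,\lambda,\alpha\}$, i.e.\ $\ell=3$, contradicting $\ell\ge 4$. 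Your instinct that "symmetry of relatedness plus a cardinality count" should carry the reverse inclusion points in the right direction, but the count that actually works is the symmetry $\K_\alpha(\theta)=\K_\theta(\alpha)$, not connectivity of the Dynkin diagram; as written, the proposal does not constitute a proof.
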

 \begin{proof} 
We may assume that  $c_{\max}\ge 2$.
If not, Corollary \ref{cor:c-max}(i) implies that $\theta^\perp$ is irreducible and of rank at least $2$, a contradiction. 
Since $-\theta$ connects only to one vertex of ${\D}(\Phi)$, $\theta^\perp$ must be reducible; otherwise, $\theta^\perp=\{\pm\alpha\}$, a contradiction.
Thus, $\{\beta, \alpha\}$ is $A_1^2$ for every $\beta \in \theta^\perp\setminus \{\pm \alpha\}$ because $\{\beta, \alpha\}=w\{\alpha', \alpha\}$ for some $w \in W$ and $\alpha' \in \Delta$ such that $(\alpha', \alpha)=0$. 
With the notations in Definition \ref{def:N} and Proposition \ref{prop:3sums}, we have $\M_\theta(\alpha)=\emptyset$ and $\K_\theta(\alpha)=0$.

Note that $\theta^\perp\setminus \{\pm \alpha\} \subseteq  \alpha^\perp\setminus \{\pm\theta\}$ since $\{\pm\alpha\}$ is a component of $\theta^\perp$.
Suppose to the contrary that $\{\theta, \alpha\}$ is not RO. 
Then there exists $\beta \in \alpha^\perp\setminus \{\pm\theta\}$ such that $(\beta, \theta)\ne0$. 
Note that $\Gamma:=\Phi\cap \mathrm{span}\{\beta, \theta, \alpha\}$ is a subsystem of rank $3$ since it contains the $A_1^2$ set $\{\theta, \alpha\}$. 
$\Gamma$ must be irreducible, otherwise, $\Gamma\in \M_\alpha(\theta)$ and $\K_\alpha(\theta)\ne0$ which contradicts Proposition \ref{prop:3sums}. 
Relying on the classification of rank-$3$ irreducible root systems, and two facts: (i) $\{\theta, \alpha\}$ is $A_1^2$ in $\Gamma$, (ii) $\alpha^\perp \cap \Gamma$ is an irreducible subsystem of rank $2$ of $\Gamma$ (as it contains $\beta$ and $\theta$), we conclude that $\Gamma = B_3$ and $\alpha$ is the unique short simple root of $\Gamma$. 
Since $\theta^\perp$ is reducible, either Possibility (2a) or (2b) occurs by Corollary \ref{cor:reducible}.
Thus Possibility (2b) must occur since $\Phi$ contains the subsystem $\Gamma$ of type $B_3$.
Since $\{\pm\alpha\}$ is a component of $\theta^\perp$ and $\alpha$ is short, with the notation in Possibility  (2b), $\Delta=\{\gamma_2, \lambda, \alpha\}$. 
Thus $\ell=3$, a contradiction. 
\end{proof}

\subsection{Second half of Proof of Theorem \ref{thm:card}}\label{subsec:halfway2}
In this subsection, we complete the Proof of Theorem \ref{thm:card} by proving its second half, Theorem \ref{thm:half2} below.
\begin{theorem}\label{thm:half2} 
If $\{\beta_1, \beta_2\} \subseteq \Phi^+$ is $A_1^2$ but not RO, then for each $i \in \{1,2\}$
$$|\A^{H_{\beta_i}}| - |\A^{H_{\beta_1} \cap H_{\beta_2}}|= m_{\ell-1}.$$
 \end{theorem}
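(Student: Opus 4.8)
The plan is to convert the cardinality statement into an identity about rank-$3$ subsystems, reduce to a convenient representative, and extract from it a purely combinatorial fact about the highest root.

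By Proposition \ref{prop:X=A_1^2} and Remark \ref{rem:K0}(i), for $X=H_{\beta_1}\cap H_{\beta_2}$ of type $A_1^2$ we have $|\A^{H_{\beta_i}}|-|\A^{X}|=m_\ell-\K_0$, where $\K_0=\K_0(\{\beta_1,\beta_2\})=\sum_{\Psi\in\N_0}(m_2(\Psi)-1)$ by Remark \ref{rem:K0}(ii); in particular the left-hand side does not depend on $i$. Since $m_\ell=\h-1$ and $m_{\ell-1}=\h-m_2$ by Theorem \ref{exponents}, the desired equality $|\A^{H_{\beta_i}}|-|\A^{X}|=m_{\ell-1}$ is equivalent to
\[
\K_0(\{\beta_1,\beta_2\})=m_2-1 .
\]
The quantity $\K_0$ is $W$-invariant (indeed it is determined by $\A$ and $X$ alone, by Remark \ref{rem:K0}(i)), so I may replace $\{\beta_1,\beta_2\}$ by any $W$-translate, and even replace $\Phi$ by its dual root system, which changes neither $\A$ nor $X$. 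Unless $\Phi$ is of type $C$ (in which case I pass to the dual), each $W$-orbit of non-RO $A_1^2$ sets has a representative containing a long root, so by Proposition \ref{prop:characterize} and transitivity of $W$ on long roots I may assume $\beta_1=\theta$, the highest root. As $\theta^{\perp}$ is a standard parabolic subsystem with base contained in $\Delta$, conjugating by an element of $W(\theta^{\perp})$ further lets me take $\beta_2=\alpha\in\Delta$ with $(\alpha,\theta)=0$; and since $\{\theta,\alpha\}$ is not RO, Proposition \ref{prop:A1-component} forces $\alpha$ to lie in an irreducible component of $\theta^{\perp}$ of rank $\ge 2$ --- the situation of Corollary \ref{cor:reducible} and Proposition \ref{prop:non-A1}. (When $\ell=3$ the identity is trivial, as $\N_0=\{\Phi\}$; when $\ell=4$ we have $\Phi\ne D_4$ by Corollary \ref{cor:existence-D4}, and the finitely many remaining possibilities can be checked directly.)

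I would then pin down the members of $\N_0=\N_0(\{\theta,\alpha\})$. Because $\theta$ is maximal in the root poset, $\theta+\delta\notin\Phi$ for all $\delta\in\Phi^+$, and applying this inside a subsystem shows that $\theta$ is the highest root of every $\Psi\in\N_0$ relative to $\Phi^+\cap\Psi$. Hence each such $\Psi$ is of type $A_3$, $B_3$ or $C_3$; none is of type $G_2$, so Theorem \ref{thm:iso} (Case $2$) yields $m_2(\Psi)-1=c_{\max}(\Psi)$, the largest coefficient of $\theta$ in the base of $\Psi$, and the same applied to $\Phi$ gives $m_2-1=c_{\max}(\Phi)$. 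Thus the problem is reduced to the diagrammatic identity
\[
\sum_{\Psi\in\N_0}c_{\max}(\Psi)=c_{\max}(\Phi).
\]

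The heart of the matter, and what I expect to be the main obstacle, is this last identity; it is exactly what I would isolate as Theorem \ref{thm:crucial}, the cleanest route being the bijection in \S\ref{sec:app}. The idea is to read both sides through the ``lemma of the string'' (Proposition \ref{lem:coes}): write $\Lambda=\{\lambda_0=-\theta,\lambda_1,\dots,\lambda_q\}$ for the minimal chain in $\widetilde{\D}(\Phi)$ with $c_{\lambda_s}=s+1$, so $c_{\max}(\Phi)=q+1$; by Proposition \ref{prop:b-a} and Theorem \ref{thm:iso} the positive roots of height $>m_{\ell-1}$ are $\theta=\theta_1,\theta_2,\dots$ with $\theta_s-\theta_{s+1}=\xi_s=\lambda_s$, and there are exactly $m_2-1=c_{\max}(\Phi)$ of them. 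One then matches this chain of ``top'' roots against the rank-$3$ subsystems of $\N_0$ counted with multiplicity $c_{\max}(\cdot)$, splitting along the possibilities of Lemma \ref{lem:cases} for the node $\lambda$ adjacent to $-\theta$: the simply-laced case $c_{\max}(\Phi)=1$ (where $\D(\Phi)$ is a simple chain and one must exhibit a single $A_3$ in $\N_0$ and rule out $B_3$ and $C_3$), Possibility $(1)$, Possibility $(2b)$, and Possibility $(2c)$ --- Possibility $(2a)$ being excluded since it forces $\Phi=D_4$, where every $A_1^2$ set is RO. In each case one invokes the position of $\alpha$ inside a rank-$\ge 2$ component $\Omega$ of $\theta^{\perp}$ (Proposition \ref{prop:non-A1}), together with Propositions \ref{prop:long} and \ref{prop:irr} and Corollary \ref{cor:exactly2} to control the roots of heights $m_{\ell-1}$ and just above and how they sit inside subsystems, and builds from $\theta,\theta_2,\dots$ a nested family of members of $\N_0$ whose $c_{\max}$-values add to $c_{\max}(\Phi)$. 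The subtle part --- which is why the case analysis seems unavoidable --- is that $\N_0$ may contain subsystems not of the form $\Phi\cap\mathrm{span}(\Psi)$, so one must ensure these neither over- nor under-count; the explicit bijection of \S\ref{sec:app} is what makes this bookkeeping transparent.
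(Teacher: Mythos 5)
Your reductions are sound and track the paper's: passing to the dual to ensure a long root, using Lemma \ref{lem:orbit} to normalize to $\{\theta,\alpha\}$ with $\alpha\in\Delta$, and rewriting the claim as $\K_0(\{\theta,\alpha\})=\sum_{\Psi\in\N_0}(m_2(\Psi)-1)=m_2-1$, equivalently $\sum_{\Psi\in\N_0}c_{\max}(\Psi)=c_{\max}(\Phi)$. But the proof then stops exactly where the real work begins, and the one concrete route you point to does not apply.

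You identify the remaining identity with Theorem \ref{thm:crucial} and propose to prove it by the bijection of \S\ref{sec:app}. That is a genuine gap: Theorem \ref{thm:crucial} is a statement about a pair $\gamma_1,\gamma_2\in\Theta^{(m_{\ell-1})}$, which need not lie in the same $W$-orbit as your representative $\{\theta,\alpha\}$ (in types $B$, $C$, $F_4$ there are two orbits of $A_1^2$ sets, and only one of them meets $\Theta^{(m_{\ell-1})}\times\Theta^{(m_{\ell-1})}$). Since $\K_0$ is only an orbit invariant, knowing the identity for the height-$m_{\ell-1}$ pair says nothing a priori about the other orbit. Moreover the appendix bijection cannot be run on $\{\theta,\alpha\}$: it hinges on Lemma \ref{lem:local height}, i.e.\ ${\rm ht}_{\Psi}(\gamma_i)=m_2(\Psi)$ for \emph{both} roots, which makes $|\U_\Psi|=m_2(\Psi)-1$ and makes $\U_\Psi$ independent of $i$. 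For your pair one has ${\rm ht}_\Psi(\theta)=\h(\Psi)-1$ and ${\rm ht}_\Psi(\alpha)=1$, so the sets of ``locally higher'' roots do not have the right cardinality and depend on which root you use. The remaining sketch (``split along Lemma \ref{lem:cases} and build a nested family of members of $\N_0$ whose $c_{\max}$-values add up'') is not carried out, and it is precisely the hard step.

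What the paper does instead, and what is missing from your argument, is a transfer mechanism between orbits: it proves the identity only for height-$m_{\ell-1}$ pairs (Theorem \ref{thm:crucial}, via \cite{MR20} or the appendix bijection), uses Proposition \ref{prop:existence} and Lemma \ref{lem:orbit} to realize that orbit as some $\{\theta,\lambda_2\}$, invokes Corollary \ref{cor:same-component} to place both $\lambda_1$ and $\lambda_2$ in the unique rank-$\ge 2$ component $\Omega$ of $\theta^{\perp}$, and then shows $\K_\theta(\beta)=\h(\Omega)/2-1$ for \emph{every} $\beta\in\Omega$ by proving $\M'_\theta(\beta)=\emptyset$ (a check on rank-$3$ irreducible systems). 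That last computation is what makes $\K_\theta(\lambda_1)=\K_\theta(\lambda_2)$ even when $\|\lambda_1\|\neq\|\lambda_2\|$, i.e.\ across distinct orbits. Without it, or some substitute for it, your argument only establishes the theorem for the single orbit containing a pair of roots of height $m_{\ell-1}$.
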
 
 
The first key ingredient is Theorem \ref{thm:crucial}. 
It asserts that Theorem \ref{thm:half2} is always true for a special class of $A_1^2$ sets, and we can describe it by the height function without requiring the RO condition. 
We were recently informed that M\"{u}cksch and R\"{o}hrle \cite{MR20} study a similar property (to the non-RO case) of Weyl arrangement restrictions (which they called the accuracy) via MAT-free techniques of \cite{ABCHT16}.
Their main result together with Corollary \ref{cor:exactly2} indeed give a proof of Theorem \ref{thm:crucial}. 
We will use this proof here, however, we remark that our primary method gives a different and bijective proof, which we refer the interested reader to Appendix \S \ref{sec:app} for more details.

Unless otherwise stated, we assume that $\ell \ge 3$ in the remainder of the paper.

 \begin{lemma}\label{lem:special}
If $\{\gamma_1,\gamma_2\} \subseteq \Theta^{(r)}$ with $\gamma_1\ne\gamma_2$ and $r \ge \left \lfloor{m_\ell/2 }\right \rfloor+1$ (floor function), then $\{\gamma_1,\gamma_2\}$ is $A_1^2$. 
The assertion is true, in particular, if $r=m_{\ell-1}$.
\end{lemma}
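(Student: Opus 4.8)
The plan is to argue by contradiction: suppose $\{\gamma_1,\gamma_2\}\subseteq\Theta^{(r)}$ with $\gamma_1\ne\gamma_2$, $r\ge\lfloor m_\ell/2\rfloor+1$, but $\{\gamma_1,\gamma_2\}$ is \emph{not} an $A_1^2$ set. First I would dispose of the trivial possibility $\gamma_1=-\gamma_2$: this cannot happen since both are positive roots of the same positive height. So $\gamma_1,\gamma_2$ are non-proportional, and $\Phi_{\{\gamma_1,\gamma_2\}}:=\Phi\cap\mathrm{span}\{\gamma_1,\gamma_2\}$ is a rank-$2$ subsystem that is \emph{not} of type $A_1^2$; hence it is of type $A_2$, $B_2$, or $G_2$. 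In each of these cases, the subsystem contains a root $\gamma$ which is an integer combination $\gamma=c_1\gamma_1+c_2\gamma_2$ with $c_1,c_2$ of the same sign (for $A_2$ one has $\pm(\gamma_1+\gamma_2)$ up to relabeling signs; for $B_2$ one additionally has $\pm(2\gamma_i\pm\gamma_j)$ combinations; for $G_2$ one has combinations up to $3\gamma_i\pm\gamma_j$). The key point is that, after replacing $\gamma_1,\gamma_2$ by a suitable pair of the \emph{positive} roots in this rank-$2$ subsystem if necessary, the subsystem forces the existence of a positive root of the form $\gamma_1+\gamma_2$ (or more), whose height is then $\ge 2r$.

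The heart of the argument is then the height bound. Since $\gamma_1,\gamma_2\in\Phi^+$ with $\mathrm{ht}(\gamma_i)=r$, any positive root that is a nonnegative-integer combination $c_1\gamma_1+c_2\gamma_2$ with $c_1+c_2\ge 2$ has height $\ge 2r$. But the maximal height of a positive root is $\mathrm{ht}(\theta)=\mathrm{h}-1=m_\ell$ by Theorem \ref{exponents}(iii),(v). So we would need $2r\le m_\ell$, i.e.\ $r\le m_\ell/2$, i.e.\ $r\le\lfloor m_\ell/2\rfloor$, contradicting $r\ge\lfloor m_\ell/2\rfloor+1$. The subtlety I need to handle carefully is ensuring that the rank-$2$ subsystem $\Phi_{\{\gamma_1,\gamma_2\}}$, viewed with the positive system induced from $\Phi^+$, actually contains a positive root that is a nonnegative combination of $\gamma_1$ and $\gamma_2$ with coefficient sum $\ge 2$: the two positive roots $\gamma_1,\gamma_2$ span the positive cone of the subsystem, and in $A_2$, $B_2$, $G_2$ the "top" positive root is always such a combination (it is $\gamma_1+\gamma_2$ in $A_2$; $\gamma_1+\gamma_2$ and $\gamma_1+2\gamma_2$ in $B_2$ after ordering so $\gamma_2$ is short; $\gamma_1+\gamma_2,\ldots,2\gamma_1+3\gamma_2$ in $G_2$), with coefficient sum at least $2$. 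This is where the case check on the three non-$A_1^2$ rank-$2$ types is carried out, and it is the main (though short) obstacle; everything else is the height inequality above.

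For the final sentence, I would observe that $m_{\ell-1}=\mathrm{h}-m_2$ by Theorem \ref{exponents}(i), and since $m_2\le m_\ell=\mathrm{h}-1$ always, and more precisely $m_2\ge 2$ unless $\ell=1$ (Theorem \ref{exponents}(iii)), we get $m_{\ell-1}=\mathrm{h}-m_2$. To conclude $m_{\ell-1}\ge\lfloor m_\ell/2\rfloor+1$ it suffices to check $\mathrm{h}-m_2\ge\lfloor(\mathrm{h}-1)/2\rfloor+1$, equivalently $m_2\le\mathrm{h}-\lfloor(\mathrm{h}-1)/2\rfloor-1=\lceil(\mathrm{h}-1)/2\rceil=\lceil\mathrm{h}/2\rceil$ (using $\mathrm{h}\ge 2$); and indeed $m_2\le\mathrm{h}/2$ always holds — this is the statement that at most half the exponents can exceed $\mathrm{h}/2$, which follows from $m_j+m_{\ell+1-j}=\mathrm{h}$ in Theorem \ref{exponents}(i) together with $m_1=1<\mathrm{h}/2$, giving $m_\ell>\mathrm{h}/2$ and hence $m_2=\mathrm{h}-m_{\ell-1}\le\mathrm{h}/2$ once $\ell\ge 3$ forces $m_{\ell-1}\ge m_2$ and the pairing is symmetric. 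Thus $r=m_{\ell-1}$ satisfies the hypothesis and the special case follows.
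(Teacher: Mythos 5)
Your overall strategy --- reduce to the rank-$2$ subsystem $\Phi\cap\mathrm{span}\{\gamma_1,\gamma_2\}$ and rule out the irreducible types $A_2,B_2,G_2$ by producing a root of height $\ge 2r>m_\ell$ --- is exactly the paper's, but there is a genuine gap at the step you yourself flag as the main obstacle. You need a positive root of the subsystem of the form $c_1\gamma_1+c_2\gamma_2$ with $c_1,c_2\in\Z_{\ge0}$ and $c_1+c_2\ge2$, and you justify its existence by asserting that ``the two positive roots $\gamma_1,\gamma_2$ span the positive cone of the subsystem,'' i.e.\ that they form the base of $\Psi:=\Phi\cap\mathrm{span}\{\gamma_1,\gamma_2\}$. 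That is false for an arbitrary pair of distinct positive roots: take $\Psi$ of type $B_2$ with base $\{\mu_1,\mu_2\}$ and $\{\gamma_1,\gamma_2\}=\{\mu_1,\,\mu_1+2\mu_2\}$ (the two long roots). Then no positive root of $\Psi$ is a nonnegative integer combination of $\gamma_1,\gamma_2$ with coefficient sum $\ge 2$: the remaining positive roots are $\mu_2=\tfrac12(\gamma_2-\gamma_1)$ and $\mu_1+\mu_2=\tfrac12(\gamma_1+\gamma_2)$, and the half-integer coefficients kill the height estimate (the height of $\tfrac12(\gamma_1+\gamma_2)$ is only $r$, not $2r$). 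Analogous configurations occur in $G_2$. Your escape hatch ``after replacing $\gamma_1,\gamma_2$ by a suitable pair of the positive roots in this rank-$2$ subsystem if necessary'' does not close the gap, because the replacement pair need not lie in $\Theta^{(r)}$, and then the inequality ``height $\ge 2r$'' is lost.

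The missing ingredient is the other half of the hypothesis: since $\mathrm{ht}(\gamma_1)=\mathrm{ht}(\gamma_2)=r$ and $\gamma_1\ne\gamma_2$, the difference $\gamma_1-\gamma_2$ is a nonzero vector of height $0$, hence lies in neither $\sum_{\alpha\in\Delta}\Z_{\ge0}\alpha$ nor its negative. This is precisely what the paper's proof records alongside $\gamma_1+\gamma_2\notin\Phi$. Inspecting $A_2$, $B_2$, $G_2$, the only pairs of distinct positive roots whose difference is not $\pm$ a nonzero nonnegative integer combination of the base of $\Psi$ are the base pairs themselves (in the $B_2$ example above, $\gamma_1-\gamma_2=-2\mu_2$ has nonzero height, so that pair is excluded by equal heights), and for a base pair the sum $\gamma_1+\gamma_2$ is a root of $\Psi$, of height $2r>m_\ell=\mathrm{ht}(\theta)$, the desired contradiction. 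So your argument is repairable --- insert the observation that equal heights force $\{\gamma_1,\gamma_2\}$ to be the base of $\Psi$ --- but as written the key existence claim is unjustified and, without that observation, false. Your verification that $r=m_{\ell-1}$ satisfies $r\ge\lfloor m_\ell/2\rfloor+1$ is correct.
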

\begin{proof}
We have $\gamma_1+\gamma_2 \notin \Phi$ and $\gamma_1-\gamma_2 \notin \sum_{\alpha \in \Delta}\mathbb Z_{\ge 0}\alpha$. 
Use the classification of irreducible root systems of rank $2$.
\end{proof}

\begin{theorem}\label{thm:crucial} 
If $\gamma_1,\gamma_2\in\Theta^{(m_{\ell-1})}$ with $\gamma_1\ne\gamma_2$, then for each $i \in \{1,2\}$
$$|\A^{H_{\gamma_i}}| - |\A^{H_{\gamma_1} \cap H_{\gamma_2}}|= m_{\ell-1}.$$
 \end{theorem}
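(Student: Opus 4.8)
The plan is to turn the statement into a combinatorial identity about rank‑$3$ subsystems and then verify that identity.

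First, by Lemma~\ref{lem:special} the set $\{\gamma_1,\gamma_2\}$ is $A_1^2$; put $X:=H_{\gamma_1}\cap H_{\gamma_2}$. By Remark~\ref{rem:K0}(i), for each $i\in\{1,2\}$ we have $|\A^{H_{\gamma_i}}|-|\A^X|=m_\ell-\K_0$. Since $m_\ell=\h-1$ and $m_{\ell-1}=\h-m_2$ by Theorem~\ref{exponents}(i),(iii), one has $m_\ell-m_{\ell-1}=m_2-1$, so the claim is \emph{equivalent} to $\K_0=m_2-1$. By Remark~\ref{rem:K0}(ii), this in turn reads
\[
\sum_{\Psi\in\N_0}\bigl(m_2(\Psi)-1\bigr)=m_2-1,
\]
where $\N_0=\N_0(\{\gamma_1,\gamma_2\})$ (Definition~\ref{def:N}) is the set of irreducible rank‑$3$ subsystems $\Psi\subseteq\Phi$ containing $\{\gamma_1,\gamma_2\}$ and $m_2(\Psi)$ is the second smallest exponent of $\Psi$; equivalently, in the weighted count each $A_3$ in $\N_0$ contributes $1$, each $B_3$ or $C_3$ contributes $2$, and the total must be $m_2-1$.

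Next I would establish this identity by inspecting $\N_0$. For $\ell=3$ it is immediate: a rank‑$3$ subsystem spans $V$, so $\Phi$ is the only candidate, it is irreducible and contains $\{\gamma_1,\gamma_2\}$; in types $B_3$ and $C_3$ one of $\gamma_1,\gamma_2$ is short and one is long, so the $A_3$ of long roots is excluded, hence $\N_0=\{\Phi\}$ and $m_2(\Phi)=m_2$. For $\ell=4$ the identity is checked directly from the classification of rank‑$4$ root systems (for $\Phi=D_4$ it also drops out of Theorem~\ref{thm:half1}, since there every $A_1^2$ set is RO). For $\ell\ge5$, Corollary~\ref{cor:exactly2} gives $\Theta^{(m_{\ell-1})}=\{\gamma_1,\gamma_2\}$, after which the structural results of \S\ref{subsec:elemetary} describe the pair explicitly: writing $\theta_m$ for the unique positive root of height $m_{\ell-1}+1$, Proposition~\ref{prop:irr} together with a height count gives $\gamma_j=\theta_m-\mu_j$ with distinct simple roots $\mu_1,\mu_2$, while Lemma~\ref{lem:cases} pins down the local shape of $\widetilde{\D}(\Phi)$ near $-\theta$ and hence the mutual position of $\gamma_1,\gamma_2$ in each type. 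One then enumerates $\N_0$: every $\Psi\in\N_0$ equals $\Phi\cap U$ for a $3$‑dimensional $U$ containing $P:=\mathrm{span}\{\gamma_1,\gamma_2\}$, and since $\Phi\cap P=\{\pm\gamma_1,\pm\gamma_2\}$, irreducibility forces $U=P+\R\delta$ for some root $\delta$ non‑orthogonal to $\gamma_1$ or $\gamma_2$; in particular $\theta_m\notin P$ and $\Phi\cap(P+\R\theta_m)$ is always one member of $\N_0$. Listing the $3$‑spaces $U$ that occur and reading off the type of each $\Phi\cap U$ produces the weighted total.

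I expect the main obstacle to be exactly this last enumeration: classifying the admissible $3$‑spaces $U$ and determining $m_2(\Phi\cap U)$ in a manner uniform over \emph{all} irreducible types, with the non‑simply‑laced series $B_\ell,C_\ell$ and the large exceptional system $E_8$ being the delicate cases (there $m_2-1$ is big, so the count has to be assembled from several rank‑$3$ subsystems at once). A way to bypass redoing this enumeration is to recognise that the equality $|\A^{H_{\gamma_1}}|-|\A^X|=m_{\ell-1}$ is precisely the assertion that the restriction of $\A^{H_{\gamma_1}}$ to $X$ is \emph{accurate} in the MAT‑free sense of M\"{u}cksch and R\"{o}hrle~\cite{MR20}; their main result, together with Corollary~\ref{cor:exactly2}, then yields the theorem at once, and this is the proof we shall use. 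A self‑contained bijective argument, directly matching the roots counted on the two sides, is also possible and is carried out in Appendix~\S\ref{sec:app}.
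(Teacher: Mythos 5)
Your final argument---handling $\ell=3$ directly, $\ell=4$ by the rank-$4$ classification, and $\ell\ge5$ by combining Corollary~\ref{cor:exactly2} with the accuracy theorem of M\"{u}cksch--R\"{o}hrle \cite{MR20}---is exactly the proof the paper gives in \S\ref{subsec:halfway2}, and your preliminary reduction via Remark~\ref{rem:K0} to the identity $\sum_{\Psi\in\N_0}(m_2(\Psi)-1)=m_2-1$ is the same reduction the paper uses for its alternative bijective proof in the appendix. The enumeration of rank-$3$ subsystems that you sketch but do not carry out is explicitly superseded by the \cite{MR20} route you adopt, so the proposal is correct and essentially coincides with the paper's approach.
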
 
\begin{proof}
The formula holds true trivially for $\ell=3$, and we use the classification of irreducible root systems for $\ell=4$. 
Assume that $\ell\ge 5$. 
Then by Corollary \ref{cor:exactly2}, there are exactly two roots of height $m_{\ell-1}$. 
The rest follows from  \cite[Theorem 4.3]{MR20}.
 \end{proof}

The second key ingredient is Proposition \ref{prop:existence}. In fact, the $A_1^2$ sets described in Theorem \ref{thm:crucial} are not enough to generate all possible $W$-orbits of the $A_1^2$ sets (cf. Remark \ref{rem:numerical}). 
Notice that the focus of Theorem \ref{thm:half2} is non-RO $A_1^2$ sets.
Although, Theorem \ref{thm:crucial} alone is not enough to prove Theorem \ref{thm:half2}, it guarantees that the problem is solved if the involving non-RO $A_1^2$ set lies in the same $W$-orbit with a pair of roots of height $m_{\ell-1}$. 
\begin{proposition}\label{prop:existence}
If there exists an $A_1^2$ set that is not RO, then the set 
$\scS:  = \{ \{\gamma_1,\gamma_2\}\subseteq\Theta^{(m_{\ell-1})} \mid  \mbox{at least one of $\gamma_1,\gamma_2$ is a long root}\}$ contains a non-RO set.
\end{proposition}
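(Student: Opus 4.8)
The plan is to exploit the structural analysis of $\theta^\perp$ developed in the preceding subsection, together with the classification-free manipulations of the root poset. First I would reduce to a normalized situation: by Proposition \ref{prop:characterize}(i), the property of being $A_1^2$ and the property of being RO are both $W$-invariant, so without loss of generality I may assume the given non-RO $A_1^2$ set is $\{\theta,\alpha\}$ for the highest root $\theta$ and some $\alpha\in\Delta$ with $(\theta,\alpha)=0$; indeed every $A_1^2$ set is conjugate to a pair of orthogonal simple roots (Proposition \ref{prop:characterize}(ii)), and moving one of them to $\theta$ is harmless since $\theta$ is conjugate to any long root and, in the simply-laced-relevant cases, the argument localizes. (If the subtlety with root lengths bites, I would instead argue directly that some $W$-translate of a non-RO $A_1^2$ set has $\theta$ as a member, using that $\theta$ is the unique highest root and the transitivity of $W$ on roots of a fixed length.)

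Next, since $\{\theta,\alpha\}$ is not RO, Proposition \ref{prop:A1-component} forces $\{\pm\alpha\}$ to \emph{not} be a component of $\theta^\perp$; equivalently $\alpha$ lies in an irreducible component $\Omega$ of $\theta^\perp$ of rank $\ge 2$. Moreover $\theta^\perp$ must be reducible here — otherwise by Proposition \ref{prop:non-A1} we would not even be in the situation where a non-RO $A_1^2$ set coexists with the structural setup, or more simply: if $\theta^\perp$ were irreducible then $\{\theta,\alpha\}$ with $\alpha\in\Delta\cap\theta^\perp$ would have $\alpha^\perp\setminus\{\pm\theta\}=\theta^\perp\setminus\{\pm\alpha\}$ forced by connectedness, making it RO. So Proposition \ref{prop:non-A1} applies and $\theta^\perp=\{\pm\gamma_2\}\times\Omega$ with $\gamma_2$ a long simple root and $\Omega$ irreducible of rank $\ge 2$.

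The heart of the argument is then to produce, inside $\Theta^{(m_{\ell-1})}$, two distinct roots $\gamma_1,\gamma_2$ with at least one long, whose intersection subspace is non-RO. By Proposition \ref{prop:long} there is always a long root in $\Theta^{(m_{\ell-1})}$; if $|\Theta^{(m_{\ell-1})}|\ge 2$ (which holds for $\ell\ge 5$ by Corollary \ref{cor:exactly2}, and can be checked for $\ell\in\{3,4\}$ using the rank-$3$ and rank-$4$ classifications, with $\ell=3$ trivial), I would show that the height-$m_{\ell-1}$ pair is not RO precisely when a non-RO $A_1^2$ set exists anywhere: the key point is that the subsystem $\theta^\perp$ governs RO-ness via condition (b') of Remark \ref{rem:terminology}, and a root of height $m_{\ell-1}$ is (after a $W$-move, using the description of $\Xi$ and $\Lambda$ from Theorem \ref{thm:iso} and Proposition \ref{prop:b-a}) conjugate to $\theta$ itself, so its orthogonal complement is conjugate to $\theta^\perp$, which we just showed is reducible of the form $\{\pm\gamma_2\}\times\Omega$ with $\mathrm{rank}(\Omega)\ge 2$ — and such a complement is never of the ``RO shape'' $\{\pm\gamma_2\}^\perp\setminus\{\text{nothing}\}$. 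I expect the main obstacle to be precisely this last step: matching the combinatorics of the two roots of height $m_{\ell-1}$ (via $\theta_{m-1},\theta_m$ and the simple root $\xi_{m-1}$, cf. Propositions \ref{prop:irr}, \ref{prop:long}) against the non-RO condition without circular appeal to the classification, i.e., showing cleanly that the unique pair in $\Theta^{(m_{\ell-1})}$ fails RO exactly in the cases where Possibility $(2b)$ of Lemma \ref{lem:cases} holds. I would handle the low-rank residue ($\ell=3,4$) by the rank-$3$/rank-$4$ classification as already licensed in the text, checking directly that whenever a non-RO $A_1^2$ set exists the height-$m_{\ell-1}$ pair (or one of the finitely many pairs of long roots at that height) is non-RO.
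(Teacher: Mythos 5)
Your proposal has a genuine gap, and also a step that is simply wrong. The wrong step: you claim that if $\theta^\perp$ were irreducible then $\{\theta,\alpha\}$ would be RO ``by connectedness,'' and hence that $\theta^\perp$ must be reducible of the form $\{\pm\gamma_2\}\times\Omega$. This is backwards. If $\theta^\perp$ is irreducible of rank $\ge 2$ and contains $\alpha$, then there is some $\beta\in\theta^\perp\setminus\{\pm\alpha\}$ with $(\beta,\alpha)\ne 0$; such a $\beta$ violates condition (b) of Definition \ref{def:RO}, so $\{\theta,\alpha\}$ is \emph{not} RO. Concretely, in $E_8$ one has $\theta^\perp=E_7$ irreducible while non-RO $A_1^2$ sets abound, so your structural conclusion (Possibility (2b) of Lemma \ref{lem:cases} must hold) is false in exactly the cases the proposition is about. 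The acknowledged gap: your ``heart of the argument'' — that the height-$m_{\ell-1}$ pair fails RO because a root of height $m_{\ell-1}$ is conjugate to $\theta$ and its perp is reducible — is not an argument; RO-ness of a pair is not read off from the perp of one of its members, and you yourself flag that you cannot close this step.

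The missing idea is a short numerical contradiction that sidesteps all of this structure. The paper notes $\scS\ne\emptyset$ (Proposition \ref{prop:long} gives a long root in $\Theta^{(m_{\ell-1})}$, and any two distinct roots there form an $A_1^2$ set by Lemma \ref{lem:special}), checks $\ell=3$ directly, and for $\ell\ge 4$ supposes every element of $\scS$ is RO. Picking $\{\gamma_1,\gamma_2\}\in\scS$, Theorem \ref{thm:crucial} gives $|\A^{H_{\gamma_1}}|-|\A^{H_{\gamma_1}\cap H_{\gamma_2}}|=m_{\ell-1}$, while Theorem \ref{thm:half1} (applied because the pair is assumed RO) gives the same quantity as ${\rm h}/2$; hence $m_{\ell-1}={\rm h}/2$, equivalently $m_2={\rm h}/2$. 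Lemma \ref{lem:repeat} then forces $\ell=4$ and $\Phi=D_4$, where Corollary \ref{cor:existence-D4} says \emph{every} $A_1^2$ set is RO, contradicting the hypothesis that a non-RO one exists. You would need to replace your structural analysis of $\theta^\perp$ with this comparison of the two restriction counts (or some equivalent quantitative input) to make the argument go through.
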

 \begin{proof} 
Note that  $\scS \ne \emptyset$ since $\Theta^{(m_{\ell-1})}$ always contains a long root (Proposition \ref{prop:long}).
The case $\ell = 3$ is checked directly by the classification.
Assume that $\ell \ge 4$ and suppose to the contrary that every element in $\scS$ is RO.
We can take $\{\gamma_1,\gamma_2\} \in \scS$ and assume that $\gamma_1$ is long.
By Theorems \ref{thm:crucial} and \ref{thm:half1}, we have $m_{\ell-1}={\rm h}/2$. 
By Lemma \ref{lem:repeat}, $\ell =4$. 
By Corollary \ref{cor:existence-D4}, $\Phi=D_4$, and all $A_1^2$ sets must be RO. 
This contradicts the Proposition's assumption.
\end{proof}

Of course, if $\ell \ge 5$, then Corollary \ref{cor:exactly2} implies that the set $\scS$ contains only one element. However, the present statement is enough for us.

The third (and the final) key ingredient  is Corollary \ref{cor:same-component}. 
From the previous discussions, we will be in the situation that  there exist two non-RO $A_1^2$ sets which may form different orbits. 
 So we want to find a relation between them. 
\begin{corollary}\label{cor:same-component}
Assume that $\ell \ge 4$. 
If there are two $A_1^2$ sets $\{\theta, \lambda_1\}$,  $\{\theta, \lambda_2\}$ that are both non-RO, then both $\lambda_1$ and $\lambda_2$ lie in the unique irreducible component $\Omega$ of $\theta^\perp$ with $\mathrm{rank}(\Omega)\ge 2$.
\end{corollary}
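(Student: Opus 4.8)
The plan is to argue by cases according to whether $\theta^{\perp}$ is irreducible. Recall first that since $\{\theta,\lambda_i\}$ is an $A_1^{2}$ set, its span meets $\Phi$ in exactly $\{\pm\theta,\pm\lambda_i\}$, which forces $(\theta,\lambda_i)=0$; hence $\lambda_i\in\theta^{\perp}$ in all cases, and the only question is which irreducible component of $\theta^{\perp}$ contains $\lambda_i$.

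Suppose $\theta^{\perp}$ is irreducible. Recall that $\theta^{\perp}$ is the standard parabolic subsystem generated by $\{\alpha\in\Delta\mid(\alpha,\theta)=0\}$; by Corollary~\ref{cor:c-max} the vertex $-\theta$ is joined in $\widetilde{\D}(\Phi)$ to at most two simple roots, so at least $\ell-2\ge 2$ elements of $\Delta$ are orthogonal to $\theta$, whence $\mathrm{rank}(\theta^{\perp})\ge 2$. Thus $\theta^{\perp}$ is itself the unique irreducible component $\Omega$ of $\theta^{\perp}$ of rank $\ge 2$, and $\lambda_1,\lambda_2\in\theta^{\perp}=\Omega$; this case is immediate. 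Now suppose $\theta^{\perp}$ is reducible. By Corollary~\ref{cor:c-max}(i) this forces $c_{\max}\ge 2$, so Possibility (2a) or (2b) of Lemma~\ref{lem:cases} occurs; since $\{\theta,\lambda_1\}$ is an $A_1^{2}$ set that is not RO, Proposition~\ref{prop:non-A1} applies and shows Possibility (2b) occurs, i.e. $\theta^{\perp}=\{\pm\gamma_{2}\}\times\Omega$ with $\gamma_{2}\in\Delta$ a long simple root and $\Omega$ irreducible of rank $\ge 2$. In particular the irreducible components of $\theta^{\perp}$ are exactly $\{\pm\gamma_{2}\}$ and $\Omega$, and $\Omega$ is the unique one of rank $\ge 2$.

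It then remains to verify that $\lambda_i\in\Omega$ for $i=1,2$, i.e. to exclude the possibility $\lambda_i=\pm\gamma_{2}$. If $\lambda_i=\pm\gamma_{2}$, then $\{\pm\lambda_i\}=\{\pm\gamma_{2}\}$ is a component of $\theta^{\perp}$; the set $\{\theta,\gamma_{2}\}$ has the same span as $\{\theta,\lambda_i\}$, so it is again an $A_1^{2}$ set, and it is again not RO, because whether $\{\theta,\mu\}$ is RO depends only on $\mu^{\perp}$, which is unchanged under $\mu\mapsto-\mu$. But Proposition~\ref{prop:A1-component}, applied with $\alpha=\gamma_{2}\in\Delta$, then forces $\{\theta,\gamma_{2}\}$ to be RO, a contradiction. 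Hence $\lambda_i\in\Omega$, as desired. I expect the only mildly delicate point to be this last bookkeeping step — passing from the (possibly non-simple) root $\lambda_i$ to the genuine simple root $\gamma_{2}$ so as to be entitled to invoke Proposition~\ref{prop:A1-component} — since all of the structural work has already been isolated in Propositions~\ref{prop:non-A1} and~\ref{prop:A1-component}.
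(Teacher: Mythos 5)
Your proof is correct and follows essentially the same route as the paper's (which simply notes the irreducible case is trivial and invokes Propositions~\ref{prop:non-A1} and~\ref{prop:A1-component} in the reducible case); you have merely supplied the details the paper leaves implicit, including the rank bound in the irreducible case and the reduction from $\lambda_i=\pm\gamma_2$ to the simple root $\gamma_2$.
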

\begin{proof}
The statement is trivial if $\theta^\perp$ is irreducible. 
If $\theta^\perp$ is reducible, then the statement follows from Propositions \ref{prop:non-A1} and \ref{prop:A1-component}.
\end{proof}

We need one more simple lemma.
\begin{lemma}\label{lem:orbit} 
 If $\{\beta_1, \beta_2\} \subseteq \Phi^+$ contains a long root and $(\beta_1, \beta_2)=0$, then $\{\beta_1, \beta_2\}$ lies in the same $W$-orbit with $\{\theta, \mu\}$ for some $\mu \in\Delta$. 

 \end{lemma}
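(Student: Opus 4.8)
The plan is to reduce the statement to the well-known fact that $W$ acts transitively on roots of a fixed length, and on pairs consisting of a long root together with an orthogonal root, by first moving the long root onto the highest root $\theta$. First I would note that by Lemma \ref{lem:eg}(iii) or directly from Definition \ref{def:A_1^2}, the hypothesis $(\beta_1,\beta_2)=0$ means $\{\beta_1,\beta_2\}$ spans a subsystem $\Gamma:=\Phi\cap\mathrm{span}\{\beta_1,\beta_2\}$ which is of type $A_1^2$, $B_2$, or $G_2$; in all cases $\beta_1,\beta_2$ are orthogonal roots, at least one of them long. Relabel so that $\beta_1$ is long. Since $W$ acts transitively on long roots (the long roots form a single $W$-orbit, as $\theta$ is long and all long roots are conjugate), there is $w_1\in W$ with $w_1(\beta_1)=\theta$. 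Replacing $\{\beta_1,\beta_2\}$ by $w_1\{\beta_1,\beta_2\}$, we may assume $\beta_1=\theta$ and $(\theta,\beta_2)=0$, i.e.\ $\beta_2\in\theta^\perp$.

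Next I would use the structure of $\theta^\perp$ recalled just before Corollary \ref{cor:reducible}: $\theta^\perp$ is the standard parabolic subsystem generated by $\Delta_0:=\{\alpha\in\Delta\mid(\alpha,\theta)=0\}$, with Weyl group $W(\Delta_0)$. Now $\beta_2\in\theta^\perp$, and $W(\Delta_0)=W_\theta$ fixes $\theta$ pointwise on $\mathbb R\theta$ — more precisely it stabilizes $\theta$ since each $s_\alpha$ with $\alpha\perp\theta$ fixes $\theta$. Applying the standard fact that the Weyl group of a root system acts transitively on the roots of each fixed length within each irreducible component, I can find $w_2\in W(\Delta_0)$ carrying $\beta_2$ to a simple root of $\theta^\perp$ of the appropriate length; since every simple root of $\theta^\perp$ lies in $\Delta_0\subseteq\Delta$, this produces $\mu\in\Delta$ with $w_2(\beta_2)=\mu$, while $w_2(\theta)=\theta$. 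Then $w:=w_2w_1$ satisfies $w\{\beta_1,\beta_2\}=\{\theta,\mu\}$ with $\mu\in\Delta$, as required. (If $\theta^\perp$ is reducible one simply applies this within the component containing $\beta_2$; the conclusion $\mu\in\Delta$ is unaffected.)

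The only subtlety — and the place I would be most careful — is the claim that $\beta_2$ can be moved to a \emph{simple} root of $\Phi$ by an element of $W_\theta=W(\Delta_0)$, not merely to a root of $\theta^\perp$. This is fine because $\Delta_0$ is by construction a subset of $\Delta$, so it is a base of $\theta^\perp$ (each irreducible component of $\theta^\perp$ has its base among $\Delta_0$), and any root in an irreducible root system is conjugate under its Weyl group to a simple root of the same length; hence $\beta_2$ is $W(\Delta_0)$-conjugate to some element of $\Delta_0\subseteq\Delta$. I should also record that when $\Gamma$ is of type $B_2$ or $G_2$ and $\beta_2$ is the \emph{short} root, the short root of $\theta^\perp$ lying in $\Delta_0$ is again a simple root of $\Phi$, so the argument goes through verbatim; the length of $\mu$ is forced by the length of $\beta_2$. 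No case-by-case classification is needed beyond these standard transitivity statements, which are available, e.g., from \cite{B68}.
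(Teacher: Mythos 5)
Your proposal is correct and follows essentially the same route as the paper's own (very terse) proof: first move the long root to $\theta$ using transitivity of $W$ on long roots, so the other root lands in $\theta^\perp$, and then conjugate it within the relevant irreducible component of $\theta^\perp$ (whose Weyl group fixes $\theta$) to a simple root of that component, which lies in $\Delta$. The extra care you take about $\Delta_0=\{\alpha\in\Delta\mid(\alpha,\theta)=0\}$ being a base of $\theta^\perp$ and about root lengths is exactly the content the paper leaves implicit under ``this is well-known.''
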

 \begin{proof} 
 This is well-known.
 There is an irreducible component $\Psi\subseteq \theta^\perp$ such that $\{\beta_1, \beta_2\}$ lies in the same $W$-orbit with $\{\theta, \gamma\}$ for some $\gamma \in\Psi$. 
 There exist $\mu \in \Delta\cap\Psi$ and $w \in W(\Psi)$ such that $\gamma = w(\mu)$ and this $w$ fixes $\theta$, i.e., $ \theta = w( \theta)$. 
 Thus $\{\beta_1, \beta_2\}$ lies in the same $W$-orbit with $\{\theta,\mu\}$. 
\end{proof}

Now we are ready to prove Theorem \ref{thm:half2}.

\begin{proof}[Proof of Theorem \ref{thm:half2}]
It suffices to prove the theorem under the condition that the $A_1^2$ set  $\{\beta_1, \beta_2\}$ contains a long root. 
Otherwise, we would consider the dual root system $\Phi^\vee$ where short roots become long roots.
By Lemma \ref{lem:orbit}, $\{\beta_1, \beta_2\}$ lies in the same $W$-orbit with $\{\theta, \lambda_1\}$ for some $\lambda_1 \in \Delta$. 
We may also assume that $\ell \ge 4$ since the case $\ell=3$ is done in Remark \ref{rem:K0}(ii). 
We note that by Remark \ref{rem:K0}(i), proving Theorem \ref{thm:half2} is equivalent to showing that $\K_\theta(\lambda_1)=m_{\ell-1}-{\rm h}/2$ (notation in Proposition \ref{prop:3sums}).

By Proposition \ref{prop:existence}, we can find a non-RO set $\{\gamma_1,\gamma_2\}\subseteq\Theta^{(m_{\ell-1})}$ where $\gamma_1$ is a long root. 
Again by Lemma \ref{lem:orbit},  $\{\gamma_1,\gamma_2\}$ lies in the same $W$-orbit with $\{\theta, \lambda_2\}$ for some $\lambda_2 \in \Delta$. 
By Proposition \ref{prop:characterize}(i), $\{\theta, \lambda_1\}$ and $\{\theta, \lambda_2\}$ are $A_1^2$ and  non-RO.
Corollary \ref{cor:same-component} implies that $\lambda_1$ and $\lambda_2$ lie in the unique irreducible component $\Omega$ of $\theta^\perp$ with $\mathrm{rank}(\Omega)\ge 2$. 
We already know from Theorem \ref{thm:crucial} that Theorem \ref{thm:half2} is automatically proved for $\{\theta, \lambda_2\}$, i.e., $\K_\theta(\lambda_2)=m_{\ell-1}-{\rm h}/2$. 
So we want to prove that 
\begin{equation}\label{eq:final}
\K_\theta(\lambda_1)=\K_\theta(\lambda_2). 
\end{equation}

If $\|\lambda_1\|=\|\lambda_2\|$, then  $\{\theta, \lambda_1\}$ and $\{\theta, \lambda_2\}$ lie in the same $W$-orbit.
So Formula \eqref{eq:final} follows. 
Now consider $\|\lambda_1\|\ne\|\lambda_2\|$. 
Note that at most two root lengths occur in $\Omega$, they are $\|\lambda_1\|$ and $\|\lambda_2\|$.
Then the fact that $\{\theta, \lambda_1\}$ and $\{\theta, \lambda_2\}$ are both $A_1^2$ implies that $\{\theta, \beta\}$ is $A_1^2$ for all $\beta \in \Omega$. 
For $\beta \in \Omega$, with the notations in Definition \ref{def:N} applied to the $A_1^2$ set $\{\theta, \beta\}$, we have
$$\M_\theta(\beta) =
 \left\{ 
\Lambda \subseteq \Omega\middle|
\begin{array}{c}
       \mbox{$\Lambda$ is an irreducible subsystem of rank $2$, $\beta \in  \Lambda$},  \\
\mbox{$\Phi\cap \mbox{span}(\{\theta\} \cup \Lambda)$ is a reducible subsystem of rank $3$}.
    \end{array}
\right\}.$$
Indeed, if $\Lambda \subseteq \theta^\perp$ and $\Lambda$ is irreducible, then $\Lambda \subseteq \Omega$. 
We further make the following definition
$$\M'_\theta(\beta) :=
 \left\{ 
\Lambda \subseteq \Omega\middle|
\begin{array}{c}
       \mbox{$\Lambda$ is an irreducible subsystem of rank $2$, $\beta \in  \Lambda$},  \\
\mbox{$\Phi\cap \mbox{span}(\{\theta\} \cup \Lambda)$ is an irreducible subsystem of rank $3$}.
    \end{array}
\right\}.$$
Using Proposition \ref{prop:3sums} and Theorem \ref{exponents}(vi), we compute
\begin{align*}
2\K_\theta(\beta) & =  2\sum_{ \Lambda\in\M_\theta(\beta)}\sum_{ \delta \in \Lambda^+\setminus\{\beta\}}\left( \widehat{\beta}, \widehat\delta\right)^2 \\
  & =  {\rm h}(\Omega)- 2 - 2\sum_{ \Lambda\in\M'_\theta(\beta)}\sum_{ \delta \in \Lambda^+\setminus\{\beta\}}\left( \widehat{\beta}, \widehat\delta\right)^2.
\end{align*}
We claim that $\M'_\theta(\beta)= \emptyset$ for every $\beta \in \Omega$.
Suppose not and let $\Lambda \in \M'_\theta(\beta)$. 
Note that $\Gamma:=\Phi\cap \mbox{span}(\{\theta\} \cup \Lambda)$ admits $\theta$ as the highest root in its positive system $\Phi^+\cap \mbox{span}(\{\theta\} \cup \Lambda)$.
By a direct check on all rank-$3$ irreducible root systems and using the fact that $\{\theta, \beta\}$ is $A_1^2$ for all $\beta \in \Lambda \subseteq \Gamma \cap \Omega$, we obtain a contradiction. 
Thus the claim is proved and we have $\M'_\theta(\lambda_1)=\M'_\theta(\lambda_2)= \emptyset$.
By the computation above, $\K_\theta(\lambda_1)=\K_\theta(\lambda_2)={\rm h}(\Omega)/2- 1$. 
This completes the proof. 
\end{proof}

We close this section (\S\ref{sec:main}) by giving two corollaries. 
\begin{corollary}[Local-global inequalities]\label{cor:Local-global} 
Assume that a set $\{ \beta_1, \beta_2\}\subseteq \Phi$ is $A_1^2$. 
Recall the notation $\N_0=\N_0(\{ \beta_1,\beta_2\})$ in Definition \ref{def:N}.
For each $\Psi\in \N_0$, 
denote by $c_{\max}(\Psi)$ the largest coefficient of the highest root of the subsystem $\Psi$. Then
\begin{enumerate}
\item[(a)] $\sum_{\Psi \in \N_0}\left( m_2(\Psi)-1 \right) \ge m_2-1,$
\item[(b)]  $\sum_{\Psi \in \N_0}c_{\max}(\Psi)\ge c_{\max}$. 
\end{enumerate}
The equality in (a) (resp., (b)) occurs if and only if either (i) $\ell \le 4$, or (ii) $\ell \ge 5$ and $\{ \beta_1,  \beta_2\}$ is not RO.
\end{corollary}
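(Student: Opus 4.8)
The plan is to derive both inequalities as arithmetical consequences of the identity in Proposition \ref{prop:3sums} together with the exponent formula now established in Theorem \ref{thm:card}. First I would record the key numerical identity. Fix $i \in \{1,2\}$ and apply Proposition \ref{prop:3sums}: $2(\K_0 + \K_{\beta_{3-i}}(\beta_i)+1) = \h$. By Remark \ref{rem:K0}(i)--(ii), $\K_0 = \sum_{\Psi\in\N_0}(m_2(\Psi)-1)$ and $|\A^{H_{\beta_i}}| - |\A^X| = m_\ell - \K_0 = \h/2 + \K_{\beta_{3-i}}(\beta_i)$, where $X = H_{\beta_1}\cap H_{\beta_2}$. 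Combining these with Theorem \ref{thm:card}: if $\{\beta_1,\beta_2\}$ is RO then $\K_{\beta_{3-i}}(\beta_i) = 0$, hence $\K_0 = \h/2 - 1 = m_\ell - 1$; if $\{\beta_1,\beta_2\}$ is not RO then $\K_{\beta_{3-i}}(\beta_i) = m_{\ell-1}-\h/2 \ge 0$ (note $m_{\ell-1}\ge m_2 \ge \h/2$ is false in general, but $m_{\ell-1}+m_2 = \h$ gives $m_{\ell-1}-\h/2 = \h/2 - m_2 \le$… ) — here I must be careful: since $m_2 \le m_{\ell-1}$ and $m_2 + m_{\ell-1} = \h$ by Theorem \ref{exponents}(i), we get $m_2 \le \h/2 \le m_{\ell-1}$, so $\K_{\beta_{3-i}}(\beta_i) = m_{\ell-1}-\h/2 \ge 0$ with equality iff $m_2 = \h/2$, i.e. (Lemma \ref{lem:repeat}) iff $\ell\le 4$ or $\Phi = D_4$. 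In the non-RO case this yields $\K_0 = m_\ell - (m_{\ell-1}-\h/2) = m_\ell - m_{\ell-1} + \h/2 = (m_\ell - m_{\ell-1}) + \h/2$; and since $m_\ell = \h - 1$, $m_{\ell-1} = \h - m_2$, this is $\h/2 + m_2 - 1 \ge m_2 - 1$… that overshoots, so I will instead just read off directly: $\K_0 = m_\ell - \K_{\beta_{3-i}}(\beta_i)$, and in all cases $\K_{\beta_{3-i}}(\beta_i)\le m_\ell - (m_2-1)$ because $\K_0 \ge m_2 - 1$ is exactly what must be shown — so the cleaner route is: $\K_0 = \h/2 - 1 = m_{\ell} - 1 \ge m_2 - 1$ when RO (with equality iff $m_2 = m_\ell$, impossible for $\ell\ge 2$, so in the RO case strict unless… wait, RO forces $\h/2$ an exponent, and one checks $m_2 = \h/2$ may or may not hold). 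Rather than untangle this inline, the proof will split cleanly into the two cases of Theorem \ref{thm:card} and in each compute $\K_0$ exactly, then compare with $m_2 - 1$.

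Concretely, the body of the proof is: \textbf{(a)} In the RO case, $\K_0 = \h/2 - 1$, and we must show $\h/2 - 1 \ge m_2 - 1$, i.e. $m_2 \le \h/2$, which holds by Theorem \ref{exponents}(i),(iii) ($m_2 + m_{\ell-1} = \h$, $m_2 \le m_{\ell-1}$); equality $m_2 = \h/2$ holds iff, by Lemma \ref{lem:repeat}, $\ell \le 4$ (for $\ell\ge 5$, $m_{\ell-2} < m_{\ell-1}$ by Corollary \ref{cor:exactly2} and one shows $m_2 < \h/2$ strictly — actually $m_2 = \h/2$ with $\ell \ge 5$ is excluded by Lemma \ref{lem:repeat} directly). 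In the non-RO case, $\K_0 = m_\ell - \K_\theta(\lambda_1) = m_\ell - (m_{\ell-1} - \h/2) = (\h-1) - m_{\ell-1} + \h/2 = m_2 - 1 + \h/2 > m_2 - 1$ strictly, since $m_2 + m_{\ell-1} = \h$ gives $\h - 1 - m_{\ell-1} = m_2 - 1$. Wait — this says strict in the non-RO case, contradicting the claimed equality condition. I expect the resolution is that the equality statement groups $\ell\le 4$ (RO or not) with ($\ell\ge 5$, non-RO), and that for $\ell\le 4$ the non-RO subcase still has $\K_{\beta_{3-i}}(\beta_i) = m_{\ell-1}-\h/2 = 0$ by Lemma \ref{lem:repeat}, giving $\K_0 = m_\ell - 0 = \h/2 - 1 = m_2 - 1$; while for $\ell \ge 5$ and RO, no such configuration exists (by Lemma \ref{lem:repeat} and Corollary \ref{cor:existence-D4}, RO $A_1^2$ sets force $\Phi = D_\ell$, and for $D_\ell$ with $\ell \ge 5$ one has $m_2 = 3 < \h/2 = \ell - 1$, strict). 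So the equality cases are exactly: $\ell \le 4$ (any $A_1^2$ set), or $\ell \ge 5$ and non-RO. I will organize the argument around this trichotomy.

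\textbf{(b)} follows from (a) by the same mechanism applied to coefficients of highest roots: by Theorem \ref{exponents}(v), $\h(\Psi) = \operatorname{ht}(\theta_\Psi) + 1$ where $\theta_\Psi$ is the highest root of $\Psi$, and since $\operatorname{rank}(\Psi) = 3$ we have $\h(\Psi) = 2m_2(\Psi)$, so $m_2(\Psi) - 1 = \operatorname{ht}(\theta_\Psi)/2 - 1$. The link to $c_{\max}(\Psi)$ is given by Theorem \ref{thm:iso}: for an irreducible $\Psi$, $m_2(\Psi) = c_{\max}(\Psi) + 1$ in Case 2 and $= c_{\max}(\Psi) + 2$ in Case 1, and Case 1 occurs (Corollary \ref{cor:criterion}) only for rank-$2$ systems, hence never for the rank-$3$ systems $\Psi \in \N_0$. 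Therefore $m_2(\Psi) - 1 = c_{\max}(\Psi)$ for every $\Psi \in \N_0$, and likewise $m_2 - 1 = c_{\max}$ for $\Phi$ when $\Phi$ has rank $\ge 3$ (again Corollary \ref{cor:criterion}). Thus (b) is literally the same inequality as (a), rewritten, with the same equality conditions. The main obstacle I foresee is not any single deep step — all ingredients are in hand — but rather bookkeeping the equality cases correctly, in particular verifying that the RO case with $\ell \ge 5$ genuinely gives strict inequality (using that RO $A_1^2$ sets exist only in type $D$, where the exponents are explicit) and that the $\ell = 4$, non-RO case gives equality (using Lemma \ref{lem:repeat}, which pins down $m_2 = \h/2$ exactly when $\Phi = D_4$, together with Corollary \ref{cor:existence-D4}). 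I would double-check these small type-$D$ computations at the end.
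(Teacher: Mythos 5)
Your overall strategy coincides with the paper's: reduce (b) to (a) via $m_2(\Psi)-1=c_{\max}(\Psi)$ (Theorem \ref{thm:iso} together with Corollary \ref{cor:criterion}, applicable because every $\Psi\in\N_0$ has rank $3$ and $\Phi$ has rank $\ge 3$), identify the left-hand side of (a) with $\K_0$ via Remark \ref{rem:K0}(ii), evaluate $\K_0$ in the two cases of Theorem \ref{thm:card}, and settle the equality cases with Lemma \ref{lem:repeat} and Corollary \ref{cor:existence-D4}. Your RO branch is correct: $\K_0=m_\ell-\h/2=\h/2-1\ge m_2-1$, with equality iff $m_2=\h/2$, which in the presence of an RO $A_1^2$ set happens exactly for $D_3$ and $D_4$.

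The genuine problem is the non-RO branch. You write $\K_0=m_\ell-\K_\theta(\lambda_1)$, but the correct identities are $|\A^{H_{\beta_i}}|-|\A^{X}|=m_\ell-\K_0$ (Remark \ref{rem:K0}(i)) and $|\A^{H_{\beta_i}}|-|\A^{X}|=\h/2+\K_{\beta_{3-i}}(\beta_i)$ (Proposition \ref{prop:X=A_1^2}), whence $\K_0=\h/2-1-\K_{\beta_{3-i}}(\beta_i)$, not $m_\ell-\K_{\beta_{3-i}}(\beta_i)$. Plugging in Theorem \ref{thm:card}, the non-RO case gives $\K_0=m_\ell-m_{\ell-1}=m_2-1$ exactly, for \emph{every} $\ell$ --- equality outright, with no strictness to explain away. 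Your computation instead yields the spurious value $m_2-1+\h/2$; you notice the resulting contradiction, but the patch you propose --- that $\K_{\beta_{3-i}}(\beta_i)=m_{\ell-1}-\h/2=0$ whenever $\ell\le 4$ and the set is non-RO --- is false: for $\Phi=B_4$ or $F_4$ every $A_1^2$ set is non-RO (Remark \ref{rem:numerical}) and $m_{\ell-1}-\h/2=1$. The chain ``$\K_0=m_\ell-0=\h/2-1=m_2-1$'' is also internally inconsistent, since $m_\ell=\h-1\ne\h/2-1$. The trichotomy you state at the end is the right one, but the argument as written does not establish it; once the identity for $\K_0$ is corrected, the non-RO case collapses to an identity valid for all $\ell$, and the remaining equality analysis (RO forces type $D$, where $m_2=\h/2$ iff $\ell\le 4$) goes through exactly as in the paper.
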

\begin{proof}
Since $\ell \ge 3$, $m_2 = c_{\max}+1$ by Theorem \ref{thm:iso} and Corollary \ref{cor:criterion}. 
Thus (a) and (b) are essentially equivalent.
The left-hand sides of these inequalities are equal to $\K_0$ by Remark \ref{rem:K0}.
By Theorem \ref{thm:card} and Remark \ref{rem:K0},
$$\K_0=
\begin{cases}
{\rm h}/2-1 \mbox{ if $\{ \beta_1, \beta_2\}$ is RO}, \\
m_2 -1 \mbox{ if $\{ \beta_1, \beta_2\}$ is not RO}.
\end{cases}
$$
Thus, the inequalities follow. 
If $\ell =3$, the equalities always occur since ${\rm h}/2=m_2$.
If $\ell = 4$, we need only care about the case $\{ \beta_1, \beta_2\}$ is both $A_1^2$ and RO. 
This condition forces $\Phi=D_4$ by Corollary \ref{cor:existence-D4}. 
Again, we have ${\rm h}/2=m_2$. 
So the equalities alway occur if $\ell \le 4$.
If $\ell \ge 5$, by Lemma \ref{lem:repeat}, ${\rm h}/2>m_2$ . 
Thus the equalities occur if $\{ \beta_1,  \beta_2\}$ is not RO.
\end{proof}

\begin{corollary}\label{cor:Omega-cx} 
Let $\Omega$ be an irreducible component of $\theta^\perp$.  
If $\Phi$ is simply-laced, then the Coxeter number of $\Omega$ is given by
$${\rm h}(\Omega)=
\begin{cases}
2 \mbox{ if } \mathrm{rank}(\Omega)=1, \\
{\rm h} -2m_2+2 \mbox{ if } \mathrm{rank}(\Omega)\ge 2.
\end{cases}
$$
 \end{corollary}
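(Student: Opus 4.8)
The plan is to reduce Corollary~\ref{cor:Omega-cx} to the machinery on $A_1^2$ sets developed for Theorem~\ref{thm:card}. The case $\mathrm{rank}(\Omega)=1$ is immediate: then $\Omega$ is of type $A_1$, so $\h(\Omega)=2$. So assume $\mathrm{rank}(\Omega)\ge 2$ and fix any $\lambda\in\Omega^+:=\Omega\cap\Phi^+$. I would first check that $\{\theta,\lambda\}$ is a non-RO $A_1^2$ set. Since $\Phi$ is simply-laced, the rank-$2$ subsystem $\Phi\cap\mathrm{span}\{\theta,\lambda\}$ contains the orthogonal pair $\{\theta,\lambda\}$, hence is not of type $A_2$ and therefore equals $\{\pm\theta,\pm\lambda\}$; thus $\{\theta,\lambda\}$ is $A_1^2$. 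It is not RO: as $\Omega$ is irreducible of rank $\ge 2$ there is $\mu\in\Omega$ with $(\mu,\lambda)\ne 0$, and then $\mu\in\theta^\perp\setminus\{\pm\lambda\}$ while $\mu\notin\lambda^\perp$, so condition (b$'$) of Remark~\ref{rem:terminology} fails.

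Next I would compute $\K_\theta(\lambda)$ in two ways. On one hand, applying Theorem~\ref{thm:card} (the non-RO case) together with Proposition~\ref{prop:X=A_1^2} to $\{\theta,\lambda\}$ gives $m_{\ell-1}=|\A^{H_{\lambda}}|-|\A^{H_{\theta}\cap H_{\lambda}}|=\h/2+\K_\theta(\lambda)$, so $\K_\theta(\lambda)=m_{\ell-1}-\h/2=\h/2-m_2$ by Theorem~\ref{exponents}(i). On the other hand I claim the ``local'' identity $\K_\theta(\lambda)=\h(\Omega)/2-1$; granting it, $\h(\Omega)=2\bigl(1+\K_\theta(\lambda)\bigr)=\h-2m_2+2$, which is the assertion. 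To prove the local identity I would argue exactly as in the final paragraph of the proof of Theorem~\ref{thm:half2}: any irreducible rank-$2$ subsystem $\Lambda\subseteq\theta^\perp$ with $\lambda\in\Lambda$ necessarily lies inside $\Omega$, and $\Gamma:=\Phi\cap\mathrm{span}(\{\theta\}\cup\Lambda)$ is forced to be reducible --- otherwise $\Gamma$ would be an irreducible simply-laced rank-$3$ system, i.e.\ $\Gamma=A_3$, but in $A_3$ the orthogonal complement of any root has rank $1$, contradicting $\mathrm{rank}(\Lambda)=2$ with $\Lambda\subseteq\Gamma\cap\theta^\perp$. Hence $\M_\theta(\lambda)$ is precisely the set of all irreducible rank-$2$ subsystems of $\Omega$ through $\lambda$. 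Since two such subsystems meet only in $\{\pm\lambda\}$, and within each of them every root other than $\pm\lambda$ is non-orthogonal to $\lambda$, the assignment $\mu\mapsto\Phi\cap\mathrm{span}\{\lambda,\mu\}$ gives a bijection between $\{\mu\in\Omega^+\setminus\{\lambda\}:(\mu,\lambda)\ne 0\}$ and $\{(\Lambda,\mu):\Lambda\in\M_\theta(\lambda),\ \mu\in\Lambda^+\setminus\{\lambda\}\}$. Applying Theorem~\ref{exponents}(vi) to $\Omega$ with $\gamma=\lambda$ and isolating the $\mu=\lambda$ summand (which equals $1$) then yields $\h(\Omega)=2\sum_{\mu\in\Omega^+}(\widehat{\lambda},\widehat{\mu})^2=2+2\K_\theta(\lambda)$, as wanted.

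The computations involved are routine; the step I expect to require the most care is the input, crucially using simply-lacedness, that $\{\theta,\beta\}$ is $A_1^2$ for \emph{every} $\beta\in\theta^\perp$ --- this is exactly what legitimises reusing the argument ``$\M'_\theta(\lambda)=\emptyset$'' from the proof of Theorem~\ref{thm:half2} --- together with the bookkeeping in the bijection above. One could instead deduce the statement from Corollary~\ref{cor:Local-global} combined with Remark~\ref{rem:K0}, but passing through $\K_\theta(\lambda)$ and the component $\Omega$ directly seems the cleanest route and keeps the proof classification-free beyond the trivial identifications of rank-$2$ and rank-$3$ simply-laced subsystems.
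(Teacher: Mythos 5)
Your proposal is correct and follows essentially the same route as the paper: the paper's proof likewise observes that $\{\theta,\beta\}$ is a non-RO $A_1^2$ set for every $\beta\in\Omega$ (simply-lacedness plus the rank of $\Omega$), invokes $\M'_\theta(\beta)=\emptyset$, and concludes via the displayed computation $2\K_\theta(\beta)={\rm h}(\Omega)-2$ from the proof of Theorem~\ref{thm:half2} combined with Theorem~\ref{thm:card}. You merely spell out the details more explicitly (the $A_3$ check for $\M'_\theta(\lambda)=\emptyset$ and the bijection behind the sum), which is consistent with, not different from, the paper's argument.
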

 \begin{proof}
We need only give a proof for the second line.
Note that by Lemma \ref{lem:cases}, there exists at most one irreducible component $\Omega$ of $\theta^\perp$ satisfying $\mathrm{rank}(\Omega)\ge 2$. 
For every $\beta \in \Omega$, $\{\theta, \beta\}$ is $A_1^2$ since $\Phi$ is simply-laced. 
Moreover, $\{\theta, \beta\}$ is not RO by the reason of rank.
With the notations in Proof of Theorem \ref{thm:half2}, $\M'_\theta(\beta)=\emptyset$. 
It completes the proof. 
\end{proof}

\section{Proof of  Theorem \ref{thm:basis-main}}
\label{sec:Weyl-freeness part}
Theorem \ref{thm:basis-main} follows from Theorem \ref{thm:basis-non-RO} and Example \ref{ex:basis-RO} below.
\begin{theorem}\label{thm:basis-non-RO}
Assume that $X=H_1 \cap X_2$ is  $A_1^2$  but not RO. 
Let $\{\varphi_{1}, \ldots, \varphi_\ell\}$ be a basis for $D(\A)$ with $\deg {\varphi}_j=m_j$ ($1 \le j \le \ell$). 
Then $\{ {\varphi}^X_1, \ldots, {\varphi}^X_{\ell-2}\}$ is a basis for $D(\A^X)$.

 \end{theorem}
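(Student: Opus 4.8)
The plan is to obtain the wanted basis by restricting in two stages: first from $\A$ to a hyperplane $H\in\A$ containing $X$, and then from $\A^H$ to $X$, applying the general basis-lifting mechanism of Theorem \ref{thm:basis-derived} at the second stage. Write $X=H_\alpha\cap H_\beta$, where $\{\alpha,\beta\}$ is an $A_1^2$ set that is not RO, and set $H:=H_\alpha$. Since $m_\ell$ occurs exactly once in $\exp(\A)$, Remark \ref{rem:alternative} (equivalently Theorem \ref{thm:OST} together with Theorem \ref{thm:basis-derived}) shows that $\{\varphi_1^H,\dots,\varphi_{\ell-1}^H\}$ is a basis for $D(\A^H)$ with $\deg\varphi_j^H=m_j$ for $1\le j\le\ell-1$, and that $\exp(\A^H)=\{m_1,\dots,m_{\ell-1}\}_{\le}$.

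Next I would record that $X$ is a hyperplane of $\A^H$: indeed $X=H_\beta\cap H$ and $H_\beta\notin\A_H=\{H\}$, so $X\in\A^H$, and $\dim X=\ell-2=\dim H-1$; moreover $(\A^H)^X=\A^X$ by Lemma \ref{lem:start}(i). The arrangement $(\A^H)':=\A^H\setminus\{X\}$ is free --- this is exactly the freeness of $\A^{H_\alpha}\setminus\{H_\alpha\cap H_\beta\}$ established inside the proof of Theorem \ref{thm:main}. Thus Theorem \ref{thm:basis-derived}, applied to the triple $(\A^H,(\A^H)',\A^X)$ with the homogeneous basis $\{\varphi_1^H,\dots,\varphi_{\ell-1}^H\}$, yields an index $p$ with $1\le p\le\ell-1$ such that $\{(\varphi_1^H)^X,\dots,(\varphi_{\ell-1}^H)^X\}\setminus\{(\varphi_p^H)^X\}$ is a basis for $D(\A^X)$, and moreover $\exp(\A^X)=\exp(\A^H)\setminus\{m_p\}$.

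To pin down $p$, I would invoke the exponent part of Theorem \ref{thm:combine}: since $X$ is not RO, $\exp(\A^X)=\exp(\A)\setminus\{m_{\ell-1},m_\ell\}=\{m_1,\dots,m_{\ell-2}\}$, so comparing with the previous paragraph forces $m_p=m_{\ell-1}$. The key point is then that $m_{\ell-1}$ occurs \emph{exactly once} in $\exp(\A^H)=\{m_1,\dots,m_{\ell-1}\}$, i.e.\ $m_{\ell-2}<m_{\ell-1}$: for $\ell\ge5$ this is Corollary \ref{cor:exactly2}; for $\ell=3$ it is $1=m_1<m_2$ from Theorem \ref{exponents}(iii); and for $\ell=4$ it holds for every rank-$4$ irreducible root system except $D_4$, while $D_4$ admits no non-RO $A_1^2$ set by Corollary \ref{cor:existence-D4}. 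Hence $p=\ell-1$ is forced (this is the ``appears only once'' scenario of Remark \ref{rem:once}), so $\{(\varphi_1^H)^X,\dots,(\varphi_{\ell-2}^H)^X\}$ is a basis for $D(\A^X)$; since $(\varphi_j^H)^X=\varphi_j^{H\cap X}=\varphi_j^X$ by Lemma \ref{lem:start}(i), this gives the statement.

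The main obstacle I anticipate is the multiplicity bookkeeping in the last step: ruling out that $m_{\ell-1}$ is a repeated exponent of $\A^H$, which would otherwise leave Theorem \ref{thm:basis-derived} ambiguous about which restricted derivation drops out and might require the extra computation flagged in Remark \ref{rem:once}. This is resolved by the small-rank classification inputs (Corollaries \ref{cor:exactly2} and \ref{cor:existence-D4}). Everything else is a routine assembly of results already available: Theorem \ref{thm:OST}, the freeness statement embedded in the proof of Theorem \ref{thm:main}, the exponent formula of Theorem \ref{thm:combine}, and Theorem \ref{thm:basis-derived}.
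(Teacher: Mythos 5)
Your proposal is correct and follows essentially the same route as the paper's proof: restrict first to $H=H_\alpha$ via Remark \ref{rem:alternative}, use the freeness of $\A^{H}\setminus\{X\}$ from the proof of Theorem \ref{thm:main} together with Theorem \ref{thm:card}, and finish with Theorem \ref{thm:basis-derived} using the fact that $m_{\ell-1}$ occurs only once in $\exp(\A^H)$ (Corollary \ref{cor:exactly2}). The only difference is cosmetic: where the paper simply defers $\ell\le 4$ to a case-by-case check, you extend the uniform argument by verifying $m_{\ell-2}<m_{\ell-1}$ directly in those ranks and excluding $D_4$ via Corollary \ref{cor:existence-D4}, which is a slightly cleaner packaging of the same classification input.
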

\begin{proof}
The statement is checked by a case-by-case method when $\ell \le 4$. 
If $\ell \ge 5$, then $m_{\ell-1}$ appears exactly once in $\exp(\A)$ (Corollary \ref{cor:exactly2}). 
By Remark \ref{rem:alternative}, $\A^{H_1}$ is free and $\{ {\varphi}^{H_1}_1, \ldots, {\varphi}^{H_1}_{\ell-1}\}$ is a basis for $D(\A^{H_1})$. 
By Theorem \ref{thm:card}, $|\A^{H_1}| - |\A^X|=m_{\ell-1}$.
By Proof of Theorem \ref{thm:main}, $\A^{H_1} \setminus \{X\}$ is also free. 
 Theorem \ref{thm:basis-derived} completes the proof.
\end{proof}

\begin{example}\label{ex:basis-RO}
Assume that $X\in L(\A)$ is both $A_1^2$ and RO. 
By Remark \ref{rem:numerical}, $\Phi=D_\ell$ with $\ell \ge 3$ ($D_3=A_3$). 
Suppose that 
$$Q=\prod_{1 \le i < j\le \ell}(x_i-x_j) \prod_{1 \le i < j\le \ell}(x_i+x_j),$$
where  $\{x_1,\ldots, x_\ell\}$ is an orthonormal basis for $V^*$. Let $H_1 = \ker(x_1+x_2)$, $H_2 = \ker(x_1-x_2)$, and $X=H_1 \cap X_2$. 
Then again by Remark \ref{rem:numerical}, $X$ is $A_1^2$ and RO. Define
\begin{align*}
\tau_i & := \sum_{k=1}^\ell x_k^{2i-1}\partial_k\,\, (1 \le i \le \ell-1),\\
\eta & := \sum_{k=1}^\ell \frac{x_1\ldots x_\ell}{x_k}\partial_k.
\end{align*}
Then it is known that $\tau_1,\ldots, \tau_{\ell-1},\eta$ form a basis for $D(\A)$. Let
$$\varphi:=\left(\prod_{k=3}^\ell(x_1^2-x_k^2)\right)\partial_1+\left(\prod_{k=3}^\ell(x_2^2-x_k^2)\right)\partial_2.$$
Then it is not hard to verify that $\varphi \in D(\A \setminus\{H_1\})$ and thus $(x_1+x_2)\varphi \in D(\A)$. 
By Saito's criterion, we may show that $\tau_1,\ldots, \tau_{\ell-2},\eta, (x_1+x_2)\varphi$ also form a basis for $D(\A)$, and $\tau_1,\ldots, \tau_{\ell-2},\eta, \varphi$ form a basis for $D(\A \setminus\{H_1\})$. 
Therefore,  $\{ \tau_1^{H_1}, \ldots, \tau_{\ell-2}^{H_1}, \eta^{H_1}\}$ is a basis for $D(\A^{H_1})$. 
This basis may have two elements having the same degree, for example, $\deg \eta = \deg \tau_{\ell/2}=\ell-1$ when $\ell$ is an even number.
However, it is easy to check that  for every case $\eta^X=0$. 
Hence, $\{ \tau_1^X, \ldots, \tau_{\ell-2}^X\}$ is a basis for $D(\A^X)$ and $\exp(\A^X)=\{1,3, 5,\ldots, 2\ell-5\}$ as predicted in Theorem \ref{thm:basis-derived}. 
This is also consistent with the fact that the $\A^X$ above is exactly the Weyl arrangement of type $B_{\ell-2}$.

 \end{example}

\section{Appendix: a bijective proof of Theorem \ref{thm:crucial}}\label{sec:app}

 In this section, we give an alternative and bijective proof for Theorem \ref{thm:crucial}. 
In comparison with the proof used in \S \ref{subsec:halfway2}, we do not use here the classification of rank-$4$ irreducible root systems.

\begin{definition}\label{def:local}
Let $\{\gamma_1,\gamma_2\}$ be an $A_1^2$ set. 
Let $\Psi \in\N_0=\N_0(\{ \gamma_1,\gamma_2\})$ (see notation in Definition \ref{def:N}).
For any $\alpha= \sum_{\mu \in \Delta(\Psi)} c_{\mu } {\mu} \in \Psi^+=\Phi^+ \cap \Psi$ ($\Delta(\Psi)$ is the base of $\Psi$ associated to $\Psi^+$), its \emph{local height} in $\Psi$ is defined by ${\rm ht}_{\Psi}(\alpha):=\sum_{\mu \in \Delta(\Psi)} c_{\mu }$.
\end{definition}
\begin{lemma}\label{lem:local height} 
With the notations and assumptions in Lemma \ref{lem:special}, Definition \ref{def:local} and Remark \ref{rem:K0}(ii), we have ${\rm ht}_{\Psi}(\gamma_i)=m_2(\Psi)$  for $i\in\{1,2\}$.
\end{lemma}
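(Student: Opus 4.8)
The plan is a short finite case analysis hinging on two global constraints. First, the highest root $\theta_\Psi$ of $\Psi$ with respect to the induced positive system $\Psi^+:=\Phi^+\cap\Psi$ is itself a positive root of $\Phi$, so Theorem \ref{exponents}(v) gives ${\rm ht}(\theta_\Psi)\le {\rm ht}(\theta)=m_\ell$. Second, the hypothesis $\gamma_1,\gamma_2\in\Theta^{(r)}$ with $r\ge\lfloor m_\ell/2\rfloor+1$ from Lemma \ref{lem:special} forces $2r\ge m_\ell+1$, hence ${\rm ht}(\theta_\Psi)\le 2r-1$. If $\ell=3$ the claim is immediate, since then $\Psi=\Phi$ (by Remark \ref{rem:K0}(ii), $\N_0=\{\Phi\}$) and one checks directly from Theorems \ref{thm:dual} and \ref{exponents} that two distinct positive roots of $\Phi$ of equal height $r\ge\lfloor m_\ell/2\rfloor+1$ necessarily have height $m_2=m_2(\Psi)$. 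So I may assume $\ell\ge4$. Since $\Psi$ is irreducible of rank $3$, it is of type $A_3$, $B_3$ or $C_3$, and by Remark \ref{rem:K0}(ii) again, $m_2(\Psi)=\h(\Psi)/2$, which equals $2$ in type $A_3$ and $3$ in types $B_3,C_3$.

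The main step is bookkeeping. Write $\Delta(\Psi)=\{\epsilon_1,\epsilon_2,\epsilon_3\}$ for the base of $\Psi^+$ and put $h_i:={\rm ht}(\epsilon_i)\ge1$; then for $\alpha=\sum_i a_i\epsilon_i\in\Psi^+$ one has ${\rm ht}_\Psi(\alpha)=\sum_i a_i$ and ${\rm ht}(\alpha)=\sum_i a_ih_i$. In a fixed standard realization of $A_3$, $B_3$, $C_3$ I would list the (few) unordered pairs of positive roots spanning a subsystem of type $A_1^2$ — this is the only place the classification of rank-$2$ systems enters, namely to discard the orthogonal equal-length pairs that actually span a $B_2$, just as in Lemma \ref{lem:special}. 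For each candidate pair $\{\gamma_1,\gamma_2\}$, writing $\gamma_j=\sum_i a_i^{(j)}\epsilon_i$ and $\theta_\Psi=\sum_i n_i\epsilon_i$ with all $n_i\ge1$, the data $\sum_i a_i^{(1)}h_i=\sum_i a_i^{(2)}h_i=r$, $\sum_i n_ih_i\le 2r-1$ and $h_i\ge1$ is a small linear feasibility problem in $h_1,h_2,h_3$, and I would check that it is infeasible unless ${\rm ht}_\Psi(\gamma_1)={\rm ht}_\Psi(\gamma_2)=m_2(\Psi)$. In the pairs whose two local heights are unequal, $\gamma_1$ and $\gamma_2$ either are comparable in the root poset of $\Psi$ — so one of ${\rm ht}(\gamma_1),{\rm ht}(\gamma_2)$ strictly exceeds the other, contradicting $\gamma_1,\gamma_2\in\Theta^{(r)}$ — or a one-line manipulation of $\sum_i n_ih_i\le 2r-1$ with the equal-height relations leaves no admissible $h_i\ge1$. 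In the pairs whose local heights agree but are smaller than $m_2(\Psi)$ (e.g. $\{\epsilon_1,\epsilon_3\}$ in $A_3$ and the analogous ``low'' pairs in $B_3$, $C_3$), equality of global heights forces two of the $h_i$ to coincide and then ${\rm ht}(\theta_\Psi)\ge 2r+1$, contradicting ${\rm ht}(\theta_\Psi)\le 2r-1$. The only surviving profile is the central one, with ${\rm ht}_\Psi(\gamma_i)=m_2(\Psi)$; since the genuine pair certainly produces a feasible instance, it must be this one, which is exactly the assertion.

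The only real work is organizing this check, which is light: $A_3$, $B_3$, $C_3$ carry only a handful of $A_1^2$ subsets apiece, and for each non-central one the contradiction is a single linear manipulation of the two relations ${\rm ht}(\theta_\Psi)\le 2r-1$ and ${\rm ht}(\gamma_1)={\rm ht}(\gamma_2)=r$. The one spot calling for care is correctly identifying, in $B_3$ and $C_3$, which orthogonal pairs of roots span a subsystem of type $A_1^2$ and which span a $B_2$; apart from that the argument is elementary, and in contrast with the proof of Theorem \ref{thm:crucial} in \S\ref{subsec:halfway2}, it does not invoke the classification of rank-$4$ root systems.
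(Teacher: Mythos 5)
Your proposal is correct and is essentially the paper's own argument: a direct case check over the three rank-$3$ irreducible types $A_3$, $B_3$, $C_3$, eliminating each non-central $A_1^2$ pair by exhibiting a positive root of $\Phi$ (in your version $\theta_\Psi$, in the paper's $C_3$ case the root $2\mu_1+2\mu_2+\mu_3$, which is the same root) whose height would exceed $m_\ell$ because $2r-1\ge m_\ell$. The paper merely writes out the single ``most non-obvious'' $C_3$ instance and leaves the rest of the verification implicit, so your write-up is a more systematic rendering of the same idea rather than a different route.
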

\begin{proof}
This follows from a direct verification on all rank-$3$ irreducible root systems. 
We will check only a (most) non-obvious case when $\Psi=  C_3$, and $\gamma_1=\mu_1+\mu_2, \gamma_2=2\mu_2+\mu_3$ (see Table \ref{tab:C3}).
Set $\alpha:=2\mu_1+2\mu_2+\mu_3$. 
Since $\alpha=2\gamma_1+\mu_3 \in \Psi^+ \subseteq \Phi^+$, we have ${\rm ht}(\alpha)>2{\rm ht}(\gamma_1)>m_\ell$, which is a contradiction.
\end{proof}

 \begin{table}[!ht]
\begin{center}
\begin{tabular}{cccc} 
\mbox{Height}  &  & & \\ 
5 & $2\mu_1+2\mu_2+\mu_3$ & & \\ 
4 & $\mu_1+2\mu_2+\mu_3$ & & \\ 
3 & $\mu_1+\mu_2+\mu_3$ &  $2\mu_2+\mu_3$ & \\ 
2 & $\mu_1+\mu_2$ &  $\mu_2+\mu_3$ & \\ 
1 & $\mu_1$ & $\mu_2$ &  $\mu_3$ 
\end{tabular}
\vskip 1em
\caption{$\Psi^+$ when $\Psi=  C_3$.}
\label{tab:C3}
\end{center}
\end{table}

\begin{proof}[A bijective proof of Theorem \ref{thm:crucial}]
By Remark \ref{rem:K0}, Theorem \ref{thm:crucial} is proved once we prove 
\begin{equation}\label{eq:once}
\sum_{\Psi\in\N_0}\left( m_2(\Psi)-1 \right)=m_2-1. 
\end{equation}
Recall the notation $\U=\{\theta_j\in \Phi^+ \mid {\rm ht}(\theta_j) >m_{\ell-1}\}$, and $|\U|=m_2-1$.
For each $\Psi\in\N_0$, set $\U_\Psi:=\{\mu \in \Psi^+ \mid  {\rm ht}_{\Psi}(\mu)>{\rm ht}_{\Psi}(\gamma_i)\}$ (by Lemma \ref{lem:local height}, this definition does not depend on the index $i$).
Moreover, $|\U_\Psi|=m_2(\Psi)-1$. 
Since $\U_{\Psi} \cap \U_{\Psi'} = \emptyset$ for $\Psi \ne \Psi'$, we have
$$\left|\bigcup_{\Psi\in\N_0}\U_\Psi \right| = \sum_{\Psi\in\N_0}|\U_\Psi|=\sum_{\Psi\in\N_0}\left( m_2(\Psi)-1 \right).$$
Equality \eqref{eq:once} will be proved once we prove $\U=\bigcup_{\Psi\in\N_0}\U_\Psi$. 
For any $\mu \in \U_\Psi$,
$$\mu - \gamma_i =  \sum_{\mu \in \Delta(\Psi)}\mathbb Z_{\ge 0}\mu \subseteq   \sum_{\alpha \in \Delta}\mathbb Z_{\ge 0}\alpha \quad (i\in\{1,2\}).$$
Thus, ${\rm ht}(\mu) >{\rm ht}(\gamma_i)=m_{\ell-1}$ hence $\mu \in \U$.
Therefore, $\U \supseteq \bigcup_{\Psi\in\N_0}\U_\Psi$.
To prove the inclusion, it suffices to prove that for every $\theta_j \in \U$, the subsystem $\Gamma:=\Phi\cap \mbox{span}\{\theta_j ,\gamma_1,\gamma_2\}$ is an element of $\N_0$.
Obviously, $\Gamma$ is of rank $3$ since it contains the $A_1^2$ set $\{\gamma_1,\gamma_2\}$.
The irreducibility of $\Gamma$ follows from Proposition \ref{prop:irr}.
\end{proof}

\noindent
\textbf{Acknowledgements.} 
The current paper is an improvement of the third author's Master's thesis, written under the supervision of the second author at Hokkaido University in 2017. 
At that time, the first main result (Theorem \ref{thm:combine}) was only proved by a case-by-case method.
The second author was supported by 
JSPS Grants-in-Aid for basic research (A) No. 24244001.
The third author was partially supported by the scholarship program of Japanese Ministry of Education, Culture, Sports, Science, and Technology 
(MEXT) No. 142506, and is currently supported by JSPS Research Fellowship for Young Scientists No. 19J12024.

\bibliographystyle{alpha} 
\bibliography{references}

\end{document}